\documentclass[10pt, reqno]{amsart}
\usepackage{mathrsfs}
\usepackage{amssymb,amsthm,amsmath}
\usepackage[numbers,sort&compress]{natbib}
\usepackage{amssymb,amsmath}
\usepackage{amsfonts}
\usepackage{mathrsfs}
\usepackage{latexsym}
\usepackage{amssymb}
\usepackage{amsthm}
\usepackage{color}
\usepackage{pdfsync}
\usepackage{indentfirst}
\usepackage{graphics}
\usepackage{subfigure}
\usepackage{epsfig}
\usepackage{float}
\usepackage{cases}
\usepackage{tikz}
\usetikzlibrary{arrows, positioning}
\date{today}
\usepackage[unicode,bookmarks=false]{hyperref}
\hypersetup{
colorlinks=true,
linkcolor=blue,
anchorcolor=g,
citecolor=red}
\usepackage{amsmath}
\usepackage{amsthm}

\usepackage{graphicx}

\usepackage{natbib}%reference

\renewcommand{\upsilon}{\rho}%将字母$\upsilon$替换成\rho

\let\oldsection\section
\renewcommand\section{\setcounter{equation}{0}\oldsection}

\theoremstyle{plain} % 斜体内容 + 粗体标签
\newtheorem{theorem}{Theorem}[section]
\newtheorem{lemma}[theorem]{Lemma}
\newtheorem{corollary}[theorem]{Corollary}
\newtheorem{proposition}[theorem]{Proposition}

\theoremstyle{definition} % 正体内容 + 粗体标签

\newtheorem{remark}[theorem]{Remark}

\def\ba{\begin{eqnarray}}
	\def\ea{\end{eqnarray}}

\newcommand{\beq}{\begin{equation}}
	\newcommand{\eeq}{\end{equation}}
\newcommand{\ben}{\begin{eqnarray}}
	\newcommand{\een}{\end{eqnarray}}
\newcommand{\beno}{\begin{eqnarray*}}
	\newcommand{\eeno}{\end{eqnarray*}}

\usepackage{color}

\title[{\scriptsize  Zero-viscosity limit of the chemotaxis-Navier-Stokes equations}]{ Zero-viscosity limit of the chemotaxis-Navier-Stokes equations with the Navier-slip boundary condition}

\author{Bolun Li}
\address[Bolun Li]{School of Mathematical Sciences, Dalian University of Technology, Dalian, 116024,  China}
\email{Libolun\_dut@163.com}
\author{ Fengqiang Shi}
\address[Fengqiang~Shi]{School of Mathematical Sciences, Dalian University of Technology, Dalian, 116024,  China}
\email{2192593228@mail.dlut.edu.cn}
\author{Wendong~Wang}
\address[Wendong~Wang]{School of Mathematical Sciences, Dalian University of Technology, Dalian, 116024,  China}
\email{wendong@dlut.edu.cn}

\date{\today}  % 可选，设置日期为今天，或者自定义日期
\allowdisplaybreaks
\begin{document}
	\begin{abstract}
 The interplay of chemotaxis and diffusion of nutrients or
signaling chemicals in bacterial suspensions can produce a
variety of structures with locally high concentrations of cells,
including phyllotactic patterns, filaments, and concentrations
in fabricated microstructures, which is described by the chemotaxis-Navier-Stokes flow by Tuval et al. in 2005. 
 Dombrowski et al. also observed that Bacterial flow in a sessile drop related to
those in the Boycott effect of sedimentation can carry bioconvective plumes, viewed
from below through the bottom of a petri dish, and the horizontal “turbulence”
white line near the top is the air-water-plastic contact line.  It’s interesting to verify these turbulent phenomena mathematically.
		For varying chemotactic and velocity viscosities, we derive the boundary layer equations of the chemotaxis-Navier-Stokes system rigorously in  a two-dimensional half-space under the Navier-slip boundary condition and obtain the vanishing viscosity limit  of the 2D chemotaxis-fluid coupled system in the anisotropic conormal Sobolev spaces. 
        
        % Using matched asymptotic expansions and anisotropic conormal Sobolev spaces, we construct an approximate solution and rigorously prove its $O(\varepsilon^{\frac 3 2})$ convergence rate in energy norm, extending boundary layer theory to biofluid systems.
        \end{abstract}
	\maketitle
	\noindent

% \subjclass[2020]{35Q83, 35Q84, 35Q30; 35B45, 35A01, 35B65}
    
	\textbf{Keywords:}  % 注意每个关键词之间用逗号隔开z
	   Chemotaxics Navier-Stokes system, Boundary layer, Vanishing viscosity limit, Asymptotic expansion, Conormal spaces

      \textbf{ 2020 Mathematics Subject Classification.}
      % \subjclass[2020]
       {Primary 35Q92; Secondary 76N05, 35B44, 35Q30, 35Q35}
	\tableofcontents
	
	%\tableofcontents
	\parskip5pt
	\parindent=1.5em

%Introduction
\section{Introduction}
%%==================================================
%% chapter01.tex for DUT Thesis
%% version: 0.1
%% last update: Dec 25th, 2022
%%==================================================
%\subsection{Research background}
The chemotactic movement of microorganisms in a fluid represents a fundamental coupling between biology and fluid dynamics, where the directed motion of organisms (chemotaxis) interacts with the surrounding flow. To capture this interaction, Keller and Segel~\cite{KS1970,KS1971} established the classical Keller-Segel model in the 1970s (see, also, Patlak \cite{P1953}). The physical significance of this coupling was dramatically demonstrated by Dombrowski et al.~\cite{dombrowski2004} and Tuval et al.~\cite{Tuval2005}, who observed that aerobic bacteria aggregate near the water surface, forming a distinct boundary layer and triggering large-scale bioconvection and micro-turbulence. These experiments reveal that the presence of a fluid fundamentally alters bacterial spatial organization, making the study of boundary layers essential for understanding such systems. Although extensive mathematical results exist for chemotaxis-fluid equations~\cite{Winker2014,Winker2016}, the boundary layer analysis or the vanishing viscosity limit remains largely unexplored. Investigating this limit is not only mathematically challenging but also physically significant, as it reveals the mechanism by which a nearly inviscid fluid organizes microbial life at its boundary.

% \subsection{Research progress}
% \subsubsection{Prandtl boundary layer theory}
To address the vanishing viscosity limit in chemotaxis-fluid systems, we first draw insights from classical Navier-Stokes equations. Since Prandtl \cite{Prandtl1904}
introduced the boundary layer theory and derived the Prandtl equation by the 
 asymptotic expansion in 1904, 
a fundamental problem in mathematical fluid dynamics is to determine whether
in the inviscid limit $\nu\rightarrow 0$ the solutions of the Navier–Stokes equations converge
to those of the Euler equations. Let us recall some developments briefly.
 % The Prandtl boundary layer theory \cite{Prandtl1904} describes the flow behavior near a solid boundary in the limit of vanishing viscosity, where the analogous problem hinges on the Prandtl boundary layer expansion.

% A crucial prerequisite for justifying this expansion is the well-posedness of the underlying Prandtl equation. However, due to its inherent mathematical complexity, such well-posedness has only been obtained within specific functional frameworks.
To justify the Prandtl boundary layer expansion, one of the key steps is to
establish the well-posedness of the Prandtl equation.
Under the monotonicity assumption on the outflow velocity, Oleinik and Samokhin \cite{1999Samokhin} proved the local existence and uniqueness of classical solutions in the two-dimensional case. For flows with a favorable pressure gradient, Xin and Zhang \cite{2004XinZhang} established the global existence of weak solutions. More recently, the local well-posedness in Sobolev spaces was independently proved by Alexandre et al. \cite{AWXY2015} and Masmoudi and Wong \cite{MW2015} using direct energy methods. 
On the other hand, within the analytic setting, Sammartino and Caflisch \cite{SC1998} obtained the local existence and uniqueness of solutions for full analytic data (with extensions to tangential analytic data found in \cite{LCS2003,ZZ2016}). For the Gevrey data, the local well-posed results were obtained by Gerard-Varet-Masmoudi \cite{GM2015} and Li Yang \cite{LY2020}. Conversely, without the monotonicity condition, the system exhibits strong instability; notably, G\'erard-Varet and Dormy \cite{GD2010} demonstrated that the linearized Prandtl equation around non-monotonic shear flows is ill-posed in Sobolev spaces.  For more developments on this topic, we refer to \cite{ZZ2016,LWY2016,LMY2022} and the references therein.
% [119] Gerard-Varet, David; Masmoudi, Nader；Well-posedness for the Prandtl system without analyticity or monotonicity. Ann. Sci. Éc. Norm. Supér. (4) 48 (2015), no. 6, 1273-1325.
% [120] Li, Wei-Xi; Yang, Tong；Well-posedness in Gevrey function spaces for the Prandtl equations with non-degenerate critical points. J. Eur. Math. Soc. (JEMS) 22 (2020), no. 3, 717-775. 
% [121] Zhang, Ping; Zhang, Zhifei; Long time well-posedness of Prandtl system with small and analytic initial data. J. Funct. Anal. 270 (2016), no. 7, 2591–2615.
% [122] Liu, Cheng-Jie; Wang, Ya-Guang; Yang, Tong; On the ill-posedness of the Prandtl equations in three-dimensional space. Arch. Ration. Mech. Anal. 220 (2016), no. 1, 83–108.
% [123] Li, Wei-Xi; Masmoudi, Nader; Yang, Tong; Well-posedness in Gevrey function space for 3D Prandtl equations without structural assumption. Comm. Pure Appl. Math. 75 (2022), no. 8, 1755–1797.

Another key step is  the rigorous verification of the Prandtl boundary layer
expansion.
% Although the Prandtl equation is well-posed in the functional spaces mentioned
% above, there are few results on the rigorous verification of the Prandtl boundary layer
% expansion.
Even when the Prandtl equation is well-posed in the aforementioned functional spaces, rigorously verifying the Prandtl boundary layer expansion remains highly challenging, and only a few landmark results are available. Sammartino and Caflisch \cite{SC1998} first achieved this justification within the analytic framework, a result later re-proved by Wang, Wang, and Zhang \cite{WWZ2017} via a newly developed direct energy method. Moreover, the result was extended to the three-dimensional half-space $\mathbb{R}^3_+$ by Fei, Tao, and Zhang \cite{FTZ2018} utilizing a direct energy approach again. Moving beyond analytic spaces, Maekawa \cite{MY2014} successfully justified the boundary layer expansion under the specific condition that the initial vorticity is supported away from the boundary. We refer to \cite{KVW2022,KOS2025} and the  references therein for more recent progress.

%  Kukavica, Igor; Vicol, Vlad; Wang, Fei Remarks on the inviscid limit problem for the Navier-Stokes equations. Pure Appl. Funct. Anal. 7 (2022), no. 1, 283–306.

% Kukavica, Igor; Ożański, Wojciech; Sammartino, Marco The inviscid inflow-outflow problem via analyticity. Arch. Ration. Mech. Anal. 249 (2025), no. 3, Paper No. 27, 39 pp.

\subsection{Chemotaxis equations}

Parallel to the development of boundary layer theory in fluid dynamics, few significant progress has also been made in the study of pure chemotaxis equations. The classical Keller-Segel system \cite{KS1970,KS1971} has been extensively analyzed, particularly regarding the critical balance between chemotactic aggregation and diffusion that leads to either global existence or finite-time blow-up \cite{HP2009}. Hillen and Painter \cite{HP2009} provided a comprehensive review of global existence and blow-up phenomena in chemotaxis PDE models.

%\subsubsection{Chemotaxis equations without coupling fluids}
% In the presence of physical boundaries or system degeneracy, chemotaxis mechanisms can generate boundary layer effects. The work of Peng, Ruan and Zhu \cite{PRZ2012} from 2012 established the global well-posedness of large amplitude solutions and the convergence rates of the zero diffusion limit for a transformed Keller-Segel conservation law. A few years later, Wang, Xiang, and Yu \cite{WXY2016} explored the asymptotic dynamics of singular chemotaxis systems modeling tumor angiogenesis in 2016. Moving forward to 2018, Hou, Liu, Wang, and Wang \cite{HLWW2018} investigated the stability of boundary layers for a one-dimensional viscous hyperbolic system derived from chemotaxis, establishing the nonlinear stability of boundary layer profiles under small perturbations. The subsequent year saw an extension of this analysis by Hou and Wang \cite{houwang2019} to the half-plane setting for the Keller-Segel system with singular sensitivity, proving the convergence of boundary layers and deriving the sharp convergence rates as the chemical diffusion parameter tends to zero. Around the same time, Lee et al. \cite{leewang2020} investigated singularly perturbed boundary layer profiles of nonlocal semilinear chemotaxis problems in 2020. More relevant results can be found in \cite{CLW2021,chw2024,HW2024,PWZZ2018,PWZ2014,Hou2022ks} and the references therein.

\subsubsection{ The case of chemotaxis equations without fluid coupling}
By assuming the Neumann boundary condition, Meng-Xu-Wang \cite{MXW2020} considered the  boundary layer problem of a Keller-Segel model in a domain of two dimensional space with vanishing chemical diffusion coefficient:
\begin{equation}\label{eq:KS}
\left\{
    \begin{aligned} 
        &\partial_t n^{\varepsilon}  - \Delta n^{\varepsilon} = -\nabla \cdot (n^{\varepsilon} \nabla c^{\varepsilon}), \\
        &\partial_t c^{\varepsilon} - \varepsilon^2\Delta c^{\varepsilon} = -c^{\varepsilon}n^{\varepsilon}, \\
        &\partial_y c^{\varepsilon}=\partial_y n^{\varepsilon}=0, \quad {\rm on}~ y=0
    \end{aligned}
    \right.
\end{equation}
and they obtained the error equations $(n,c)=(n^{\varepsilon}-n^a, c^{\varepsilon}-c^a)$ satisfying
\ben\label{eq:MXW}
\|n\|_{L^\infty(\Omega\times (0,T))}\lesssim \varepsilon^{\frac32-\delta},\quad \|c\|_{L^\infty(\Omega\times (0,T))}\lesssim \varepsilon^{\frac12-\delta},
\een
where $\delta>0.$
% Meng, Linlin; Xu, Wen-Qing; Wang, Shu Boundary layer analysis for a 2-D Keller-Segel model. Open Math. 18 (2020), no. 1, 1895–1914.
Moreover, Hou-Wang \cite{houwang2019} established the convergence of boundary
layer solutions of the Keller-Segel model with singular sensitivity:
\begin{equation}\label{eq:KS}
\left\{
    \begin{aligned} 
        &
\partial_t n=\nabla\cdot(D \nabla n-\chi \frac{n}{c}\nabla c)\\
& \partial_t c=\varepsilon \Delta c-nc,
\end{aligned}
    \right.
\end{equation}
and by assuming the Dirichlet-Neumann boundary condiotion they justify that the boundary layer converges to
the outer layer (solution with $\varepsilon$= 0) plus the inner layer with the convergence rate of $(n,c)$ as $(\varepsilon^\frac12, \varepsilon^\frac14)$.
 % Hou, Qianqian; Wang, Zhian; Convergence of boundary layers for the Keller-Segel system with singular sensitivity in the half-plane. J. Math. Pures Appl. (9) 130 (2019), 251–287. 
Lee-Wang-Yang \cite{leewang2020} also considered  the stationary
problem of Keller-Segel model, which is reduced to a singularly perturbed nonlocal semi-linear elliptic
problem.
Carrillo-Hong-Wang \cite{chw2024}
 show that the model with physical boundary conditions proposed
by Tuval et al. in one dimension can generate a boundary layer solution as the oxygen diffusion rate $\varepsilon$ is small. Specifically, we show that the solution of the model with
$\varepsilon>0$ will converge to the solution with $\varepsilon=0$ (outer-layer solution) plus the boundary
layer profiles (inner-layer solution) with a sharp transition near the boundary as $\varepsilon\rightarrow 0$.
% Carrillo, José A.; Hong, Guangyi; Wang, Zhi-an Convergence of boundary layers of chemotaxis models with physical boundary conditions I: Degenerate initial data. SIAM J. Math. Anal. 56 (2024), no. 6, 7576–7643. 
 For more developments on general domains, we refer to  \cite{PRZ2012,PWZZ2018,PWZ2014,lwmy2025} and the references therein.
%  Lee-Wang-Moon-Yang \cite{lwmy2025} consider the stationary problem of the Keller-Segel system with physical boundary conditions describing the boundary-layer formation driven by chemotaxis. By using Fermi coordinates  the authors rigorously
% derive the asymptotic expansion of the boundary-layer profile and thickness in terms of the small diffusion rate with coefficients explicitly expressed by the domain’s geometric properties.

% Lee, Chiun-Chang; Moon, Sang-Hyuck; Wang, Zhi-An; Yang, Wen Geometry effects on the boundary-layer profiles of the Keller-Segel system. Trans. Amer. Math. Soc. 378 (2025), no. 12, 8871–8907.

% {Chemotaxis-fluid coupled systems}

% As stated in \cite{Tuval2005,dombrowski2004}, it is a more realistic scenario that chemotactic processes take place in a moving fluid. It's natural to consider the boundary layer effect of the chemotaxis-fluid coupled systems to explain the turbulent phenomenon of locally high concentrations of cells.
\subsubsection{The case of chemotaxis equations with fluid coupling} 
Only a few studies have explored boundary layer problems in chemotaxis-fluid systems. In 2024, Hou \cite{Hou2024} considered the vanishing diffusion limit issue for the chemotaxis–Navier–Stokes system in $\mathbb{R}^3$ with  the vanishing chemical diffusion rate.
% Hou, Qianqian Chemical diffusion limit of a chemotaxis-Navier-Stokes system. Appl. Anal. 103 (2024), no. 1, 198–210.
% considered the  chemotaxis–Navier–Stokes system \cite{Hou2022} analyzed the asymptotic behavior of the chemotaxis-Navier-Stokes system under Robin boundary conditions by constructing boundary layer correctors.
% In 2024, Wang et al. \cite{WWX2024} studied smooth solutions of the 3D chemotaxis-Stokes system with signal Dirichlet boundary conditions. 
Hou \cite{Hou2025} further demonstrated that the boundary layer effect in the chemotaxis-Navier-Stokes system as follows: 
\begin{equation}\label{eq:HOU}
    \left\{
\begin{aligned}
& n_t + \vec{u} \cdot \nabla n + \nabla \cdot (n \nabla c) = \Delta n, && (x,y,t) \in \mathbb{R}^2_+ \times (0,T), \\
& c_t + \vec{u} \cdot \nabla c + n c = \varepsilon \Delta c, && (x,y,t) \in \mathbb{R}^2_+ \times (0,T), \\
& \vec{u}_t + \vec{u} \cdot \nabla \vec{u} + \nabla p + (0,\lambda)n = \Delta \vec{u}, && (x,y,t) \in \mathbb{R}^2_+ \times (0,T), \\
& \nabla \cdot \vec{u} = 0, && (x,y,t) \in \mathbb{R}^2_+ \times (0,T), 
% & (m,c,\vec{u})(x,y,0) = (m_0,c_0,\vec{u}_0)(x,y), && (x,y) \in \mathbb{R}^2_+
\end{aligned}
\right.
\end{equation}
with the boundary conditions
\begin{equation}\label{eq:HOU-B}
\left\{
\begin{aligned}
& (\nabla n - n \nabla c) \cdot \vec{n} = 0, \quad \nabla c \cdot \vec{n} = 0, \quad \vec{u} = \mathbf{0} && \text{if } \varepsilon > 0, \\
& (\nabla n - n \nabla c) \cdot \vec{n} = 0, \quad \vec{u} = \mathbf{0} && \text{if } \varepsilon = 0,
\end{aligned}
\right.
\end{equation}
where the author obtained the convergence rate is
\ben\label{eq:HOU-RATE}
\left\| \left( n^\varepsilon - n^0, c^\varepsilon - c^0 \right) \right\|_{L^\infty([0,T]; L^\infty_{xy})} \leq C \varepsilon^{\frac{1}{2}},
\quad
\left\| \left( \vec{u}^\varepsilon - \vec{u}^0 \right) \right\|_{L^\infty([0,T]; L^\infty_{xy})} \leq C \varepsilon,
\een
which $(n^0,c^0,\vec{u}^0)$ is the solution of \eqref{eq:HOU} with $\varepsilon=0$.
% is inherently driven by the fluid, as it vanishes when the fluid component is removed.
%  some details???
Despite these advances, the vanishing viscosity limit for chemotaxis-fluid systems remains an open challenge. Biologically, chemotaxis is inseparable from the surrounding fluid dynamics; thus, the formation of the fluid boundary layer inevitably dictates the behavior of the chemotactic boundary layer. As both fluid and chemotactic viscosities vanish, the strong coupling of these boundary layers creates a highly singular region. A fundamental question naturally emerges: \\
{\it is it mathematically possible to establish a rigorous boundary layer expansion in this regime, thereby offering a mathematical answer to the complex phenomena reported in Dombrowski et. al.'s experiments in \cite{dombrowski2004}?}
\vspace{-10pt}
\subsection{Boundary layers in the  chemotaxis system}
% Motivated by the above observations, this paper aims to systematically investigate the vanishing viscosity limit and associated boundary layer behavior for chemotaxis-fluid equations on the two-dimensional half-plane $\mathbb{R}_+^2$. Specifically, we seek to address the following fundamental questions: When the fluid viscosity tends to zero, how do the fluid dynamics and chemotactic mechanisms jointly influence the formation of boundary layers? Is there an interactive effect between these two components? These questions remain mathematically unexplored. 
% Although the experimental work of Tuval et al.~\cite{Tuval2005} has demonstrated the physical existence of such phenomena, the underlying mathematical mechanisms urgently require rigorous investigation. The present work represents a first step toward filling this gap.
Motivated by these physical and mathematical challenges, this paper aims to systematically investigate the vanishing viscosity-diffusivity limits and the associated boundary layer expansions for the chemotaxis-fluid equations on the two-dimensional half-plane $\mathbb{R}_+^2$.
Consider the model coupling the chemotaxis equations for bacterial density $n^\varepsilon$ and oxygen concentration $c^\varepsilon$ with the incompressible Navier-Stokes equations for the velocity field $u^\varepsilon$, as shown below:
\vspace{-\abovedisplayskip}
\begin{equation}\label{eq:CNS}
\left\{
    \begin{aligned}
        &\partial_t n^{\varepsilon} + u^{\varepsilon} \cdot \nabla n^{\varepsilon} - \Delta n^{\varepsilon} = -\nabla \cdot (n^{\varepsilon} \nabla c^{\varepsilon}), \\
        &\partial_t c^{\varepsilon} + u^{\varepsilon} \cdot \nabla c^{\varepsilon} - \varepsilon^2\Delta c^{\varepsilon} = -c^{\varepsilon}n^{\varepsilon}, \\
        &\partial_t u^{\varepsilon} + u^{\varepsilon} \cdot \nabla u^{\varepsilon} - \varepsilon^2\Delta u^{\varepsilon} + \nabla p^{\varepsilon} = n^{\varepsilon} \nabla \phi, \\
        &\nabla \cdot u^{\varepsilon} = 0, \\
        &\partial_y c^{\varepsilon}=\partial_y n^{\varepsilon}=0, ~\partial_y u^{\varepsilon}_1=u^{\varepsilon}_2=0,\quad y=0,\\
        &(n^{\varepsilon},c^{\varepsilon},u^{\varepsilon})=(n_{in},c_{in},u_{in}),\quad t=0.
    \end{aligned}
    \right.
\end{equation}
Here, we assume that $\phi = y$ without loss of generality, then $\nabla \phi = (0,1)^\top = e_2$, and the system is supplemented with physically reasonable Neumann-slip boundary conditions~\cite{WL2024} and nonnegative initial data $n_{in},c_{in}\ge 0$.

Formally, as the viscosity coefficient $\varepsilon \to 0$, the solution of \eqref{eq:CNS} converges to the following inviscid limit system on $\mathbb{R}_+^2$:
\begin{equation}\label{eq:epsilon=0}
    \left\{
    \begin{aligned}
        &\partial_t n^0 + u^0 \cdot \nabla n^0 - \Delta n^0 = -\nabla \cdot (n^0 \nabla c^0), \\
        &\partial_t c^0 + u^0 \cdot \nabla c^0 = -c^0 n^0, \\
        &\partial_t u^0 + u^0 \cdot \nabla u^0 + \nabla p^0 = n^0 e_2, \\
        &\nabla \cdot u^0 = 0, \\
        &\partial_y n^0 = n^0 \partial_y c^0, \quad u_2^0 = 0, \quad y=0,\\
        &(u^0,c^0,n^0) = (n_{in},c_{in},u_{in}), \quad t=0.
    \end{aligned}
    \right.
\end{equation}
However, due to the mismatch of normal velocity boundary conditions between the viscous and inviscid systems, boundary layers may appear.

This research will follow the paradigm of matched asymptotic expansion method. First, we introduce boundary layer coordinates $z = y/\varepsilon$ and decompose the solution into an outer solution describing the flow far from the boundary and an internal correction term describing boundary layer behavior. Through asymptotic expansion of the original equations, we derive the equations satisfied by each order of outer solutions and internal correction terms, and construct approximate solutions $(n^a,u^a,c^a,p^a)$. The core of subsequent work will be to prove that the error between this approximate solution and the true solution of the original equation tends to zero as $\varepsilon \to 0$. To this end, we plan to introduce an anisotropic Sobolev space that can assign different weights to tangential ($x$) and normal ($y$) derivatives, thereby finely capturing the characteristics of drastic normal variations within the boundary layer\cite{Hou2022,WL2024}. By establishing energy inequalities for error variables and using Gronwall-type lemmas, we hope to obtain energy estimates, thereby strictly quantifying convergence rates. This energy estimation method draws on the techniques of Masmoudi and Rousset\cite{MR2012} in handling Navier slip boundary conditions, as well as the energy construction ideas of Wang, Wang and Zhang\cite{WWZ2017} in the analytic framework, and innovatively applies them to complex systems coupled with nonlinear chemotaxis terms. Ultimately, this research will for the first time rigorously verify from a mathematical perspective the boundary layer asymptotic structure of a class of chemotaxis-fluid equations under the vanishing viscosity limit, revealing how fluid viscosity and biological chemotaxis jointly act to cause the fine distribution of bacterial density and oxygen concentration near the boundary.

The rest of the paper is organized as follows. In Section~\ref{main results}, we introduce the functional framework, formulate the error equations, and state our main results. Section~\ref{key energy estimates} is devoted to stating the key \textit{a priori} estimates and providing the proof of the main theorem. Subsequently, in Section~\ref{approximate solution}, we detail the derivation of the asymptotic expansions and rigorously construct the approximate solutions. In Section~\ref{energy estimates}, we establish the necessary supporting lemmas and prove the \textit{a priori} estimates presented in Section~\ref{key energy estimates}. Finally, the well-posedness proofs for the inner and outer layer solutions are deferred to the Appendix.

% The structure of this paper is arranged as follows: First, in Section~\ref{main results}, we introduce the error equations, function spaces, and main results; second, in Section~\ref{key energy estimates}, we introduce the main a priori estimate results and the proof of the main theorem; subsequently, in Section~\ref{approximate solution}, we derive the asymptotic expansion in detail and obtain the construction of approximate solutions; finally, in Section~\ref{energy estimates}, we obtain the lemmas required for energy estimates and prove the a priori estimate results listed in Section~\ref{key energy estimates}. The proof of well-posedness of inner and outer solutions will be placed in the Appendix.

In this paper, to simplify notation, we let $h(\cdot)$ represent an undetermined function. Then the boundary operator is expressed as $\overline{h} (t,x) = h(t,x,0)$, and the shift operator is expressed as:
\begin{equation}
    \binom{h_1(\cdot,j)}{h_2(\cdot,j)}_p = \binom{h_1(\cdot,j)}{h_2(\cdot,j+1)},~\binom{h_1(\cdot,j)}{h_2(\cdot,j)}_{pp} = \binom{h_1(\cdot,j)}{h_2(\cdot,j+2)},
\end{equation}
and when $j < 0$, $h(\cdot,j) = 0$. Throughout this paper, we denote by $C$ a constant independent of $\varepsilon$ and by $C_\xi$ a coefficient dependent only on $\xi$, which may be different from line to line.

\section{Error equations, functional spaces and main results}\label{main results}

%The error equations
\subsection{The error equations}

Assume the approximate solution $(n^a,c^a,u^a,p^a)$ satisfies:
\begin{equation}
    \left\{
    \begin{aligned}\label{eq:approximate}
        &\partial_t n^a + u^a \cdot \nabla n^a - \nabla \cdot \left( \nabla n^a - n^a \nabla c^a \right) = -N, \\
        &\partial_t c^a + u^a \cdot \nabla c^a + c^a n^a - \varepsilon^2 \Delta c^a = -K, \\
        &\partial_t u^a + u^a \cdot \nabla u^a + \nabla p^a - \varepsilon^2 \Delta u^a - n^ae_2 = -U, \\
        &\nabla \cdot u^a = 0, \\
        &\partial_y n^a = \partial_y c^a = \partial_y u_1^a = 0, u_2^a = \varepsilon^2 f(t, x) , \quad  y=0,
    \end{aligned}
    \right.
\end{equation}
where $(N,C,U)$ are remainders which are small in some functional space.

We introduce the error between the solution and the approximate solution:
 \begin{equation*}
    n =n^\varepsilon - n^a, c = c^\varepsilon - c^a, u = u^\varepsilon - u^a, p = p^\varepsilon -p^a.
\end{equation*}
Then from \eqref{eq:CNS} and \eqref{eq:approximate}, we know that $(n,c,u,p)$ satisfies:
\begin{equation}
    \left\{
    \begin{aligned}\label{eq:error}
        &\partial_t n + \left( u^a \cdot \nabla n + u \cdot \nabla n^a + u \cdot \nabla n \right)  + \nabla \cdot \left( n \nabla c + n^a \nabla c + n \nabla c^a \right) \\
        &\quad - \Delta n= N, \\
        &\partial_t c + \left( u^a \cdot \nabla c + u \cdot \nabla c^a + u \cdot \nabla c \right) - \varepsilon^2 \Delta c + \left( c^a n + cn^a + cn \right) = K, \\
        &\partial_t u + \left( u^a \cdot \nabla u + u \cdot \nabla u^a + u \cdot \nabla u \right) + \nabla p - \varepsilon^2 \Delta u - ne_2 = U, \\
        &\nabla \cdot u = 0, \\
        &\partial_y n = \partial_y c = \partial_y u_1 = 0, ~u_2 = -\varepsilon^2 f, \quad  y=0.
    \end{aligned}
    \right.
\end{equation}

Taking the divergence operator on both sides of \eqref{eq:error}$_3$, we obtain the pressure equation:
\begin{equation}
    \left\{
    \begin{aligned}\label{eq:p}
        &-\Delta p = \nabla \cdot \left( (u^a + u) \cdot \nabla u \right) + \nabla \cdot \left( u \cdot \nabla u^a \right) - \partial_y n - \nabla \cdot U, \\
        &\partial_y p = U_2 - \partial_t u_2 - (u^a + u) \cdot \nabla u_2 - u \cdot \nabla u_2^a + \varepsilon^2 \Delta u_2 + n, , \quad  y=0.
    \end{aligned}
    \right.
\end{equation}
The vorticity $\omega, \omega^a$ is defined by:
\begin{equation}\label{def:omega}
    \omega = \partial_y u_1 - \partial_x u_2,~\omega^a = \partial_y u_1^a -\partial_x u_2^a.
\end{equation}
Then from \eqref{eq:error}$_3$ we can deduce that:
\begin{equation}
    \left\{
    \begin{aligned}\label{eq:omega}
        &\partial_t \omega + \left(u^a \cdot \nabla \omega + u \cdot \nabla \omega^a + u \cdot \nabla \omega \right) - \varepsilon^2 \Delta \omega + \partial_x n = -\nabla^\perp \cdot U, \\
        &\omega = \partial_y u_1 - \partial_x u_2 = \varepsilon^2 \partial_x f, \quad  y=0.
    \end{aligned}
    \right.
\end{equation}

%Differential operator and functional spaces
\subsection{Differential operator and functional spaces}\label{section_operator}

As in \cite{MR2012,MW2015,TWZ2020}, we introduce the conormal derivative:
\begin{equation}\label{def:derivative}
    \partial^{\alpha} = \partial^{\alpha_1 }_t \partial_x^{\alpha_2} \psi(y)^{\alpha_3} \partial_y^{\alpha_3},
\end{equation}
where $\alpha = (\alpha_1,\alpha_3,\alpha_3)$ is a multi-index and $\psi(y)$ is a smooth function defined by:
\ben\label{eq:psi-def}
\psi(y) = 
\begin{cases} 
\delta y, & \text{for } y \leq \frac 1 2, \\
\frac{\delta y}{1 + y}, & \text{for } y \geq 1,
\end{cases}
\een
where $\delta > 0$, to be decided later. 

Here we say that $\beta \leq \alpha$ in $\mathbb{N}^3$ if $\alpha =(\alpha_1 ,\alpha_3 ,\alpha_3), \beta =(\beta_1, \beta_2, \beta_3)$ satisfy $\beta_1 \leq \alpha_1, \beta_2 \leq \alpha_2, \beta_3 \leq \alpha_3$.
In addition, there is also the gradient operator:
\begin{equation*}
    \nabla = \binom{\partial_x}{\partial_y}, \quad \widehat{\nabla} = \binom{\partial_x}{\partial_z}.
\end{equation*}
where $z = \frac y \varepsilon$. 
Define the commutator of two operators $\mathcal{A}$ and $\mathcal{B}$ as $[\mathcal{A},\mathcal{B}] = \mathcal{A}\mathcal{B} - \mathcal{B}\mathcal{A}$. 
Then we have 
\begin{equation}\label{exchange oper}
    [\partial_y, \partial^{\alpha}] = \alpha_3 \psi' \partial^{\alpha-(0,0,1)} \partial_y.
\end{equation}
(If $\alpha_3 = 0$, we denote $[\partial_y, \partial^{\alpha}] = 0$.)

Denote $\left\| g \right\|_q = \| g \|_{L^q(\mathbb{R}^2_+)}$. For $l,m\in \mathbb{N}$, the conormal Sobolev spaces are defined as follows:

\begin{equation}\label{def:conormal sobolev}
    \begin{aligned}
        &Y^{l,m} \left(\mathbb{R}_+^2 \right) = \left\{ u \;\Bigg|\; \|u \|_{Y^{l,m}}^2 = \sum_{\alpha_1 \leq l,|\alpha|\leq m} \| \partial^{\alpha} u \|_2^2 < \infty \right\},\\
        &Y^{l,m}_\infty \left(\mathbb{R}_+^2 \right) = \left\{ u \;\Bigg|\; \|u \|_{Y_\infty^{l,m}}^2 = \sum_{\alpha_1 \leq l,|\alpha|\leq m} \| \partial^{\alpha} u \|_\infty^2 < \infty \right\},\\
        &Y^{l} \left(\mathbb{R}_+^2 \right) = \left\{ u \;\Bigg|\; \|u \|_{Y^{l}}^2 = \sum_{\tau \leq l} \| \partial^{\tau}_t u \|_2^2 < \infty \right\}. 
    \end{aligned}
\end{equation}
Similarly,  denote the general Sobolev spaces by
\begin{equation}\label{def:sobolev}
    \begin{aligned}
        &\overline{H}^{l,m} \left(\mathbb{R}_+^2 \right) = \left\{ u \;\Bigg|\; \| u \|_{\overline{H}^{l,m}}^2 = \sum_{\alpha_1 \leq l, |\alpha| \leq m} \| \partial^{\alpha_1}_t \partial_x^{\alpha_2} \partial_y^{\alpha_3} u\|_2^2 < \infty \right\}, \\
        &H^{m} \left(\mathbb{R}_+^2 \right) = \left\{ u \;\Bigg|\; \| u \|_{H^{m}}^2 = \sum_{|\alpha| \leq m} \| \partial_x^{\alpha_2} \partial_y^{\alpha_3} u\|_2^2 < \infty \right\}.
    \end{aligned}
\end{equation}

%Main results
\subsection{Main results}

 Let us first introduce a proposition on approximate solutions and remainders:
\begin{proposition}\label{prop:uniform bounds}
    Assuming $(n_{in},c_{in},u_{in}) \in H^{9}$ satisfy the compatibility conditions $(\mathcal{I}^{3})$ (see Section \ref{sec:loocal well-posed} for details), 
    and $(n^a,c^a,u^a)$ takes the form
        \begin{equation}\label{def:soluntion^a}
        \left\{
        \begin{aligned}
            n^a(t,x,y) &= n^{e,0}(t,x,y) + \varepsilon n^{b,1}(t,x,z) + \varepsilon^2 n^{b,2}(t,x,z), \\
            c^a(t,x,y) &= c^{e,0}(t,x,y) + \varepsilon c^{b,1}(t,x,z) + \varepsilon^2 c^{b,2}(t,x,z), \\
            u_1^a(t,x,y) &= u_1^{e,0}(t,x,y) + \varepsilon u_1^{b,1}(t,x,z), \\
            u_2^a(t,x,y) &= u_2^{e,0}(t,x,y) + \varepsilon^2 u_2^{b,2}(t,x,z), \\
            p^a(t,x,y) &= p^{e,0}(t,x,y) + \varepsilon^2 p^{b,2}(t,x,z),
        \end{aligned}
        \right.
    \end{equation}
then it follows that:
\begin{itemize}
\item
    \textbf{Uniform bounds of approximate solutions:}
    There exist $T_a > 0$ such that for any $t \in [0, T_a]$
    \begin{align}\label{approximate uniform bound}
        &\|n^a\|_{L_t^\infty Y^{1,3}_\infty} + \|c^a\|_{L_t^\infty Y^{1,4}_\infty} + \|u^a\|_{L_t^\infty Y^{1,4}_\infty} \leq C,  \nonumber\\
        &\|\partial_y n^a\|_{L_t^\infty Y^{1,2}_\infty} + \|\partial_y c^a\|_{L_t^\infty Y^{1,3}_\infty} + \|\partial_y u^a\|_{L_t^\infty Y^{1,3}_\infty} \leq C,  \\
        &\|\partial_y^2 c^a\|_{L_t^2 Y^{1,2}} + \|\partial_y^2 u^a\|_{L_t^2 Y^{1,2}} \leq C \varepsilon^{-\frac12}\nonumber.
    \end{align}
    
    \item 
    \textbf{Uniform bounds of the remainders:}
    For all $t \in [0, T_a]$, it holds that
    \begin{align}\label{remainder uniform bound}
        &\| N \|_{L_t^2 Y^{1,2}} 
        + \| K \|_{L_t^2 Y^1}^2 + \| \nabla K \|_{L_t^2 Y^{1,2}} 
        + \| U \|_{L_t^2 Y^1}^2 + \| \nabla^\perp\cdot U \|_{L_t^2 Y^{1,2}} \leq C\varepsilon^\frac{3}{2}.
    \end{align}
    % where $(j=1,2)$
    % \begin{align*}
    %     C_4(t)=&C\Bigl(\|n^{b,j}\|_{Z^{2,4}_1},\|\partial_z n^{b,j}\|_{Z^{1,4}_1},\|c^{b,j}\|_{Z^{1,5}_1}，\|\partial_z c^{b,j}\|_{Z^{1,4}_2},\|u_1^{b,1}\|_{Z^{2,6}_3}，\|\partial_z u_1^{b,1}\|_{Z^{1,4}_1},\\
    %     &\|u_1^{b,2}\|_{Z^{1,3}_0}，\|\partial_z u_1^{b,2}\|_{Z^{1,2}_0},\|n^{e,0}\|_{\overline{H}^{1,6}},\|c^{e,0}\|_{\overline{H}^{1,7}},\|u^{e,0}\|_{\overline{H}^{1,7}}\Bigr)\\
    %     +&\Big(\|n^{e,0}\|^2_{\overline{H}^{1,6}}+\|u_2^{e,0}\|^2_{\overline{H}^{1,6}} + \|c^{b,2}\|^2_{Z^{1,3}_0} + \|\partial_z c^{b,2}\|^2_{Z^{1,2}_0}  + \|n^{b,2}\|^2_{Z^{1,3}_0} + \|\partial_z n^{b,2}\|^2_{Z^{1,3}_0} \\
    %     &\quad + \|n^{b,1}\|^2_{Z^{1,3}_0} + \|\partial_z n^{b,1}\|^2_{Z^{1,3}_0}+ \|u_1^{b,1}\|^2_{Z^{1,4}_2} \Big) \\
    %     \times& \Big(\|\partial_z^2 c^{b,1}\|^2_{Z^{1,3}_2} + \|\partial_z^2 c^{b,2}\|^2_{Z^{1,3}_2} + \|\partial_z^2 n^{b,1}\|^2_{Z^{1,3}_0} + \|\partial_z^2 n^{b,2}\|^2_{Z^{1,3}_0} +\|\partial_z^2 u_1^{b,1}\|^2_{Z^{1,3}_2}\Big).
        % +&\Big(\|n^{e,0}\|^2_{\overline{H}^{1,m+4}}+\|u_2^{e,0}\|^2_{\overline{H}^{1,m+4}} + \|c^{b,2}\|^2_{Z^{1,m+1}_0} + \|\partial_z c^{b,2}\|^2_{Z^{1,m}_0}  + \|n^{b,2}\|^2_{Z^{1,m+1}_0} + \|\partial_z n^{b,2}\|^2_{Z^{1,m+1}_0} \\
        % &\quad + \|n^{b,1}\|^2_{Z^{1,m+1}_0} + \|\partial_z n^{b,1}\|^2_{Z^{1,m+1}_0}+ \|u_1^{b,1}\|^2_{Z^{1,m+2}_2} \Big) \\
        % \times& \Big(\|\partial_z^2 c^{b,1}\|^2_{Z^{1,m+1}_2} + \|\partial_z^2 c^{b,2}\|^2_{Z^{1,m+1}_2} + \|\partial_z^2 n^{b,1}\|^2_{Z^{1,m+1}_0} + \|\partial_z^2 n^{b,2}\|^2_{Z^{1,m+1}_0} +\|\partial_z^2 u_1^{b,1}\|^2_{Z^{1,m+1}_2}\Big).
    % \end{align*}
    \item 
    \textbf{Uniform bounds of $f$:}
    There exists $F(t,x)$ such that for $m \in \mathbb{N}$, $f(t,x) = \partial_x F$ with
    \begin{align}\label{f uniform bound}
        \| f e^{-y} \|_{L_t^\infty Y^{2,{5}}} + \| Fe^{-y} \|_{L_t^\infty Y^{2,2}} \leq C.
    \end{align}
    % where 
    % \begin{align*}
    %     C_5(t)=\|u_1^{b,1}\|_{Z^{2,6}_1}.
    % \end{align*}
    \end{itemize}
\end{proposition}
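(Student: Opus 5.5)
The plan is the classical matched-asymptotic construction. I would substitute the ansatz \eqref{def:soluntion^a} into \eqref{eq:CNS}, Taylor expand the outer profiles at $y=0$ ($h^{e,0}(t,x,y)=\sum_k \frac{(\varepsilon z)^k}{k!}\partial_y^k \overline{h^{e,0}}$), and collect powers of $\varepsilon$ separately in the outer region and in the boundary-layer variable $z=y/\varepsilon$. The boundary-layer profiles are required to decay as $z\to\infty$.

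The hierarchy should come out as follows.
\begin{enumerate}
\item At order $\varepsilon^0$, the outer part $(n^{e,0},c^{e,0},u^{e,0},p^{e,0})$ solves the limit system \eqref{eq:epsilon=0}.
\item The $n$-equation has $O(1)$ diffusion, so the Neumann mismatch $\partial_y n^{e,0}\neq0$ at $y=0$ gives an $O(\varepsilon)$ correction. The condition $\partial_z n^{b,1}=-\overline{\partial_y n^{e,0}}$ fixes the amplitude $\varepsilon n^{b,1}$, and similarly for $c^{b,1}$ and $u_1^{b,1}$ through $\partial_z c^{b,1}=-\overline{\partial_y c^{e,0}}$ and $\partial_z u_1^{b,1}=-\overline{\partial_y u_1^{e,0}}$.
\item The profiles $c^{b,1}$ and $u_1^{b,1}$ satisfy linear Prandtl-type heat equations in $(t,x,z)$. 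Their transport is by $\overline{u_1^{e,0}}\partial_x$ and $(z\overline{\partial_y u_2^{e,0}})\partial_z$, and they carry the coupling terms $\overline{n^{e,0}}c^{b,1}$, $\overline{c^{e,0}}n^{b,1}$.
\item The profile $n^{b,1}$ satisfies $\partial_z^2 n^{b,1}=\partial_z(\cdot)$-type relations. These come from $\varepsilon^{-2}\partial_z^2(\varepsilon n^{b,1})$ balancing the chemotactic flux, and I expect this forces the inclusion of $n^{b,2},c^{b,2}$ to absorb the next-order Neumann mismatch.
\item Divergence-freeness gives $\partial_z u_2^{b,2}=-\partial_x u_1^{b,1}$. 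Hence $u_2^{b,2}=\int_z^\infty\partial_x u_1^{b,1}$, which does not vanish at $z=0$. This is exactly why $u_2^a=\varepsilon^2 f$ on $y=0$ with $f=\partial_x F$, $F=\int_0^\infty \overline{u_1^{b,1}}\,dz$, which yields \eqref{f uniform bound}.
\item The pressure $p^{b,2}$ comes from the normal momentum balance.
\end{enumerate}

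For well-posedness, I would take local existence of the limit system \eqref{eq:epsilon=0} in $H^9$ under the compatibility conditions $(\mathcal I^3)$ as the appendix result, on a time $T_a$. The boundary-layer equations are linear parabolic in $(x,z)$ with coefficients given by traces of the outer solution. I would solve them in weighted spaces $Z^{l,m}_k$ with polynomial weights $\langle z\rangle^k$, using energy estimates with $\partial_t,\partial_x$ and $z\partial_z$ derivatives. The $H^9$ regularity leaves enough derivative room to place the profiles in $Z^{2,6}$.

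The uniform bounds \eqref{approximate uniform bound} then follow directly. Conormal derivatives $\psi(y)\partial_y=\delta\varepsilon z\,\frac1\varepsilon\partial_z$ near the boundary act on boundary-layer terms as $z\partial_z$, which is $O(1)$. Each true $\partial_y$ costs $\varepsilon^{-1}$, which the prefactor $\varepsilon$ cancels for one derivative. For two derivatives one is left with $\varepsilon^{-1}\|\partial_z^2 h^{b,1}\|_{L^2_y}=\varepsilon^{-1}\varepsilon^{1/2}$, giving the stated $\varepsilon^{-1/2}$.

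For the remainders, $N,K,U$ consist of the terms not matched by the hierarchy: Taylor remainders $O((\varepsilon z)^2)$ multiplying $\varepsilon^{-1}$-size derivatives, and higher-order products such as $\varepsilon^2 n^{b,1}\partial_z\ldots$. In $L^2_y$ each profile term gains $\varepsilon^{1/2}$ from $dy=\varepsilon\,dz$. I would therefore check that every residual is pointwise $O(\varepsilon)$ inside the layer and $O(\varepsilon^2)$ outside, giving $\varepsilon^{3/2}$.

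The main obstacle I expect is the derivative quantities $\nabla K$ and $\nabla^\perp\!\cdot U$ in $Y^{1,2}$. A $\partial_y$ on a layer residual loses $\varepsilon^{-1}$, so the residuals must be $O(\varepsilon^2)$ pointwise in the layer. This is precisely why the second-order terms $c^{b,2},n^{b,2},u_2^{b,2}$ are included: their equations must be chosen to cancel all $O(\varepsilon)$ layer residuals in the $c$- and $u$-equations. I would verify this term by term, and would use the weights in $z$ to control $z^k$ factors arising from the Taylor expansion.
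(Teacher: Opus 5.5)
Your plan is the paper's proof: the profile hierarchy of Section~\ref{approximate solution}, appendix well-posedness of the limit system and of the linear layer problems in the weighted spaces $Z^{l,m}_s$, the $Y_\infty$ bounds by embedding, the $\varepsilon^{-1/2}$ bound from $dy=\varepsilon\,dz$, a term-by-term count of $N,K,U$ (Proposition~\ref{NKU}), and $f$, $F$ controlled by weighted norms of $u_1^{b,1}$. One correction to your heuristics: after $c^{b,1}$, $u_1^{b,1}$ and $p^{b,2}$ (which balances $\varepsilon n^{b,1}$ in the normal momentum), the $c$- and $u$-residuals are already $O(\varepsilon^2)$ in the layer, and $u_2^{b,2}$ is forced by $\nabla\cdot u^a=0$, so the second-order profile that is genuinely needed is $n^{b,2}$: the $n$-layer is degenerate ($n^{b,1}=\overline{n^{e,0}}\,c^{b,1}$, with its Neumann condition becoming the outer zero-flux condition), hence without $n^{b,2}$ the chemotactic flux leaves an $O(1)$ layer residual in $N$, and the uncancelled $\varepsilon\partial_t n^{b,1}$ is what caps $N$ at $O(\varepsilon^{3/2})$.
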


\begin{proof}
    The proof can be found in the Appendix (see Subsection \ref{proof:uniform bounds}).
\end{proof}
% The proof of \eqref{approximate uniform bound} and \eqref{remainder uniform bound} can be derived by \eqref{def:soluntion^a} and Proposition \ref{NKU} respectively, so we omit it. The proof of \eqref{f uniform bound}, it follows that

Our main result starts as follows.
\begin{theorem}\label{thm:main}
    Assume Proposition~\ref{prop:uniform bounds} holds. Let $\omega$ be defined as in \eqref{def:omega}. Then there exist $\varepsilon_0 > 0$, $T > 0$, and $C > 0$ independent of $\varepsilon$ such that for any $\varepsilon \in (0,\varepsilon_0)$, $t \in [0,T]$, the error equation \eqref{eq:error} admits a unique solution $(n,c,u)(t,\cdot) \in Y^{1,2}$, which satisfies
    \begin{align*}
        &\| n \|_{L^\infty_t Y^{1,2}} 
        + \| \nabla n \|_{L^2_t Y^{1,2}}
        \leq C \varepsilon^{\frac 3 2}, \\
        &\| c \|_{L^\infty_t Y^{1,2}} 
        + \| \nabla c \|_{L^\infty_t Y^{1,2}}
        + \| \nabla^2 c \|_{L^2_t Y^{1,2}}
        \leq C \varepsilon^{\frac 3 2}, \\
        &\| u \|_{L^\infty_t Y^{1,2}} 
        + \| \omega \|_{L^\infty_t Y^{1,2}}
        + \| \nabla \omega \|_{L^2_t Y^{1,2}}
        \leq C \varepsilon^{\frac 3 2}. 
    \end{align*}
    In particular, we have 
    \begin{equation*}
        \sup_{0 \leq t \leq T} \| (n, c, u)(t,\cdot)\|_{L^2 \cap L^\infty} \leq C\varepsilon^{\frac 3 2}.
    \end{equation*}
\end{theorem}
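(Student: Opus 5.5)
We will argue by a bootstrap (continuity) on a combined energy functional. Local existence of a smooth solution to \eqref{eq:error} for each fixed $\varepsilon$ is standard, since \eqref{eq:CNS} is parabolic with compatible Neumann/Navier-slip data. Define
\begin{align*}
\mathcal{E}(t)=&\ \|n\|_{Y^{1,2}}^2+\|c\|_{Y^{1,2}}^2+\|\nabla c\|_{Y^{1,2}}^2+\|u\|_{Y^{1,2}}^2+\|\omega\|_{Y^{1,2}}^2,\\
\mathcal{D}(t)=&\ \|\nabla n\|_{Y^{1,2}}^2+\varepsilon^2\|\nabla^2 c\|_{Y^{1,2}}^2+\varepsilon^2\|\nabla\omega\|_{Y^{1,2}}^2+\|\nabla^2 c\|_{Y^{1,2}}^2\ (\text{see below}).
\end{align*}
We make the bootstrap assumption $\sup_{s\le t}\mathcal{E}(s)\le \varepsilon^{3-\eta}$ for some small $\eta>0$; by conormal Sobolev embedding, this gives $\|(n,c,u,\nabla c)\|_{L^\infty}\lesssim \varepsilon^{1-\eta/2}$. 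The goal is a differential inequality
$$\frac{d}{dt}\mathcal{E}+\mathcal{D}\le C(1+\mathcal{E})\mathcal{E}+C\big(\|N\|_{Y^{1,2}}^2+\|\nabla K\|_{Y^{1,2}}^2+\|\nabla^\perp\!\cdot U\|_{Y^{1,2}}^2+\cdots\big)+C\varepsilon^4\|fe^{-y}\|^2 .$$
Integrating in time and using \eqref{remainder uniform bound}, \eqref{f uniform bound} and Gronwall then yields $\mathcal{E}\le C\varepsilon^3$ on $[0,T]$, which closes the bootstrap for $\varepsilon<\varepsilon_0$. Uniqueness follows from the same $L^2$ estimate applied to the difference of two solutions.

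The estimates proceed in the following order, applying $\partial^\alpha$ with $\alpha_1\le1$, $|\alpha|\le2$ to each equation.

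First, the $u$ equation. We use $L^2$ conormal estimates on \eqref{eq:error}$_3$. The boundary term from $u_2=-\varepsilon^2 f$ is removed by subtracting a lift such as $\varepsilon^2(-\partial_x F\,\cdot, \ldots)e^{-y}$ built from $F$ in \eqref{f uniform bound}. The pressure is handled through \eqref{eq:p} by elliptic estimates; $\nabla p$ is needed only at conormal order $\le 2$.

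Second, the $\omega$ equation. We estimate \eqref{eq:omega}, exploiting that the Dirichlet datum $\omega=\varepsilon^2\partial_xf$ is tiny. Again we subtract a lift, following Masmoudi--Rousset \cite{MR2012}. The normal derivative $\partial_y u$ is then recovered from $\omega$ and $\nabla\cdot u=0$.

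Third, the $c$ equation. The viscous term only gives $\varepsilon^2\|\nabla^2c\|^2$. To control $\|\nabla c\|_{L^\infty_tY^{1,2}}$ we apply $\nabla$ to \eqref{eq:error}$_2$ and do an energy estimate for $\nabla c$, which has Neumann data $\partial_yc=0$. The dangerous term $u\cdot\nabla\nabla c^a$ involves $\partial_y^2c^a\sim\varepsilon^{-1}$. We control it by writing $u_2=y\cdot(u_2/y)$ with a Hardy inequality, so that $|u_2\partial_y^2 c^a|\lesssim \|\partial_y u_2\|\,\|z\partial_z^2 c^b\|_\infty$. This is bounded because $\partial_yu_2=-\partial_xu_1$ is tangential. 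The bound $\|\partial_y^2c^a\|_{L^2_tY^{1,2}}\le C\varepsilon^{-1/2}$ from \eqref{approximate uniform bound} is used where only an $L^2$ bound is available. The $\nabla^2c$ dissipation in the statement (without $\varepsilon^2$) must then come from the coupling: we would try to get it from the $n$ equation, treating $\Delta c$ as a source.

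Fourth, the $n$ equation. This is fully parabolic with Neumann data, giving $\|\nabla n\|^2_{Y^{1,2}}$ dissipation. The chemotactic term $\nabla\cdot(n^a\nabla c)$ is integrated by parts against $\partial^\alpha n$, which costs $\|\nabla c\|_{Y^{1,2}}\|\nabla n\|_{Y^{1,2}}$ and is absorbed. The term $n\Delta c^a$ with $\Delta c^a\sim\varepsilon^{-1}$ is again handled by Hardy-type bounds with the weight $z$ in the boundary layer profiles.

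Throughout, the commutators $[\partial_y,\partial^\alpha]$ from \eqref{exchange oper} are lower order. The buoyancy terms $ne_2$ and $\partial_xn$ couple $u$ to $n$ linearly and are harmless.

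The main obstacle is the set of terms where an error quantity multiplies a normal derivative of a boundary-layer profile, since those derivatives are of size $\varepsilon^{-1}$: $u\cdot\nabla\omega^a$ with $\partial_y\omega^a\sim\varepsilon^{-2}$, $u\cdot\nabla\nabla c^a$, and $n\nabla^2c^a$. For $u_2$ we will use the vanishing $u_2|_{y=0}=O(\varepsilon^2)$ together with $|u_2|\le y\|\partial_yu_2\|_\infty$, so that $y\partial_y^k(\text{profile})=\varepsilon z\partial_z^k(\cdot)\varepsilon^{-k}$ gains one power of $\varepsilon$. For $u_1\partial_x\omega^a$ the profile derivative is only $\varepsilon^{-1}$, and the $L^2$-in-$z$ smallness gives $\varepsilon^{1/2}$. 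The $\varepsilon^{-1/2}$ loss in \eqref{approximate uniform bound} is the reason the final rate is $\varepsilon^{3/2}$ and not $\varepsilon^2$. If the $\omega$ estimate does not close, we will fall back on estimating $\|\omega\|$ only at one lower conormal order, weighted by $\varepsilon$.
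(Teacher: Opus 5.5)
Your skeleton matches the paper's. You use conormal energies for $n$, $\nabla c$ and the lifted vorticity $\widehat\omega=\omega-\varepsilon^2\partial_x f e^{-y}$, and the Hardy/$\psi$ trick on $\widehat{u_2}=u_2-\varepsilon^2 fe^{-y}$ against normal derivatives of the layers. You also close with an argument that tolerates negative powers of $\varepsilon$ because the energy is $O(\varepsilon^3)$. The paper integrates $\frac{d}{dt}E+D\le C(t)E+C'\varepsilon^{-4}E^3$ with $E(0)=\varepsilon^3$, which is your bootstrap in ODE form.

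The step that fails is your treatment of $\nabla^2 c$ and of $L^\infty$ norms. There is no unweighted $\|\nabla^2 c\|_{Y^{1,2}}^2$ dissipation to be had. The $c$-equation has only $\varepsilon^2$-diffusion, and the $n$-equation cannot be solved for $\Delta c$. Isolating $n^a\Delta c$ would need $\Delta n$ and $\partial_t n$ in $L^2_tY^{1,2}$, one derivative beyond your energy, and also a positive lower bound on $n^a$, which you do not have. For a related reason, Lemma \ref{lem:embedding} needs a genuine $\partial_y$ of the function. So $\mathcal E$ controls neither $\|\nabla c\|_{L^\infty}$ (this needs $\partial_y^2 c$) nor $\|n\|_{L^\infty}$ pointwise in time ($\partial_y n$ is only in $L^2_t$). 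Your bootstrap consequence $\|(n,\nabla c)\|_{L^\infty}\lesssim\varepsilon^{1-\eta/2}$ is therefore unjustified.

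The paper instead interpolates, e.g. in $\nabla\cdot(n\nabla c)$, using $\|\nabla c\|_{L^\infty}\lesssim\|\nabla c\|_{Y^{1,1}}^{1/2}\|\nabla^2 c\|_{Y^{1,1}}^{1/2}$. It then absorbs $\nabla^2 c$ into $\sigma\varepsilon^2\|\nabla^2 c\|_{Y^{1,2}}^2$. The factors $\varepsilon^{-2}$ this costs are exactly where the $\varepsilon^{-4}E^3$ term comes from, and that term is harmless once $E\lesssim\varepsilon^3$. The final $L^\infty$ bound on $n$ comes from Sobolev embedding in time, using the $\partial_t$ built into $Y^{1,m}$ together with $\|\nabla n\|_{L^2_tY^{1,2}}$.

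Note that the paper's own $D(t)$ carries exactly the weights $\varepsilon^2\|\nabla^2c\|_{Y^{1,2}}^2$ and $\varepsilon^2\|\nabla\widehat\omega\|_{Y^{1,2}}^2$. Its argument therefore also yields these two dissipative norms only as $\varepsilon\|\nabla^2 c\|_{L^2_tY^{1,2}}+\varepsilon\|\nabla\widehat\omega\|_{L^2_tY^{1,2}}\lesssim\varepsilon^{3/2}$. You were right that the unweighted bounds in the statement need an extra mechanism, but the one you propose does not supply it.

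A second step that fails is the Hardy argument for $n\,\Delta c^a$, because $n$ does not vanish on $\{y=0\}$. Instead, integrate the flux $\nabla\cdot(n\nabla c^a)$ by parts against $\partial^\alpha n$, as you already do for $n^a\nabla c$. Since $\partial_y c^a=0$ at $y=0$, no boundary term appears, and only $\|\nabla c^a\|_{Y^{1,2}_\infty}=O(1)$ is needed; this is the paper's term $A_{22}$.

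Smaller inaccuracies:
- $\omega^a=O(1)$, so $\partial_y\omega^a\sim\varepsilon^{-1}$, not $\varepsilon^{-2}$.
- $|u_2|\le y\|\partial_yu_2\|_\infty$ holds only after removing the boundary value $-\varepsilon^2 f$.
- Estimating $u$ directly in $Y^{1,2}$ is unnecessary and forces conormal pressure estimates with commutators. The paper estimates $u$ only in $Y^1$ and recovers the rest from $\widehat\omega$ through Lemma \ref{lem:omega}.
- The rate $\varepsilon^{3/2}$ is dictated by the remainder bounds of Proposition \ref{prop:uniform bounds} (e.g. the term $\varepsilon\partial_t n^{b,1}$ left in $N$). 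The $\varepsilon^{-1/2}$ bound on $\partial_y^2(c^a,u^a)$ enters only multiplied by $\varepsilon^4\|fe^{-y}\|^2$, producing an $O(\varepsilon^3)$ forcing of the same order, so it does not set the rate.
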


In Section 3, 
we construct the approximate solution $(n^a, c^a, u^a ,p^a)$ of \eqref{eq:CNS} by using the matched asymptotic expansion, which satisfies Proposition~\ref{prop:uniform bounds}. Thus, we have the  following conclusion.
\begin{corollary}
    Let Proposition~\ref{prop:uniform bounds} holds, then there exist $\varepsilon_0 > 0$, $T > 0$, and $C > 0$ independent of $\varepsilon$ such that for any $\varepsilon \in (0,\varepsilon_0)$, the chemotaxis-Navier-Stokes system \eqref{eq:CNS} admit a unique solution $(n^\varepsilon, c^\varepsilon, u^\varepsilon)$ on $[0,T]$, which satisfies
    \begin{align*}
        &\| n^\varepsilon - n^a \|_{L^\infty_t Y^{1,2}} 
        + \| \nabla (n^\varepsilon - n^a) \|_{L^2_t Y^{1,2}}
        \leq C \varepsilon^{\frac 3 2}, \\
        &\| c^\varepsilon - c^a \|_{L^\infty_t Y^{1,2}} 
        + \| \nabla (c^\varepsilon - c^a) \|_{L^\infty_t Y^{1,2}}
        + \| \nabla^2 (c^\varepsilon - c^a) \|_{L^2_t Y^{1,2}}
        \leq C \varepsilon^{\frac 3 2}, \\
        &\| u^\varepsilon - u^a \|_{L^\infty_t Y^{1,2}} 
        + \| \omega^\varepsilon - \omega^a \|_{L^\infty_t Y^{1,2}}
        + \| \nabla (\omega^\varepsilon - \omega^a) \|_{L^2_t Y^{1,2}}
        \leq C \varepsilon^{\frac 3 2}.
    \end{align*}
    In particular, we have 
    \begin{equation*}
        \sup_{0 \leq t \leq T} \| (n^\varepsilon, c^\varepsilon, u^\varepsilon)(t,\cdot) - (n^a, c^a, u^a)(t,\cdot) \|_{L^2 \cap L^\infty} \leq C\varepsilon^{\frac 3 2}.
    \end{equation*}
\end{corollary}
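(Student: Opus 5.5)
The corollary follows directly from Theorem~\ref{thm:main}, given the local solvability of \eqref{eq:CNS}. The plan is to set $(n^\varepsilon,c^\varepsilon,u^\varepsilon,p^\varepsilon)=(n^a,c^a,u^a,p^a)+(n,c,u,p)$, with $(n,c,u,p)$ the error solution from Theorem~\ref{thm:main}, and then check three things: that this sum solves \eqref{eq:CNS}, that it is unique, and that the bounds transfer.

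\textbf{Step 1: the sum solves \eqref{eq:CNS}.} By Proposition~\ref{prop:uniform bounds}, the approximate solution of the form \eqref{def:soluntion^a} satisfies \eqref{eq:approximate} on $[0,T_a]$ with the remainders bounded as in \eqref{remainder uniform bound}. Let $(n,c,u)$ be the unique solution of \eqref{eq:error} on $[0,T]$ given by Theorem~\ref{thm:main}, with $T\le T_a$. Adding \eqref{eq:approximate} and \eqref{eq:error}, the remainders $N,K,U$ cancel, since \eqref{eq:error} is exactly the difference of \eqref{eq:CNS} and \eqref{eq:approximate}. The boundary data also cancel: $u_2^a=\varepsilon^2 f$ and $u_2=-\varepsilon^2 f$ sum to $u_2^\varepsilon=0$, and the Neumann conditions add linearly. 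For the initial data we use that the profiles are built so that $(n^a,c^a,u^a)|_{t=0}=(n_{in},c_{in},u_{in})$; if the boundary-layer profiles carry nontrivial initial data, the error equation carries the corresponding mismatch as its initial data, and Theorem~\ref{thm:main} is understood with that data. Either way, the sum is a solution of \eqref{eq:CNS} on $[0,T]$.

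\textbf{Step 2: uniqueness.} Suppose $(\tilde n,\tilde c,\tilde u)$ is a second solution of \eqref{eq:CNS} in the same class. Then $(\tilde n-n^a,\tilde c-c^a,\tilde u-u^a)$ solves \eqref{eq:error}. The uniqueness part of Theorem~\ref{thm:main} forces it to equal $(n,c,u)$, so $\tilde n=n^\varepsilon$, and likewise for $c$ and $u$.

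\textbf{Step 3: the estimates.} The differences $n^\varepsilon-n^a=n$, $c^\varepsilon-c^a=c$, $u^\varepsilon-u^a=u$ are exactly the error variables. By \eqref{def:omega} and linearity, $\omega^\varepsilon-\omega^a=\omega$. So every displayed bound is a verbatim restatement of Theorem~\ref{thm:main}, including the $L^2\cap L^\infty$ estimate.

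The only point needing care is the consistency of the initial data and of the time interval. One must make sure that the error problem solved in Theorem~\ref{thm:main} is exactly the one whose initial data come from the given $(n_{in},c_{in},u_{in})$, and that $T\le T_a$ so that the bounds of Proposition~\ref{prop:uniform bounds} hold throughout. Once this bookkeeping is checked, the corollary is immediate.
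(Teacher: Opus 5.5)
Your proposal is correct and follows the paper's route: the corollary is simply Theorem~\ref{thm:main} rewritten through $(n,c,u)=(n^\varepsilon-n^a,\,c^\varepsilon-c^a,\,u^\varepsilon-u^a)$ and $\omega=\omega^\varepsilon-\omega^a$, with the boundary data $u_2^a=\varepsilon^2 f$ and $u_2=-\varepsilon^2 f$ cancelling. Your fallback about boundary layers carrying nontrivial initial data is unnecessary, and the theorem would not actually cover it, since its proof uses $E(0)=\varepsilon^3$, i.e.\ vanishing error data; the profiles are taken with zero initial data, so $(n^a,c^a,u^a)|_{t=0}=(n_{in},c_{in},u_{in})$, exactly as in your main line.
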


\begin{remark}[Difficulties in  the construction of boundary layer corrections ]\label{rmk:difficult}
 The above theorem implies the $L^\infty$ strong convergence from the system \eqref{eq:CNS} to a constructed solution, which seems to be the first result of vanishing limit for the  chemotaxis-Navier-Stokes system with chemotactic viscosity and fluid viscosity.
    One main difficulty lies in the construction of boundary layer corrections \(n^{b,j}\): a direct attempt to solve the equation for \(\partial_t n^{b,j}\) is difficult, as it contains the higher-order derivative \(\partial_z^2 n^{b,j+2}\). This creates a circular dependency where the equation for a low-order correction explicitly involves an unknown higher-order term.
    To overcome this obstacle, we instead exploit the equation of the lower-order term \(\partial_t n^{b,j-2}\). A key observation is that this equation induces a degenerate equation for \(n^{b,j}\): the time derivative of \(n^{b,j}\) no longer appears, and the equation reduces to a linear relation involving only \(\partial_z^2 n^{b,j}\) and known lower-order quantities. 
    % This reformulation avoids the explicit appearance of unknown higher-order terms, enabling a systematic order-by-order solution of the boundary layer corrections.
    In details, 
    % for the $n^{b,1}$ term, which is complex and coupled with the unknown function $n^{b,3}$ term:
    % \beno
    % &&\partial_t n^{b,1} + \sum_{k=0}^2 \left( (u^{E,1-k})_p \cdot \widehat{\nabla} n^{b,k} + (u^{b,1-k})_p \cdot \widehat{\nabla} n^{E,k} + (u^{b,1-k})_p \cdot \widehat{\nabla} n^{b,k} \right) \\
    %             &&\quad - \widehat{\nabla} \cdot \left( \widehat{\nabla} n^{b,1} - \sum_{k=0}^1 \left( n^{E,1-k} \widehat{\nabla} c^{b,k} + n^{b,1-k} \widehat{\nabla} c^{E,k} + n^{b,1-k} \widehat{\nabla} c^{b,k} \right) \right)_{pp} = 0,
    % \eeno
    % (see \eqref{eq:b1}$_1$).
    in order to estimate $n^{b,1}$, we identify the useful structure:
    % \beno
    % &&u_2^{E,0} \partial_z n^{b,0}  + u_2^{b,0} \partial_z n^{b,0}  \\
    %             && \quad-\partial_z \left( \partial_z n^{b,1} - \sum_{k=0}^1 \left( n^{E,1-k} \partial_z c^{b,k} + n^{b,1-k} \partial_z c^{E,k} + n^{b,1-k} \partial_z c^{b,k} \right) \right) = 0,
    % \eeno
    % (see \eqref{eq:b-1}$_1$). By using $u_2^{b,0}=u_2^{E,0}=p^{b,0}=0$ and integrating with respect to $z$ from $z$ to $\infty$, we can derive the following expression:
    \beno
    \partial_z n^{b,1} - \sum_{k=0}^1 \left( n^{E,1-k} \partial_z c^{b,k} + n^{b,1-k} \partial_z c^{E,k} + n^{b,1-k} \partial_z c^{b,k} \right) = 0,
    \eeno
    (see \eqref{eq:partial_z n^b1}). Notably, all terms in this equation are of order at most \(1\). As a result, \(n^{b,1}\) can be solved by combining the equation for \(c^{b,1}\) with other known lower-order terms. The only additional requirement is the well-posedness of the system of \( (n^{e,0}, c^{e,0}, u^{e,0}) \), whose boundary values can also be derived from this structure (see \eqref{bound:n^e0}).
    Similarly, for $n^{b,2}$ we use the following relation:
    \beno
     \partial_z n^{b,2} = \left( n^{E,1} \partial_z c^{b,1} + n^{b,1} \partial_z c^{E,1} + n^{b,1} \partial_z c^{b,1} \right) + n^{E,0} \partial_z c^{b,2}
    \eeno
    (see \eqref{eq:partial_z n^b2}).  Hence, the solving of $n^{b,2}$ can be obtained by using the equations of $c^{b,2}$ and $(n^{e,1}, c^{e,1}, u^{e,1})$, whose boundary condition is derived as in \eqref{bound:n^e1}. 
For the detailed constructing of  the approximate solution and the derivation process, we refer to Figure \ref{fig:1} as follows.

\begin{figure}[H]
\centering

% 你的流程图
\begin{tikzpicture}[
    box/.style={rectangle, draw, minimum width=5cm, minimum height=1cm, align=center},
    >=stealth
]
% 纵向间距缩小，横向x坐标不变
% 第一行
\node[box] (A1) at (0,0)  {$(u_2^{b,0}, p^{b,0})$};
\node[box] (A2) at (7,0)  {$\color{red}\partial_y n^{e,0} - n^{e,0}\partial_y c^{e,0}\big|_{y=0} = 0$};
% 第二行（缩短竖向距离）
\node[box] (B1) at (0,-1.2) {$(c^{b,0}, n^{b,0}, u_1^{b,0})$};
\node[box] (B2) at (7,-1.2) {$(n^{e,0}, c^{e,0}, u^{e,0})$};
% 第三行
\node[box] (C1) at (0,-2.4) {$(u_2^{b,1}, p^{b,1})$};
\node[box] (C2) at (7,-2.4) {$\color{red}\partial_y n^{e,1} - \sum\limits_{k=0}^1 n^{e,1-k}\partial_y c^{e,k}\big|_{y=0} = 0$};
% 第四行
\node[box] (D1) at (0,-3.6) {$(n^{b,1}, c^{b,1}, u_1^{b,1})$};
\node[box] (D2) at (7,-3.6) {$(n^{e,1}, c^{e,1}, u^{e,1})$};
% 第五行
\node[box] (E1) at (0,-4.8) {$(u_2^{b,2}, p^{b,2})$};
% 第六行
\node[box] (F1) at (0,-6.0){$(n^{b,2}, c^{b,2}, u_1^{b,2})$};

% 箭头
\draw[->] (A1.east) -- (A2.west);
\draw[->] (A2.south) -- (B2.north);
\draw[->] (B2.west) -- (B1.east);
\draw[->] (B1.south) -- (C1.north);
\draw[->] (C1.east) -- (C2.west);
\draw[->] (C2.south) -- (D2.north);
\draw[->] (D2.west) -- (D1.east);
\draw[->] (D1.south) -- (E1.north);
\draw[->] (E1.south) -- (F1.north);
\end{tikzpicture}

% 图名（自动编号 图1）
\caption{}
% 引用标签
\label{fig:1}

\end{figure}
\end{remark}

\begin{remark}[The sharp convergence rate]
A key observation is that although the approximate solution is expanded up to second-order boundary layer corrections, it yields a suboptimal convergence rate of $O(\varepsilon^{\frac 3 2})$, which is better that the rate in \eqref{eq:HOU-RATE} obtained in \cite{Hou2025}. This limitation is primarily driven by two analytical factors:
\begin{itemize}
    \item First, due to the \textit{degeneracy} of the equations governing $n^{b,j}$ discussed in Remark \ref{rmk:difficult}, truncating the expansion at $n^{b,2}$ inevitably relegates the term $\varepsilon \partial_t n^{b,1}$ to the remainder term $N$. As demonstrated in Proposition \ref{NKU}, this restricts the norm of $N$ to $O(\varepsilon^{\frac 3 2})$.
    
    \item Second, evaluating the normal derivatives $\partial_y K$ and $\partial_y U_1$ introduces an inherent loss of $\varepsilon$ scaling. Specifically, the boundary layer scaling $\partial_y = \varepsilon^{-1} \partial_z$ consumes one power of $\varepsilon$, which ultimately caps the overall convergence rate at $O(\varepsilon^{\frac 3 2})$.
\end{itemize}
Notably, if the approximate solution were only constructed up to the first-order correction $n^{b,1}$, the convergence rate would severely degrade to $O(\varepsilon^{\frac 1 2})$.
    % A key observation is that although we expand the solution up to second-order boundary layer corrections, the resulting approximate solution exhibits a convergence rate of $O(\varepsilon^{\frac 3 2})$. This non-optimal ratio is primarily determined by two key factors:

    % First, due to the \textit{degeneracy} of the equations for $n^{b,j}$ discussed in Remark 2.5, the construction of the approximate solution truncated at $n^{b,2}$ necessarily introduces the term $\varepsilon \partial_t n^{b,1}$ into the remainder term $N$. As demonstrated in Proposition \ref{NKU}, this leads to the norm of $N$ being of order $O(\varepsilon^{\frac 3 2})$.
    
    % Second, in the calculation of $\partial_y K$ and $\partial_y U_1$, we encounter a scaling issue. Recall that the normal derivative operator scales as $\partial_y = \varepsilon^{-1} \partial_z$. This inverse scaling causes the magnitude of these terms to decrease, thereby affecting the overall $O(\varepsilon^{\frac 3 2})$ convergence rate.

    % Notably, if the approximate solution is only constructed up to the first-order boundary layer correction $n^{b,1}$, the resulting convergence rate would be further reduced to $O(\varepsilon^{\frac 1 2})$.

\end{remark}

\begin{remark}[Difficulties in a priori estimates]
    We highlight three major technical difficulties encountered in our \textit{a priori} estimates and the strategies used to overcome them:
    \begin{itemize}
        \item \textbf{Treatment of \(n\) in the error equations:} Since the anisotropic conormal space \(Y^{0,m}\) does not embed into \(L_{x,y}^\infty\), controlling the \(L_{x,y}^\infty\)-norm of \(n\) requires at least one additional normal derivative, \(\partial_y\). However, directly estimating \(\|\partial_y n\|_{L^\infty_t L^2_{x,y}}\) would necessitate bounding third-order normal derivatives of boundary layer profiles (e.g., \(\partial_z^3 c^{b,1}\) and \(\partial_z^3 u^{b,1}_1\)), leading to severe boundary complications. To bypass this, we utilize Sobolev embedding in time, motivating the inclusion of time derivatives in our anisotropic conormal spaces \(Y^{l,m}\). Under this framework, the estimates are closed by simply controlling \(\|\partial_y n\|_{L_t^2 Y^{1,m}}\).
        
        \item \textbf{Outer layer boundary conditions:} The leading-order outer layer equation, derived via asymptotic expansions, satisfies a zero-flux boundary condition \eqref{eq:e0}. This is inherently weaker than the free-slip condition of the original viscous system. To address this, we rewrite the equation as 
        \begin{align*}
            \partial_y^2 n^{e,0} = \partial_t n^{e,0} + u^{e,0} \cdot \nabla n^{e,0} - \partial_x^2 n^{e,0} + \nabla \cdot (n^{e,0} \nabla c^{e,0}),    
        \end{align*}
        (see \eqref{eq:partial_y^2 n}), which allows us to recover the normal regularity of \(n^{e,0}\) via time and tangential derivatives.
        
        \item \textbf{Initial data regularity:} The higher regularity assumptions imposed on the initial data are specifically tailored to establish the convergence of the boundary layer expansions. These conditions could be relaxed if the goal were solely to prove the existence of solutions.
    \end{itemize}
\end{remark}

\begin{remark}[Origins of the boundary layer]
    Similar to the findings reported in \cite{Hou2025}, the boundary layer effect in the present system is induced by the presence of the fluid rather than the chemotactic mechanism alone. To verify this, we consider the chemotaxis-only subsystem by formally setting $u = \nabla p = \mathbf{0}$ in \eqref{eq:CNS}. From the chemical concentration equation with $\varepsilon=0$, we can deduce
    \begin{align*}
        \overline{\partial_y c^{0}}(t,x) = \overline{\partial_y c^0}(0,x) e^{-\int_0^t [\overline{n^0}(1+\overline{c^0})](\tau,x) d\tau} =0,
    \end{align*}
    thus by \eqref{eq:c^b1,n^b1}, \eqref{eq:c^b2,n^b2} we have $c^{b,1}= n^{b,1} =c^{b,2} =n^{b,2} \equiv 0$ through the uniqueness of solutions.
    In this case, the boundary layer correction terms for the chemotaxis variables vanish identically.

\end{remark}

\section{Key energy estimates and proof of main result}\label{key energy estimates}

%Key energy estimates
\subsection{Key energy estimates}
The key is to prove that the solution of error equation \eqref{eq:error} is uniformly bounded in the suitable functional spaces. Now we present some main propositions.

\begin{proposition}\label{prop:n}
    Let $m \geq 2 \in \mathbb{N}$. Then there exist $\delta  > 0$ and $\varepsilon_0 > 0$ such that for any $\varepsilon \in (0, \varepsilon_0)$, we have:
    \begin{align}
        \frac{1}{2}&\frac{d}{dt} \| n \|_{Y^{1,m}}^2 
        + (1 - C\delta - C\sigma) \| \nabla n \|_{Y^{1,m}}^2 
        - C\sigma\varepsilon^2 \| \nabla^2 c \|_{Y^{1,1}}^2 \nonumber\\
        &\leq \frac{C_\delta}{\sigma} \left( 1 + \| n^a \|_{Y^{1,m+1}_\infty}^{2} + \| u^a \|_{Y^{1,m+1}_\infty}^{2}+\| \nabla c^a \|_{Y^{1,m}_\infty}^{2} + \| f e^{-y} \|_{Y^{1,m+1}}^{2} \right)^2 \times \nonumber\\
        &\quad \left( \| n \|_{Y^{1,m}}^{2} + \| u \|_{Y^{1}}^{2} + \| \omega \|_{Y^{1,m}}^{2} + \| \nabla c \|_{Y^{1,m}}^{2} \right) 
        \\
        &\quad + \frac{C}{\sigma^3 \varepsilon^4} \left( \| u \|_{Y^{1}}^{2} + \| \omega \|_{Y^{1,m}}^{2} + \| n \|_{Y^{1,m}}^{2} + \| \nabla c \|_{Y^{1,m}}^{2} \right)^3 \nonumber\\
        &\quad + C\varepsilon^4 \| f e^{-y} \|_{Y^{1,m+1}}^{2} 
        \left( \| n^a \|_{Y^{1,m+1}_\infty}^{2} + \| \partial_y n^a \|_{Y^{1,m}}^{2} \right) 
        + C \| N \|_{Y^{1,m}}^{2}.\nonumber
    \end{align}
\end{proposition}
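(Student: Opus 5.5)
We will prove Proposition \ref{prop:n} by a conormal energy estimate on the $n$-equation in \eqref{eq:error}. For each multi-index $\alpha$ with $\alpha_1\le 1$ and $|\alpha|\le m$, apply $\partial^\alpha$ to \eqref{eq:error}$_1$, multiply by $\partial^\alpha n$, integrate over $\mathbb{R}^2_+$, and sum over $\alpha$.

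\textbf{Diffusion term and boundary terms.} We split $-\Delta n=-\partial_x^2 n-\partial_y^2 n$.
\begin{itemize}
\item For the tangential part, $\partial^\alpha$ commutes with $\partial_x$.
\item For the normal part, we use \eqref{exchange oper} to write $\partial^\alpha\partial_y^2 n=\partial_y\partial^\alpha\partial_y n+\text{commutators}$, and then integrate by parts.
\item Since $\psi(0)=0$, the boundary term at $y=0$ vanishes whenever $\alpha_3\ge 1$. When $\alpha_3=0$ it vanishes by $\partial_y n=0$.
\end{itemize}
It is cleaner to integrate by parts the whole flux $\nabla n-(n\nabla c+n^a\nabla c+n\nabla c^a)$. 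Its normal component vanishes at $y=0$ because $\partial_y n=\partial_y c=\partial_y c^a=0$ there. This produces $\|\nabla\partial^\alpha n\|_2^2$. The commutators carry factors $\psi'=O(\delta)$ and $\psi$-weighted lower-order terms. We control them by $C\delta\|\nabla n\|_{Y^{1,m}}^2$ plus $C_\delta\|n\|_{Y^{1,m}}^2$, following Masmoudi--Rousset \cite{MR2012}. This is where $\delta$ is fixed and the coefficient $(1-C\delta-\dots)$ arises.

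\textbf{Transport terms.}
\begin{itemize}
\item For $u^a\cdot\nabla\partial^\alpha n$, we integrate by parts using $\nabla\cdot u^a=0$. The boundary term involves $u_2^a=\varepsilon^2 f$. We bound it with the trace inequality and $\|fe^{-y}\|_{Y^{1,m+1}}$, which gives the $C\varepsilon^4\|fe^{-y}\|^2(\dots)$ term.
\item In the commutators, normal derivatives hitting $n$ are paired with $u_2^a/\psi$. This quotient is bounded via a Hardy-type argument, since $u^a_2-\varepsilon^2 f$ vanishes at $y=0$.
\item For $u\cdot\nabla n^a$, we bound $\|\partial^\alpha u\|_2$ by $\|u\|_{Y^1}+\|\omega\|_{Y^{1,m}}$ plus the boundary datum $\varepsilon^2 f$. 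This uses div--curl/elliptic estimates for $u$ with $\nabla\cdot u=0$, $u_2=-\varepsilon^2 f$, and explains the appearance of $\|\omega\|_{Y^{1,m}}$ and $\|fe^{-y}\|_{Y^{1,m+1}}$.
\end{itemize}

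\textbf{Chemotactic and nonlinear terms.}
\begin{itemize}
\item The linear chemotactic terms $\nabla\cdot(n^a\nabla c+n\nabla c^a)$ are treated after integration by parts as $\int \partial^\alpha(n^a\nabla c+n\nabla c^a)\cdot\nabla\partial^\alpha n$. We apply Young with $\sigma$, giving $\sigma\|\nabla n\|_{Y^{1,m}}^2+\sigma^{-1}(\dots)(\|\nabla c\|_{Y^{1,m}}^2+\|n\|_{Y^{1,m}}^2)$.
\item A commutator of $\partial^\alpha$ with $\partial_y$ acting on $\nabla c$ may require $\nabla^2 c$ in $Y^{1,1}$. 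We absorb it into the $C\sigma\varepsilon^2\|\nabla^2 c\|_{Y^{1,1}}^2$ term, which will later be controlled by the $c$-estimate.
\item The genuinely nonlinear terms $u\cdot\nabla n$ and $\nabla\cdot(n\nabla c)$ need $L^\infty$ bounds on the error. We use the anisotropic embedding $\|g\|_\infty^2\lesssim \|g\|_{Y^{1,2}}\|\partial_y g\|_{Y^{1,1}}$ together with time Sobolev embedding, i.e.\ the reason $\alpha_1\le1$ is included. We also use that the normal-derivative controls for $u$ and $\nabla c$ available here lose powers of $\varepsilon$ through the $\varepsilon^2$-viscosity.
\item We expect this bookkeeping, after Young's inequality, to yield the cubic term $\frac{C}{\sigma^3\varepsilon^4}(\dots)^3$.
\end{itemize}
Finally, $\int\partial^\alpha N\,\partial^\alpha n\le C\|N\|_{Y^{1,m}}^2+C\|n\|_{Y^{1,m}}^2$.

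\textbf{Main obstacle.} The hardest step is the nonlinear chemotactic term $\partial^\alpha\nabla\cdot(n\nabla c)$ at top order. Here $\partial^\alpha$ may fall entirely on $\nabla c$, and $\partial_y$-commutators generate $\nabla^2 c$ without a matching dissipation in the $n$-equation. We plan to integrate by parts so that one derivative lands on $\partial^\alpha n$. We then place $n$ in $L^\infty$ via the embedding above, and keep $\partial^\alpha\nabla c$ in $L^2$, with only the $\nabla^2c$ commutator pieces carried to the $\sigma\varepsilon^2\|\nabla^2c\|^2_{Y^{1,1}}$ term. Tracking exactly which $\varepsilon$-powers are lost there is what produces $\varepsilon^{-4}$.
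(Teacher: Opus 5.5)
Your overall architecture matches the paper's: conormal energy estimate, integrating the flux by parts, Hardy for $u_2^a$, vorticity/Korn control of $u$, Young with $\sigma$, and the anisotropic $L^\infty$ embedding. Your treatment of the case where $\partial^\alpha$ falls on $\nabla c$ (put $n$ in $L^\infty$ with one fewer derivative) is also fine. The gap is in how you treat the $\partial_y$-commutator in the chemotactic terms, and as proposed it does not give the stated inequality.

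Write $F_2=n^a\partial_y c+n\partial_y c^a+n\partial_y c$. After integrating by parts you are left with $\langle[\partial^\alpha,\partial_y]F_2,\partial^\alpha n\rangle=-\alpha_3\langle\psi'\partial^{\alpha-(0,0,1)}\partial_y F_2,\partial^\alpha n\rangle$.

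\begin{itemize}
\item If you expand $\partial_y F_2$ and absorb the second normal derivative of $c$ into $C\sigma\varepsilon^2\|\nabla^2 c\|^2$, as you propose for the linear terms, Young's inequality forces a partner of size $\frac{C\delta^2}{\sigma\varepsilon^2}\|n^a\|_{Y^{1,m}_\infty}^2\|n\|_{Y^{1,m}}^2$. This is a \emph{linear} term with an $\varepsilon^{-2}$ coefficient. The statement has no such term, and it would make the Gronwall step in Theorem~\ref{thm:main} grow like $e^{Ct/\varepsilon^2}$.
\item For $n\partial_y c^a$, the same expansion produces $n\,\partial_y^2 c^a$. Here $\|\partial_y^2 c^a\|_{L^\infty}\sim\varepsilon^{-1}\|\partial_z^2 c^{b,1}\|_{L^\infty}$, which is not controlled by $\|\nabla c^a\|_{Y^{1,m}_\infty}$.
\item At top order the second derivative would also sit in $Y^{1,m-1}$, not $Y^{1,1}$.
\end{itemize}

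The missing idea is that this commutator only appears when $\alpha_3\ge 1$. In that case $\partial^\alpha n=\psi\,\partial^{\alpha-(0,0,1)}\partial_y n$ and $\psi\,\partial^{\alpha-(0,0,1)}\partial_y F_2=\partial^\alpha F_2$, so you can move the weight $\psi$ across:
\[
\bigl\langle \psi'\,\partial^{\alpha-(0,0,1)}\partial_y F_2,\ \partial^\alpha n\bigr\rangle
=\bigl\langle \psi'\,\partial^{\alpha}F_2,\ \partial^{\alpha-(0,0,1)}\partial_y n\bigr\rangle .
\]
The right side is bounded by $C\delta\|F_2\|_{Y^{1,m}}\|\nabla n\|_{Y^{1,m}}$. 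The extra normal derivative is transferred to $n$, which has $O(1)$ dissipation, so commutators never produce second derivatives of $c$ or $c^a$. This is what the paper does in estimating $A_2$.

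In the paper, $\nabla^2 c$ enters only in the nonlinear piece where all of $\partial^\alpha$ falls on $n$, namely $\langle\partial^\alpha n\,\nabla c,\partial^\alpha\nabla n\rangle$. It comes through $\|\nabla c\|_{L^\infty}\lesssim\|\nabla c\|_{Y^{1,1}}^{1/2}\|\nabla^2 c\|_{Y^{1,1}}^{1/2}$ (Lemma~\ref{lem:embedding}), which is why the norm is $Y^{1,1}$ for every $m$. Because the partner there is quadratic in the error, Young gives $C\sigma\varepsilon^2\|\nabla^2 c\|_{Y^{1,1}}^2+\frac{C}{\sigma^3\varepsilon^2}\|n\|_{Y^{1,m}}^4\|\nabla c\|_{Y^{1,1}}^2$, which fits the cubic $\varepsilon^{-4}$ term.

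A minor bookkeeping point: the term $C\varepsilon^4\|fe^{-y}\|^2_{Y^{1,m+1}}(\|n^a\|^2+\|\partial_y n^a\|^2)$ comes from $u\cdot\nabla n^a$, through the non-vanishing boundary part $\varepsilon^2 fe^{-y}$ of $u_2$. It does not come from the boundary integral in $u^a\cdot\nabla n$; that integral only gives $\sigma\varepsilon^4\|\partial_y n\|^2_{Y^{1,m}}$ plus $\|f\|^2$-weighted multiples of $\|n\|^2_{Y^{1,m}}$.
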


\begin{proposition}\label{prop:c}
    There exist $\delta  > 0$ and $\varepsilon_0 > 0$ such that for any $\varepsilon \in (0, \varepsilon_0)$, we have:
    \begin{align}
        \frac{1}{2}& \frac{d}{dt} \| c \|_{Y^{1}}^2 
        + \varepsilon^2 \| \nabla c \|_{Y^{1}}^2 
        - C\sigma \| \partial_y n \|_{Y^{1,2}}^2 \nonumber\\
        &\leq  C \left( 1 + \| c^a \|_{Y^{1}_{\infty}} 
        + \| \nabla c^a \|_{Y_{\infty}^{1}} 
        + \| n^a \|_{Y^{1}_{\infty}} 
        + \| u^a \|_{Y_{\infty}^{1}} 
        + \varepsilon^2 \| f e^{-y} \|_{Y^{1,3}} \right)\times  \\
        &\quad \left( \| n \|_{Y^{1}}^2 
        + \| c \|_{Y^{1}}^2 
        + \| \nabla c \|_{Y^{1}}^2 
        + \| u \|_{Y^{1}}^2+\| \omega \|_{Y^{1,2}}^2 \right)\nonumber \\
        &\quad+ \frac{C}{\sigma}  
        \left( \| n \|_{Y^{1,2}}^2 + \| c \|_{Y^{1}}^2  + \| \nabla c \|_{Y^{1}}^2 \right)^3 
        + C \| K \|_{Y^{1}}^2.\nonumber
    \end{align}
\end{proposition}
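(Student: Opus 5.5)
We prove Proposition~\ref{prop:c} by a plain $L^2$ energy estimate. For each $\tau\le 1$ we apply $\partial_t^\tau$ to \eqref{eq:error}$_2$, multiply by $\partial_t^\tau c$, integrate over $\mathbb{R}^2_+$, and sum over $\tau$. Since $\partial_t$ commutes with $\nabla$ and with the Neumann trace, $\partial_y\partial_t^\tau c=0$ on $y=0$. Integration by parts of $-\varepsilon^2\Delta$ therefore produces exactly $\varepsilon^2\|\nabla \partial_t^\tau c\|_2^2$, with no boundary term. This gives the left-hand side $\frac12\frac{d}{dt}\|c\|_{Y^1}^2+\varepsilon^2\|\nabla c\|_{Y^1}^2$. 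The forcing is handled by Cauchy--Schwarz: $\int \partial_t^\tau K\,\partial_t^\tau c\le C\|K\|_{Y^1}^2+C\|c\|_{Y^1}^2$.

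\textbf{Linear terms.} In the transport term $u^a\cdot\nabla \partial_t^\tau c$ we integrate by parts using $\nabla\cdot u^a=0$. This leaves only the boundary contribution $-\frac12\int_{y=0} u_2^a|\partial_t^\tau c|^2\,dx$, where $u_2^a=\varepsilon^2 f$. We bound it by the trace inequality $\|g\|_{L^2(y=0)}^2\le C\|g\|_2\|\partial_y g\|_2$, with $g=e^{-y}$-weighted versions of $f$ and $c$. This yields $C\varepsilon^2\|fe^{-y}\|_{Y^{1,3}}(\|c\|_{Y^1}^2+\|\nabla c\|_{Y^1}^2)$, since $f$ is independent of $y$ and its $L^\infty_x$ norm is controlled by the $Y^{1,3}$ norm of $fe^{-y}$. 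When $\tau=1$, the commutator $\partial_t u^a\cdot\nabla c$ is bounded by $\|u^a\|_{Y^1_\infty}\|\nabla c\|_{Y^1}\|c\|_{Y^1}$. The remaining linear terms are handled with $L^\infty$ bounds on the approximate solution:
\begin{itemize}
\item $u\cdot\nabla c^a$ is bounded by $\|\nabla c^a\|_{Y^1_\infty}\|u\|_{Y^1}\|c\|_{Y^1}$;
\item $c^a n$ is bounded by $\|c^a\|_{Y^1_\infty}\|n\|_{Y^1}\|c\|_{Y^1}$;
\item $cn^a$ is bounded by $\|n^a\|_{Y^1_\infty}\|c\|_{Y^1}^2$.
\end{itemize}
Each of these is absorbed into the first product on the right-hand side.

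\textbf{Nonlinear terms.} The nonlinear terms are $u\cdot\nabla c$ and $cn$, together with their $\partial_t$ versions. The main obstacle is bounding $\|u\|_\infty$ and $\|n\|_\infty$, since $Y^{0,m}$ does not embed into $L^\infty$. We use the anisotropic inequality $\|g\|_\infty^2\le C(\|g\|_{Y^{0,2}}+\|\partial_y g\|_{Y^{0,1}})\|g\|_{Y^{0,1}}$. We also use the Sobolev embedding in time, which is why $Y^{1,\cdot}$ is chosen.

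For $u$, we control $\partial_y u_1$ through $\omega$ and $\partial_x u_2$, and $\partial_y u_2=-\partial_x u_1$. Hence $\|u\|_\infty\lesssim \|u\|_{Y^1}+\|\omega\|_{Y^{1,2}}$, and the term $\int (u\cdot\nabla c)\,c$ is bounded by $(\|u\|_{Y^1}+\|\omega\|_{Y^{1,2}})\|\nabla c\|_{Y^1}\|c\|_{Y^1}$. Here $\partial_t u$ is paired with $\nabla c\in L^\infty$, and $\|\nabla c\|_\infty$ is controlled by the $Y^{1,\cdot}$ norms of $\nabla c$ that appear elsewhere in the full estimate. By Young's inequality this also goes into the first product.

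For $cn$, we write $\|n\|_\infty^2\le C\|n\|_{Y^{1,2}}(\|n\|_{Y^{1,2}}+\|\partial_y n\|_{Y^{1,2}})$. The $\partial_y n$ piece is the one we cannot control at the level of $c$ alone. We split it off by Young's inequality with weight $\sigma$:
\[
\|n\|_\infty\|c\|_{Y^1}^2\le \sigma\|\partial_y n\|_{Y^{1,2}}^2+\frac{C}{\sigma}\big(\|n\|_{Y^{1,2}}^2+\|c\|_{Y^1}^2+\|\nabla c\|_{Y^1}^2\big)^3 .
\]
This produces the term $-C\sigma\|\partial_y n\|_{Y^{1,2}}^2$ on the left-hand side and the cubic term on the right-hand side. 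For $\tau=1$, the piece $\partial_t c\, n$ is treated the same way, with $c$ in $L^\infty$ controlled via the $Y^1$ norms of $c$ and $\nabla c$, using the $L^\infty$-in-time embedding from the one time derivative. Collecting all terms gives the stated inequality.
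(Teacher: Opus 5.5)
Your overall scheme is the paper's: apply $\partial_t^\tau$ and test with $\partial_t^\tau c$, use the Neumann condition for the dissipation, use $L^\infty$ bounds on the approximate solution for the linear terms, and use an anisotropic $L^\infty$ bound plus a $\sigma$-weighted Young split for $cn$. Integrating $u^a\cdot\nabla\partial_t^\tau c$ by parts is unnecessary but harmless, since $\|\nabla c\|_{Y^1}$ is already allowed on the right.

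The genuine problem is in the $\tau=1$ nonlinear terms. Two of your H\"older allocations do not close within the stated right-hand side.
\begin{itemize}
\item In $\langle \partial_t u\cdot\nabla c,\partial_t c\rangle$ you put $\nabla c$ in $L^\infty$. The proposition only has $\|\nabla c\|_{Y^1}$ available, and any $L^\infty$ bound on $\nabla c$ needs $\partial_y\nabla c$, i.e.\ $\partial_y^2 c$. That is not a conormal derivative. It is controlled only through the $\varepsilon^2\|\nabla^2 c\|^2$ dissipation of the $\nabla c$ estimate, which would cost negative powers of $\varepsilon$. Appealing to norms that ``appear elsewhere in the full estimate'' does not prove this inequality.
\item In $\langle c\,\partial_t n,\partial_t c\rangle$ you put $c$ in $L^\infty$ and claim control by $\|c\|_{Y^1}+\|\nabla c\|_{Y^1}$. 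At fixed $t$ this is only $H^1(\mathbb{R}^2_+)$ control of $c$ and $\partial_t c$, which does not embed into $L^\infty$ in two dimensions. An embedding in the time variable says nothing about a spatial supremum at a fixed time.
\end{itemize}

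The fix, which is what the paper does, is to put the time-differentiated error factor in $L^\infty$. Lemma~\ref{lem:embedding} with $m=2$ admits $\alpha=(1,0,0)$, so
\[
\|\partial_t n\|_\infty^2\le C\|n\|_{Y^{1,2}}\|\partial_y n\|_{Y^{1,2}},\qquad \|\partial_t u\|_\infty\le C\bigl(\|u\|_{Y^1}+\|\nabla u\|_{Y^{1,2}}\bigr),
\]
while $c$, $\nabla c$ and $\partial_t c$ stay in $L^2$. The term $\|\partial_t n\|_\infty\|c\|_{Y^1}^2$ is then split exactly like the $\tau=0$ term. This is why one time derivative is built into $Y^{1,m}$; no Sobolev embedding in time is involved.

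Your bound $\|u\|_\infty\lesssim\|u\|_{Y^1}+\|\omega\|_{Y^{1,2}}$ is also not true as stated. Since $u_2=-\varepsilon^2 f\neq 0$ on $y=0$, $\omega$ alone does not control $\nabla u$: gradients of harmonic functions have $\omega=0$. The relations you list for $\partial_y u$ are circular without an elliptic estimate for $\partial_x u$. You need the Korn-type Lemma~\ref{lem:omega}, $\|\nabla u\|_{Y^{1,2}}\le C(\|\omega\|_{Y^{1,2}}+\varepsilon^2\|fe^{-y}\|_{Y^{1,3}})$. The extra term multiplies $\|\nabla c\|_{Y^1}\|c\|_{Y^1}$, so it is absorbed by the $\varepsilon^2\|fe^{-y}\|_{Y^{1,3}}$ already in the prefactor of the statement.
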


% Since $Y^{l,m}$ is not embedded in $L^\infty$, we need at least one derivative in normal variable $y$ in order to justify the boundary layer expansion. Thus we have:
\begin{proposition}\label{prop:nabla c}
    Let $m \geq 2 \in \mathbb{N}$. Then there exist $\delta  > 0$ and $\varepsilon_0 > 0$ such that for any $\varepsilon \in (0, \varepsilon_0)$, we have:
    \begin{equation}
        \begin{aligned}
            \frac{1}{2} &\frac{d}{dt} \| \nabla c \|_{Y^{1,m}}^2 
            + (1 - C\delta - C\sigma) \varepsilon^2 \| \nabla^2 c \|_{Y^{1,m}}^2 
            - C\sigma \| \nabla n \|_{Y^{1,m}}^2 \\
            &\leq \frac{C_\delta}{\sigma} \biggl( 1 +\| u^a \|_{Y^{1,m+1}_\infty} + \| \nabla u^a \|_{Y^{1,m}_\infty}+ \| c^a \|_{Y^{1,m}_\infty}
           + \| \nabla c^a \|_{Y^{1,m+1}_\infty} 
            + \| \nabla n^a \|_{Y^{1,m}_\infty}\\
            &+\| n^a \|_{Y^{1,m}_\infty} 
            + \| f e^{-y} \|_{Y^{1,m+1}}  \biggr)^2  \left( \| u \|_{Y^1}^2 
            + \| \nabla c \|_{Y^{1,m}}^2 
            + \| c \|_{Y^1}^2 
            + \| \omega \|_{Y^{1,m}}^2 
            + \| n \|_{Y^{1,m}}^2 \right)\\ 
            &+ \frac{C}{\sigma^3 \varepsilon^4} \left( \| c \|_{Y^1}^2 
            + \| \nabla c \|_{Y^{1,m}}^2 
            + \| u \|_{Y^1}^2 
            + \| \omega \|_{Y^{1,m}}^2 
            + \| n \|_{Y^{1,m}}^2 \right)^3 \\
            &+ C_\delta \varepsilon^4 \| f e^{-y} \|_{Y^{1,m+1}}^2 
            \left( 1 + \| \nabla c^a \|_{Y^{1,m+1}_\infty}^2 
            + \| \nabla^2 c^a \|_{Y^{1,m}}^2 \right) + C \| \nabla K \|_{Y^{1,m}}^2.
        \end{aligned}
    \end{equation}
\end{proposition}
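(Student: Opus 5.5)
We apply $\partial_y$ to the error equation \eqref{eq:error}$_2$ for $c$, which is allowed since $\partial_y c=0$ on $y=0$. We treat $\partial_x c$ separately by applying $\partial_x$, which commutes with everything. Writing $g=\nabla c$, we then apply $\partial^\alpha$ with $\alpha_1\le 1$, $|\alpha|\le m$, take the $L^2$ inner product with $\partial^\alpha g$, and sum.

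The time derivative gives $\frac12\frac{d}{dt}\|\nabla c\|_{Y^{1,m}}^2$. The diffusion term $-\varepsilon^2\Delta\partial^\alpha\nabla c$ is integrated by parts. For the normal component $\partial_y c$, the boundary term vanishes because $\partial_y c|_{y=0}=0$, and $\partial^\alpha$ with $\alpha_3\ge1$ contains the factor $\psi(y)$, which vanishes at $y=0$. For the tangential component $\partial_x c$, the boundary term is $\varepsilon^2\int\partial^\alpha\partial_x c\,\partial_y\partial^\alpha\partial_x c$, and on $y=0$ we have $\partial_y\partial_x c=0$. So both boundary terms vanish, leaving $\varepsilon^2\|\nabla^2c\|_{Y^{1,m}}^2$ plus the commutator $[\partial_y,\partial^\alpha]$. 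By \eqref{exchange oper} this commutator is bounded by $C\delta\varepsilon^2\|\nabla^2 c\|_{Y^{1,m}}^2+C_\delta\varepsilon^2\|\nabla c\|_{Y^{1,m}}^2$, which produces the factor $(1-C\delta)$.

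The linear transport terms split into two groups.
\begin{itemize}
\item In $u^a\cdot\nabla\nabla c$, the tangential part is handled by antisymmetry. The normal part $u_2^a\partial_y$ is conormal, since $u_2^a\sim y$ near the boundary up to the $\varepsilon^2 f$ boundary value. This yields the $\|u^a\|_{Y^{1,m+1}_\infty}$, $\|\nabla u^a\|_{Y^{1,m}_\infty}$ terms and an $\varepsilon^4\|fe^{-y}\|^2$ contribution.
\item For $\nabla(u\cdot\nabla c^a)$, we need $\|\nabla u\|_{Y^{1,m}}$. This is controlled by $\|\omega\|_{Y^{1,m}}+\|u\|_{Y^1}+\varepsilon^2\|fe^{-y}\|_{Y^{1,m+1}}$ through a div–curl lemma with the inhomogeneous boundary datum $u_2=-\varepsilon^2 f$.
\end{itemize}
The reaction terms $\nabla(c^a n+cn^a)$ need $\nabla n$. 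For these we integrate $\partial_y$ by parts onto $\partial^\alpha\nabla c$ when that is cheaper, or otherwise bound by $\sigma\|\nabla n\|_{Y^{1,m}}^2$; this is exactly the $-C\sigma\|\nabla n\|^2$ on the left. The term $\nabla K$ is handled by Cauchy–Schwarz.

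The main obstacle is the nonlinear terms $\nabla(u\cdot\nabla c)$ and $\nabla(cn)$ at top conormal order, because $Y^{1,m}$ does not control $L^\infty$. Here we use the anisotropic embedding $\|g\|_\infty^2\lesssim\|\partial_y g\|_{Y^{1,1}}\|g\|_{Y^{1,2}}$, with the time Sobolev embedding as stressed in the remarks, applied with $g=\nabla c,n,u$.
\begin{itemize}
\item In $u\cdot\nabla\partial^\alpha\nabla c$, the highest normal derivative falls on $\nabla^2 c$. We pair it with $\varepsilon^2\|\nabla^2 c\|^2$ via Young, writing $ab\le\sigma\varepsilon^2a^2+\frac{1}{\sigma\varepsilon^2}b^2$.
\item The remaining $L^\infty$ factors bring in $\|\nabla u\|_{Y^{1,1}}$ and $\|\nabla c\|_{Y^{1,2}}$. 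After a second interpolation with $\sigma\|\nabla n\|^2$ and $\sigma\varepsilon^2\|\nabla^2c\|^2$, they give the cubic term $\frac{C}{\sigma^3\varepsilon^4}(\cdots)^3$.
\end{itemize}
I would first check that the $\varepsilon^{-4}$ loss is compatible with the ansatz that the energy is $O(\varepsilon^3)$, which makes the cubic term $O(\varepsilon^5)$.

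Finally, all absorbable pieces are collected to give the stated inequality with $(1-C\delta-C\sigma)$.
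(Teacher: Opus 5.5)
There is a genuine gap in the linear term $\nabla(u\cdot\nabla c^a)$. Your overall scheme matches the paper's: apply $\partial^\alpha\nabla$, integrate the diffusion by parts with vanishing boundary terms, absorb into $\sigma\|\nabla n\|_{Y^{1,m}}^2$ and $\sigma\varepsilon^2\|\nabla^2c\|_{Y^{1,m}}^2$, and use the anisotropic embedding with Young for the nonlinear terms.

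You only say this term needs $\|\nabla u\|_{Y^{1,m}}$. That covers $(\nabla u\cdot\nabla)c^a$, but the other half, $(u\cdot\nabla)\nabla c^a$, contains $\partial^{\alpha-\beta}u_2\,\partial^\beta\partial_y\nabla c^a$, and $\partial_y\nabla c^a$ is not uniformly bounded. Because of the boundary layer, $\partial_y^2\bigl(\varepsilon c^{b,1}(t,x,y/\varepsilon)\bigr)=\varepsilon^{-1}\partial_z^2c^{b,1}$. Hence $\|\partial_y\nabla c^a\|_{L^\infty}\sim\varepsilon^{-1}$ and $\|\partial_y\nabla c^a\|_{Y^{1,m}}\sim\varepsilon^{-1/2}$.

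A H\"older bound therefore produces a \emph{linear} term of size $\varepsilon^{-1}(\|u\|_{Y^{1,m}}^2+\|\nabla c\|_{Y^{1,m}}^2)$. The statement does not allow this: there $c^a$ enters only through $\|\nabla c^a\|_{Y^{1,m+1}_\infty}$ and through $\varepsilon^4\|fe^{-y}\|^2\|\nabla^2c^a\|_{Y^{1,m}}^2$. Such a term would also give a Gronwall factor $e^{Ct/\varepsilon}$. Integrating $\partial_y$ by parts onto $\partial^\alpha\nabla c$ does not help either. Against a linear term, the dissipation $\varepsilon^2\|\nabla^2c\|^2$ leaves $\varepsilon^{-2}\|u\|^2$, and a boundary term appears because $u_2=-\varepsilon^2f\neq0$ and $\partial^\alpha\partial_xc\neq0$ on $y=0$.

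The missing step is the conormal/Hardy device you already use for $u_2^a\partial_y$, applied now to the \emph{error} velocity. Set $\widehat{u_2}=u_2+\varepsilon^2fe^{-y}$, so that $\widehat{u_2}|_{y=0}=0$, and write
\[
\partial^{\alpha-\beta}u_2\,\partial^\beta\partial_y\nabla c^a=\frac{\partial^{\alpha-\beta}\widehat{u_2}}{\psi}\,\psi\,\partial^\beta\partial_y\nabla c^a-\varepsilon^2\,\partial^{\alpha-\beta}(fe^{-y})\,\partial^\beta\partial_y\nabla c^a .
\]
In the first piece, $\psi\partial_y\nabla c^a$ is a conormal derivative ($\psi\partial_y=\delta z\partial_z$ in the layer), so it is bounded by $\|\nabla c^a\|_{Y^{1,m+1}_\infty}$ uniformly in $\varepsilon$. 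The quotient $\partial^{\alpha-\beta}\widehat{u_2}/\psi$ is controlled in $L^2$ by Lemma~\ref{lem:psi} through $\partial_y\widehat{u_2}$. That gives a bound by $\|\nabla u\|_{Y^{1,m}}+\varepsilon^2\|fe^{-y}\|_{Y^{1,m+1}}$, and then by $\|u\|_{Y^1}+\|\omega\|_{Y^{1,m}}$ via Lemma~\ref{lem:omega}. This is where $\|\nabla u\|$ is really needed.

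The second piece uses $\partial_y\nabla c^a$ only in $L^2$. It yields exactly the $\varepsilon^4\|fe^{-y}\|_{Y^{1,m+1}}^2\|\nabla^2c^a\|_{Y^{1,m}}^2$ term of the statement, which is $O(\varepsilon^3)$ after time integration since $\|\nabla^2c^a\|_{L^2_tY^{1,2}}\lesssim\varepsilon^{-1/2}$. This is Lemma~\ref{lem:H cdot nabla G}(b) with $G^a=\nabla c^a$.

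Separately, make explicit how you treat the top-order piece $(\nabla u\cdot\nabla)\partial^\alpha c$. Your embedding with $g=u$ puts $u$, not $\nabla u$, in $L^\infty$, so you need one of two fixes:
\begin{itemize}
\item integrate by parts there: the boundary term vanishes since $\partial^\alpha\partial_yc=0$ on $y=0$, and the resulting $\|u\|_\infty\|\nabla^2c\|$ is harmless for this nonlinear term;
\item or use $\|\psi\partial_yu\|_\infty$ together with Hardy on $\partial^\alpha\partial_yc/\psi$, as in Lemma~\ref{lem:nabla H cdot nabla G}(c).
\end{itemize}
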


\begin{proposition}\label{prop:u}
    There exist $\delta  > 0$ and $\varepsilon_0 > 0$ such that for any $\varepsilon \in (0, \varepsilon_0)$, we have:
    \begin{equation}
        \begin{aligned}
            \frac{1}{2} &\frac{d}{dt} \| u \|_{Y^1}^2 
            + \varepsilon^2 (1 - C\sigma) \| \nabla u \|_{Y^1}^2 \\
            &\leq \frac{C}{\sigma} \varepsilon^4 \left(\| F e^{-y} \|_{Y^{2,2}}^2+\| f e^{-y} \|_{Y^{2,3}}^2\right) 
           + C \Bigg( \| u \|_{Y^1}^2 + \| \omega \|_{Y^{1,2}}^2 
            + \varepsilon^4 \| f e^{-y} \|_{Y^{1,3}}^2 \Bigg)^{\!3} \\
            &+ C \left( 1 + \| u^a \|_{Y^{1}_\infty}^2+\| \nabla u^a \|_{Y^{1}_\infty}^2 \right) 
            \left( \| u \|_{Y^1}^2 + \| \omega \|_{Y^{1,2}}^2 + \| n \|_{Y^{1}}^2 +\varepsilon^4 \| f e^{-y} \|_{Y^{1,2}}^2\right) \\
            & + C \| U \|_{Y^{1}}^2.
        \end{aligned}
    \end{equation}
\end{proposition}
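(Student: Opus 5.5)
We prove Proposition~\ref{prop:u} by a basic $L^2$ energy estimate on the error velocity equation \eqref{eq:error}$_3$, applied to $u$ and to $\partial_t u$ (the $Y^1$ norm only involves at most one time derivative). The main complication is that $u_2=-\varepsilon^2 f$ on $y=0$ is not zero. We therefore first lift the boundary datum. Set
\[
u^f=\varepsilon^2\big(\partial_y(F e^{-y}),\,-\partial_x(Fe^{-y})\big)=\varepsilon^2\big(-Fe^{-y},\,-fe^{-y}\big),
\]
which is divergence free and satisfies $u^f_2=-\varepsilon^2 f$ at $y=0$. Then $v=u-u^f$ has $v_2|_{y=0}=0$ and $\nabla\cdot v=0$. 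This explains why $\|Fe^{-y}\|_{Y^{2,2}}$ and $\|fe^{-y}\|_{Y^{2,3}}$ enter the estimate: one time derivative beyond $Y^1$ is needed because $\partial_t u^f$ and $\partial_t^2u^f$ appear as forcing terms.

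\textbf{Energy identity.} Write the equation for $v$, test with $\partial_t^{j}v$ after applying $\partial_t^j$, $j=0,1$, and integrate over $\mathbb{R}^2_+$.
\begin{itemize}
\item \emph{Pressure.} Since $\nabla\cdot v=0$ and $v_2=0$ on the boundary, the pressure term $\int\nabla\partial_t^jp\cdot\partial_t^jv$ vanishes.
\item \emph{Viscous term.} Use $-\Delta u=\nabla^\perp\omega$ in 2D. Integrating by parts gives $\varepsilon^2\|\nabla v\|^2$ up to boundary terms of the form $\varepsilon^2\int_{y=0}\omega\, v_1$. On the boundary $\omega=\varepsilon^2\partial_x f$, so this boundary term is $O(\varepsilon^4)$ times a trace of $v_1$. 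The trace is controlled by $\|v\|\,\|\nabla v\|$ and then absorbed with $\sigma$, which produces the $\frac{C}{\sigma}\varepsilon^4\|fe^{-y}\|^2$ contribution.
\item \emph{Lift terms.} The terms coming from $u^f$, namely $\partial_tu^f$, $\varepsilon^2\Delta u^f$, and the convection by $u^f$, are bounded by $\varepsilon^4(\|Fe^{-y}\|_{Y^{2,2}}^2+\|fe^{-y}\|_{Y^{2,3}}^2)$ plus $\|v\|_{Y^1}^2$.
\end{itemize}

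\textbf{Linear and source terms.} The term $u^a\cdot\nabla v$ is antisymmetric after integration by parts. This works because $\nabla\cdot u^a=0$ and $u^a_2=\varepsilon^2 f$ at $y=0$; the resulting boundary contribution again carries $\varepsilon^2$. The term $u\cdot\nabla u^a$ is bounded by $\|\nabla u^a\|_{Y^1_\infty}\|u\|_{Y^1}^2$; note that $\nabla u^a$ is $O(1)$ in $L^\infty$ by Proposition~\ref{prop:uniform bounds}, since $u_1^{b,1}$ enters with the prefactor $\varepsilon$. The terms $ne_2$ and $U$ are handled by Cauchy--Schwarz. For $j=1$, the commutator terms $\partial_tu^a\cdot\nabla u$ and $u\cdot\nabla\partial_t u^a$ are treated the same way. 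To bound $\nabla u$ without viscosity we use the div--curl structure: $\|\nabla u\|_{L^2}\lesssim\|\omega\|_{L^2}+\|u\|_{L^2}+(\text{boundary lift})$. This is why $\|\omega\|_{Y^{1,2}}$ appears on the right-hand side.

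\textbf{Main obstacle: the nonlinear term.} The term $\int\partial_t(u\cdot\nabla u)\cdot\partial_tu$ must be handled without using the $\varepsilon^2$-weighted dissipation. The piece $u\cdot\nabla\partial_tu$ cancels up to an $O(\varepsilon^2 f)$ boundary flux. The remaining piece, $\partial_tu\cdot\nabla u$, requires $\|\nabla u\|_{L^\infty}$. I would bound $\|u\|_{L^\infty}$ and $\|\nabla u\|$ in terms of conormal norms of $\omega$ together with the anisotropic embedding $\|g\|_\infty^2\lesssim\|g\|_{Y^{0,2}}\|\partial_y g\|_{Y^{0,1}}$. Since $\partial_y u_1=\omega+\partial_xu_2$ and $\partial_yu_2=-\partial_xu_1$, this yields a cubic bound of the form $C(\|u\|_{Y^1}^2+\|\omega\|_{Y^{1,2}}^2+\varepsilon^4\|fe^{-y}\|_{Y^{1,3}}^2)^3$ via Young's inequality.

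This step, controlling $L^\infty$ of $\nabla u$ in the conormal setting near the boundary with the inhomogeneous normal velocity, is where I expect the difficulty to lie. Summing the cases $j=0,1$ and choosing $\sigma$ small then gives the stated inequality.
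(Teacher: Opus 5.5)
Lifting the boundary datum with $u^f=\varepsilon^2(-Fe^{-y},-fe^{-y})$ is a genuinely different route from the paper. The paper keeps $u$ and bounds $|\langle\partial_t^\tau\nabla p,\partial_t^\tau u\rangle|\le C\|\nabla p\|_{Y^1}^2+C\|u\|_{Y^1}^2$. It then estimates $\|\nabla p\|_{Y^1}$ from the Neumann problem \eqref{eq:p}. The surviving boundary term $\int_{\mathbb{R}}\partial_t^\tau(-\varepsilon^2\partial_tf+\varepsilon^4\partial_x^2f)\,\overline{\partial_t^\tau p}\,dx$ is rewritten as an interior integral with weight $e^{-y}$, and $f=\partial_xF$ moves the $x$-derivative onto $p$; that is where $\|Fe^{-y}\|_{Y^{2,2}}$ enters.

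Your computations are sound for $v=u-u^f$. However, they give a differential inequality for $\frac12\frac{d}{dt}\|v\|_{Y^1}^2$, not for $\frac12\frac{d}{dt}\|u\|_{Y^1}^2$. Your last sentence, that this \emph{gives the stated inequality}, is where the argument breaks. The difference $\frac{d}{dt}\bigl(\|u\|_{Y^1}^2-\|v\|_{Y^1}^2\bigr)$ contains $2\langle\partial_t^2v,\partial_tu^f\rangle$, and nothing on the right-hand side controls $\partial_t^2v$. Substituting the equation for $\partial_t^2 v$ brings back $\langle\nabla\partial_tp,\partial_tu^f\rangle=\varepsilon^2\int_{y=0}\partial_tp\,\partial_tf\,dx$, which is a pressure trace. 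Equivalently, testing the $u$-equation with $\partial_t^\tau u$ leaves $\langle\nabla\partial_t^\tau p,\partial_t^\tau u\rangle=\varepsilon^2\int_{y=0}\partial_t^\tau p\,\partial_t^\tau f\,dx\neq0$. So for the proposition as stated, the pressure estimate cannot be bypassed.

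What you have actually proved is the same inequality with $v$ in place of $u$ on the left. Since $\|u-v\|_{Y^1}^2=\varepsilon^4\bigl(\|Fe^{-y}\|_{Y^1}^2+\|fe^{-y}\|_{Y^1}^2\bigr)=O(\varepsilon^4)$, this variant would still close the Gronwall argument of Theorem~\ref{thm:main} if $\|u\|_{Y^1}^2$ is replaced by $\|v\|_{Y^1}^2$ in $E(t)$. It is also simpler than the paper's route, because it avoids the elliptic pressure estimate entirely. But it is a different statement, and you must say so explicitly rather than claim the original one.

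Separately, for $\partial_tu\cdot\nabla u$ you say you need $\|\nabla u\|_{L^\infty}$, and that route fails here. Bounding $\|\partial_yu_1\|_{L^\infty}$ requires $\partial_y\omega$ in $L^2$. This is not controlled by $\|\omega\|_{Y^{1,2}}$, since the conormal weight $\psi$ vanishes at $y=0$. It is available only with weight $\varepsilon^2$ in Proposition~\ref{prop:omega}. Put the $L^\infty$ norm on $\partial_tu$ instead, i.e.\ use $\|\partial_tu\|_{L^\infty}\|\nabla u\|_{2}\|\partial_tv\|_{2}$. Bound $\|\partial_tu\|_{L^\infty}$ by Lemma~\ref{lem:embedding}, whose highest-order input is $\|\partial_t\partial_x\partial_yu\|_2\le\|\nabla u\|_{Y^{1,1}}$, and then apply Lemma~\ref{lem:omega}. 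This is how the paper obtains the cubic term in $\|u\|_{Y^1}^2+\|\omega\|_{Y^{1,2}}^2+\varepsilon^4\|fe^{-y}\|_{Y^{1,3}}^2$.

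A minor point: the viscous boundary term involves the vorticity of $v$, whose trace is $-\varepsilon^2F$, not $\varepsilon^2\partial_xf$. The $\varepsilon^4$ order is unchanged.
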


For $\partial_y u$, if we directly compute the norm of $\nabla u$ in the conormal Sobolev space, then higher-order terms will appear. Therefore, motivated by \cite{M2014,WYZ2017}, we use the vorticity equation to gain one normal derivative.
\begin{proposition}\label{prop:omega}
    Let $m \geq 2 \in \mathbb{N}$. Then there exist $\delta  > 0$ and $\varepsilon_0 > 0$ such that for any $\varepsilon \in (0, \varepsilon_0)$, we have:
    \begin{equation}
        \begin{aligned}
           & \frac{1}{2} \frac{d}{dt} \| \widehat{\omega} \|_{Y^{1,m}}^2 
            + (1 - C\delta - C\sigma) \varepsilon^2 \| \nabla \widehat{\omega} \|_{Y^{1,m}}^2 
            - C\sigma \| \nabla n \|_{Y^{1,m}}^2 \\
            &\leq \frac{C_\delta}{\sigma} \left( 1 + \|u^{a}\|_{Y^{1,m}}^{2}+\| u^a \|_{Y^{1,m+1}_\infty}^2+\| \omega^a \|_{Y^{1,m+1}_\infty}^2 
            + \| f e^{-y} \|_{Y^{1,m+1}}^2 \right) \left( \| u \|_{Y^1}^2 
            + \| \widehat{\omega} \|_{Y^{1,m}}^2 
            + \varepsilon^4 \| f e^{-y} \|_{Y^{1,m+1}}^2 \right) \\
            &+ \frac{C}{\sigma \varepsilon^4} \left( \| u \|_{Y^1}^2 
            + \| \widehat{\omega} \|_{Y^{1,m}}^2 +\varepsilon^4 \| f e^{-y} \|_{Y^{1,m}}^2\right)^3 + C \| \nabla^\perp \cdot U \|_{Y^{1,m}}^2 \\
            &+ C\varepsilon^4 \left( \| f e^{-y} \|_{Y^{1,m+3}}^2 
            + \| f e^{-y} \|_{Y^{2,m+2}}^2 \right)  \times \left( 1 + \| \omega^a \|_{Y^{1,m+1}_\infty}^2 
            + \| \partial_y \omega^a \|_{Y^{1,m}}^2 \right),
        \end{aligned}
    \end{equation}
    where $\widehat{\omega} = \omega - \varepsilon^2 \partial_x f e^{-y}$.
\end{proposition}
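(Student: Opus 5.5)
We will prove Proposition~\ref{prop:omega} by a conormal energy estimate on the modified vorticity $\widehat{\omega} = \omega - \varepsilon^2 \partial_x f e^{-y}$. Its advantage is that it satisfies a homogeneous Dirichlet condition $\widehat{\omega}|_{y=0}=0$, since $\omega=\varepsilon^2\partial_x f$ on $y=0$ by \eqref{eq:omega}$_2$. Subtracting the corrector from \eqref{eq:omega}$_1$ gives
\begin{align*}
\partial_t \widehat{\omega} + u^a\cdot\nabla\widehat{\omega} + u\cdot\nabla\widehat{\omega} + u\cdot\nabla\omega^a - \varepsilon^2\Delta\widehat{\omega} + \partial_x n = -\nabla^\perp\cdot U + R_f,
\end{align*}
where $R_f$ collects $-(\partial_t + (u^a+u)\cdot\nabla - \varepsilon^2\Delta)(\varepsilon^2\partial_x f e^{-y})$. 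We will apply $\partial^\alpha$ with $\alpha_1\le 1$, $|\alpha|\le m$, take the $L^2$ inner product with $\partial^\alpha\widehat{\omega}$, and sum.

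Since $\psi(0)=0$, for $\alpha_3\ge1$ the conormal derivative $\partial^\alpha\widehat{\omega}$ vanishes on the boundary. For $\alpha_3=0$ it vanishes because $\widehat{\omega}$ does. Hence the viscous term integrates by parts with no boundary contribution, giving $\varepsilon^2\|\nabla\partial^\alpha\widehat{\omega}\|_2^2$ up to the commutator $[\partial_y,\partial^\alpha]$ from \eqref{exchange oper}. That commutator involves $\psi'\partial^{\alpha-(0,0,1)}\partial_y$, and we bound it by $C\delta\varepsilon^2\|\nabla\widehat{\omega}\|_{Y^{1,m}}^2$ plus lower-order terms. This is where the factor $(1-C\delta)$ comes from, after choosing $\delta$ small in \eqref{eq:psi-def}.

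We will handle the remaining terms as follows.
\begin{itemize}
\item The forcing term $\partial^\alpha\partial_x n$ is integrated by parts in $x$. This gives $\sigma\|\nabla n\|_{Y^{1,m}}^2 + C\sigma^{-1}\|\widehat{\omega}\|_{Y^{1,m}}^2$, which accounts for the $-C\sigma\|\nabla n\|^2$ on the left side.
\item The transport term $u^a\cdot\nabla\widehat{\omega}$ is treated by the standard commutator estimates of \cite{MR2012}. The key point is that $u_2^a\partial_y = (u_2^a/\psi)\,\psi\partial_y$. Because $u_2^a=\varepsilon^2 f$ at $y=0$, we split off that boundary part, and the remainder is bounded by $\|\partial_y u_2^a\|_\infty$. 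This yields the $\|u^a\|_{Y^{1,m+1}_\infty}$ and $\|\omega^a\|_{Y^{1,m+1}_\infty}$ factors.
\item The linear term $u\cdot\nabla\omega^a$ is bounded by $\|u\|_{Y^{1,m}}$. That norm is in turn controlled by $\|u\|_{Y^1}+\|\widehat{\omega}\|_{Y^{1,m}}+\varepsilon^2\|fe^{-y}\|_{Y^{1,m+1}}$, via the div-curl/Biot–Savart relation on the half plane with $u_2=-\varepsilon^2 f$.
\item The $R_f$ terms and $\nabla^\perp\cdot U$ are handled by Cauchy–Schwarz. They produce the $\varepsilon^4\|fe^{-y}\|^2$ contributions multiplied by $\omega^a$ norms, including $\|\partial_y\omega^a\|_{Y^{1,m}}$ from $u\cdot\nabla$ applied to the corrector.
\end{itemize}

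The main obstacle is the nonlinear term $\partial^\alpha(u\cdot\nabla\widehat{\omega})$ (together with $u\cdot\nabla(\varepsilon^2\partial_xfe^{-y})$). The top-order piece $u\cdot\nabla\partial^\alpha\widehat{\omega}$ integrates to zero up to a boundary flux, because $\nabla\cdot u=0$ and $u_2=-\varepsilon^2 f$. The commutators, however, require $L^\infty$ bounds on $u$ and $\nabla u$, and these are not available in conormal spaces. We plan to use an anisotropic Agmon-type inequality
\begin{align*}
\|g\|_\infty^2\lesssim \|\partial_y g\|_{Y^{0,1}}\|g\|_{Y^{0,2}} + \|g\|_{Y^{0,2}}^2
\end{align*}
with $g=\partial^\beta u$ for $|\beta|\le1$. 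Higher derivatives $\partial_y\partial^\beta u$ are traded for $\nabla\widehat{\omega}$, and the resulting $\|\nabla\widehat{\omega}\|$ factor is absorbed into the dissipation at the price $\varepsilon^{-2}$. Applying Young's inequality twice, first with weight $\sigma\varepsilon^2$, produces exactly the cubic term $C\sigma^{-1}\varepsilon^{-4}(\|u\|_{Y^1}^2+\|\widehat{\omega}\|_{Y^{1,m}}^2+\varepsilon^4\|fe^{-y}\|_{Y^{1,m}}^2)^3$. This works because $m\ge2$ lets the low-order factor land in $L^\infty$ via $\partial^\beta$ with $|\beta|\le m-2$ and the high-order factor in $L^2$. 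The time-derivative index $\alpha_1\le1$ needs no extra care, since $\partial_t$ commutes with the boundary conditions.

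Collecting all the terms and choosing $\sigma$ and $\delta$ small then gives the stated inequality.
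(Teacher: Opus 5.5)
Your overall plan is the paper's. You do an energy estimate for $\widehat{\omega}$ in $Y^{1,m}$. There are no boundary terms because $\partial^\alpha\widehat{\omega}|_{y=0}=0$. The factor $(1-C\delta)$ comes from $[\partial_y,\partial^\alpha]$, and the nonlinear term is absorbed into $\sigma\varepsilon^2\|\nabla\widehat{\omega}\|_{Y^{1,m}}^2$.

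\textbf{Gap in the term $u\cdot\nabla\omega^a$.} As written, your treatment does not give the stated inequality, and the trouble is the normal part $u_2\partial_y\omega^a$. Bounding it by $\|u\|_{Y^{1,m}}$ times an $L^\infty$-type norm requires conormal derivatives of $\partial_y\omega^a$ in $L^\infty$. But $\partial_y\omega^a=\varepsilon^{-1}\partial_z^2u_1^{b,1}+O(1)$, so that coefficient is of order $\varepsilon^{-1}$. It is not among the coefficients allowed in the statement. Worse, it would place a non-uniform factor in front of $\|u\|_{Y^1}^2+\|\widehat{\omega}\|_{Y^{1,m}}^2$, so $\exp(\int_0^t C)$ in the proof of Theorem~\ref{thm:main} would blow up as $\varepsilon\to0$. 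Only $\psi\partial_y\omega^a\approx\delta z\,\partial_z^2u_1^{b,1}$ is uniformly bounded, and that is what $\|\omega^a\|_{Y^{1,m+1}_\infty}$ measures.

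The missing step is the Hardy splitting, which you used only for $u^a$. Set $\widehat{u}_2:=u_2+\varepsilon^2fe^{-y}$, which vanishes on $y=0$, and write $u_2\partial_y\omega^a=(\widehat{u}_2/\psi)(\psi\partial_y\omega^a)-\varepsilon^2fe^{-y}\partial_y\omega^a$.
\begin{itemize}
\item For the first piece, combine Lemma~\ref{lem:psi}, the identity $\partial_yu_2=-\partial_xu_1$, and Lemma~\ref{lem:omega}. This bounds $\|\partial^\gamma\widehat{u}_2/\psi\|_2$ by $\|u\|_{Y^1}+\|\omega\|_{Y^{1,m}}+\varepsilon^2\|fe^{-y}\|_{Y^{1,m+1}}$, with coefficient $\|\omega^a\|_{Y^{1,m+1}_\infty}$.
\item For the second piece, use $L^\infty\times L^2$ to get $\varepsilon^2\|fe^{-y}\|_{Y^{1,m}_\infty}\|\partial_y\omega^a\|_{Y^{1,m}}$. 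Note that $\|\partial_y\omega^a\|_{Y^{1,m}}$ is only $O(\varepsilon^{-1/2})$, which is why it may appear only multiplied by $\varepsilon^4$.
\end{itemize}
This is Lemma~\ref{lem:H cdot nabla G}(b). It is also the actual source of the factor $\varepsilon^4\|fe^{-y}\|^2\|\partial_y\omega^a\|_{Y^{1,m}}^2$ in the statement. Your attribution of that factor to $R_f$ is wrong, since $R_f$ contains no $\omega^a$.

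\textbf{The same issue in the nonlinear term.} The same device is needed in the piece of $\partial^\alpha(u\cdot\nabla\widehat{\omega})$ where all conormal derivatives fall on $u_2$, namely $\partial^\alpha u_2\,\partial_y\widehat{\omega}$. Your rule of putting the low-order factor in $L^\infty$ would require $\|\partial_y\widehat{\omega}\|_\infty$. Neither the energy nor the dissipation controls this: they give only conormal derivatives of $\partial_y\widehat{\omega}$ in $L^2$, never $\partial_y^2\widehat{\omega}$. Your Agmon inequality for $\partial^\beta u$ does not reach this factor.

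The paper instead writes the piece as $(\partial^\alpha\widehat{u}_2/\psi)(\psi\partial_y\widehat{\omega})$. It bounds $\|\psi\partial_y\widehat{\omega}\|_\infty$ via Lemma~\ref{lem:embedding} by $\|\widehat{\omega}\|_{Y^{1,2}}^{1/2}\|\partial_y\widehat{\omega}\|_{Y^{1,2}}^{1/2}$; this is the term $c_1$ in Lemma~\ref{lem:H cdot nabla G}(c). The boundary part $\varepsilon^2\partial^\alpha(fe^{-y})\,\partial_y\widehat{\omega}$ is put in $L^\infty\times L^2$. Only with this structure does Young's inequality yield exactly the $\sigma^{-1}\varepsilon^{-4}(\cdot)^3$ term.

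\textbf{A minor slip.} For $\partial_xn$ you should not integrate by parts in $x$. That moves $\partial_x$ onto $\widehat{\omega}$ and costs $\varepsilon^{-2}\|n\|_{Y^{1,m}}^2$. Plain Cauchy--Schwarz with $\|\partial^\alpha\partial_xn\|_2\le\|\partial^\alpha\nabla n\|_2$ gives the bound you wrote.
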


%Proof of Theorem
\subsection{Proof of Theorem \ref{thm:main}}

\begin{proof}[Proof of Theorem \ref{thm:main}]

We introduce the energy functional
\begin{align*}
    E(t) = \| n \|_{Y^{1,2}}^2 + \| c \|_{Y^{1}}^2 + \| \nabla c \|_{Y^{1,2}}^2 + \| u \|_{Y^{1}}^2 + \| \widehat{\omega} \|_{Y^{1,2}}^2 + \varepsilon^3,
\end{align*}
and
\begin{align*}
    D(t) = \| \nabla n \|_{Y^{1,2}}^2 + \varepsilon^2 \| \nabla^2 c \|_{Y^{1,2}}^2 + \varepsilon^2 \| \nabla\widehat{\omega} \|_{Y^{1,2}}^2.
\end{align*}

Combining the estimates of  Propositions~\ref{prop:n} to~\ref{prop:omega} together, letting $m = 2$ and $\sigma, \delta$ small enough, we have
\ben\label{eq:Et}
    \frac{d}{dt} E(t) + D(t) \leq C(t) E(t) + \frac{C'}{\varepsilon^4} E(t)^3,
\een
where 
\begin{align*}
    C(t) = & C \biggl( 1 + \left\|n^a\right\|_{Y_\infty^{1,3}} + \| \nabla n^a \|_{Y^{1,2}_\infty}
    + \left\|u^a\right\|_{Y_\infty^{1,4}} + \left\|\nabla u^a\right\|_{Y_\infty^{1,3}} + \left\|\omega^a\right\|_{Y_\infty^{1,3}}  + \left\|c^a\right\|_{Y_\infty^{1,4}} + \left\|\nabla c^a\right\|_{Y_\infty^{1,3}} \biggr)^6\\
    &+ C\varepsilon  \| fe^{-y}\|^2_{Y^{2,5}} \left( \left\|\partial_y^2 c^a \|_{Y^{1,2}}^2 + \|\partial_y^2 u^a \right\|_{Y^{1,2}}^2 
    % + \left\|\partial_y \omega^a\right\|_{Y^{1,2}}^2 
    \right) \\
    &+ C\varepsilon \| F e^{-y} \|_{Y^{2,2}}^2 + C\big( 1+\|fe^{-y} \|_{Y^{2,5}} \big)^6\\
    &+C\left(\| N \|_{Y^{1,2}}^{2} + \| K \|_{Y^{1}}^2 + \| \nabla K \|_{Y^{1,2}}^2 + \| U \|_{Y^{1}}^2 + \| \nabla^\perp \cdot U \|_{Y^{1,2}}^2\right),
\end{align*}
and we  used
\ben\label{eq:omega-f}
    \| \omega \|_{Y^{1,2}}^2\leq 2\| \widehat{\omega} \|_{Y^{1,2}}^2+2\varepsilon^4\| \partial_xfe^{-y} \|_{Y^{1,2}}^2,
\een
\begin{align*}
    \frac{C'}{\varepsilon^4}\left(2\varepsilon^4\| \partial_xfe^{-y} \|_{Y^{1,2}}^2 \right)^3\leq C\varepsilon^8 \|fe^{-y} \|_{Y^{1,3}}^6,
\end{align*}
and the definition of \eqref{def:conormal sobolev}:
\begin{align*}
   \left\|\cdot\right\|_{Y_\infty^{1}}\leq \left\|\cdot\right\|_{Y_\infty^{1,k}},\quad k\geq 1.
\end{align*}
Then by \eqref{eq:Et} we get
% and Proposition \ref{prop:uniform bounds}, the standard continuity arguments
% (see also \cite{gronwall}) 
% yield that:
\begin{align*}
    &\frac{\frac{d}{dt} E}{E^3} \leq \frac{C(t)}{E^2} + \frac{C'}{\varepsilon^4}, 
    \end{align*}
    and
    % &\Rightarrow - \frac{1}{2} \frac{d}{dt} (E^{-2}) \leq C(t)E^{-2} + \frac{C'}{\varepsilon^4}, \\
    \begin{align*}
    & E(t)^{-2} \geq \left( E(0)^{-2} - \frac{2C'}{\varepsilon^4} \int_0^t e^{2\int_0^\tau C(\xi) d\xi} d\tau \right) e^{-2\int_0^t C(\tau) d\tau}, \end{align*}
 which implies
 \begin{align*}
 E(t) \leq \sqrt{\frac{e^{2\int_0^t C(\tau) d\tau}}{E(0)^{-2} - \frac{2C'}{\varepsilon^4} \int_0^t e^{2\int_0^\tau C(\xi) d\xi} d\tau}}.
\end{align*}
Noting that $E(0) = \varepsilon^3$ 
%(derived from initial value of equations \eqref{eq:error}, \eqref{eq:omega} and \eqref{eq:u_1^b1}) 
and the  integrability of $C(t)$ in  Proposition \ref{prop:uniform bounds}, we deduce that
\begin{align*}
    E \leq \varepsilon^3 \frac{e^{ \int_0^t C(\tau) d \tau}}{\sqrt{1 - 2C' \varepsilon^2 \int_0^t e^{2\int_0^ \tau C(\xi) d\xi} d\tau}}.
\end{align*}
Thus, there exist constants $C,T_* > 0$, both independent of $\varepsilon$, such that
\begin{align*}
    \sup_{0 \leq t \leq T^*} E(t) \leq C\varepsilon^3 \quad \text{for all } \varepsilon \in (0,\varepsilon_0),
\end{align*}
which implies
\ben\label{eq:ncu-uniform}
    \| n \|_{L^\infty_tY^{1,2}}^2 + \| c \|_{L^\infty_tY^{1}}^2 + \| \nabla c \|_{L^\infty_tY^{1,2}}^2 + \| u \|_{L^\infty_tY^{1}}^2 + \| \widehat{\omega} \|_{L^\infty_tY^{1,2}}^2 \leq C\varepsilon^3, \quad  \varepsilon \in (0,\varepsilon_0), ~t \in [0,T_*].
\een
% Furthermore, we have
% \begin{align*}
%     D(t) \leq C(t) E(t) + \frac{C'}{\varepsilon^4} E(t)^3+\varepsilon^3 
% \end{align*}
% Using the integrability of $C(t)$ again,  for $t \in [0,T_*]$ there exists $C$ so that
% \begin{align*}
%     \| D(t) \|_{L_t^2} \leq C\varepsilon^3.
% \end{align*}
% Thus
% \begin{align*}
%     \| \nabla n \|_{L_t^2Y^{1,2}}^2 + \varepsilon^2 \| \nabla^2 c \|_{L_t^2Y^{1,2}}^2 + \varepsilon^2 \| \widehat{\omega} \|_{L_t^2Y^{1,2}}^2 \leq C\varepsilon^3 \quad \text{for all } \varepsilon \in (0,\varepsilon_0).
% \end{align*}
Using \eqref{eq:omega-f},  Lemma \ref{lem:omega} and Proposition \ref{prop:uniform bounds}, we get
\ben\label{eq:u-bounded}
\| {\omega} \|_{L^\infty_tY^{1,2}}^2\leq C,\quad \| \nabla u\|_{L^\infty_tY^{1,2}}^2\leq C.
\een
Finally, it follows from \eqref{eq:u-bounded}, \eqref{eq:ncu-uniform}
and Lemma~\ref{lem:embedding} that
\begin{align*}
    \| (n,c,u) \|_{L^\infty(0,T_*;L_{xy}^\infty)} \leq C\varepsilon^{\frac 3 2}.
\end{align*}
The proof is complete.
\end{proof}

%Construction of the approximate solution
\section{Construction of the approximate solution}\label{approximate solution}

In this section, we will construct the approximate solutions of the chemotaxis Navier-Stokes equations \eqref{eq:CNS} by using the matched asymptotic expansion method(see also \cite{TWZ2020,houwang2019}).

%Asymptotic expansion
\subsection{Asymptotic expansion}

As in \cite{TWZ2020,houwang2019}), it is natural to choose the following ansatz.
%We perform the following asymptotic expansion:
\begin{equation}\label{exp:asymptotic}
    \left\{
        \begin{aligned}
            &n^\varepsilon(t,x,y) = \sum_{j \geq 0} \varepsilon^j \left( n^{e,j} (t,x,y) + n^{b,j}(t,x,z) \right), \\
            &c^\varepsilon(t,x,y) = \sum_{j \geq 0} \varepsilon^j \left( c^{e,j} (t,x,y) + c^{b,j}(t,x,z) \right), \\
            &u^\varepsilon(t,x,y) = \sum_{j \geq 0} \varepsilon^j \left( u^{e,j} (t,x,y) + u^{b,j}(t,x,z) \right), \\
            &p^\varepsilon(t,x,y) = \sum_{j \geq 0} \varepsilon^j \left( p^{e,j} (t,x,y) + p^{b,j}(t,x,z) \right), \\
        \end{aligned}
    \right.
\end{equation}
where $z = \frac y \varepsilon$ and $n^{b,j},c^{b,j},u^{b,j},p^{b,j} $ with their derivatives are vanishing exponentially when $z\to \infty$.

Substituting \eqref{exp:asymptotic} into \eqref{eq:CNS}, we get
% \begin{equation}\label{ebquation}
% \left\{
\begin{align}\label{expanded equations}
    &\partial_t \sum_{j\geq 0} \varepsilon^j (n^{e,j} + n^{b,j}) + \sum_{j\geq 0} \varepsilon^j (u^{e,j} + u^{b,j}) \cdot \nabla \sum_{j\geq 0} \varepsilon^j (n^{e,j} + n^{b,j}) \nonumber \\
    &\quad - \Delta \sum_{j\geq 0} \varepsilon^j (n^{e,j} + n^{b,j}) + \nabla \cdot \left( \sum_{j\geq 0} \varepsilon^j (n^{e,j} + n^{b,j}) \nabla \sum_{j\geq 0} \varepsilon^j (c^{e,j} + c^{b,j}) \right) = 0, \nonumber\\
    &\partial_t \sum_{j\geq 0} \varepsilon^j (c^{e,j} + c^{b,j}) + \sum_{j\geq 0} \varepsilon^j (u^{e,j} + u^{b,j}) \cdot \nabla \sum_{j\geq 0} \varepsilon^j (c^{e,j} + c^{b,j}) \nonumber\\
    &\quad - \varepsilon^2 \Delta \sum_{j\geq 0} \varepsilon^j (c^{e,j} + c^{b,j}) + \sum_{j\geq 0} \varepsilon^j (c^{e,j} + c^{b,j}) \sum_{j\geq 0} \varepsilon^j (n^{e,j} + n^{b,j}) = 0,\\
    &\partial_t \sum_{j\geq 0} \varepsilon^j (u^{e,j} + u^{b,j}) + \sum_{j\geq 0} \varepsilon^j (u^{e,j} + u^{b,j}) \cdot \nabla \sum_{j\geq 0} \varepsilon^j (u^{e,j} + u^{b,j})  \nonumber\\
    &\quad+ \nabla \sum_{j\geq 0} \varepsilon^j (p^{e,j} + p^{b,j}) - \varepsilon^2 \Delta \sum_{j\geq 0} \varepsilon^j (u^{e,j} + u^{b,j}) - \sum_{j\geq 0} \varepsilon^j (n^{e,j} + n^{b,j}) e_2 = 0, \nonumber\\
    &\nabla \cdot \sum_{j\geq 0} \varepsilon^j (u^{e,j} + u^{b,j}) = 0. \nonumber
\end{align}
% \right.
% \end{equation}
Away from the boundary, by taking $z \to \infty$ we get
\begin{align}\label{outer layers}
    &\sum_{j\geq 0} \varepsilon^j \partial_t n^{e,j} + \sum_{j\geq 0} \varepsilon^j u^{e,j} \cdot \sum_{j\geq 0} \varepsilon^j \nabla n^{e,j}  \nonumber + \nabla \cdot \left(\sum_{j\geq 0} \varepsilon^j n^{e,j} \sum_{j\geq 0} \varepsilon^j \nabla c^{e,j} \right) \nonumber \\
    &\quad - \sum_{j\geq 0} \varepsilon^j \Delta n^{e,j}= 0, \nonumber\\
    &\sum_{j\geq 0} \varepsilon^j \partial_t c^{e,j} + \sum_{j\geq 0} \varepsilon^j u^{e,j} \cdot \sum_{j\geq 0} \varepsilon^j \nabla c^{e,j} + \sum_{j\geq 0} \varepsilon^j c^{e,j} \sum_{j\geq 0} \varepsilon^j n^{e,j} \nonumber\\
    &\quad  - \sum_{j\geq 0} \varepsilon^{j+2} \Delta c^{e,j}=0, \\
    &\sum_{j\geq 0} \varepsilon^j \partial_t u^{e,j} + \sum_{j\geq 0} \varepsilon^j u^{e,j} \cdot \sum_{j\geq 0} \varepsilon^j \nabla u^{e,j} + \sum_{j\geq 0} \varepsilon^j \nabla p^{e,j}  - \sum_{j\geq 0} \varepsilon^j  n^{e,j} e_2 \nonumber\\
    &\quad - \sum_{j\geq 0} \varepsilon^{j+2} \Delta u^{e,j} =0, \nonumber\\
    &\sum_{j\geq 0} \varepsilon^j \nabla \cdot u^{e,j} = 0, \nonumber
\end{align}
by noting that $(n^{b,j},c^{b,j},u^{b,j},p^{b,j}) \to 0$ when $z \to \infty$. Therefore, $(n^{e,j},c^{e,j},u^{e,j},p^{e,j})$ with $j \geq 0$ solves
\begin{equation}\label{eq:ej}
    \left\{
        \begin{aligned}
            &\partial_t n^{e,j} + \sum_{k=0}^j u^{e,j-k} \cdot \nabla n^{e,k} - \nabla \cdot \left( \nabla n^{e,j} - \sum_{k=0}^j n^{e,j-k} \nabla c^{e,k} \right) = 0, \\
            &\partial_t c^{e,j} + \sum_{k=0}^j u^{e,j-k} \cdot \nabla c^{e,k} - \Delta c^{e,j-2} = -\sum_{k=0}^j c^{e,j-k} n^{e,k}, \\
            &\partial_t u^{e,j} + \sum_{k=0}^j u^{e,j-k} \cdot \nabla u^{e,k} + \nabla p^{e,j} - \Delta u^{e,j-2} = \begin{pmatrix} 0 \\ n^{e,j} \end{pmatrix}, \\
            &\nabla \cdot u^{e,j} = 0.
        \end{aligned}
    \right.
\end{equation}

To analyze the behavior of the  equations near the boundary, it is necessary to unify the coordinates into $z$ by subtracting the outer layers  \eqref{outer layers} from the expanded equations \eqref{expanded equations}.  Assume that the boundary operator is expressed as $\overline{h} (t,x) = h(t,x,y)|_{y=0}$, and a Taylor expansion of $h (t,x,y)$ near $y=0$ is as follows:
\begin{align}\label{exp:taylor}
    h (t,x,y)= \sum_{l \geq 0} \varepsilon^l \frac{z^l}{l!} \overline{ \partial_y^l h} (t,x).
\end{align}
Moreover, define \begin{equation}\label{def:Ej}
    \left\{
    \begin{aligned}
        &n^{E,j} = \sum_{l = 0}^j \frac{z^l}{l!} \overline{ \partial_y^{l} n^{e,j-l}},\\
        &c^{E,j} = \sum_{l = 0}^j \frac{z^l}{l!} \overline{ \partial_y^{l} c^{e,j-l}},\\
        &u^{E,j} = \sum_{l = 0}^j \frac{z^l}{l!} \overline{ \partial_y^{l} u^{e,j-l}}.
    \end{aligned}
    \right.
\end{equation}
Then we have the following lemma.
\begin{lemma}\label{lem:layer formula} There holds
\begin{align*}
    &\sum_{j\geq 0} \varepsilon^j \partial_t n^{b,j} + \sum_{j\geq -1} \varepsilon^j \sum_{k=0}^{j+1} \left( (u^{E,j-k})_p \cdot \widehat{\nabla} n^{b,k} + (u^{b,j-k})_p \cdot \widehat{\nabla} n^{E,k} + (u^{b,j-k})_p \cdot \widehat{\nabla} n^{b,k} \right) \\
    &- \sum_{j\geq -2} \varepsilon^j \widehat{\nabla} \cdot \left( \widehat{\nabla} n^{b,j} - \sum_{k=0}^j \left( n^{E,j-k} \widehat{\nabla} c^{b,k} + n^{b,j-k} \widehat{\nabla} c^{E,k} + n^{b,j-k} \widehat{\nabla} c^{b,k} \right) \right)_{pp} = 0,\\
    &\sum_{j\geq 0} \varepsilon^j \partial_t c^{b,j} + \sum_{j\geq -1} \varepsilon^j \sum_{k=0}^{j+1} \left( (u^{E,j-k})_p \cdot \widehat{\nabla} c^{b,k} + (u^{b,j-k})_p \cdot \widehat{\nabla} c^{E,k} + (u^{b,j-k})_p \cdot \widehat{\nabla} c^{b,k} \right) \\
    &- \sum_{j\geq 0} \varepsilon^j \widehat{\nabla} \cdot (\widehat{\nabla} c^{b,j-2})_{pp} = - \sum_{j\geq 0} \varepsilon^j \sum_{k=0}^j \left( c^{E,j-k} n^{b,k} + c^{b,j-k} n^{E,k} + c^{b,j-k} n^{b,k} \right),\\
    &\sum_{j\geq 0} \varepsilon^j \partial_t u^{b,j} + \sum_{j\geq -1} \varepsilon^j \sum_{k=0}^{j+1} \left( (u^{E,j-k})_p \cdot \widehat{\nabla} u^{b,k} + (u^{b,j-k})_p \cdot \widehat{\nabla} u^{E,k} + (u^{b,j-k})_p \cdot \widehat{\nabla} u^{b,k} \right) \\
    &+ \sum_{j\geq -1} \varepsilon^j (\widehat{\nabla} p^{b,j})_p - \sum_{j\geq 0} \varepsilon^j \widehat{\nabla} \cdot (\widehat{\nabla} u^{b,j-2})_{pp} = \sum_{j\geq 0} \varepsilon^j  n^{b,j} e_2,\\
    &\sum_{j\geq -1} \varepsilon^j \widehat{\nabla} \cdot (u^{b,j})_p = 0,
\end{align*}
where for a vector-valued function $(h_1(\cdot,j), h_2(\cdot,j))$ we write \begin{equation}\label{def:()_p}
    \binom{h_1(\cdot,j)}{h_2(\cdot,j)}_p = \binom{h_1(\cdot,j)}{h_2(\cdot,j+1)},~\binom{h_1(\cdot,j)}{h_2(\cdot,j)}_{pp} = \binom{h_1(\cdot,j)}{h_2(\cdot,j+2)},
\end{equation}
and for a function let $h(\cdot,j) = 0$ when $j < 0$ (for example, $h(\cdot,j)=n^{b,j}, c^{b,j}, u_1^{e,j},  u_1^{b,j}$ etc).
\end{lemma}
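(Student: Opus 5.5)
The plan is a direct bookkeeping computation. We substitute the ansatz \eqref{exp:asymptotic} into \eqref{expanded equations}, subtract the outer-layer identities \eqref{outer layers}, and then rewrite everything in the variables $(t,x,z)$.

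\textbf{Step 1: change of variables and Taylor expansion.} For boundary-layer profiles we use $\partial_y=\varepsilon^{-1}\partial_z$, so that
\[
\nabla h^{b,j}=\binom{\partial_x}{\varepsilon^{-1}\partial_z}h^{b,j}, \qquad \Delta h^{b,j}=\partial_x^2h^{b,j}+\varepsilon^{-2}\partial_z^2h^{b,j}.
\]
Every outer quantity that multiplies a boundary-layer term is replaced by its Taylor expansion \eqref{exp:taylor} at $y=0$, with $y=\varepsilon z$. Regrouping the double sum $\sum_j\varepsilon^j h^{e,j}(t,x,\varepsilon z)$ by total powers of $\varepsilon$ gives exactly $\sum_j\varepsilon^j h^{E,j}$ by \eqref{def:Ej}. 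Derivatives of outer terms behave consistently: $\partial_y h^{e,j}$ expands as $\varepsilon^{-1}\partial_z$ applied to the Taylor series, up to the appropriate shift of index.

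\textbf{Step 2: separating the nonlinear terms.} Each nonlinear product, say $u\cdot\nabla n$, is split as
\[
(u^e+u^b)\cdot\nabla(n^e+n^b)-u^e\cdot\nabla n^e = u^e\cdot\nabla n^b+u^b\cdot\nabla n^e+u^b\cdot\nabla n^b .
\]
The subtracted term $u^e\cdot\nabla n^e$ is precisely what \eqref{outer layers} removes. In the remaining three products we write
\[
u\cdot\nabla h=u_1\partial_x h+\varepsilon^{-1}u_2\partial_z h .
\]
The factor $\varepsilon^{-1}$ attached to the second component means that the coefficient of $\varepsilon^j$ contains $u_2^{\cdot,j+1}\partial_z h^{\cdot,k}$ together with $u_1^{\cdot,j}\partial_x h^{\cdot,k}$. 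This is exactly the shift encoded by $(\cdot)_p$, and it explains why the convective sums start at $j=-1$ and why the inner index runs over $k\le j+1$ after collecting Cauchy products.

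The remaining terms are handled the same way:
\begin{itemize}
\item The diffusion term $\Delta n$ and the chemotactic flux $\nabla\cdot(n\nabla c)$ carry $\varepsilon^{-2}$ in their $z$–$z$ parts. This gives the $(\cdot)_{pp}$ shift and the sums starting at $j=-2$.
\item The $\varepsilon^2\Delta c$ and $\varepsilon^2\Delta u$ terms carry $\varepsilon^{2}\cdot\varepsilon^{-2}$, which shows up as the index $j-2$ combined with $(\cdot)_{pp}$.
\item The pressure gradient $\nabla p$ contributes $(\widehat\nabla p^{b,j})_p$, starting at $j=-1$.
\item The divergence-free condition similarly gives $\sum_j\varepsilon^j\widehat\nabla\cdot(u^{b,j})_p$.
\item The zero-order terms, namely $cn$ and $ne_2$, need no shift.
\end{itemize}

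\textbf{Step 3: collecting powers of $\varepsilon$.} Cauchy products of the form $\sum_a\varepsilon^a A_a\sum_b\varepsilon^b B_b$ are reorganised into $\sum_j\varepsilon^j\sum_k A_{j-k}B_k$. The convention $h(\cdot,j)=0$ for $j<0$ allows the sums to be written uniformly.

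The main obstacle is this index bookkeeping. In particular, one must check that combining the Taylor shift in $h^{E,\cdot}$ with the $\varepsilon^{-1}$ shift in $u_2$ yields exactly the stated ranges, $k$ from $0$ to $j+1$ for convection and $k\le j$ inside the flux, without double counting. I would verify this by checking the lowest orders $j=-2,-1,0$ explicitly and then arguing by the general pattern.
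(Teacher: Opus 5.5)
Your proposal is correct and is essentially the paper's argument: the paper likewise subtracts the outer equations, uses $\sum_j\varepsilon^j h^{e,j}=\sum_j\varepsilon^j h^{E,j}$ together with $\partial_y=\varepsilon^{-1}\partial_z$, and collects Cauchy products. The only organizational difference is that the paper first proves four generic identities, each written as ``full minus outer'', for $hg$, $h\partial_y g$, $\partial_y(h\partial_y g)$ and $\partial_y^2 h$. All four equations then follow from these identities, including the index ranges $k\le j+1$ and $j\ge -1,-2$, so your separate check of the orders $j=-2,-1,0$ is not needed.
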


\begin{proof}
For a function $h^{e,j}(t,x,y)$,  by \eqref{def:Ej} noting that 
\begin{align*}
    &\sum_{j \ge 0} \varepsilon^j h^{e,j} = \sum_{j \ge 0} \varepsilon^j\sum_{l \ge 0} \varepsilon^l \frac{z^l}{l!} \overline{\partial_y^l h^{e,j}} =\sum_{j \ge 0} \varepsilon^j \sum_{l=0}^j \frac{z^l}{l!} \overline{\partial_y^l h^{e,j-l}} = \sum_{j \ge 0} \varepsilon^j h^{E,j}, 
\end{align*}
and
% \begin{align*}
%     &\partial_y^k \sum_{j \ge 0} \varepsilon^j h^{e,j} 
%     =\sum_{j \ge 0} \varepsilon^j \sum_{l=0}^j \frac{z^l}{l!} \overline{\partial_y^{l+k} h^{e,j-l}} 
%     =\sum_{j \ge 0} \varepsilon^j \partial_z^{k}\sum_{l=-k}^j \frac{z^{l+k}}{(l+k)!} \overline{\partial_y^{l+k} h^{e,j-l}} 
%     = \sum_{j \ge 0} \varepsilon^j \partial_z^k h^{E,j+k}, 
% \end{align*}
then we have\\
(a)
\begin{align*}
    &\sum_{j\geq 0} \varepsilon^j (h^{e,j} + h^{b,j}) \sum_{j\geq 0} \varepsilon^j (g^{e,j} + g^{b,j})- \sum_{j\geq 0} \varepsilon^j h^{e,j}  \sum_{j\geq 0} \varepsilon^j g^{e,j} \\
    % &\qquad =\sum_{j\geq 0} \varepsilon^j \sum_{k=0}^j h^{e,j-k}g^{b,k}+\sum_{j\geq 0} \varepsilon^j \sum_{k=0}^j h^{b,j-k}g^{b,k}+\sum_{j\geq 0} \varepsilon^j \sum_{k=0}^j h^{b,j-k}g^{e,k}\\\
    % &\qquad =\sum_{j\geq 0} \varepsilon^j \sum_{k=0}^j \sum_{l=0}^j\varepsilon^l \frac{z^l}{l!} \overline{\partial_y^l h^{e,j-k}}g^{b,k}+\sum_{j\geq 0} \varepsilon^j \sum_{k=0}^j h^{b,j-k}g^{b,k}+\sum_{j\geq 0} \varepsilon^j \sum_{k=0}^j h^{b,j-k}g^{e,k}\\
    &\qquad =\sum_{j\geq 0} \varepsilon^j \sum_{k=0}^j \left( h^{E,j-k} g^{b,k} + h^{b,j-k} g^{E,k} + h^{b,j-k} g^{b,k} \right),
\end{align*}
(b)
\begin{align*}
    % (a)~~~&\sum_{j\geq 0} \varepsilon^j (h^{e,j} + h^{b,j}) 
    &\sum_{j\geq 0} \varepsilon^j (h^{e,j} + h^{b,j}) \partial_y \sum_{j\geq 0} \varepsilon^j (g^{e,j} + g^{b,j}) - \sum_{j\geq 0} \varepsilon^j h^{e,j}  \partial_y \sum_{j\geq 0} \varepsilon^j g^{e,j} \\
     &\qquad=\sum_{j\geq 0} \varepsilon^j h^{E,j}  \partial_z \sum_{j\geq 0} \varepsilon^{j-1} g^{b,j}+\sum_{j\geq 0} \varepsilon^j h^{b,j}  \partial_z \sum_{j\geq 0} \varepsilon^{j-1} g^{E,j} + \sum_{j\geq 0} \varepsilon^j h^{b,j} \partial_z \sum_{j\geq 0} \varepsilon^{j-1} g^{b,j} \\
    &\qquad =\sum_{j\geq -1} \varepsilon^{j} \sum_{k=0}^{j+1} \left( h^{E,{j+1}-k} \partial_z g^{b,k} + h^{b,{j+1}-k} \partial_z g^{E,k} + h^{b,{j+1}-k} \partial_zg^{b,k} \right),
    % (c)~~~&\partial_y \left( \sum_{j\geq 0} \varepsilon^j (h^{e,j} + h^{b,j}) \partial_y \sum_{j\geq 0} \varepsilon^j (g^{e,j} + g^{b,j}) \right)- \partial_y \left( \sum_{j\geq 0} \varepsilon^j h^{e,j}  \partial_y \sum_{j\geq 0} \varepsilon^j g^{e,j} \right) \\
    % &\qquad =\partial_y \left( \sum_{j\geq 0} \varepsilon^j h^{e,j}  \partial_y \sum_{j\geq 0} \varepsilon^j g^{b,j} \right)+\partial_y \left( \sum_{j\geq 0} \varepsilon^j h^{b,j}  \partial_y \sum_{j\geq 0} \varepsilon^j g^{e,j} \right)+\partial_y \left( \sum_{j\geq 0} \varepsilon^j h^{b,j}  \partial_y \sum_{j\geq 0} \varepsilon^j g^{b,j} \right)\\
    % &\qquad =\sum_{j\geq -2} \varepsilon^{j} \partial_z \sum_{k=0}^{j+2} \left( h^{E,{j+2}-k} \partial_z g^{b,k} + h^{b,{j+2}-k} \partial_z g^{E,k} + h^{b,{j+2}-k} \partial_zg^{b,k} \right),\\
    % (d)~~~&\partial_y^2 \sum_{j \ge 0} \varepsilon^j (h^{e,j}+h^{b,j}) - \partial_y^2 \sum_{j \ge 0} \varepsilon^j h^{e,j} = \sum_{j \ge -2} \varepsilon^j \partial_z^2 h^{b,j}.
\end{align*}
(c)
\begin{align*}
    &\partial_y \left( \sum_{j\geq 0} \varepsilon^j (h^{e,j} + h^{b,j}) \partial_y \sum_{j\geq 0} \varepsilon^j (g^{e,j} + g^{b,j}) \right)- \partial_y \left( \sum_{j\geq 0} \varepsilon^j h^{e,j}  \partial_y \sum_{j\geq 0} \varepsilon^j g^{e,j} \right) \\
    &\qquad =\partial_y \left( \sum_{j\geq 0} \varepsilon^j h^{E,j}  \partial_y \sum_{j\geq 0} \varepsilon^j g^{b,j} \right)+\partial_y \left( \sum_{j\geq 0} \varepsilon^j h^{b,j}  \partial_y \sum_{j\geq 0} \varepsilon^j g^{E,j} \right)+\partial_y \left( \sum_{j\geq 0} \varepsilon^j h^{b,j}  \partial_y \sum_{j\geq 0} \varepsilon^j g^{b,j} \right)\\
    &\qquad =\sum_{j\geq -2} \varepsilon^{j} \partial_z \sum_{k=0}^{j+2} \left( h^{E,{j+2}-k} \partial_z g^{b,k} + h^{b,{j+2}-k} \partial_z g^{E,k} + h^{b,{j+2}-k} \partial_zg^{b,k} \right),
    % (d)~~~&\partial_y^2 \sum_{j \ge 0} \varepsilon^j (h^{e,j}+h^{b,j}) - \partial_y^2 \sum_{j \ge 0} \varepsilon^j h^{e,j} = \sum_{j \ge -2} \varepsilon^j \partial_z^2 h^{b,j}.
\end{align*}
and 
(d)
\begin{align*}
    &\partial_y^2 \sum_{j \ge 0} \varepsilon^j (h^{e,j}+h^{b,j}) - \partial_y^2 \sum_{j \ge 0} \varepsilon^j h^{e,j} = \sum_{j \ge -2} \varepsilon^j \partial_z^2 h^{b,j}.
\end{align*}
Applying the above equalities, the proof of  Lemma \ref{lem:layer formula} is complete.
\end{proof}

It follows from Lemma \ref{lem:layer formula} immediately that
\begin{equation}\label{eq:bj}
    \left\{
        \begin{aligned}
            &\partial_t n^{b,j} + \sum_{k=0}^{j+1} \left( (u^{E,j-k})_p \cdot \widehat{\nabla} n^{b,k} + (u^{b,j-k})_p \cdot \widehat{\nabla} n^{E,k} + (u^{b,j-k})_p \cdot \widehat{\nabla} n^{b,k} \right) \\
            &\quad- \widehat{\nabla} \cdot \left( \widehat{\nabla} n^{b,j} - \sum_{k=0}^j \left( n^{E,j-k} \widehat{\nabla} c^{b,k} + n^{b,j-k} \widehat{\nabla} c^{E,k} + n^{b,j-k} \widehat{\nabla} c^{b,k} \right) \right)_{pp} = 0,\quad j\geq -2 \\
            &\partial_t c^{b,j} + \sum_{k=0}^{j+1} \left( (u^{E,j-k})_p \cdot \widehat{\nabla} c^{b,k} + (u^{b,j-k})_p \cdot \widehat{\nabla} c^{E,k} + (u^{b,j-k})_p \cdot \widehat{\nabla} c^{b,k} \right) \\
            &\quad- \widehat{\nabla} \cdot (\widehat{\nabla} c^{b,j-2})_{pp} = -\sum_{k=0}^j \left( c^{E,j-k} n^{b,k} + c^{b,j-k} n^{E,k} + c^{b,j-k} n^{b,k} \right),\quad  j\geq -1\\
            &\partial_t u^{b,j} + \sum_{k=0}^{j+1} \left( (u^{E,j-k})_p \cdot \widehat{\nabla} u^{b,k} + (u^{b,j-k})_p \cdot \widehat{\nabla} u^{E,k} + (u^{b,j-k})_p \cdot \widehat{\nabla} u^{b,k} \right) \\
            &\quad + (\widehat{\nabla} p^{b,j})_p- \widehat{\nabla} \cdot (\widehat{\nabla} u^{b,j-2})_{pp} =  n^{b,j} e_2, \quad j\geq -1\\
            &\widehat{\nabla} \cdot (u^{b,j})_p = 0, \quad j\geq -1.
        \end{aligned}
    \right.
\end{equation}
%where we define the outer mixing term: 
Moreover,
for the boundary conditions, plugging \eqref{exp:asymptotic} into \eqref{eq:CNS}$_{5}$, we get
\begin{equation}\label{bound}
    \left\{
        \begin{aligned}
            &\partial_y n^{e,j} + \partial_z n^{b,j+1} = 0\quad \text{on}~ y=z=0, \\
            &\partial_y c^{e,j} + \partial_z c^{b,j+1} = 0\quad \text{on}~ y=z=0, \\
            &\partial_y u_1^{e,j} + \partial_z u_1^{b,j+1} = 0\quad \text{on}~ y=z=0, \\
            &u_2^{e,j} + u_2^{b,j}=0\quad \text{on}~ y=z=0,
        \end{aligned}
    \right.
\end{equation}
where $j\geq -1.$
% and
% \begin{align}
%     (n^{e,0}, c^{e,0}, u^{e,0})|_{t=0}=(
% \end{align}

%Derivation of the matching terms
\subsection{Derivation of the matching terms}
We will follow a step-by-step procedure.

\textbf{Step I.  The order $O(\varepsilon^{-2})$ terms.} Taking $j=-2$ in the system \eqref{eq:bj}, and noting  that $\partial_z c^{E,0}=0$ due to \eqref{def:Ej} we have
%From the order $O(\varepsilon^{-2})$ terms o, we get:
\beno
        -\partial_z \left(\partial_z n^{b,0} - n^{E,0}\partial_z c^{b,0}  -n^{b,0}\partial_z c^{b,0} \right) = 0, 
    \eeno
which implies that
\begin{equation}\label{eq:partial_z n^b0}
    \begin{aligned}
        & \partial_z n^{b,0} - n^{E,0}\partial_z c^{b,0}  -n^{b,0}\partial_z c^{b,0}  = 0, 
    \end{aligned}
\end{equation}
by noting that the vanishing exponential decay of the above boundary-layer functions at $\infty.$
Using \eqref{eq:partial_z n^b0}, it follow that $\partial_z (n^{E,0} + n^{b,0} ) = (n^{E,0} + n^{b,0} ) \partial_z c^{b,0}$. Then by solving first-order linear ordinary differential equations,
we have
\begin{equation}\label{eq:n^b0}
    n^{b,0} = n^{E,0}\left( e^{c^{b,0}} - 1 \right),
\end{equation}
since $n^{b,0}, c^{b,0}$ decay fastly when $z\to \infty$.

\textbf{Step II. The order $O(\varepsilon^{-1})$ terms.} Taking $j=-1$ in the system \eqref{eq:bj}, and using \eqref{def:Ej} again we have
\begin{equation}\label{eq:b-1}
    \left\{
        \begin{aligned}
            &u_2^{E,0} \partial_z n^{b,0}  + u_2^{b,0} \partial_z n^{b,0}  \\
            &\quad -\partial_z \left( \partial_z n^{b,1} - \sum_{k=0}^1 \left( n^{E,1-k} \partial_z c^{b,k} + n^{b,1-k} \partial_z c^{E,k} + n^{b,1-k} \partial_z c^{b,k} \right) \right) = 0, \\
            &u_2^{E,0} \partial_z c^{b,0} + u_2^{b,0} \partial_z c^{b,0} = 0, \\
            &u_2^{E,0} \partial_z u^{b,0}  + u_2^{b,0} \partial_z u^{b,0} + \begin{pmatrix} 0 \\ \partial_z p^{b,0} \end{pmatrix} = 0, \\
            &\partial_z u_2^{b,0} = 0,
        \end{aligned}
    \right.
\end{equation}
which implies 
\begin{equation}\label{u_2^b0=0}
    u_2^{b,0} \equiv 0,
\end{equation}
since  $\lim_{z\to \infty} u_2^{b,0} = 0$. Using \eqref{bound}, we have
\begin{equation}\label{bound:u_2^e0}
    u_2^{E,0} =\overline{u_2^{e,0}}\equiv 0,
\end{equation}
and it follows from \eqref{eq:b-1}$_3$ that 
\begin{equation}\label{p^b0=0}
    p^{b,0} \equiv 0.
\end{equation}
Using \eqref{eq:b-1}$_1$, \eqref{u_2^b0=0} and \eqref{bound:u_2^e0} we have
\begin{equation*}
    \partial_z \left( \partial_z n^{b,1} - \sum_{k=0}^1 \left( n^{E,1-k} \partial_z c^{b,k} + n^{b,1-k} \partial_z c^{E,k} + n^{b,1-k} \partial_z c^{b,k} \right) \right) = 0,
\end{equation*}
which implies
\ben\label{eq:partial_z n^b1}
    \partial_z n^{b,1} - \sum_{k=0}^1 \left( n^{E,1-k} \partial_z c^{b,k} + n^{b,1-k} \partial_z c^{E,k} + n^{b,1-k} \partial_z c^{b,k} \right)= 0.
\een
Using \eqref{bound} with $j=-1$, we have
\ben\label{bound:ncu^b0}
        \partial_z n^{b,0} = \partial_z c^{b,0} = \partial_z u_1^{b,0} = 0 \quad \text{on}~ z=0.
    \een
    Letting $z=0$ in \eqref{eq:partial_z n^b1}, by \eqref{bound:ncu^b0} we have
    \begin{align*}
        \partial_z n^{b,1} = n^{E,0}\partial_z c^{b,1} + n^{b,0}(\overline{\partial_y c^{e,0}} + \partial_z c^{b,1}) \quad \text{on}~z=0,
    \end{align*}
which and  \eqref{bound} with $j=0$ yield that
\ben\label{bound:n^e0}
    \partial_y n^{e,0} -n^{e,0}\partial_y c^{e,0} = 0 \quad \text{on}~ y = 0.
\een

    % First, integrating $z$ from $z$ to $\infty$, we have
    % \begin{align*}
    %     \partial_z n^{b,1} - \sum_{k=0}^1 n^{E,1-k} \partial_z c^{b,k} + n^{b,1-k} \partial_z c^{E,k} + n^{b,1-k} \partial_z c^{b,k} = 0,
    % \end{align*}
    % provided the decay conditions on $n^{b,0}, \partial_z n^{b,1},z\partial_z  c^{b,0}$ and $\partial_zc^{b,1}$. 

\textbf{Step III.  The order $O(\varepsilon^{0})$ terms.} It follows from \eqref{eq:ej} with $j=0$, \eqref{bound:u_2^e0} and \eqref{bound:n^e0} that
\begin{equation}\label{eq:e0}
    \left\{
        \begin{aligned}
            &\partial_t n^{e,0} + u^{e,0} \cdot \nabla n^{e,0} - \nabla \cdot \left( \nabla n^{e,0} - n^{e,0} \nabla c^{e,0} \right) = 0, \\
            &\partial_t c^{e,0} + u^{e,0} \cdot \nabla c^{e,0} + c^{e,0} n^{e,0} = 0, \\
            &\partial_t u^{e,0} + u^{e,0} \cdot \nabla u^{e,0} + \nabla p^{e,0} = \begin{pmatrix} 0 \\ n^{e,0} \end{pmatrix}, \\
            &\nabla \cdot u^{e,0} = 0, \\
            &\partial_y n^{e,0} -n^{e,0}\partial_y c^{e,0} = 0, \quad u_2^{e,0} = 0 \quad \text{on}~ y = 0\\
            &(n^{e,0},c^{e,0},u^{e,0}) (0,x,y) = (n_{in},c_{in},u_{in}).
        \end{aligned}
    \right.
\end{equation}
By the uniqueness of the solution, we conclude that this coincides with the equation \eqref{eq:epsilon=0} when $\varepsilon = 0$.

For the boundary layer analysis, 
taking $j=0$ in the system \eqref{eq:bj} we get
% \begin{equation}\label{展开_0e}
%     \left\{
%         \begin{aligned}
%             &\partial_t n^{e,0} + u^{e,0} \cdot \nabla n^{e,0} - \nabla \cdot \left( \nabla n^{e,0} - n^{e,0} \nabla c^{e,0} \right) = 0, \\
%             &\partial_t c^{e,0} + u^{e,0} \cdot \nabla c^{e,0} + c^{e,0} n^{e,0} = 0, \\
%             &\partial_t u^{e,0} + u_1^{e,0} \cdot \partial_x u^{e,0} + u_2^{e,0} \cdot \partial_y u^{e,0} + \nabla p^{e,0} = \begin{pmatrix} 0 \\ n^{e,0} \end{pmatrix}, \\
%             &\nabla \cdot u^{e,0} = 0,
%         \end{aligned}
%     \right.
% \end{equation}
% and

% So, the $O(\varepsilon^0)$-order term of the outer solution satisfies the equations:
\begin{equation}\label{eq:b0}
    \left\{
        \begin{aligned}
            &\partial_t n^{b,0} + \sum_{k=0}^1 \left( (u^{E,-k})_p \cdot \widehat{\nabla} n^{b,k} + (u^{b,-k})_p \cdot \widehat{\nabla} n^{E,k} + (u^{b,-k})_p \cdot \widehat{\nabla} n^{b,k} \right) \\
            % &-\widehat{\nabla} \cdot \left( \widehat{\nabla} n^{b,0} - \sum_{k=0}^2 \left( n^{E,0-k} \widehat{\nabla} c^{b,k} + n^{b,0-k} \widehat{\nabla} c^{E,k} + n^{b,0-k} \widehat{\nabla} c^{b,k} \right) \right)_{pp} = 0,\\
            &\quad- \partial_x\left( \partial_x n^{b,0} - \left( n^{E,0} \partial_x c^{b,0} + n^{b,0} \partial_x c^{E,0} + n^{b,0} \partial_x c^{b,0} \right) \right) \\
            &\quad -\partial_z \left( \partial_z n^{b,2} - \sum_{k=0}^2 \left( n^{E,2-k} \partial_z c^{b,k} + n^{b,2-k} \partial_z c^{E,k} + n^{b,2-k} \partial_z c^{b,k} \right) \right) = 0, \\
            &\partial_t c^{b,0} + \sum_{k=0}^1 \left( (u^{E,-k})_p \cdot \widehat{\nabla} c^{b,k} + (u^{b,-k})_p \cdot \widehat{\nabla} c^{E,k} + (u^{b,-k})_p \cdot \widehat{\nabla} c^{b,k} \right) - \partial_z^2 c^{b,0} \\
            &\quad = - \left( c^{E,0} n^{b,0} + c^{b,0} n^{E,0} + c^{b,0} n^{b,0} \right), \\
            &\partial_t u^{b,0} + \sum_{k=0}^1 \left( (u^{E,-k})_p \cdot \widehat{\nabla} u^{b,k} + (u^{b,-k})_p \cdot \widehat{\nabla} u^{E,k} + (u^{b,-k})_p \cdot \widehat{\nabla} u^{b,k} \right) \\
            &\quad + (\widehat{\nabla} p^{b,0})_p - \partial_z^2 u^{b,0}
            = \begin{pmatrix} 0 \\ n^{b,0} \end{pmatrix}, \\
            &\widehat{\nabla} \cdot (u^{b,0})_p = 0,
        \end{aligned}
    \right.
\end{equation}
% with

% \begin{equation}
%     \left\{
%         \begin{aligned}
%             &\partial_y n^{e,0} + \partial_z n^{b,1} = 0 \quad \text{on}~ y=z=0, \\
%             &\partial_y c^{e,0} + \partial_z c^{b,1} = 0 \quad \text{on}~ y=z=0, \\
%             &\partial_y u_1^{e,0} + \partial_z u_1^{b,1} = 0 \quad \text{on}~ y=z=0, \\
%             &u_2^{e,0} + u_2^{b,0} = 0 \quad \text{on}~ y=z=0.
%         \end{aligned}
%     \right.
% \end{equation}

Combining \eqref{eq:b0}$_3$, \eqref{eq:b0}$_4$, \eqref{u_2^b0=0}, \eqref{p^b0=0} and \eqref{bound},  $(u_1^{b,0}, u_2^{b,1})$ satisfies the following equations:
\begin{equation}\label{eq:u_1^b0}
    \left\{
        \begin{aligned}
            &\partial_t u_1^{b,0} + \left( (u^{E,0})_p \cdot \widehat{\nabla} u_1^{b,0} + u_1^{b,0} \partial_x u_1^{E,0} + (u^{b,0})_p \cdot \widehat{\nabla} u_1^{b,0} \right) - \partial_z^2 u_1^{b,0} = 0,\\
            &\partial_x u_1^{b,0} + \partial_z u_2^{b,1} = 0,\\
            &\partial_z u_1^{b,0} = 0, \quad \text{on}~ z=0, \\
            &\lim_{z\to \infty} u_1^{b,0} = \lim_{z\to \infty} u_2^{b,1} =0,
        \end{aligned}
    \right.
\end{equation}
with the initial data $u_1^{b,0}(0,x,y)=0.$
By the uniqueness of the solution and the energy method with \eqref{bound}, we have
\begin{equation}\label{u_1^b0=u_2^b1=0}
    u_1^{b,0} = u_2^{b,1} = 0,
\end{equation}
which along with \eqref{u_2^b0=0}, \eqref{eq:n^b0} and \eqref{bound} implies that $c^{b,0}$ satisfies:
\begin{equation}\label{eq:c^b0}
    \left\{
        \begin{aligned}
            &\partial_t c^{b,0} + (u^{E,0})_p \cdot \widehat{\nabla} c^{b,0} - \partial_z^2 c^{b,0} = - \left( c^{E,0} n^{b,0} + c^{b,0} n^{E,0} + c^{b,0} n^{b,0} \right),\\
            &n^{b,0} = n^{E,0}\left( e^{c^{b,0}} - 1 \right),\\
            &\partial_z c^{b,0} = 0 \quad \text{on}~ z=0,\\
            &\lim_{z\to \infty} c^{b,0} = 0,
        \end{aligned}
    \right.
\end{equation}
with the initial data $c^{b,0}(0,x,y)=0.$ There exists a local non-negative solution of \eqref{eq:c^b0} by the standard contraction mapping principle in $L^\infty  L^2\cap L^\infty$. Indeed, 
 taking the $L^2$ inner product with $c^{b,0}$ on both sides of \eqref{eq:c^b0}$_1$, we have
    \begin{align*}
        &\frac{1}{2} \frac{d}{dt} \|c^{b,0}\|_2^2 + \|\partial_z c^{b,0}\|_2^2 \\
        &\quad \le - \int_R n^{E,0} c^{E,0} \int_0^\infty \left(e^{c^{b,0}} - 1\right) c^{b,0} \, dy dx - \int_R n^{E,0} \int_0^\infty e^{c^{b,0}} \left(c^{b,0}\right)^2 \, dy dx,
    \end{align*}
    due to \(n_{in},c_{in}\geq 0\) and \(n^{e,0},c^{e,0}\geq 0\) by \eqref{eq:e0}, 
    which implies 
%     Due to \(n^0,c^0>0\), we have \(n^{e,0},c^{e,0}>0\).
% Then we have
\begin{equation}\label{nc^b0=0}
    c^{b,0} = 0, n^{b,0} = 0.
\end{equation}
Using \eqref{nc^b0=0},  \eqref{u_2^b0=0}, \eqref{bound:u_2^e0}, \eqref{u_1^b0=u_2^b1=0} and \eqref{eq:b0} we get
\beno -\partial_z \left( \partial_z n^{b,2} - \sum_{k=0}^2 \left( n^{E,2-k} \partial_z c^{b,k} + n^{b,2-k} \partial_z c^{E,k} + n^{b,2-k} \partial_z c^{b,k} \right) \right) = 0\eeno
Integrating both sides of the equation with respect to $z$ from $z$ to $\infty$, we have
\ben\label{eq:partial_z n^b2}
    \partial_z n^{b,2} = \left( n^{E,1} \partial_z c^{b,1} + n^{b,1} \partial_z c^{E,1} + n^{b,1} \partial_z c^{b,1} \right) + n^{E,0} \partial_z c^{b,2}.
\een
Letting $z=0$ and using \eqref{bound} and \eqref{def:Ej}, we can deduce:
\ben\label{bound:n^e1}
    \partial_y n^{e,1} = n^{e,1} \partial_y c^{e,0} + n^{e,0} \partial_y c^{e,1} \quad \text{on}~ y=0.
\een

% \begin{remark}
%     See the appendix for the proof.
% \end{remark}
Using \eqref{nc^b0=0} again, recall 
  \eqref{eq:b0}$_3$ and we have $p^{b,1}$ satisfies:
\begin{equation*}
    \left\{
        \begin{aligned}
            &\partial_z p^{b,1} = 0,\\
            &\lim_{z\to \infty} p^{b,1} = 0,
        \end{aligned}
    \right.
\end{equation*}
which implies
\ben\label{p^b1=0}
    p^{b,1} = 0.
\een
Collecting this, \eqref{u_2^b0=0}, \eqref{p^b0=0}, \eqref{u_1^b0=u_2^b1=0},  \eqref{nc^b0=0}, and \eqref{p^b1=0}, we have 
\ben\label{eq:vanishing-layer}
u_2^{b,0}=u_1^{b,0}=u_2^{b,1}=p^{b,0}=p^{b,1}=n^{b,0}=
c^{b,0}=0.
\een

\textbf{Step IV. The order $O(\varepsilon^{1})$ terms.} 

% From the order $O(\varepsilon^{1})$ terms, we get:
% So, the $O(\varepsilon^1)$-order term of the outer solution satisfies the equations:
First, for the domain far from the boundary,  it follows from \eqref{eq:ej} and \eqref{bound:n^e1} that  
\begin{equation}\label{eq:e1}
    \left\{
        \begin{aligned}
            &\partial_t n^{e,1} + \sum_{k=0}^1 u^{e,1-k} \cdot \nabla n^{e,k} - \nabla \cdot \left( \nabla n^{e,1} - \sum_{k=0}^1 n^{e,1-k} \nabla c^{e,k} \right) = 0, \\
            &\partial_t c^{e,1} + \sum_{k=0}^1 u^{e,1-k} \cdot \nabla c^{e,k} + \sum_{k=0}^1 c^{e,1-k} n^{e,k} = 0, \\
            &\partial_t u^{e,1} + \sum_{k=0}^1 u^{e,k} \cdot \nabla u^{e,1-k} + \nabla p^{e,1} = \begin{pmatrix} 0 \\ n^{e,1} \end{pmatrix}, \\
            &\nabla \cdot u^{e,1} = 0, \\
            &\partial_y n^{e,1} - \sum_{k=0}^1 n^{e,1-k}\partial_y c^{e,k} = 0,~ \quad \text{on $y = 0$},\\
            &(n^{e,1},c^{e,1},u^{e,1})(0,x,y)=0.
        \end{aligned}
    \right.
\end{equation}
Moreover, by \eqref{bound} and \eqref{u_1^b0=u_2^b1=0} we have 
\ben\label{eq:u2e1}
u_2^{e,1} = 0,  \quad \text{on $y = 0$}. \een
Thus we have 
\begin{equation}\label{ncu^e1=0}
    n^{e,1},c^{e,1},u^{e,1} = 0.
\end{equation}
Next, for near the boundary by \eqref{eq:bj} we have
\vspace{-\abovedisplayskip}
\begin{equation}\label{eq:b1}
    \left\{
        \begin{aligned}
            &\partial_t n^{b,1} + \sum_{k=0}^2 \left( (u^{E,1-k})_p \cdot \widehat{\nabla} n^{b,k} + (u^{b,1-k})_p \cdot \widehat{\nabla} n^{E,k} + (u^{b,1-k})_p \cdot \widehat{\nabla} n^{b,k} \right) \\
            &\quad - \widehat{\nabla} \cdot \left( \widehat{\nabla} n^{b,1} - \sum_{k=0}^1 \left( n^{E,1-k} \widehat{\nabla} c^{b,k} + n^{b,1-k} \widehat{\nabla} c^{E,k} + n^{b,1-k} \widehat{\nabla} c^{b,k} \right) \right)_{pp} = 0, \\
            &\partial_t c^{b,1} + \sum_{k=0}^2 \left( (u^{E,1-k})_p \cdot \widehat{\nabla} c^{b,k} + (u^{b,1-k})_p \cdot \widehat{\nabla} c^{E,k} + (u^{b,1-k})_p \cdot \widehat{\nabla} c^{b,k} \right) - \partial_z^2 c^{b,1} \\
            &\quad = - \sum_{k=0}^1 \left( c^{E,1-k} n^{b,k} + c^{b,1-k} n^{E,k} + c^{b,1-k} n^{b,k} \right), \\
            &\partial_t u^{b,1} + \sum_{k=0}^2 \left( (u^{E,1-k})_p \cdot \widehat{\nabla} u^{b,k} + (u^{b,1-k})_p \cdot \widehat{\nabla} u^{E,k} + (u^{b,1-k})_p \cdot \widehat{\nabla} u^{b,k} \right) \\
            &\quad + (\widehat{\nabla} p^{b,1})_p - \partial_z^2 u^{b,1}= \begin{pmatrix} 0 \\ n^{b,1} \end{pmatrix}, \\
            &\widehat{\nabla} \cdot (u^{b,1})_p = 0,
        \end{aligned}
    \right.
\end{equation}
\vspace{-\belowdisplayskip}

Combining \eqref{eq:b1}, the boundary condition \eqref{bound}, \eqref{eq:vanishing-layer}, \eqref{eq:u2e1} and \eqref{ncu^e1=0}, we deduce that $u_1^{b,1}, u_2^{b,2}$ satisfy
\begin{equation}\label{eq:u_1^b1}
    \left\{
        \begin{aligned}
            &\partial_t u_1^{b,1} + \left( u^{E,0} \right)_p \cdot \widehat{\nabla} u_1^{b,1} + u_1^{b,1} \partial_x u_1^{E,0} - \partial_z^2 u_1^{b,1} = 0, \\
            &\partial_x u_1^{b,1} + \partial_z u_2^{b,2} = 0, \\
            &\partial_z u_1^{b,1} = - \overline{\partial_y u_1^{e,0}} \quad \text{on}~ z=0, \\
            &\lim_{z\to \infty} u_1^{b,1} = \lim_{z\to \infty} u_2^{b,2} = 0.
        \end{aligned}
    \right.
\end{equation}
Similarly, by \eqref{eq:partial_z n^b1} we have
 \beno
 \partial_z n^{b,1} - n^{E,0} \partial_z c^{b,1} = 0,\eeno
 which implies
  \beno
  n^{b,1} - n^{E,0}  c^{b,1} = 0
  \eeno
by applying the integration with respect to $z$. Using this,  $\eqref{eq:b1}_2$  
 and  \eqref{bound} we have
 % \beno
 % \partial_t c^{b,1} + u_1^{b,1} \partial_x c^{E,0} + (u^{E,0})_p \cdot \widehat{\nabla} c^{b,1} - \partial_z^2 c^{b,1} = - \left( c^{b,1} n^{E,0} + c^{E,0} n^{b,1} \right)\eeno
% $n^{b,1}, c^{b,1}$ satisfy:
\begin{equation}\label{eq:c^b1,n^b1}
    \left\{
        \begin{aligned}
            &\partial_t c^{b,1} + u_1^{b,1} \partial_x c^{E,0} + (u^{E,0})_p \cdot \widehat{\nabla} c^{b,1} - \partial_z^2 c^{b,1} = - \left( c^{b,1} n^{E,0} + c^{E,0} n^{b,1} \right), \\
            & n^{b,1} - n^{E,0}  c^{b,1} = 0, \\
            &\partial_z c^{b,1} = - \overline{\partial_y c^{e,0}} \quad \text{on}~ z=0, \\
            &\lim_{z\to \infty} c^{b,1} = \lim_{z\to \infty} n^{b,1} = 0.
        \end{aligned}
    \right.
\end{equation}
Moreover, by \eqref{eq:b1} $p^{b,2}$ satisfies
\begin{equation*}\label{}
    \left\{
        \begin{aligned}
            &\partial_z p^{b,2} = n^{b,1}, \\
            &\lim_{z\to\infty} p^{b,2} = 0,
        \end{aligned}
    \right.
\end{equation*}
which implies 
\begin{equation}\label{eq:pb2}
    p^{b,2} = -\int_z^\infty n^{b,1}(t,x,\zeta) d\zeta.
\end{equation}
% Then, using \eqref{eq:b1}$_1$ and \eqref{展开_2边界}, we can get the boundary condition of \eqref{eq:2e}:
% \begin{equation*}
%     \partial_y n^{e,2} - \sum_{k=0,2} n^{e,2-k}\partial_y c^{e,k} = -\int_0^\infty \Gamma(t,x,\zeta)d\zeta \quad \text{on}~ y=0,
% \end{equation*}
% where
% \begin{equation*}
%     \Gamma = \partial_t n^{b,1} + u_1^{b,1} \partial_x n^{E,0} + \left( u^{E,0} \right)_p \cdot \widehat{\nabla} n^{b,1} -\partial_x \left( \partial_x n^{b,1} - n^{b,1} \partial_x c^{E,0} - n^{E,0}\partial_x c^{b,1}\right) .
% \end{equation*}
% \begin{remark}
%     The proof is similar. Integrating both sides of \eqref{eq:b1}$_1$, we have
%     \begin{align*}
%         \partial_z n^{b,3}
%         &= \left(n^{E,0} \partial_z c^{b,3}\right)
%           + \left(n^{E,1} \partial_z c^{b,2} + n^{b,1} \partial_z c^{E,2} + n^{b,1} \partial_z c^{b,2}\right) \\
%           &\quad+ \left(n^{E,2} \partial_z c^{b,1} + n^{b,2} \partial_z c^{E,1} + n^{b,2} \partial_z c^{b,1}\right)
%           + \int_z^\infty \Gamma(\cdot,\zeta)\,d\zeta.
%     \end{align*}
%     Then by \eqref{展开_2边界}, the proof is completed.
% \end{remark}

\textbf{Step V. The order $O(\varepsilon^{2})$ terms.} 
%From the order $O(\varepsilon^{2})$ terms, we get:
% \begin{equation}\label{eq:2e}
%     \left\{
%         \begin{aligned}
%             &\partial_t n^{e,2} + \sum_{k=0}^2 u^{e,k} \cdot \nabla n^{e,2-k} - \nabla \cdot \left( \nabla n^{e,2} - \sum_{k=0}^2 n^{e,2-k} \nabla c^{e,k} \right) = 0, \\
%             &\partial_t c^{e,2} + \sum_{k=0}^2 u^{e,k} \cdot \nabla c^{e,2-k} - \Delta c^{e,0} = -\sum_{k=0}^2 c^{e,k} n^{e,2-k}, \\
%             &\partial_t u^{e,2} + \sum_{k=0}^2 u^{e,2-k} \cdot \nabla u^{e,k} + \nabla p^{e,2} - \Delta u^{e,0} = \begin{pmatrix} 0 \\ n^{e,2} \end{pmatrix}, \\
%             &\nabla \cdot u^{e,2} = 0,
%         \end{aligned}
%     \right.
% \end{equation}
% Then, for the Prandtl-type equation, we have

% \begin{equation}\label{展开_2边界}
%     \left\{
%         \begin{aligned}
%             &\partial_y n^{e,2} + \partial_z n^{b,3} = 0 \quad \text{on}~ y=z=0, \\
%             &\partial_y c^{e,2} + \partial_z c^{b,3} = 0 \quad \text{on}~ y=z=0, \\
%             &\partial_y u_1^{e,2} + \partial_z u_1^{b,3} = 0 \quad \text{on}~ y=z=0, \\
%             &u_2^{e,2} + u_2^{b,2} = 0 \quad \text{on}~ y=z=0.
%         \end{aligned}
%     \right.
% \end{equation}
% with
\begin{equation}\label{eq:b2}
    \left\{
        \begin{aligned}
            &\partial_t n^{b,2} + \sum_{k=0}^3 \left( (u^{E,2-k})_p \cdot \widehat{\nabla} n^{b,k} + (u^{b,2-k})_p \cdot \widehat{\nabla} n^{E,k} + (u^{b,2-k})_p \cdot \widehat{\nabla} n^{b,k} \right) \\
            &- \widehat{\nabla} \cdot \left( \widehat{\nabla} n^{b,2} - \sum_{k=0}^2 \left( n^{E,2-k} \widehat{\nabla} c^{b,k} + n^{b,2-k} \widehat{\nabla} c^{E,k} + n^{b,2-k} \widehat{\nabla} c^{b,k} \right) \right)_{pp} = 0, \\
            &\partial_t c^{b,2} + \sum_{k=0}^3 \left( (u^{E,2-k})_p \cdot \widehat{\nabla} c^{b,k} + (u^{b,2-k})_p \cdot \widehat{\nabla} c^{E,k} + (u^{b,2-k})_p \cdot \widehat{\nabla} c^{b,k} \right) \\
            &- \widehat{\nabla} \cdot (\widehat{\nabla} c^{b,0})_{pp} = - \sum_{k=0}^2 \left( c^{E,2-k} n^{b,k} + c^{b,2-k} n^{E,k} + c^{b,2-k} n^{b,k} \right), \\
            &\partial_t u^{b,2} + \sum_{k=0}^3 \left( (u^{E,2-k})_p \cdot \widehat{\nabla} u^{b,k} + (u^{b,2-k})_p \cdot \widehat{\nabla} u^{E,k} + (u^{b,2-k})_p \cdot \widehat{\nabla} u^{b,k} \right) \\
            & + (\widehat{\nabla} p^{b,2})_p - \widehat{\nabla} \cdot (\widehat{\nabla} u^{b,0})_{pp} = \begin{pmatrix} 0 \\ n^{b,2} \end{pmatrix}, \\
            &\widehat{\nabla} \cdot (u^{b,2})_p = 0,
        \end{aligned}
    \right.
\end{equation}
% So, the $O(\varepsilon^2)$-order term of the outer solution satisfies the equations:
% \begin{equation}\label{方程_2e}
%     \left\{
%     \begin{aligned}
%         &\partial_t n^{e,2} + \sum_{k=0,2} u^{e,k} \cdot \nabla n^{e,2-k} - \nabla \cdot \left( \nabla n^{e,2} - \sum_{k=0,2} n^{e,2-k} \nabla c^{e,k} \right) = 0, \\
%         &\partial_t c^{e,2} + \sum_{k=0,2} u^{e,k} \cdot \nabla c^{e,2-k} - \Delta c^{e,0} + \sum_{k=0,2} c^{e,k} n^{e,2-k} = 0, \\
%         &\partial_t u^{e,2} + \sum_{k=0,2} u^{e,2-k} \cdot \nabla u^{e,k} + \nabla p^{e,2} - \Delta u^{e,0} = \begin{pmatrix} 0 \\ n^{e,2} \end{pmatrix}, \\
%         &\nabla \cdot u^{e,2} = 0, \\
%         &\partial_y n^{e,2} - \sum_{k=0,2} n^{e,2-k}\partial_y c^{e,k} = -\int_0^\infty \Gamma(t,x,\zeta)d\zeta \quad \text{on}~ y=0, \\
%         &u_2^{e,2} = -\int_0^\infty \partial_x u_1^{b,1}(t,x,\zeta) d\zeta \quad \text{on}~ y=0.
%     \end{aligned}
%     \right.
% \end{equation}
thus
$u_1^{b,2}$ satisfy
\begin{equation}\label{eq:u_1^b2}
    \left\{
        \begin{aligned}
            &\partial_t u_1^{b,2} + \Big( (u^{E,1})_p \cdot \widehat{\nabla} u_1^{b,1} + (u^{b,1})_p \cdot \widehat{\nabla} u_1^{E,1} + (u^{b,1})_p \cdot \widehat{\nabla} u_1^{b,1} \Big) \\
            &+ (u^{E,0})_p \cdot \widehat{\nabla} u_1^{b,2} + u_1^{b,2} \partial_x u_1^{E,0} + \partial_x p^{b,2} - \partial_z^2 u_1^{b,2} = 0, \\
            &\partial_z u_1^{b,2} = 0, \quad z=0, \\
            &\lim_{z\to \infty} u_1^{b,2} = 0,
        \end{aligned}
    \right.
\end{equation}
where we used \eqref{ncu^e1=0}, \eqref{eq:vanishing-layer} and  \eqref{bound}.

Moreover, by \eqref{eq:partial_z n^b2}, $\eqref{eq:b2}_2$ and \eqref{bound} 
$n^{b,2},c^{b,2}$ satisfy
\begin{equation}\label{eq:c^b2,n^b2}
    \left\{
        \begin{aligned}
            &\partial_t c^{b,2} + (u^{E,0})_p \cdot \widehat{\nabla} c^{b,2} + u_1^{b,2} \partial_x c^{E,0} - \partial_z^2 c^{b,2}\\
            &+ \left( (u^{E,1})_p \cdot \widehat{\nabla} c^{b,1} + (u^{b,1})_p \cdot \widehat{\nabla} c^{E,1} + (u^{b,1})_p \cdot \widehat{\nabla} c^{b,1} \right) \\
            & + c^{b,2} n^{E,0} + \left( c^{E,1} n^{b,1} + c^{b,1} n^{E,1} + c^{b,1} n^{b,1} \right) + c^{E,0} n^{b,2} = 0, \\
            &\partial_z n^{b,2} = \left( n^{E,1} \partial_z c^{b,1} + n^{b,1} \partial_z c^{E,1} + n^{b,1} \partial_z c^{b,1} \right) + n^{E,0} \partial_z c^{b,2}, \\
            &\partial_z c^{b,2} = 0 \quad \text{on}~ z=0, \\
            &\lim_{z\to \infty} c^{b,2} = \lim_{z\to\infty} n^{b,2} = 0.
        \end{aligned}
    \right.
\end{equation}

% Finally, our derivation process is depicted in the following figure.
% \begin{center}
% \begin{tikzpicture}[
%     box/.style={
%         rectangle, draw, 
%         minimum width=5cm, 
%         minimum height=1cm, 
%         align=center
%     },
%     >=stealth, line width=0.5pt
% ]

% % 第一行
% \node[box] (A1) at (0,0)  {$(u_2^{b,0}, p^{b,0})$};
% \node[box] (A2) at (7,0)  {$\partial_y n^{e,0} - n^{e,0}\partial_y c^{e,0}\big|_{y=0} = 0$};

% % 第二行
% \node[box] (B1) at (0,-2) {$(c^{b,0}, n^{b,0}, u_1^{b,0})$};
% \node[box] (B2) at (7,-2) {$(n^{e,0}, c^{e,0}, u^{e,0})$};

% % 第三行
% \node[box] (C1) at (0,-4) {$(u_2^{b,1}, p^{b,1})$};
% \node[box] (C2) at (7,-4) {$\partial_y n^{e,1} - \sum\limits_{k=0}^1 n^{e,1-k}\partial_y c^{e,k}\big|_{y=0} = 0$};

% % 第四行
% \node[box] (D1) at (0,-6) {$(n^{b,1}, c^{b,1}, u_1^{b,1})$};
% \node[box] (D2) at (7,-6) {$(n^{e,1}, c^{e,1}, u^{e,1})$};

% % 第五行
% \node[box] (E1) at (0,-8) {$(u_2^{b,2}, p^{b,2})$};

% % 第六行
% \node[box] (F1) at (0,-10){$(n^{b,2}, c^{b,2}, u_1^{b,2})$};

% % 箭头（全部是直线！）
% \draw[->] (A1.east) -- (A2.west);
% \draw[->] (A2.south) -- (B2.north);
% \draw[->] (B2.west) -- (B1.east);
% \draw[->] (B1.south) -- (C1.north);
% \draw[->] (C1.east) -- (C2.west);
% \draw[->] (C2.south) -- (D2.north);
% \draw[->] (D2.west) -- (D1.east);
% \draw[->] (D1.south) -- (E1.north);
% \draw[->] (E1.south) -- (F1.north);

% \end{tikzpicture}

% {\large 多尺度耦合求解流程图}\\[5pt]
% \end{center}

%Combination of the solutions
\subsection{Combination of the solutions} 
Based on the definition of the approximate solution \eqref{def:soluntion^a}, we have the following lemma:
% Let us define the approximate solution:
% \begin{equation}\label{}
%     \left\{
%     \begin{aligned}
%         n^a &= n^{e,0} + \varepsilon n^{b,1} + \varepsilon^2 n^{b,2}, \\
%         c^a &= c^{e,0} + \varepsilon c^{b,1} + \varepsilon^2 c^{b,2}, \\
%         u_1^a &= u_1^{e,0} + \varepsilon u_1^{b,1}, ~u_2^a = u_2^{e,0} + \varepsilon^2 u_2^{b,2}, \\
%         p^a &= p^{e,0} + \varepsilon^2 p^{b,2}.
%     \end{aligned}
%     \right.
% \end{equation}
\begin{lemma}
    Let
    \begin{equation}\label{eq:f-u2b2}
        f(t, x) = \int_{0}^{\infty} \partial_x u_1^{b,1}(t, x, z) \, dz= u_2^{b,2}(t, x, 0), 
    \end{equation}
    then $(n^a,c^a,u^a,p^a)$ satisfies:
    \begin{equation}
        \left\{
        \begin{aligned}\label{}
            &\partial_t n^a + u^a \cdot \nabla n^a - \nabla \cdot \left( \nabla n^a - n^a \nabla c^a \right) = -N, \\
            &\partial_t c^a + u^a \cdot \nabla c^a + c^a n^a - \varepsilon^2 \Delta c^a = -K, \\
            &\partial_t u^a + u^a \cdot \nabla u^a + \nabla p^a - \varepsilon^2 \Delta u^a - \begin{pmatrix} 0 \\ n^a \end{pmatrix} = -U, \\
            &\nabla \cdot u^a = 0, \\
            &\partial_y n^a = \partial_y c^a = \partial_y u_1^a = 0, \quad u_2^a = \varepsilon^2 f(t, x) \quad \text{on}~ y=0,
        \end{aligned}
        \right.
    \end{equation}
    where $N,K,U$ is given by:
    \begin{equation}\label{eq:N}
        \begin{aligned}
            -N =& \varepsilon \partial_t n^{b,1} + \varepsilon^2 \partial_t n^{b,2} + \left( u_1^{e,0} \partial_x (\varepsilon n^{b,1} + \varepsilon^2 n^{b,2}) + \varepsilon u_1^{b,1} \partial_x n^{e,0} + \varepsilon u_1^{b,1} \partial_x (\varepsilon n^{b,1} + \varepsilon^2 n^{b,2}) \right) \\
            &+\left( (u_2^{e,0} - u_2^{E,0}) \partial_z (n^{b,1} + \varepsilon n^{b,2}) + \varepsilon^2 u_2^{b,2} \partial_y n^{e,0} + \varepsilon^2 u_2^{b,2} \partial_z (n^{b,1} + \varepsilon n^{b,2}) \right) \\
            &- \partial_x \left( \partial_x (\varepsilon n^{b,1} + \varepsilon^2 n^{b,2}) - n^{e,0} \partial_x (\varepsilon c^{b,1} + \varepsilon^2 c^{b,2}) - (\varepsilon n^{b,1} + \varepsilon^2 n^{b,2}) \partial_x c^{e,0} \right)\\
            &- (\varepsilon n^{b,1} + \varepsilon^2 n^{b,2}) \partial_x (\varepsilon c^{b,1} + \varepsilon^2 c^{b,2}) \\
            &- \partial_y \left( \left( y \overline{\partial_y n^{e,0}} + n^{E,0} - n^{e,0} \right) \partial_z c^{b,1} + \varepsilon n^{b,1} \left( \overline{\partial_y c^{e,0}} - \partial_y c^{e,0} \right) \right) \\
            &- \partial_y \left( \varepsilon (n^{E,0} - n^{e,0}) \partial_z c^{b,2} - \varepsilon^2 n^{b,2} \left( \partial_y c^{e,0} + \partial_z c^{b,1} \right) - \varepsilon (\varepsilon n^{b,1} + \varepsilon^2 n^{b,2}) \partial_z c^{b,2} \right),
        \end{aligned}
    \end{equation}
    and
    \begin{equation}\label{eq:K}
        \begin{aligned}
            -K =& \varepsilon (u_1^{e,0} - \overline{u_1^{e,0}}) \partial_x c^{b,1} + \varepsilon^2 (u_1^{e,0} - \overline{u_1^{e,0}}) \partial_x c^{b,2} + (u_2^{e,0} - y \overline{\partial_y u_2^{e,0}}) \partial_z c^{b,1} \\
            &+  \varepsilon(u_2^{e,0} - y \overline{\partial_y u_2^{e,0}}) \partial_z c^{b,2} - \varepsilon^2 \partial_y^2 c^{e,0} + \varepsilon (c^{e,0} - \overline{c^{e,0}}) n^{b,1} \\
            &+ \varepsilon^2 (c^{e,0} - c^{E,0}) n^{b,2} +  \varepsilon u_1^{b,1} \partial_x (c^{e,0} - \overline{c^{e,0}}) + \varepsilon^3 u_1^{b,1} \partial_x c^{b,2} \\
            &+ \varepsilon^2 u_2^{b,2} (\partial_y c^{e,0} - \overline{\partial_y c^{e,0}}) + \varepsilon^3 u_2^{b,2} \partial_z c^{b,2} - \varepsilon^2 \partial_x^2 c^a + \varepsilon^3 c^{b,1} n^{b,2} \\
            &+ \varepsilon^2 c^{b,2} (\varepsilon n^{b,1} + \varepsilon^2 n^{b,2})+\varepsilon c^{b,1}(n^{e,0}-n^{E,0})+\varepsilon^2 c^{b,2}(n^{e,0}-n^{E,0}) \\
            &- \varepsilon^2 \left( (u^{E,1})_p \cdot \widehat{\nabla} c^{b,1} + u_1^{b,1} \partial_x c^{E,1} + u_1^{b,2} \partial_x c^{E,0} + c^{E,1} n^{b,1} + c^{b,1} n^{E,1} \right),
        \end{aligned}
    \end{equation}
    and
    \begin{equation}\label{eq:U1}
        \begin{aligned}
            -U_1 =& \varepsilon (u_1^{e,0} - \overline{u_1^{e,0}}) \partial_x u_1^{b,1} + (u_2^{e,0} - y \overline{\partial_y u_2^{e,0}}) \partial_z u_1^{b,1} + \varepsilon^2 \partial_x p^{b,2} - \varepsilon^3 \partial_x^2 u_1^{b,1} - \varepsilon^2 \Delta u_1^{e,0}  \\
            &+ \varepsilon u_1^{b,1} \partial_x (u_1^{e,0} - \overline{u_1^{e,0}}) + \varepsilon^2 u_2^{b,2} \partial_y u_1^a+\varepsilon^2u^{b,1}_1\partial_x u^{b,1}_1,
        \end{aligned}
    \end{equation}
    and
    \begin{equation}\label{eq:U2}
        \begin{aligned}
            -U_2 =& \varepsilon^2 \partial_t u_2^{b,2} + \left( \varepsilon u_1^{b,1} \partial_x ( u_2^{e,0} - u_2^{E,0} ) + \varepsilon^2 u_1^{e,0} \partial_x u_2^{b,2} + \varepsilon^3 u_1^{b,1} \partial_x u_2^{b,2} \right) - \varepsilon^2 \Delta u_2^a \\
            &+ \left( \varepsilon^2 u_2^{b,2} \partial_y u_2^{e,0} + \varepsilon ( u_2^{e,0} -u_2^{E,0} ) \partial_z u_2^{b,2} + \varepsilon^3 u_2^{b,2} \partial_z u_2^{b,2} \right)- \varepsilon^2 n^{b,2}.
        \end{aligned}
    \end{equation}
\end{lemma}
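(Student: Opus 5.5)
The plan is a direct substitution and bookkeeping argument. We plug the truncated ansatz \eqref{def:soluntion^a} into the left-hand sides of \eqref{eq:CNS}. We then cancel every term that vanishes by an equation already imposed on the profiles, and define $-N,-K,-U$ to be whatever is left. By \eqref{eq:vanishing-layer} and \eqref{ncu^e1=0}, all profiles of indices $(e,1)$, $(b,0)$, $u_2^{b,1}$, $p^{b,0},p^{b,1}$ vanish. The ansatz is therefore consistent with the general expansion \eqref{exp:asymptotic}, and the only nontrivial input consists of the outer system \eqref{eq:e0}, the layer systems \eqref{eq:u_1^b1}, \eqref{eq:c^b1,n^b1}, \eqref{eq:pb2}, \eqref{eq:c^b2,n^b2}, and the relations \eqref{eq:partial_z n^b1}, \eqref{eq:partial_z n^b2}.

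\textbf{Boundary conditions and divergence.} At $y=0$ we have
\[
\partial_y n^a=\overline{\partial_y n^{e,0}}+\partial_z n^{b,1}+\varepsilon\,\partial_z n^{b,2}.
\]
The sum $\overline{\partial_y n^{e,0}}+\partial_z n^{b,1}$ vanishes by \eqref{bound} with $j=0$. The term $\partial_z n^{b,2}|_{z=0}$ vanishes by \eqref{eq:partial_z n^b2} at $z=0$, using $n^{e,1}=c^{e,1}=0$ and $\partial_z c^{b,2}|_{z=0}=0$. The same argument handles $c^a$ via \eqref{eq:c^b1,n^b1}$_3$ and \eqref{eq:c^b2,n^b2}$_3$, and handles $u_1^a$ via \eqref{eq:u_1^b1}$_3$. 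For the normal velocity, $u_2^a|_{y=0}=\varepsilon^2u_2^{b,2}(t,x,0)$ because $u_2^{e,0}|_{y=0}=0$. Integrating \eqref{eq:u_1^b1}$_2$ from $z$ to $\infty$ gives $u_2^{b,2}=\int_z^\infty\partial_x u_1^{b,1}\,d\zeta$, and evaluating this at $z=0$ yields \eqref{eq:f-u2b2}. Finally,
\[
\nabla\cdot u^a=\nabla\cdot u^{e,0}+\varepsilon\,\partial_x u_1^{b,1}+\varepsilon^2\varepsilon^{-1}\partial_z u_2^{b,2}=0.
\]

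\textbf{Interior remainders.} We expand each equation, writing $\partial_y=\varepsilon^{-1}\partial_z$ on layer terms. We then subtract \eqref{eq:e0} for the outer parts. For the layer parts we subtract the layer equations, which are written with the Taylor polynomials $n^{E,0},u^{E,0},c^{E,0}$ in place of $n^{e,0},u^{e,0},c^{e,0}$. Each outer coefficient multiplying a layer function is thereby replaced by a difference such as $u_1^{e,0}-\overline{u_1^{e,0}}$, $u_2^{e,0}-y\overline{\partial_y u_2^{e,0}}$ (recall $u_2^{E,0}$ only carries $z\overline{\partial_y u_2^{e,0}}$ at order $\varepsilon$ through the shift $(\cdot)_p$), $c^{e,0}-\overline{c^{e,0}}$, or $n^{E,0}-n^{e,0}$. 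The unmatched higher-order products and the viscous terms $\varepsilon^2\Delta$ acting on outer functions are left over, and these are exactly the entries listed in \eqref{eq:K}, \eqref{eq:U1} and \eqref{eq:U2}.

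For $u$, the first component uses \eqref{eq:u_1^b1}$_1$. The second component contains $\varepsilon^2\varepsilon^{-1}\partial_z p^{b,2}=\varepsilon n^{b,1}$, which cancels the $\varepsilon n^{b,1}$ part of $n^a e_2$ by \eqref{eq:pb2}; this leaves $-\varepsilon^2 n^{b,2}$ together with the listed convective and viscous terms.

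For $c$ we use \eqref{eq:c^b1,n^b1}$_1$ at order $\varepsilon$ and \eqref{eq:c^b2,n^b2}$_1$ at order $\varepsilon^2$. The terms of the latter equation involving $u^{E,1},c^{E,1},u_1^{b,2}$ do not appear in $c^a$, so they are re-added with the opposite sign; this produces the last line of \eqref{eq:K}.

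For $n$ the time derivatives $\varepsilon\partial_t n^{b,1},\varepsilon^2\partial_t n^{b,2}$ are not cancelled, because the $n^{b,j}$ are fixed by the degenerate relations of Remark~\ref{rmk:difficult} and not by evolution equations. They therefore remain in $N$, together with all tangential terms. The singular normal-flux terms of order $\varepsilon^{0}$ and $\varepsilon^{1}$ in $\partial_y(\partial_y n^a-n^a\partial_y c^a)$ are handled differently. We write $\partial_y n^{b,1}=\varepsilon^{-1}\partial_z n^{b,1}$ and use \eqref{eq:partial_z n^b1}, which here reads $\partial_z n^{b,1}=n^{E,0}\partial_z c^{b,1}$. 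The flux then reduces to $\partial_y\big((n^{E,0}-n^{e,0})\partial_z c^{b,1}+\cdots\big)$. The analogous step at the next order uses \eqref{eq:partial_z n^b2}. This gives the two $\partial_y(\cdots)$ lines of \eqref{eq:N}.

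\textbf{Main obstacle.} The delicate step is this flux bookkeeping for $n$, since mismatching a Taylor term would leave an $O(\varepsilon^{-1})$ remainder. We carry it out by first rewriting the full flux as
\[
\partial_y n^a-n^a\partial_y c^a=\big(\partial_y n^{e,0}-n^{e,0}\partial_y c^{e,0}\big)+\text{(layer part)},
\]
and then substituting the two integrated identities before differentiating once more in $y$. Everything else is routine expansion and collection of terms.
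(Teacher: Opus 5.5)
Your proposal is correct and follows the paper's route. You substitute the truncated ansatz, subtract \eqref{eq:e0} and the layer equations \eqref{eq:u_1^b1}, \eqref{eq:pb2}, \eqref{eq:c^b1,n^b1}$_1$, \eqref{eq:c^b2,n^b2}$_1$, and rewrite the normal flux of $n$ through $\partial_z n^{b,1}=n^{E,0}\partial_z c^{b,1}$ and \eqref{eq:partial_z n^b2}; the latter produces the $y\overline{\partial_y n^{e,0}}$ term since $\varepsilon n^{E,1}=y\overline{\partial_y n^{e,0}}$, and your explicit check of the boundary conditions and of $\nabla\cdot u^a=0$ fills in what the paper leaves implicit.

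Two small points. First, at $z=0$ the term $n^{b,1}(\partial_z c^{E,1}+\partial_z c^{b,1})$ in \eqref{eq:partial_z n^b2} vanishes by the Neumann condition $\partial_z c^{b,1}|_{z=0}=-\overline{\partial_y c^{e,0}}$, not by $c^{e,1}=0$ (note $\partial_z c^{E,1}=\overline{\partial_y c^{e,0}}$). Second, your expansion gives the tangential cross term of $N$ as $+\partial_x\big((\varepsilon n^{b,1}+\varepsilon^2 n^{b,2})\partial_x(\varepsilon c^{b,1}+\varepsilon^2 c^{b,2})\big)$, as does the paper's own computation, so the undifferentiated product printed in \eqref{eq:N} is a typo.
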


\begin{proof}
   For the term $N$, by \eqref{eq:e0} we get
   \begin{align*}
        & \partial_t \left( n^{e,0} + \varepsilon n^{b,1} + \varepsilon^2 n^{b,2} \right) \\
        & \quad + \left\{ \left( u_1^{e,0} + \varepsilon u_1^{b,1} \right) \partial_x \left( n^{e,0} + \varepsilon n^{b,1} + \varepsilon^2 n^{b,2} \right) + \left( u_2^{e,0} + \varepsilon^2 u_2^{b,2} \right) \partial_y \left( n^{e,0} + \varepsilon n^{b,1} + \varepsilon^2 n^{b,2} \right) \right\} \\
        & \quad - \nabla \cdot \biggl\{ \nabla \left( n^{e,0} + \varepsilon n^{b,1} + \varepsilon^2 n^{b,2} \right) - \left( n^{e,0} + \varepsilon n^{b,1} + \varepsilon^2 n^{b,2} \right) \nabla \left( c^{e,0} + \varepsilon c^{b,1} + \varepsilon^2 c^{b,2} \right) \biggr\} \\
        &= \varepsilon \partial_t n^{b,1} + \varepsilon^2 \partial_t n^{b,2} \\
        & \quad + \left( u_1^{e,0} \partial_x \left( \varepsilon n^{b,1} + \varepsilon^2 n^{b,2} \right) + \varepsilon u_1^{b,1} \partial_x n^{e,0} + \varepsilon u_1^{b,1} \partial_x \left( \varepsilon n^{b,1} + \varepsilon^2 n^{b,2} \right) \right) \\
        & \quad + \left( u_2^{e,0} \partial_y \left( \varepsilon n^{b,1} + \varepsilon^2 n^{b,2} \right) + \varepsilon^2 u_2^{b,2} \partial_y n^{e,0} + \varepsilon^2 u_2^{b,2} \partial_y \left( \varepsilon n^{b,1} + \varepsilon^2 n^{b,2} \right) \right) \\
        & \quad - \nabla \cdot \biggl\{ \nabla \left( \varepsilon n^{b,1} + \varepsilon^2 n^{b,2} \right) - n^{e,0} \nabla \left( \varepsilon c^{b,1} + \varepsilon^2 c^{b,2} \right) - \left( \varepsilon n^{b,1} + \varepsilon^2 n^{b,2} \right) \nabla c^{e,0} \\
        &\quad - \left( \varepsilon n^{b,1} + \varepsilon^2 n^{b,2} \right) \nabla \left( \varepsilon c^{b,1} + \varepsilon^2 c^{b,2} \right) \biggr\}.
    \end{align*}
    Furthermore, by $\eqref{eq:c^b1,n^b1}_2$ and $\eqref{eq:c^b2,n^b2}_2$ we have
    \begin{align*}
        &-\partial_y \biggl\{ \partial_y \left( \varepsilon n^{b,1} + \varepsilon^2 n^{b,2} \right) - n^{e,0} \partial_y \left( \varepsilon c^{b,1} + \varepsilon^2 c^{b,2} \right) - \left( \varepsilon n^{b,1} + \varepsilon^2 n^{b,2} \right) \partial_y c^{e,0} \\
        &\quad - \left( \varepsilon n^{b,1} + \varepsilon^2 n^{b,2} \right) \partial_y \left( \varepsilon c^{b,1} + \varepsilon^2 c^{b,2} \right) \biggr\} \\
        &= - \partial_y \biggl\{ \left( n^{E,0} \partial_z c^{b,1} + \varepsilon \left( n^{E,1} \partial_z c^{b,1} + n^{b,1} \partial_z c^{E,1} + n^{b,1} \partial_z c^{b,1} + n^{E,0} \partial_z c^{b,2} \right) \right) \\
        &\quad- n^{e,0} \partial_z \left( c^{b,1} + \varepsilon c^{b,2} \right) - \left( \varepsilon n^{b,1} + \varepsilon^2 n^{b,2} \right) \partial_y c^{e,0} - \left( \varepsilon n^{b,1} + \varepsilon^2 n^{b,2} \right) \partial_z \left( c^{b,1} + \varepsilon c^{b,2} \right) \biggr\} \\
        &= -\partial_y \biggl\{ \left(y\overline{\partial_y n^{e,0}} + n^{E,0} - n^{e,0}\right) \partial_z c^{b,1} + \varepsilon n^{b,1} \left( \overline{\partial_y c^{e,0}} - \partial_y c^{e,0} \right) + \varepsilon \left( n^{E,0} - n^{e,0} \right) \partial_z c^{b,2}\\
        &\quad- \varepsilon^2 n^{b,2} \left( \partial_y c^{e,0} + \partial_z c^{b,1} \right) - \varepsilon \left( \varepsilon n^{b,1} + \varepsilon^2 n^{b,2} \right) \partial_z c^{b,2} \biggr\},
    \end{align*}
    where we used \eqref{ncu^e1=0}, \eqref{eq:u2e1} and \eqref{def:Ej} in the last equality. Then the proof of \eqref{eq:N} is complete due to \eqref{bound:u_2^e0}.

    For the term $K$, through $\eqref{eq:e0}_2$, $\eqref{eq:c^b1,n^b1}_1$ and  $\eqref{eq:c^b2,n^b2}_1$ , it shows that
    \begin{align*}
        &\partial_t c^{e,0} + u^{e,0} \cdot \nabla c^{e,0} + c^{e,0} n^{e,0} \\
        &+ \varepsilon \Bigl\{ \partial_t c^{b,1} + \left( u_1^{E,0} \partial_x c^{b,1} + z \overline{\partial_y u_2^{e,0}} \partial_z c^{b,1} \right) + u_1^{b,1} \partial_x c^{E,0} - \partial_z^2 c^{b,1} \\
        &+ \left( c^{E,0} n^{b,1} + c^{b,1} n^{E,0} \right) \Bigr\} + \varepsilon^2 \Bigl\{ \partial_t c^{b,2} + (u^{E,0})_p \cdot \widehat{\nabla} c^{b,2} \\
        &+ \left( (u^{E,1})_p \cdot \widehat{\nabla} c^{b,1} + (u^{b,1})_p \cdot \widehat{\nabla} c^{E,1} + (u^{b,1})_p \cdot \widehat{\nabla} c^{b,1} \right) + u_1^{b,2} \partial_x c^{E,0} - \partial_z^2 c^{b,2} \\
        &+ \left( c^{b,2} n^{E,0} + (c^{E,1} n^{b,1} + c^{b,1} n^{E,1} + c^{b,1} n^{b,1}) + c^{E,0} n^{b,2} \right) \Bigr\} = 0.
    \end{align*}
    Then, by adding the required terms such as $\varepsilon u_1^{e,0}\partial_x c^{b,1},u_2^{e,0}\partial_z c^{b,1}$, combining the corresponding terms defined in \eqref{def:soluntion^a} and moving the remainder to the right-hand side of the equation, we have
    \begin{align*}
        &\partial_t c^a + u_1^a \partial_x c^a + u_2^a \partial_y c^a + c^a n^a - \varepsilon^2 \Delta c^a \\
        &= \varepsilon\left(u_1^{e,0} - u_1^{E,0}\right)\partial_x c^{b,1} 
        + \varepsilon^2\left(u_1^{e,0} - u_1^{E,0}\right)\partial_x c^{b,2} 
        + \left(u_2^{e,0} - \varepsilon z\overline{\partial_y u_2^{e,0}}\right)\partial_z c^{b,1}\\
        &\quad + \varepsilon\left(u_2^{e,0} - \varepsilon z\overline{\partial_y u_2^{e,0}}\right)\partial_z c^{b,2} 
        - \varepsilon^2 \partial_y^2 c^{e,0} 
        + \varepsilon\left(c^{e,0} - c^{E,0}\right)n^{b,1}+ \varepsilon^2\left(c^{e,0} - c^{E,0}\right)n^{b,2}  \\
        &\quad
        + \varepsilon u_1^{b,1} \partial_x\left(c^{e,0} - c^{E,0}\right) 
        + \varepsilon^3 u_1^{b,1} \partial_x c^{b,2} + \varepsilon^2 u_2^{b,2}\left(\partial_y c^{e,0} - \overline{\partial_y c^{e,0}}\right) 
        + \varepsilon^3 u_2^{b,2} \partial_z c^{b,2}  \\
        &\quad - \varepsilon^2 \partial_x^2 c^a 
        + \varepsilon^3 c^{b,1} n^{b,2}+ \varepsilon^2 c^{b,2}\left(\varepsilon n^{b,1} + \varepsilon^2 n^{b,2}\right) 
        + \varepsilon c^{b,1}\left(n^{e,0} - n^{E,0}\right) 
        + \varepsilon^2 c^{b,2}\left(n^{e,0} - n^{E,0}\right) \\
        &\quad- \varepsilon^2 \left\{ \left((u^{E,1})_p \cdot \widehat{\nabla} c^{b,1} + u_1^{b,1} \partial_x c^{E,1}\right) 
        + u_1^{b,2} \partial_x c^{E,0} 
        + \left(c^{E,1} n^{b,1} + c^{b,1} n^{E,1}\right) \right\},
    \end{align*}
    which along with \eqref{eq:u2e1} complete the proof of \eqref{eq:K}.

    The processes of deriving the formula  $U_1$ and $U_2$ are similar by using \eqref{eq:e0}, \eqref{eq:u_1^b1} and \eqref{eq:pb2}, and we omitted it. 
\end{proof}

% \begin{proof}

% \eqref{eq:e0}
% \eqref{eq:c^b1,n^b1}

% \eqref{eq:c^b2,n^b2}

% \end{proof}

\section{Energy estimates}\label{energy estimates}

%Basic Analytical Tools
\subsection{Some important lemmas}

The following lemmas are frequently used in our proof.
\begin{lemma}[ \cite{KV2011}]\label{lem:separate}
    Let $\{x_\alpha\}_{\alpha \in \mathbb{N}^3}$ and $\{y_\beta \}_{\beta \in \mathbb{N}^3}$ be real numbers. Then we have
    \begin{align*}
        \sum_{|\alpha|=m} \sum_{|\beta|=j, \beta \leq \alpha} x_{\beta} y_{\alpha-\beta} = \left( \sum_{|\alpha|=j} x_{\alpha} \right) \left( \sum_{|\beta|=m-j} y_{\beta} \right).
    \end{align*}
\end{lemma}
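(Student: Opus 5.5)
The plan is to prove the identity by a reindexing bijection on the set of index pairs. Since all sums are finite (there are finitely many multi-indices in $\mathbb{N}^3$ of a given length), no convergence issues arise; the identity is purely combinatorial. First expand the right-hand side by distributivity:
\begin{align*}
\Big( \sum_{|\alpha|=j} x_{\alpha} \Big) \Big( \sum_{|\beta|=m-j} y_{\beta} \Big) = \sum_{(\beta,\gamma)\in \mathcal{B}} x_{\beta}\, y_{\gamma},
\end{align*}
where $\mathcal{B}=\{(\beta,\gamma)\in\mathbb{N}^3\times\mathbb{N}^3 : |\beta|=j,\ |\gamma|=m-j\}$. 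The left-hand side is the sum of $x_\beta y_{\alpha-\beta}$ over $\mathcal{A}=\{(\alpha,\beta) : |\alpha|=m,\ |\beta|=j,\ \beta\le\alpha\}$. If $j>m$ (or $j<0$), both sets are empty and both sides vanish, so assume $0\le j\le m$.

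The key step is to define $\Phi:\mathcal{A}\to\mathcal{B}$ by $\Phi(\alpha,\beta)=(\beta,\alpha-\beta)$. This is well defined: $\beta\le\alpha$ componentwise means that $\gamma=\alpha-\beta$ has nonnegative entries, so $\gamma\in\mathbb{N}^3$, and $|\gamma|=|\alpha|-|\beta|=m-j$ because $|\cdot|$ is additive on $\mathbb{N}^3$. The inverse is $\Psi(\beta,\gamma)=(\beta+\gamma,\beta)$: indeed $|\beta+\gamma|=m$, and $\beta\le\beta+\gamma$ holds trivially since $\gamma\ge 0$ componentwise. The compositions $\Psi\circ\Phi$ and $\Phi\circ\Psi$ are clearly the identity, so $\Phi$ is a bijection.

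Finally, the summand is transported correctly: $x_\beta y_{\alpha-\beta}=x_\beta y_\gamma$ under $\Phi$. Reindexing the finite sum over $\mathcal{A}$ via $\Phi$ therefore yields exactly the sum over $\mathcal{B}$, which proves the identity. There is no genuine obstacle here. The only point requiring care is the convention that $\beta\le\alpha$ is the componentwise order introduced in Subsection~\ref{section_operator}, which is precisely what guarantees $\alpha-\beta\in\mathbb{N}^3$.
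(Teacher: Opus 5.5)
Your reindexing argument via the bijection $(\alpha,\beta)\mapsto(\beta,\alpha-\beta)$ is correct and complete, including the degenerate case $j>m$. The paper gives no proof of its own and only cites \cite{KV2011}, where the identity is the same rearrangement of the finite double sum under $\gamma=\alpha-\beta$, so your approach is essentially the standard one.
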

% \begin{proof}
%     Proof can be found in \cite{KV2011}, thus we omit it.
% \end{proof}

\begin{lemma}\label{lem:embedding}
Let \(\left\| g \right\|_{Y^{1,m}}\leq C\), then the following inequality holds:
    \begin{equation}\label{eq:embedding}
        \sum_{\alpha_1 \leq 1, |\alpha| \leq m-1} \left\| \partial^\alpha g \right\|_\infty^2
        \leq C \sum_{\alpha_1 \leq 1,|\alpha| \leq m} \left\| \partial^\alpha g \right\|_2 \,
        \left\| \partial_y \partial^\alpha g \right\|_2.
    \end{equation}
\end{lemma}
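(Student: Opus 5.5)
The plan is to reduce the inequality to a pointwise bound in one variable, using two one-dimensional inequalities: a Sobolev-type bound in the normal variable $y$ and Sobolev embeddings in the tangential variable $x$ and in time $t$.

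\textbf{Normal direction.} For a function $h(x,y)$ on $\mathbb{R}^2_+$ that decays as $y\to\infty$, the fundamental theorem of calculus gives
\[
|h(x,y)|^2 = -\int_y^\infty \partial_y |h|^2 \,dy' \le 2\|h(x,\cdot)\|_{L^2_y}\|\partial_y h(x,\cdot)\|_{L^2_y}.
\]
This turns one $L^\infty_y$ bound into a product of an $L^2_y$ norm of $h$ and an $L^2_y$ norm of $\partial_y h$. It is the origin of the product structure on the right-hand side of \eqref{eq:embedding}.

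\textbf{Tangential direction.} To remove the supremum in $x$, I would apply the 1D inequality $\|g\|_{L^\infty_x}^2\le C\|g\|_{L^2_x}\|\partial_x g\|_{L^2_x}\le C(\|g\|_{L^2_x}^2+\|\partial_x g\|_{L^2_x}^2)$ to $g=\|h\|_{L^2_y}$ and to $g=\|\partial_y h\|_{L^2_y}$. This costs one extra $\partial_x$ derivative, and $\partial_x$ is conormal. The same reasoning in time, via $H^1_t\subset L^\infty_t$, is where the index $\alpha_1\le1$ and the hypothesis $\|g\|_{Y^{1,m}}\le C$ enter. Putting these together, I expect a bound of the form
\[
\|\partial^\alpha g\|_\infty^2\le C\sum_{\beta\in\{0,e_2\}}\|\partial^{\alpha+\beta} g\|_2\,\|\partial_y\partial^{\alpha+\beta} g\|_2,
\]
where $|\alpha+\beta|\le m$ because $|\alpha|\le m-1$. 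To handle $\partial_x$ falling on $\partial_y h$, I would use that $\partial_x$ commutes with $\partial_y$ and with $\partial^\alpha$. Summing over $\alpha$ then gives \eqref{eq:embedding}.

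\textbf{Obstacle.} The main difficulty is the splitting in $x$. The naive anisotropic estimate
\[
\sup_{x}\|h\|_{L^2_y}\|\partial_y h\|_{L^2_y}\le\Big(\|h\|_2\|\partial_x h\|_2\Big)^{1/2}\Big(\|\partial_y h\|_2\|\partial_x\partial_y h\|_2\Big)^{1/2}
\]
(Cauchy–Schwarz, as in Masmoudi–Rousset \cite{MR2012}) has to be bounded by the sum on the right of \eqref{eq:embedding}. I would do this with the AM–GM inequality $\sqrt{ab\,cd}\le \tfrac12(ac+bd)$, choosing $a=\|h\|_2$, $b=\|\partial_xh\|_2$, $c=\|\partial_yh\|_2$, $d=\|\partial_x\partial_yh\|_2$. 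This yields exactly the pairs $\|\partial^\alpha g\|_2\|\partial_y\partial^\alpha g\|_2$ and $\|\partial^{\alpha+e_2}g\|_2\|\partial_y\partial^{\alpha+e_2}g\|_2$.

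For the time direction, if a sup in $t$ is intended, I would apply the analogous estimate with $\partial_t$. This uses $\alpha_1\le1$ and absorbs the lower-order terms into the constant $C$, which is allowed by the boundedness hypothesis on $g$.
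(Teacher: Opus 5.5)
Your argument is correct and is essentially the paper's proof. Both rest on the anisotropic bound $\|h\|_\infty^2\le C(\|h\|_2\|\partial_x h\|_2)^{1/2}(\|\partial_y h\|_2\|\partial_x\partial_y h\|_2)^{1/2}$ for $h=\partial^\alpha g$. The paper applies the one-dimensional estimate in $x$ first and then in $y$, while you do the reverse. Both then use AM--GM (left implicit in the paper) to split into the pairs $\|\partial^\alpha g\|_2\|\partial_y\partial^\alpha g\|_2$ and $\|\partial^{\alpha+(0,1,0)}g\|_2\|\partial_y\partial^{\alpha+(0,1,0)}g\|_2$.

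The time-direction discussion is unnecessary and should be dropped. Here $\|\cdot\|_\infty$ is the $L^\infty(\mathbb{R}^2_+)$ norm at fixed $t$, and the constraint $\alpha_1\le 1$ is simply preserved because only $\partial_x$ is added. Moreover, both sides are homogeneous of degree two in $g$, so additive lower-order terms could not be absorbed into $C$ in any case.
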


\begin{proof} Note that 
   \begin{align*}
        \left\| g \right\|_{L^\infty} 
        \leq C\left\| \left\| g \right\|_{L_x^2}^{\frac12} \left\| \partial_x g \right\|_{L_x^2}^\frac12\right\|_{L_y^\infty} 
        \leq C  \left\| g\right\|_{L^2}^{\frac14}\left\| \partial_x g\right\|_{L^2}^{\frac14}\left\| \partial_y g\right\|_{L^2}^{\frac14}\left\| \partial_{xy}g\right\|_{L^2}^{\frac14}
        % &\leq C \left\|\left\| g \right\|_{L_x^2}\right\|_{L_y^2}^{\frac 1 2} \left\|\partial_y \left\| g \right\|_{L_x^2}\right\|_{L_y^2}^{\frac 1 2} + C \left\|\left\| \partial_x g \right\|_{L_x^2}\right\|_{L_y^2}^{\frac 1 2} \left\|\partial_y \left\| \partial_x g \right\|_{L_x^2}\right\|_{L_y^2}^{\frac 1 2},
    \end{align*}
    and the proof of \eqref{eq:embedding} is complete.

\end{proof}

\begin{lemma}\label{lem:psi}
    Assume that $\overline{g} = 0$ and $\psi$ is defined as in \eqref{eq:psi-def}, then for $1 < q \leq \infty$,
    \begin{equation}
        \left\| \frac{\partial^\alpha g}{\psi} \right\|_q
        \leq C_\delta \left( \left\| \partial_y \partial^\alpha g \right\|_{q} 
        + \left\| \partial^\alpha g \right\|_{q} \right).
    \end{equation}
\end{lemma}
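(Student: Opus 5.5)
The plan is a Hardy-type argument in the normal variable $y$, after splitting the half-line into a region near the boundary and a region away from it. Set $h = \partial^\alpha g$. Since $\overline{g}=0$ and the conormal field $\psi\partial_y$ vanishes on $y=0$ (because $\psi(0)=0$), every conormal derivative preserves the vanishing trace: if $\alpha_3 \ge 1$ then $\partial^\alpha g$ carries the factor $\psi^{\alpha_3}$ and so vanishes at $y=0$, while if $\alpha_3=0$ then $\partial^\alpha g$ consists of $t,x$ derivatives of a function whose trace vanishes. So $h(t,x,0)=0$. Take a cutoff $\chi(y)$ with $\chi=1$ on $[0,\frac14]$ and $\chi=0$ on $[\frac12,\infty)$, and write $h/\psi = \chi h/\psi + (1-\chi)h/\psi$.

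\textbf{Far region.} On the support of $1-\chi$ we have $y\ge \frac14$. From \eqref{eq:psi-def}, $\psi \ge c\,\delta$ there: on $[\frac14,\frac12]$ one has $\psi=\delta y$, on $[1,\infty)$ one has $\psi=\delta y/(1+y)\ge \delta/2$, and on the smooth transition interval I assume $\psi$ is bounded below by a positive multiple of $\delta$. Hence
\[
\|(1-\chi)h/\psi\|_q \le C_\delta\|h\|_q .
\]

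\textbf{Near region.} On $[0,\frac12]$ we have $\psi=\delta y$, and $h(t,x,y)=\int_0^y \partial_y h(t,x,s)\,ds$. Therefore
\[
\frac{\chi h}{\psi}=\frac{\chi(y)}{\delta}\cdot \frac1y\int_0^y \partial_y h\,ds .
\]
For fixed $(t,x)$, the classical Hardy inequality $\|\frac1y\int_0^y F\|_{L^q_y(0,\infty)}\le \frac{q}{q-1}\|F\|_{L^q_y}$ holds for $1<q<\infty$. For $q=\infty$ the bound $\frac1y|\int_0^y F|\le \|F\|_\infty$ is immediate. Integrating in $x$ (and $t$ if needed) gives
\[
\|\chi h/\psi\|_q\le C_\delta \|\partial_y h\|_q .
\]
Adding the two regions yields $\|h/\psi\|_q \le C_\delta(\|\partial_y\partial^\alpha g\|_q+\|\partial^\alpha g\|_q)$, which is the claim.

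\textbf{Main obstacle.} The only delicate point is the justification that $\partial^\alpha g$ has zero trace and that the Hardy step may be applied pointwise in $x$. For this I will first argue for smooth, compactly supported (in $x$ and large $y$) $g$ with $\overline g=0$ and then pass to the limit by density. The constant $\frac{q}{q-1}$ blows up as $q\to1$, which is why the statement excludes $q=1$.
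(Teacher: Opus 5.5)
Your proof is correct and follows the same route as the paper. Both use a cutoff $\chi(y)$ to split off a boundary strip, where $\psi\sim\delta y$ and the Hardy inequality (or the trivial $L^\infty$ bound when $q=\infty$) turns $\partial^\alpha g/y$ into $\partial_y\partial^\alpha g$, and treat the remaining region separately, where $\psi\gtrsim\delta$. Your explicit check that $\partial^\alpha g$ inherits the zero trace, and your choice of a cutoff supported where $\psi=\delta y$ exactly, are small refinements of points the paper leaves implicit; both arguments still need $\psi\gtrsim\delta$ on the unspecified transition interval $[\tfrac12,1]$.
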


\begin{proof}
    Denote
    \begin{equation*}
        \chi(y) = 
        \left\{
            \begin{aligned}
                &1, &0 \leq y \leq \frac 1 2 \\
                &0, &y \geq 1.
            \end{aligned}
        \right.
    \end{equation*}
    Then by Hardy inequality (see, for example,  \cite{T2004}) with $1<q<\infty$ and the Lipschitz continuity of $W^{1,\infty}$ when $q=\infty$,
   \begin{align*}
        \left\| \frac{\partial^\alpha g}{\psi} \right\|_{q} 
        &\leq \left\| \chi(y) \frac{y}{\psi} \right\|_{\infty} 
        \left\| \frac{\partial^\alpha g}{y} \right\|_{q} 
        +
        \left\| \frac{1 - \chi(y)}{\psi} \right\|_{\infty} 
        \left\| \partial^\alpha g \right\|_{q} \\
        &\leq \frac{C}{\delta} \left( \left\| \partial_y \partial^\alpha g \right\|_{q} 
        +
        \left\| \partial^\alpha g \right\|_{q} \right)
    \end{align*} 
\end{proof}

%The following lemma is an application of the lemma in 
For the conormal space $Y^{l,m}$,  we present the following Korn's type inequality.

%an alternative proof.
\begin{lemma}\label{lem:omega}
    Let $\nabla \cdot G = 0 $ and 
    $\omega^g = \partial_y g_1 - \partial_x g_2$, where
    $G = (g_1, g_2)$ and
    \(\widehat{g_2} = g_2 - \overline{g_2} e^{-y}\). For  \(1\leq m< \infty\), if $\delta$ is small enough, then there holds
    \begin{equation}
        \| \nabla G \|_{Y^{1,m}}
        \leq C_\delta \left( \| \omega^g \|_{Y^{1,m}}
        + \| \overline{g_2} e^{-y} \|_{Y^{1,m+1}} \right).
    \end{equation}
\end{lemma}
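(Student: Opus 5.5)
The plan is to reduce the estimate to a div–curl (elliptic) problem for the part of $G$ with zero normal trace. Set $\widehat{G}=(g_1,\widehat{g_2})$ with $\widehat{g_2}=g_2-\overline{g_2}e^{-y}$. Then $\widehat{g_2}=0$ on $y=0$. The corrector $H=(0,\overline{g_2}e^{-y})$ has $\|\nabla H\|_{Y^{1,m}}\le C\|\overline{g_2}e^{-y}\|_{Y^{1,m+1}}$, because its $y$-derivative is again $-\overline{g_2}e^{-y}$ and its $x$-derivative is one more conormal derivative. We also have $\nabla\cdot\widehat G=-\nabla\cdot H=\overline{g_2}e^{-y}$ and $\partial_y g_1-\partial_x\widehat{g_2}=\omega^g+\partial_x\overline{g_2}e^{-y}$. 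So it suffices to bound $\nabla\widehat G$ in $Y^{1,m}$ by the $Y^{1,m}$ norms of its divergence and curl, both of which are controlled by the right-hand side.

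\textbf{Tangential and time derivatives.} These commute with $\nabla$, with the divergence, with the curl, and with the boundary condition $\widehat{g_2}|_{y=0}=0$. I first prove the base $L^2$ estimate
\[
\|\nabla V\|_2\le C\bigl(\|\nabla\cdot V\|_2+\|\partial_y v_1-\partial_x v_2\|_2\bigr)
\]
for $V=(v_1,v_2)$ with $v_2|_{y=0}=0$. The proof is by integration by parts: $\|\nabla V\|_2^2=\|\mathrm{div}\,V\|_2^2+\|\mathrm{curl}\,V\|_2^2$ plus a boundary term $\int_{y=0}(v_1\partial_x v_2-v_2\partial_x v_1)\,dx$, which vanishes since $v_2=0$ there. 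Applied to $\partial_t^{\alpha_1}\partial_x^{\alpha_2}\widehat G$, this handles all multi-indices with $\alpha_3=0$.

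\textbf{Conormal derivatives.} I argue by induction on $\alpha_3$. The tangential derivative $\partial_x$ of $\nabla\widehat G$ is already covered. For $\partial_y$-components, the two identities
\[
\partial_y g_1=\omega^g+\partial_x g_2,\qquad \partial_y g_2=-\partial_x g_1
\]
express normal derivatives through the curl, tangential derivatives, and the given boundary data. Applying $\partial^\alpha$ gives $\|\partial^\alpha\partial_y G\|_2\le \|\partial^\alpha\omega^g\|_2+\|\partial^\alpha\partial_x G\|_2$. The term $\partial^\alpha\partial_x G$ has the same conormal order but one more tangential derivative, which is bounded by the base estimate applied to $\partial_x\partial^{\alpha'}\widehat G$ together with the commutator formula \eqref{exchange oper}. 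The commutators $[\partial_y,\partial^\alpha]=\alpha_3\psi'\partial^{\alpha-(0,0,1)}\partial_y$ are of lower conormal order, with coefficient $\psi'=O(\delta)$. They are therefore absorbed, for $\delta$ small, into the left-hand side, or controlled by the induction hypothesis. This is where the smallness of $\delta$ enters and why the constant is $C_\delta$.

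\textbf{Main obstacle.} The difficulty is that $\psi(y)^{\alpha_3}\partial_y^{\alpha_3}$ does not commute with the curl and the divergence. More seriously, the base estimate applied to $\partial^\alpha\widehat G$ with $\alpha_3\ge1$ is not directly available, since $\partial^\alpha\widehat G$ is not a nice div/curl pair. I would avoid this by never applying the base estimate to $\psi\partial_y$-derivatives. Instead I use the two pointwise identities above, which give $\nabla G$ algebraically from $\omega^g$, $\partial_x G$ and $\nabla\cdot G=0$. This reduces everything to $\|\partial^\alpha\partial_x G\|_2$. For that term, I write $\psi\partial_y\partial_x G$ as $\psi\,\partial_x(\partial_y G)$ and recurse on the number of $\partial_y$'s, using Lemma~\ref{lem:psi} if a $1/\psi$ weight appears near the boundary, where $\widehat{g_2}$ vanishes. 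The recursion terminates after $m$ steps, yielding the claimed bound with $\|\overline{g_2}e^{-y}\|_{Y^{1,m+1}}$ accounting for the loss of one derivative from the corrector.
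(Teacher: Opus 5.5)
Your proof is correct, and in its final form (the recursion in your last paragraph) it takes a genuinely different route from the paper's.

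\textbf{The paper's route.} It works with the second-order equation $\Delta g_2=-\partial_x\omega^g$, which follows from $\nabla\cdot G=0$. It tests $-\partial^\alpha\Delta g_2$ against $\partial^\alpha\widehat{g_2}$ for all $\alpha_1\le 1$, $|\alpha|\le m$ at once and integrates by parts; the boundary term vanishes because $\partial^\alpha\widehat{g_2}=0$ on $y=0$. The commutators $[\partial_y,\partial^\alpha]=\alpha_3\psi'\partial^{\alpha-(0,0,1)}\partial_y$ are absorbed using $\delta$ small and Young's inequality, which bounds $\|\nabla g_2\|_{Y^{1,m}}$. Only then is $\nabla g_1$ recovered from $\partial_x g_1=-\partial_y g_2$ and $\partial_y g_1=\omega^g+\partial_x g_2$.

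\textbf{Your route.} You use only the first-order identity $\|\nabla V\|_2^2=\|\nabla\cdot V\|_2^2+\|\partial_y v_1-\partial_x v_2\|_2^2$, for $V=\partial_t^{\alpha_1}\partial_x^{\alpha_2}\widehat G$, and handle $\alpha_3\ge 1$ algebraically. The fact that makes this close should be stated explicitly. For $\alpha_3\ge1$ one has exactly $\partial^\alpha\partial_x G=\psi\,\partial^{\alpha-(0,0,1)+(0,1,0)}\partial_y G$, which is immediate from $\partial^\alpha=\partial_t^{\alpha_1}\partial_x^{\alpha_2}\psi^{\alpha_3}\partial_y^{\alpha_3}$. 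This preserves $\alpha_1$ and $|\alpha|$ and lowers $\alpha_3$ by one. Let $A_k$ be the maximum of $\|\partial^\alpha\nabla G\|_2$ over $\alpha_1\le1$, $|\alpha|\le m$, $\alpha_3=k$. Your two pointwise identities then give $A_k\le\|\omega^g\|_{Y^{1,m}}+C\|\psi\|_\infty A_{k-1}$ with $C$ absolute, and $A_0$ is controlled by the div--curl identity.

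\textbf{What to drop.} No commutators ever appear in this recursion. So the discussion in your ``Conormal derivatives'' paragraph about applying the base estimate to $\partial_x\partial^{\alpha'}\widehat G$ and absorbing $[\partial_y,\partial^\alpha]$ should be removed. No $1/\psi$ weight arises, so Lemma~\ref{lem:psi} is not needed. Smallness of $\delta$ is not needed either; only $\|\psi\|_\infty<\infty$ is used.

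\textbf{Comparison.} Your argument is more elementary: it is first order only, uses no second derivatives of $g_2$, and gives a constant of size $(1+C\|\psi\|_\infty)^m$. The paper's argument has the merit of following the same conormal energy template it uses for $n$, $\nabla c$ and $\widehat{\omega}$.
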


\begin{proof}
Due to \(\nabla \cdot G=0\), we have
    \begin{align*}
        \Delta g_2 &= \partial_x^2 g_2 + \partial_y^2 g_2 = \partial_x^2 g_2 - \partial_y \partial_x g_1 = -\partial_x \omega^g,
    \end{align*}
    and it follows that
    \begin{align*}
        -\sum_{\alpha_1 \leq 1, |\alpha| \leq m} \langle \partial^\alpha \Delta g_2, \partial^\alpha \widehat{g_2} \rangle 
        &= \sum_{\alpha_1 \leq 1, |\alpha| \leq m} \langle \partial^\alpha \partial_x \omega^g, \partial^\alpha \widehat{g_2}\rangle.
    \end{align*}
Applying the absolute value inequality to the right-hand side yields
    \begin{align}\label{eq:laplace g2}
         -\sum_{\alpha_1 \leq 1, |\alpha| \leq m} \langle \partial^\alpha \Delta g_2, \partial^\alpha \widehat{g_2} \rangle 
        \leq \sum_{\alpha_1 \leq 1, |\alpha| \leq m} |\langle \partial^\alpha \partial_x \omega^g, \partial^\alpha \widehat{g_2} \rangle|.
    \end{align}
    By integration by parts, we have
    \begin{align*}
        &-\sum_{\alpha_1 \leq 1, |\alpha| \leq m} \langle \partial^\alpha \Delta g_2, \partial^\alpha \widehat{g_2} \rangle \\
        % =& -\sum_{\alpha_1 \leq 1, |\alpha| \leq m} \langle \partial^\alpha \partial_x^2 g_2, \partial^\alpha \widehat{g_2} \rangle 
        % -\sum_{\alpha_1 \leq 1, |\alpha| \leq m} \langle \partial^\alpha \partial_y^2 g_2, \partial^\alpha \widehat{g_2} \rangle\\
        =& \sum_{\alpha_1 \leq 1, |\alpha| \leq m} \langle \partial^\alpha \partial_x g_2, \partial^\alpha \partial_x \widehat{g_2} \rangle+ \sum_{\alpha_1 \leq 1, |\alpha| \leq m} \langle \partial^\alpha \partial_y g_2, \partial^\alpha \partial_y \widehat{g_2} \rangle \\
        &- \sum_{\alpha_1 \leq 1, |\alpha| \leq m} \langle [\partial^\alpha, \partial_y] \partial_y g_2, \partial^\alpha \widehat{g_2} \rangle  + \sum_{\alpha_1 \leq 1, |\alpha| \leq m} \langle \partial^\alpha \partial_y g_2, [\partial_y, \partial^\alpha] \widehat{g_2} \rangle, 
      \end{align*}
      which is greater than 
    % replaced \(\widehat{g_2}\) by \(g_2 - \overline{g_2} e^{-y}\), 
    \begin{align*}
        \geq &\sum_{\alpha_1 \leq 1, |\alpha| \leq m} \| \partial^\alpha \partial_x g_2 \|_2^2 
        - \sum_{\alpha_1 \leq 1, |\alpha| \leq m} |\langle \partial^\alpha \partial_x (\overline{g_2} e^{-y}), \partial^\alpha \partial_x g_2 \rangle|\\
        &+ \sum_{\alpha_1 \leq 1, |\alpha| \leq m} \|\partial^\alpha \partial_y g_2\|_{2}^2 - \sum_{\alpha_1 \leq 1, |\alpha| \leq m} |\langle \partial^\alpha \partial_y g_2, \partial^\alpha \partial_y (\overline{g_2} e^{-y}) \rangle| \\
        &- C\delta \sum_{\alpha_1 \leq 1, |\alpha| \leq m} |\langle \partial^\alpha \partial_y g_2, \partial^{\alpha-(0,0,1)} \partial_y g_2 \rangle| \\
        &- C\delta \sum_{\alpha_1 \leq 1, |\alpha| \leq m} |\langle \partial^\alpha \partial_y g_2, \partial^{\alpha-(0,0,1)} \partial_y (\overline{g_2} e^{-y}) \rangle|.
    \end{align*}
   Moreover, by H\"older's inequality, it shows
    \begin{align*}
        &-\sum_{\alpha_1 \leq 1, |\alpha| \leq m} \langle \partial^\alpha \Delta g_2, \partial^\alpha \widehat{g_2} \rangle\\
        &\geq (1 - C\sigma-C\delta) \sum_{\alpha_1 \leq 1, |\alpha| \leq m} \|\partial^\alpha \nabla g_2\|_{2}^2 - C\left( \delta + \frac{1}{\sigma} \right) \sum_{\alpha_1 \leq 1, |\alpha| \leq m} \|\partial^\alpha \nabla( \overline{g_2} e^{-y} )\|_{2}^2\\
        &\quad - C\delta \left( \sum_{\alpha_1 \leq 1, |\alpha| \leq m} \|\partial^\alpha \partial_y g_2\|_{2}^2 
        + \sum_{\alpha_1 \leq 1, |\alpha| \leq m} \|\partial^{\alpha-(0,0,1)} \partial_y g_2\|_{2}^2 \right) \\
        &\geq  (1 - C\sigma-C\delta) \sum_{\alpha_1 \leq 1, |\alpha| \leq m} \|\partial^\alpha \nabla g_2\|_{2}^2 - C\left( \delta + \frac{1}{\sigma} \right) \sum_{\alpha_1 \leq 1, |\alpha| \leq m} \|\partial^\alpha \nabla( \overline{g_2} e^{-y} )\|_{2}^2.
    \end{align*}

   For the term on  the right hand of \eqref{eq:laplace g2}, it shows
    \begin{align*}
        &\sum_{\alpha_1 \leq 1, |\alpha| \leq m} |\langle \partial^\alpha \partial_x \omega^g, \partial^\alpha \widehat{g_2} \rangle| 
        = \sum_{\alpha_1 \leq 1, |\alpha| \leq m} |\langle \partial^\alpha \omega^g, \partial^\alpha \partial_x \widehat{g_2} \rangle| \\
        &\leq \frac{C}{\sigma} \sum_{\alpha_1 \leq 1, |\alpha| \leq m} \|\partial^\alpha \omega^g\|_{2}^2 
           + C\sigma \sum_{\alpha_1 \leq 1, |\alpha| \leq m} \|\partial^\alpha \partial_x g_2\|_{2}^2 + C\sigma \sum_{\alpha_1 \leq 1, |\alpha| \leq m} \|\partial^\alpha (\partial_x \overline{g_2} e^{-y} )\|_{2}^2.
    \end{align*}
    Summing up these estimates, we have
    \begin{align*}
        &(1 - C\sigma-C\delta) \sum_{\alpha_1 \leq 1, |\alpha| \leq m} \|\partial^\alpha \nabla g_2\|_{2}^2 \\
        &\leq \frac{C}{\sigma} \sum_{\alpha_1 \leq 1, |\alpha| \leq m} \|\partial^\alpha \omega^g\|_{2}^2 + C\left( \delta + \frac{1}{\sigma} \right) \sum_{\alpha_1 \leq 1, |\alpha| \leq m+1} \|\partial^\alpha ( \overline{g_2} e^{-y} )\|_{2}^2.
    \end{align*}
    Letting $\sigma, \delta$ small enough, we have
    \begin{align}\label{eq:nabla g2}
        \sum_{\alpha_1 \leq 1, |\alpha| \leq m} \|\partial^\alpha \nabla g_2\|_{2}^2 
        &\leq C \sum_{\alpha_1 \leq 1, |\alpha| \leq m} \|\partial^\alpha \omega^g\|_{2}^2 
           + C \sum_{\alpha_1 \leq 1, |\alpha| \leq m+1} \|\partial^\alpha ( \overline{g_2} e^{-y} )\|_{2}^2.
    \end{align}
    For $g_1$, as \(\nabla \cdot G=0\), we have
    \begin{align}\label{eq:nabla g1}
        \sum_{\alpha_1 \leq 1, |\alpha| \leq m} \|\partial^\alpha \nabla g_1\|_{2}^2 
        &= \sum_{\alpha_1 \leq 1, |\alpha| \leq m} \|\partial^\alpha \partial_x g_1\|_{2}^2 
           + \sum_{\alpha_1 \leq 1, |\alpha| \leq m} \|\partial^\alpha \partial_y g_1\|_{2}^2\nonumber \\
        &\leq \sum_{\alpha_1 \leq 1, |\alpha| \leq m} \|\partial^\alpha \partial_y g_2\|_{2}^2 
           + \sum_{\alpha_1 \leq 1, |\alpha| \leq m} \|\partial^\alpha (\partial_y g_1 - \partial_x g_2 + \partial_x g_2)\|_{2}^2 \nonumber\\
        &\leq \sum_{\alpha_1 \leq 1, |\alpha| \leq m} \|\partial^\alpha \nabla g_2\|_{2}^2 
           + \sum_{\alpha_1 \leq 1, |\alpha| \leq m} \|\partial^\alpha \omega^g\|_{2}^2. 
        % &\leq C \sum_{\alpha_1 \leq 1, |\alpha| \leq m} \|\partial^\alpha \omega^g\|_{2}^2 
        % + C \sum_{\alpha_1 \leq 1, |\alpha| \leq m+1} \|\partial^\alpha ( \overline{g_2} e^{-y} )\|_{2}^2.
    \end{align}
    Combining \eqref{eq:nabla g2} and \eqref{eq:nabla g1}, this proof is complete.
\end{proof}

%Nonlinear estimates in Conormal Soblev spaces.
\subsection{Nonlinear estimates in conormal Sobolev spaces.}
In this section, we  deal with the product terms or nonlinear terms. For simplicity, $\langle \cdot , \cdot \rangle$  means the inner
product in $L^2_{xy}(\mathbb{R}^2_+)$.

First,  for $H\cdot \nabla G$ we have the following estimates. 

\begin{lemma}\label{lem:H cdot nabla G}
  Let  $u^a$ and $u$  be as in \eqref{eq:approximate} and \eqref{eq:error}. Then there holds 
    
    \textup{(a)}
    \begin{equation}
        \begin{aligned}
            \sum_{\alpha_1 \leq 1, |\alpha| \leq m}& \bigl| \bigl\langle \partial^\alpha (u^a \cdot \nabla G), \partial^\alpha G \bigr\rangle \bigr|\\
            &\leq \frac{C_\delta}{\sigma} \left( 1 + \| u^a \|_{Y^{1,m+1}_\infty} + \| f e^{-y} \|_{Y^{1,m}_\infty} \right)^2 \| G \|_{Y^{1,m}}^2 + C\sigma \varepsilon^4 \| \partial_y G \|_{Y^{1,m}}^2,
        \end{aligned}
    \end{equation}
    
    \textup{(b)}
    \begin{equation}
        \begin{aligned}
            \sum_{\alpha_1 \leq 1, |\alpha| \leq m} 
            &\bigl| \bigl\langle \partial^{\alpha} (u \cdot \nabla G^a), \partial^{\alpha} Q \bigr\rangle \bigr|\\
            &\leq C \left( \left( \| G^a \|_{Y^{1,m+1}_\infty}^2 + 1 \right) \| Q \|_{Y^{1,m}}^2 + \| u \|_{Y^1}^2 + \| \omega \|_{Y^{1,m}}^2 \right) \\
            &\quad + C \varepsilon^4 \| f e^{-y} \|_{Y^{1,m+1}}^2 
            \left( \| G^a \|_{Y^{1,m+1}_\infty}^2 + \| \partial_y G^a \|_{Y^{1,m}}^2 \right),
        \end{aligned}
    \end{equation}
    and
    \textup{(c)}
    \begin{equation}
        \begin{aligned}
            &\sum_{\alpha_1 \leq 1, |\alpha| \leq m} 
            \bigl| \bigl\langle \partial^{\alpha} (u \cdot \nabla G), \partial^{\alpha} G \bigr\rangle \bigr|\\
            &\leq \frac{C}{\sigma} \left( \| u \|_{Y^1}^2 + \| \omega \|_{Y^{1,m}}^2 + \| G \|_{Y^{1,m}}^2 \right)  + \frac{C}{\sigma \varepsilon^4} \left( \| u \|_{Y^1}^2 + \| \omega \|_{Y^{1,m}}^2 + \| G \|_{Y^{1,m}}^2 \right)^3 \\
            &\quad + \frac{C}{\sigma} \varepsilon^2 \| f e^{-y} \|_{Y^{1,m+1}}^2 \| G \|_{Y^{1,m}}^2 + C\sigma \varepsilon^2 \| \nabla G \|_{Y^{1,m}}^2.
        \end{aligned}
    \end{equation}
\end{lemma}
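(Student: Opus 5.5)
The plan is to expand $\partial^\alpha$ of each product by the Leibniz rule and separate the top-order term, where all derivatives fall on the unknown, from the commutator terms. For (a), write $u^a\cdot\nabla G = u_1^a\partial_x G + u_2^a\partial_y G$. In the top-order term $\langle u^a\cdot\nabla\partial^\alpha G,\partial^\alpha G\rangle$ we integrate by parts using $\nabla\cdot u^a=0$. The only surviving contribution is the boundary term $-\tfrac12\int_{y=0} u_2^a|\partial^\alpha G|^2\,dx$, and since $u_2^a=\varepsilon^2 f$ on $y=0$ it equals $-\tfrac12\varepsilon^2\int f|\partial^\alpha G|^2$.

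To handle this boundary term we write it as an interior integral of $\partial_y\big(fe^{-y}|\partial^\alpha G|^2\big)$. This is bounded by $\varepsilon^2\|fe^{-y}\|_{W^{1,\infty}}\|\partial^\alpha G\|_2(\|\partial^\alpha G\|_2+\|\partial_y\partial^\alpha G\|_2)$, and Young's inequality then produces the $C\sigma\varepsilon^4\|\partial_yG\|^2_{Y^{1,m}}$ term. Note that only conormal derivatives in $t,x$ and $\psi\partial_y$ appear in $\alpha$, so $\partial^\alpha$ commutes with the trace up to the factor $\psi(0)=0$.

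The commutator terms $\partial^\beta u^a\,\partial^{\alpha-\beta}\nabla G$ with $\beta\ne0$ split into two kinds. The tangential part $\partial^\beta u_1^a\,\partial_x\partial^{\alpha-\beta}G$ is bounded directly by $\|u^a\|_{Y^{1,m+1}_\infty}\|G\|_{Y^{1,m}}$. For the normal part we write $u_2^a\partial_y = (u_2^a/\psi)\,\psi\partial_y$ after subtracting the boundary value, i.e. $u_2^a-\varepsilon^2 fe^{-y}$, which vanishes at $y=0$. Lemma~\ref{lem:psi} gives $\|(u_2^a-\varepsilon^2fe^{-y})/\psi\|_{Y^{1,m}_\infty}\le C_\delta(\|u^a\|_{Y^{1,m+1}_\infty}+\|fe^{-y}\|_{Y^{1,m}_\infty})$, using $\partial_yu_2^a=-\partial_xu_1^a$. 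The leftover piece $\varepsilon^2fe^{-y}\partial_y\partial^{\alpha-\beta}G$ is again absorbed with $\sigma\varepsilon^4\|\partial_yG\|^2$. We also need the commutator $[\partial^\alpha,\partial_y]$ from \eqref{exchange oper}, which produces lower-order conormal terms of the same type. Lemma~\ref{lem:separate} organizes the double sum.

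For (b), $u\cdot\nabla G^a$ puts all derivatives on known functions except for $\partial^\beta u$. We bound $\|\partial^\beta u\|_2$ for $|\beta|\le m$ by $\|u\|_{Y^1}+\|\nabla u\|_{Y^{1,m-1}}$ and apply Lemma~\ref{lem:omega} with $\overline{u_2}=-\varepsilon^2 f$; this is the source of $\|\omega\|_{Y^{1,m}}$ and $\varepsilon^4\|fe^{-y}\|^2_{Y^{1,m+1}}$. The normal term $u_2\partial_yG^a$ requires care. Where $\partial_y G^a$ is large (of size $\varepsilon^{-1}$ inside the boundary layer), we write $u_2=(u_2+\varepsilon^2fe^{-y})-\varepsilon^2fe^{-y}$. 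The first piece vanishes on the boundary, so $u_2+\varepsilon^2fe^{-y}=\psi\cdot(\text{Hardy quotient})$ and $\psi\partial_y G^a$ is controlled by $\|G^a\|_{Y^{1,m+1}_\infty}$. The second piece is paired with $\|\partial_yG^a\|_{Y^{1,m}}$ in $L^2$, giving the last term of (b).

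For (c), the quadratic term, the top-order part is handled as in (a): the boundary term involves $\varepsilon^2 f$ and gives the $\varepsilon^2\|fe^{-y}\|^2\|G\|^2/\sigma$ contribution. The main obstacle is the commutator $\partial^\beta u\,\partial^{\alpha-\beta}\nabla G$, because $u$ is not bounded in $L^\infty$ by the energy without losing $\varepsilon$.

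We split according to where the derivatives fall. When $|\beta|$ is small, we put $\partial^\beta u$ (or $\partial^\beta u_2/\psi$ via Lemma~\ref{lem:psi}) in $L^\infty$ and use Lemma~\ref{lem:embedding}: $\|\partial^\beta u\|_\infty^2\lesssim\|u\|_{Y^{1,m}}\|\nabla u\|_{Y^{1,m}}$, which is controlled by the energy through Lemma~\ref{lem:omega}. When $|\beta|$ is large, $\partial^{\alpha-\beta}\nabla G$ goes in $L^\infty$, and Lemma~\ref{lem:embedding} gives $\|G\|_{Y^{1,m}}^{1/2}\|\nabla G\|_{Y^{1,m}}^{1/2}$. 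Young's inequality then splits this into $\sigma\varepsilon^2\|\nabla G\|^2_{Y^{1,m}}$ and a remainder $\sigma^{-1}\varepsilon^{-2}(\cdot)\|G\|\cdot\|\cdot\|^2$. A further Young step on the remainder yields the cubic term $\frac{C}{\sigma\varepsilon^4}(\|u\|^2_{Y^1}+\|\omega\|^2_{Y^{1,m}}+\|G\|^2_{Y^{1,m}})^3$.

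The delicate point is making the powers of $\varepsilon$ match exactly. The dissipation available is only $\varepsilon^2\|\nabla G\|^2$, so every factor of $\nabla G$ that is not absorbed must be paid for with $\varepsilon^{-2}$. This forces the $\varepsilon^{-4}$ cubic remainder, which is harmless in the end because the energy is $O(\varepsilon^3)$.
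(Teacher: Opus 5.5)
Your parts (a) and (b) follow the paper's proof. You integrate the top-order transport term by parts using $\nabla\cdot u^a=0$ and $\overline{u^a_2}=\varepsilon^2 f$, and you control the trace by $\|\partial^\alpha G\|_2\|\partial_y\partial^\alpha G\|_2$. In the remaining terms you split the normal velocity into a piece vanishing at $y=0$, handled by the Hardy quotient of Lemma~\ref{lem:psi} against $\psi\partial_y$, plus the explicit $\varepsilon^2 fe^{-y}$ correction. Lemma~\ref{lem:omega} then converts $\|\nabla u\|_{Y^{1,m}}$ into $\|\omega\|_{Y^{1,m}}+\varepsilon^2\|fe^{-y}\|_{Y^{1,m+1}}$. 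Your sign $u_2+\varepsilon^2 fe^{-y}$ is the right one, since $\overline{u_2}=-\varepsilon^2 f$.

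In (c) you take a different route from the paper. The paper never integrates by parts there. It splits into $c_1$ (all derivatives on $u$) and $c_2$ (at least one derivative on $\nabla G$, \emph{including} $\beta=\alpha$). In $c_2$ it simply bounds $\|\partial^{\alpha-\beta}u\|_{L^\infty}\|\partial^\beta\nabla G\|_2\|\partial^\alpha G\|_2$ and absorbs $\nabla G$ into $\sigma\varepsilon^2\|\nabla G\|^2_{Y^{1,m}}$; this absorption is what generates the $\varepsilon^{-4}$ cubic term. Note that $\|u\|_{L^\infty}$ is controlled by the energy through Lemmas~\ref{lem:embedding} and \ref{lem:omega} with no loss of $\varepsilon$, contrary to your remark; the only $\varepsilon$-price comes from the $\varepsilon^2$ weight on $\nabla G$, as you say at the end. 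You instead use the transport cancellation for $\beta=0$, leaving only the boundary integral $\tfrac12\varepsilon^2\int f|\overline{\partial^\alpha G}|^2\,dx$. That is cleaner, but it does not improve the final bound, because the terms with $\partial^\beta u\in L^\infty$, $\beta\neq0$, must still be paid for with the $\varepsilon^2$-weighted dissipation.

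One step needs repair. Consider the regime where $\partial^\beta u$ cannot go in $L^\infty$, namely $\beta=\alpha$ with $|\alpha|=m$. There you put $\partial^{\alpha-\beta}\nabla G$ in $L^\infty$ and claim the bound $\|G\|^{1/2}_{Y^{1,m}}\|\nabla G\|^{1/2}_{Y^{1,m}}$. For the normal component this fails: Lemma~\ref{lem:embedding} applied to $\partial_yG$ needs $\partial_y^2 G$, which neither the energy nor the dissipation controls. The Hardy quotient belongs exactly here; you placed it in the small-$|\beta|$ case, where it is not needed.

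The fix is to write
$\partial^\alpha u_2\,\partial_yG=\bigl(\partial^\alpha(u_2+\varepsilon^2fe^{-y})/\psi\bigr)\,\psi\partial_yG-\varepsilon^2\partial^\alpha(fe^{-y})\,\partial_yG$
and treat each factor as follows:
\begin{itemize}
\item the quotient goes in $L^2$, bounded via Lemmas~\ref{lem:psi} and \ref{lem:omega} by $\|u\|_{Y^1}+\|\omega\|_{Y^{1,m}}+\varepsilon^2\|fe^{-y}\|_{Y^{1,m+1}}$;
\item the conormal quantity $\psi\partial_yG$ goes in $L^\infty$ via Lemma~\ref{lem:embedding};
\item in the last term, $fe^{-y}$ goes in $L^\infty$ and $\partial_yG$ in $L^2$.
\end{itemize}
This is the paper's $c_1$, and with it your argument closes.
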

\begin{proof}
    \textup{(a)} The term \(\sum_{\alpha_1 \leq 1, |\alpha| \leq m} 
            \bigl| \bigl\langle \partial^\alpha (u^a \cdot \nabla G), \partial^\alpha G \bigr\rangle \bigr|\) is divided into two terms:
    \begin{equation*}
        \begin{aligned}
            &\sum_{\alpha_1 \leq 1, |\alpha| \leq m} 
            \bigl| \bigl\langle \partial^\alpha (u^a \cdot \nabla G), \partial^\alpha G \bigr\rangle \bigr| \\
            &\leq \sum_{\alpha_1 \leq 1, |\alpha| \leq m} 
            \bigl| \bigl\langle u^a \cdot \partial^\alpha \nabla G, \partial^\alpha G \bigr\rangle \bigr| + C \sum_{\alpha_1 \leq 1, |\alpha| \leq m} 
            \sum_{\beta \leq \alpha, |\beta| \leq m-1} 
            \bigl| \bigl\langle \partial^{\alpha-\beta} u^a \cdot \partial^\beta \nabla G, \partial^\alpha G \bigr\rangle \bigr|\\
            &=:a_1+a_2.
        \end{aligned}
    \end{equation*}
    For \(a_1\), by integration by parts and \eqref{exchange oper},
    \begin{align*}
        a_1&
        \leq \sum_{\alpha_1 \leq 1, |\alpha| \leq m} 
        \bigl| \bigl\langle u^a \cdot \nabla \partial^\alpha G, \partial^\alpha G \bigr\rangle \bigr| + \sum_{\alpha_1 \leq 1, |\alpha| \leq m} 
        \bigl| \bigl\langle u_2^a [\partial^\alpha, \partial_y] G, \partial^\alpha G \bigr\rangle \bigr| \\
        &\leq C \varepsilon^2 \sum_{\alpha_1 \leq 1, |\alpha| \leq m} 
        \Bigl| \int_{\partial \mathbb{R}^2_+} f (\overline{\partial^\alpha G})^2 \, dx \Bigr| + C \delta \sum_{\alpha_1 \leq 1, |\alpha| \leq m} 
        \bigl| \bigl\langle u_2^a \partial^{\alpha-(0,0,1)} \partial_y G, \partial^\alpha G \bigr\rangle \bigr|  
    \end{align*}
    and by H\"older's inequality and Gagliardo–Nirenberg interpolation inequality, this becomes
    \begin{align*}
        &\leq C \varepsilon^2 \| f \|_{L_x^\infty} 
        \sum_{\alpha_1 \leq 1, |\alpha| \leq m} 
        \| \partial^\alpha G \|_2 \, \| \partial_y \partial^\alpha G \|_2 + C \delta \Bigl\| \frac{\widehat{u_2^a}}{\psi} \Bigr\|_\infty 
        \sum_{\alpha_1 \leq 1, |\alpha| \leq m} 
        \| \partial^\alpha G \|_2^2 \\
        &\quad + C \delta \varepsilon^2 \| f e^{-y} \|_\infty 
        \sum_{\alpha_1 \leq 1, |\alpha| \leq m} 
        \| \partial^{\alpha-(0,0,1)} \partial_y G \|_2 \, \| \partial^\alpha G \|_2 \\
        &\leq \frac{C}{\sigma} \left( 1 
        + \Bigl\| \frac{\widehat{u_2^a}}{\psi} \Bigr\|_\infty 
        + \| f e^{-y} \|_\infty^2 \right) 
        \sum_{\alpha_1 \leq 1, |\alpha| \leq m} 
        \| \partial^\alpha G \|_2^2  + C \sigma \varepsilon^4 
        \sum_{\alpha_1 \leq 1, |\alpha| \leq m} 
        \| \partial^\alpha \partial_y G \|_2^2,
    \end{align*}
    where  \(\widehat{u^a_2}=u_2^a-\varepsilon^2fe^{-y}\).

    For \(a_2\), by H\"older's inequality and Lemma \ref{lem:separate}, it shows
    \begin{align*}
        a_2
        \leq& C \Biggl( \sum_{\alpha_1 \leq 1, |\alpha| \leq m} 
        \| \partial^\alpha u_1^a \|_\infty^2 \Biggr)^{\frac 1 2} 
        \sum_{\alpha_1 \leq 1, |\alpha| \leq m} 
        \| \partial^\alpha G \|_2^2 \\
        &+ C \Biggl( \sum_{\alpha_1 \leq 1, |\alpha| \leq m} 
        \Bigl\| \frac{\partial^\alpha\widehat{ u_2^a}}{\psi} \Bigr\|_\infty^2 \Biggr)^{\frac 1 2} 
        \sum_{\alpha_1 \leq 1, |\alpha| \leq m} 
        \| \partial^\alpha G \|_2^2 \\
        &+ C \varepsilon^2 \Biggl( \sum_{\alpha_1 \leq 1, |\alpha| \leq m} 
        \| \partial^\alpha (f e^{-y}) \|_\infty^2 \Biggr)^{\frac 1 2}  \Biggl( \sum_{\alpha_1 \leq 1, |\alpha| \leq m} 
        \| \partial^\alpha G \|_2^2 \Biggr)^{\frac 1 2} \Biggl( \sum_{\alpha_1 \leq 1, |\alpha| \leq m} 
        \| \partial^\alpha \partial_y G \|_2^2 \Biggr)^{\frac 1 2} \\
        \leq& C \left( 1 + \frac{1}{\sigma} \right) 
        \Biggl[ \Biggl( \sum_{\alpha_1 \leq 1, |\alpha| \leq m} 
        \| \partial^\alpha u_1^a \|_\infty^2 \Biggr)^{\frac 1 2} 
        + \Biggl( \sum_{\alpha_1 \leq 1, |\alpha| \leq m} 
        \Bigl\| \frac{\partial^\alpha\widehat{ u_2^a}}{\psi} \Bigr\|_\infty^2 \Biggr)^{\frac 1 2} \\
        &\quad + \sum_{\alpha_1 \leq 1, |\alpha| \leq m} 
        \| \partial^\alpha (f e^{-y}) \|_\infty^2 \Biggr] 
        \sum_{\alpha_1 \leq 1, |\alpha| \leq m} 
        \| \partial^\alpha G \|_2^2  + C \sigma \varepsilon^4 
        \sum_{\alpha_1 \leq 1, |\alpha| \leq m} 
        \| \partial^\alpha \partial_y G \|_2^2.
    \end{align*}
    Combining the above estimates and by Lemma \ref{lem:psi}, the  proof (a) is complete.
    % Where we have used\\
    % (1) Divergence condition:
    % \begin{align*}
    %     \nabla \cdot u^a = 0.
    % \end{align*}
    % (2) Gauss theorem:
    % \begin{align*}
    %     \Bigl| \int_{\mathbb{\partial R_p^2}} (\partial^\alpha G)^2 \, dx \Bigr| = \Bigl| \int_{\mathbb{R}} \partial_y (\partial^\alpha G)^2 \, dxdy \Bigr|.
    % \end{align*}
    % (3) Summation formula(see ):
    % \begin{align*}
    %     &\sum_{\alpha_1 \leq 1, |\alpha| \leq m} \sum_{\beta \leq \alpha, i \leq |\beta| \leq j} H_{\alpha-\beta} G_\beta Q_\alpha \\
    %     &\leq \left( \sum_{\alpha_1 \leq 1, |\alpha| \leq m} \left( \sum_{\beta \leq \alpha, i \leq |\beta| \leq j} H_{\alpha-\beta} G_\beta \right) ^2 \right)^{\frac 1 2} 
    %     \left( \sum_{\alpha_1 \leq 1, |\alpha| \leq m} Q_\alpha^2 \right)^{\frac 1 2} \\
    %     &\leq \left( \sum_{\alpha_1 \leq 1, |\alpha| \leq m} \sum_{\beta \leq \alpha, i \leq |\beta| \leq j} H_{\alpha-\beta}^2 G_\beta^2 \right)^{\frac 1 2} 
    %     \left( \sum_{\alpha_1 \leq 1, |\alpha| \leq m} Q_\alpha^2 \right)^{\frac 1 2} \\
    %     &\leq \left( \sum_{\alpha_1 \leq 1, |\alpha| \leq i} H_{\alpha}^2 \right)^{\frac 1 2} 
    %     \left( \sum_{\alpha_1 \leq 1, |\alpha| \leq j} G_\alpha^2 \right)^{\frac 1 2} 
    %     \left( \sum_{\alpha_1 \leq 1, |\alpha| \leq m} Q_\alpha^2 \right)^{\frac 1 2}.
    % \end{align*}
    
    \textup{(b)} For \(\sum_{\alpha_1 \leq 1, |\alpha| \leq m} 
        \bigl| \bigl\langle \partial^\alpha (u \cdot \nabla G^a), \partial^\alpha Q \bigr\rangle \bigr|\), by H\"older's inequality and \(\widehat{u_2}=u_2-\varepsilon^2fe^{-y}\), we have
    % 第一部分
     \begin{align*}
       &\sum_{\alpha_1 \leq 1, |\alpha| \leq m} 
        \bigl| \bigl\langle \partial^\alpha (u \cdot \nabla G^a), \partial^\alpha Q \bigr\rangle \bigr|\\
        &\leq C \sum_{\alpha_1 \leq 1, |\alpha| \leq m} 
        \sum_{\beta \leq \alpha} 
        \bigl| \bigl\langle \partial^{\alpha-\beta} u \cdot \partial^\beta \nabla G^a, \partial^\alpha Q \bigr\rangle \bigr|\\
        &\leq C \Biggl( \sum_{\alpha_1 \leq 1, |\alpha| \leq m} 
        \sum_{\beta \leq \alpha} 
        \| \partial^{\alpha-\beta} u_1 \|_2^2 \,
        \| \partial^\beta \partial_x G^a \|_\infty^2 \Biggr)^{\!\frac 1 2} 
        \Biggl( \sum_{\alpha_1 \leq 1, |\alpha| \leq m} 
        \| \partial^\alpha Q \|_2^2 \Biggr)^{\!\frac 1 2} \\
        &\quad + C \Biggl( \sum_{\alpha_1 \leq 1, |\alpha| \leq m} 
        \sum_{\beta \leq \alpha} 
        \Bigl\| \frac{\partial^{\alpha-\beta} \widehat{u_2}}{\psi} \Bigr\|_2^2 \,
        \| \psi \partial^\beta \partial_y G^a \|_\infty^2 \Biggr)^{\!\frac 1 2} 
        \Biggl( \sum_{\alpha_1 \leq 1, |\alpha| \leq m} 
        \| \partial^\alpha Q \|_2^2 \Biggr)^{\!\frac 1 2} \\
        &\quad + C\varepsilon^2 \Biggl( \sum_{\alpha_1 \leq 1, |\alpha| \leq m} 
        \sum_{\beta \leq \alpha} 
        \| \partial^{\alpha-\beta} (f e^{-y}) \|_\infty^2 \,
        \| \partial^\beta \partial_y G^a \|_2^2 \Biggr)^{\!\frac 1 2} 
        \Biggl( \sum_{\alpha_1 \leq 1, |\alpha| \leq m} 
        \| \partial^\alpha Q \|_2^2 \Biggr)^{\!\frac 1 2},
    \end{align*}
    % 第二部分
    and by Lemma \ref{lem:separate}, this becomes
    \begin{align*}
        &\leq C_\delta \Biggl( \sum_{\alpha_1 \leq 1, |\alpha| \leq m} 
        \| \partial^\alpha u_1 \|_2^2 
        + \sum_{\alpha_1 \le 1,|\alpha|\le m} \left\| \frac{\partial^\alpha \widehat{u_2} }\psi \right\|_2^2 
         \Biggr)^{\frac12}
        \Biggl( \sum_{\alpha_1 \leq 1, |\alpha| \leq m+1} 
        \| \partial^\alpha G^a \|_\infty^2 \Biggr)^{\frac 1 2} 
        \Biggl( \sum_{\alpha_1 \leq 1, |\alpha| \leq m} 
        \| \partial^\alpha Q \|_2^2 \Biggr)^{\frac 1 2} \\
        &\quad + C \varepsilon^2 \Biggl( \sum_{\alpha_1 \leq 1, |\alpha| \leq m} 
        \| \partial^\alpha (f e^{-y}) \|_\infty^2 \Biggr)^{\frac 1 2} 
        \Biggl( \sum_{\alpha_1 \leq 1, |\alpha| \leq m} 
        \| \partial^\alpha \partial_y G^a \|_2^2 \Biggr)^{\!\frac 1 2}  \Biggl( \sum_{\alpha_1 \leq 1, |\alpha| \leq m} 
        \| \partial^{\alpha} Q \|_2^2 \Biggr)^{\frac 1 2}.
    \end{align*}
    Through Lemma \ref{lem:psi}, Lemma \ref{lem:omega} and the definition \eqref{def:derivative} we know that
    \begin{align*}
        &\sum_{\alpha_1 \leq 1, |\alpha| \leq m} 
        \| \partial^\alpha u_1 \|_2^2 
        + \sum_{\alpha_1 \le 1,|\alpha|\le m} \left\| \frac{\partial^\alpha \widehat{u_2} }\psi \right\|_2^2 \\
        &\le C\left( \| u_1 \|_{Y^{1,m}}^2 + \| u_2 \|_{Y^{1,m}}^2 + \| \partial_y u_2 \|_{Y^{1,m}}^2 + \| fe^{-y} \|_{Y^{1,m}}^2 \right)\\
        &\le C\left( \| u \|_{Y^1}^2 + \| \nabla u \|_{Y^{1,m}}^2 + \| fe^{-y} \|_{Y^{1,m}}^2 \right) \\
        &\le C\left( \| u_1 \|_{Y^{1,m}}^2 + \| u_2 \|_{Y^{1,m}}^2 + \| \partial_y u_2 \|_{Y^{1,m}}^2 + \| fe^{-y} \|_{Y^{1,m}}^2 \right)\\
        &\le C\left( \| u \|_{Y^1}^2 + \| \omega \|_{Y^{1,m}}^2 + \| fe^{-y} \|_{Y^{1,m+1}}^2 \right).
    \end{align*}
    Then by using H\"older's inequality we have
    \begin{align*}
        &\sum_{\alpha_1 \leq 1, |\alpha| \leq m} 
        \bigl| \bigl\langle \partial^\alpha (u \cdot \nabla G^a), \partial^\alpha Q \bigr\rangle \bigr| \\
        &\leq C \left( \| u \|_{Y^1} 
        + \|\omega \|_{Y^{1,m}} + \varepsilon^2  \| f e^{-y} \|_{Y^{1,m+1}}^2 \right)
        \| \partial^\alpha G^a \|_{Y^{1,m+1}_\infty}
        \| Q \|_{Y^{1,m}} \\
        &\quad + C \varepsilon^2 
        \|  f e^{-y} \|_{Y^{1,m}_\infty}
        \| \partial_y G^a \|_{Y^{1,m}} \| Q \|_{Y^{1,m}} \\
        &\leq C \left( \left( \| G^a \|_{Y^{1,m+1}_\infty}^2 + 1 \right) \| Q \|_{Y^{1,m}}^2 + \| u \|_{Y^1}^2 + \| \omega \|_{Y^{1,m}}^2 \right) \\
        &\quad + C \varepsilon^4 \| f e^{-y} \|_{Y^{1,m+1}}^2 
        \left( \| G^a \|_{Y^{1,m+1}_\infty}^2 + \| \partial_y G^a \|_{Y^{1,m}}^2 \right).
    \end{align*}

    \textup{(c)} For \(\sum_{\alpha_1 \leq 1, |\alpha| \leq m} \left| \langle \partial^{\alpha}(u \cdot \nabla G), \partial^{\alpha} G \rangle \right|\), we divide it into two situations:
    \begin{align*}
        &\sum_{\alpha_1 \leq 1, |\alpha| \leq m} \left| \langle \partial^{\alpha}(u \cdot \nabla G), \partial^{\alpha} G \rangle \right|\\
        &\leq C \sum_{\alpha_1 \leq 1, |\alpha| \leq m} \left| \langle \partial^{\alpha} u \cdot \nabla G, \partial^{\alpha} G \rangle \right|
        + C \sum_{\alpha_1 \leq 1, |\alpha| \leq m} \sum_{\substack{\beta \leq \alpha, 1 \leq |\beta|}} \left| \langle \partial^{\alpha-\beta} u \cdot \partial^{\beta} \nabla G, \partial^{\alpha} G \rangle \right| \\
        &=: c_1 + c_2,
    \end{align*}
     For \(c_1\), by H\"older's inequality, Lemma \ref{lem:embedding} and \(\widehat{u_2}=u_2-\varepsilon^2fe^{-y}\), we have
    \begin{align*}
        c_1 &\leq C \left( \sum_{\alpha_1 \leq 1, |\alpha| \leq m} \|\partial^\alpha u_1\|_{2}^2 \right)^{\frac{1}{2}} 
        \left( \left\| \partial_x G \right\|_{L^\infty}^2 \right)^{\frac{1}{2}}
        \left( \sum_{\alpha_1 \leq 1, |\alpha| \leq m} \|\partial^\alpha G\|_{2}^2 \right)^{\frac{1}{2}} \\
        &\quad + C \left( \sum_{\alpha_1 \leq 1, |\alpha| \leq m} \left\| \frac{\partial^\alpha \widehat{u}_2}{\psi} \right\|_{2}^2 \right)^{\frac{1}{2}}
        \left( \|\psi \partial_y G\|_{L^\infty}^2 \right)^{\frac{1}{2}}
        \left( \sum_{\alpha_1 \leq 1, |\alpha| \leq m} \|\partial^\alpha G\|_{2}^2 \right)^{\frac{1}{2}} \\
        &\quad + C \varepsilon^2 \left( \sum_{\alpha_1 \leq 1, |\alpha| \leq m} \|\partial^\alpha (f e^{-y})\|_{L^\infty}^2 \right)^{\frac{1}{2}}
        \left( \|\nabla G\|_{2}^2 \right)^{\frac{1}{2}}
        \left( \sum_{\alpha_1 \leq 1, |\alpha| \leq m} \|\partial^\alpha G\|_{2}^2 \right)^{\frac{1}{2}}\\
        &\leq C \left( \sum_{\alpha_1 \leq 1, |\alpha| \leq m} \|\partial^\alpha u_1\|_{2}^2 
        + \sum_{\alpha_1 \leq 1, |\alpha| \leq m} \left\| \frac{\partial^\alpha \widehat{u}_2}{\psi} \right\|_{2}^2 \right)^{\frac{1}{2}}\times \\
        &\quad \left( \sum_{\alpha_1 \leq 1, |\alpha| \leq 2} \|\partial^\alpha G\|_{2}^2 \right)^{\frac{1}{4}}
        \left( \sum_{\alpha_1 \leq 1, |\alpha| \leq 2} \|\partial^\alpha \partial_y G\|_{2}^2 \right)^{\frac{1}{4}}
        \left( \sum_{\alpha_1 \leq 1, |\alpha| \leq m} \|\partial^\alpha G\|_{2}^2 \right)^{\frac{1}{2}} \\
        &\quad + C \varepsilon^2 \left( \sum_{\alpha_1 \leq 1, |\alpha| \leq m} \|\partial^\alpha (f e^{-y})\|_{L^\infty}^2 \right)^{\frac{1}{2}}
        \left( \|\nabla G\|_{2}^2 \right)^{\frac{1}{2}}
        \left( \sum_{\alpha_1 \leq 1, |\alpha| \leq m} \|\partial^\alpha G\|_{2}^2 \right)^{\frac{1}{2}},
        \end{align*}
       and by Lemma \ref{lem:psi} and Lemma \ref{lem:omega}, this becomes
        \begin{align*}
        &\leq C \left( \sum_{\alpha_1 \leq 1, |\alpha| \leq m} \|\partial^\alpha u\|_{2}^2 
        + \sum_{\alpha_1 \leq 1, |\alpha| \leq m} \|\partial^{\alpha} \omega\|_{2}^2 
        + \varepsilon^4 \sum_{\alpha_1\leq1,|\alpha| \leq m+1} \|\partial^{\alpha} (f e^{-y})\|_{2}^2 \right)^{\frac{1}{2}} \\
        &\times \left( \sum_{\alpha_1 \leq 1, |\alpha| \leq 2} \|\partial^\alpha G\|_{2}^2 \right)^{\frac{1}{4}}
        \left( \sum_{\alpha_1 \leq 1, |\alpha| \leq 2} \|\partial^\alpha \partial_y G\|_{2}^2 \right)^{\frac{1}{4}}
        \left( \sum_{\alpha_1 \leq 1, |\alpha| \leq m} \|\partial^\alpha G\|_{2}^2 \right)^{\frac{1}{2}} \\
        &+ C \varepsilon^2 \left( \sum_{\alpha_1 \leq 1, |\alpha| \leq m} \|\partial^\alpha (f e^{-y})\|_{L^\infty}^2 \right)^{\frac{1}{2}}
    \left( \|\nabla G\|_{2}^2 \right)^{\frac{1}{2}}
    \left( \sum_{\alpha_1 \leq 1, |\alpha| \leq m} \|\partial^\alpha G\|_{2}^2 \right)^{\frac{1}{2}}\\
        &\leq C \left( \|u\|_{Y^{1}} + \|\omega\|_{Y^{1,m}} \right) 
        \left\| G \right\|_{Y^{1,m}}^{\frac 1 2} \left\|\partial_y G \right\|_{Y^{1,m}}^{\frac 1 2} \left\| G \right\|_{Y^{1,m}} \\
        &\quad + C \varepsilon^2 \|fe^{-y}\|_{Y^{1,m+1}} 
        \left\| G \right\|_{Y^{1,m}}^{\frac 1 2} \left\|\partial_y G \right\|_{Y^{1,m}}^{\frac 1 2} \left\| G \right\|_{Y^{1,m}} \\
        &\quad + C \varepsilon^2 \|fe^{-y}\|_{Y^{1,m}_{\infty}} 
        \|\nabla G\|_{Y^{1,m}} \left\| G \right\|_{Y^{1,m}}.
    \end{align*}
    For \(c_2\), using the same methods as H\"older's inequality, Lemma \ref{lem:psi} and Lemma \ref{lem:omega}, we have
\begin{align*}
        c_2
        &\leq C \left( \sum_{\alpha_1 \leq 1, |\alpha| \leq m-1} \|\partial^{\alpha} u\|_{L^\infty}^2 \right)^{\frac{1}{2}}
        \left( \sum_{\alpha_1 \leq 1, |\alpha| \leq m} \|\partial^{\alpha} \nabla G\|_{2}^2 \right)^{\frac{1}{2}}
        \left( \sum_{\alpha_1 \leq 1, |\alpha| \leq m} \|\partial^{\alpha} G\|_{2}^2 \right)^{\frac{1}{2}}\\
        &\leq C \left( \sum_{\alpha_1 \leq 1, |\alpha| \leq m} \|\partial^{\alpha} u\|_{2}^2 \right)^{\frac{1}{4}}
        \left( \sum_{\alpha_1 \leq 1, |\alpha| \leq m} \|\partial^{\alpha} \partial_y u\|_{2}^2 \right)^{\frac{1}{4}}\\
        &\quad \times \left( \sum_{\alpha_1 \leq 1, |\alpha| \leq m} \|\partial^{\alpha} \nabla G\|_{2}^2 \right)^{\frac{1}{2}}
        \left( \sum_{\alpha_1 \leq 1, |\alpha| \leq m} \|\partial^{\alpha} G\|_{2}^2 \right)^{\frac{1}{2}}\\
        &\leq C \left( \|u\|_{Y^{1}} + \|\omega\|_{Y^{1,m}} \right) \|\nabla G\|_{Y^{1,m}} \left\| G \right\|_{Y^{1,m}}
        + C \varepsilon^2 \|fe^{-y}\|_{Y^{1,m+1}} \|\nabla G\|_{Y^{1,m}} \left\| G \right\|_{Y^{1,m}}.
    \end{align*}
    In summary, by H\"older's inequality, we have
    \begin{align*}
        &\sum_{\alpha_1 \leq 1, |\alpha| \leq m} \left| \langle \partial^{\alpha}(u \cdot \nabla G), \partial^{\alpha} G \rangle \right| \\
     &\leq C \left( \|u\|_{Y^{1}}^2 + \|\omega\|_{Y^{1,m}}^2 \right)  + \frac{C}{\sigma \varepsilon^2} \left( \left\| G \right\|_{Y^{1,m}}^2 \right)^3\\
     &\quad + C\sigma \varepsilon^2 \|\nabla G\|_{Y^{1,m}}^2+\frac{C}{\sigma} \varepsilon^2 \left\| G \right\|_{Y^{1,m}}^2+C\varepsilon^2 \|fe^{-y}\|_{Y^{1,m+1}}^2 \left\| G \right\|_{Y^{1,m}}^2\\
     &\quad +C \left\| G \right\|_{Y^{1,m}}^2 + \frac{C}{\sigma^2 \varepsilon^4} \left( \|u\|_{Y^{1}}^2 + \|\omega\|_{Y^{1,m}}^2 \right)^2 \left\| G \right\|_{Y^{1,m}}^2+\frac{C}{\sigma} \varepsilon^2 \|fe^{-y}\|_{Y^{1,m}_{\infty}}^2 \left\| G \right\|_{Y^{1,m}}^2\\
     &\leq \frac{C}{\sigma} \left( \|u\|_{Y^1}^2 + \|\omega\|_{Y^{1,m}}^2 + \left\| G \right\|_{Y^{1,m}}^2 \right) + \frac{C}{\sigma \varepsilon^4} \left( \|u\|_{Y^1}^2 + \|\omega\|_{Y^{1,m}}^2 + \left\| G \right\|_{Y^{1,m}}^2 \right)^3 \\
        &\quad + \frac{C}{\sigma} \varepsilon^2 \|fe^{-y}\|_{Y^{1,m+1}}^2 \left\| G \right\|_{Y^{1,m}}^2 + C\sigma \varepsilon^2 \|\nabla G\|_{Y^{1,m}}^2.
     \end{align*}
    % \begin{align*}
    %     &\leq C \left\| g \right\|_{Y^{1,m}}^2 + \frac{C}{\sigma^2 \varepsilon^4} \left( \|u\|_{Y^{1}}^2 + \|\omega\|_{Y^{1,m}}^2 \right)^2 \left\| g \right\|_{Y^{1,m}}^2 \\
    %     &\quad + C\sigma \varepsilon^2 \|\nabla G\|_{Y^{1,m}}^2 + C \left( \|u\|_{Y^{1}}^2 + \|\omega\|_{Y^{1,m}}^2 \right) \\
    %     &\quad + \frac{C}{\sigma \varepsilon^2} \left( \left\| g \right\|_{Y^{1,m}}^2 \right)^3 + \frac{C}{\sigma} \varepsilon^2 \left\| g \right\|_{Y^{1,m}}^2 + C\varepsilon^2 \|fe^{-y}\|_{Y^{1,m+1}}^2 \left\| g \right\|_{Y^{1,m}}^2\\
    %     &\quad  + \frac{C}{\sigma} \varepsilon^2 \|fe^{-y}\|_{Y^{1,m}_{\infty}}^2 \left\| g \right\|_{Y^{1,m}}^2+ C\sigma \varepsilon^2 \|\nabla G\|_{Y^{1,m}}^2
    %     \end{align*}
\end{proof}

\begin{lemma}\label{lem:H nabla G}
    For $H\nabla G$, we have
    
    \textup{(a)}
    \begin{equation}
        \sum_{\alpha_1 \leq 1, |\alpha| \leq m} 
        \bigl| \bigl\langle \partial^\alpha (H^a \nabla G), \partial^\alpha \nabla Q \bigr\rangle \bigr|
        \leq C \| H^a \|_{Y^{1,m}_\infty} 
        \| \nabla G \|_{Y^{1,m}} 
        \| \nabla Q \|_{Y^{1,m}},
    \end{equation}
    
    \textup{(b)}
    \begin{equation}
        \sum_{\alpha_1 \leq 1, |\alpha| \leq m} 
        \bigl| \bigl\langle \partial^\alpha (H \nabla G^a), \partial^\alpha \nabla Q \bigr\rangle \bigr|
        \leq C \| H \|_{Y^{1,m}} 
        \| \nabla G^a \|_{Y^{1,m}_\infty} 
        \| \nabla Q \|_{Y^{1,m}},
    \end{equation}
    
    \textup{(c)}
    \begin{equation}
        \begin{aligned}
            \sum_{\alpha_1 \leq 1, |\alpha| \leq m} 
            \bigl| \bigl\langle \partial^{\alpha} (H \nabla G), \partial^{\alpha} \nabla Q \bigr\rangle \bigr| &\leq C \| H \|_{Y^{1,m}}^{\frac 1 2} 
            \| \nabla H \|_{Y^{1,m}}^{\frac 1 2} 
            \| \nabla G \|_{Y^{1,m}} 
            \| \nabla Q \|_{Y^{1,m}} \\
            &\quad + \| H \|_{Y^{1,m}} 
            \| \nabla G \|_{Y^{1,1}}^{\frac 1 2} 
            \| \nabla^2 G \|_{Y^{1,1}}^{\frac 1 2} 
            \| \nabla Q \|_{Y^{1,m}},
        \end{aligned}
    \end{equation}
and
    \textup{(d)} While $\partial_y Q|_{y=0}=0$, it shows that
    \begin{equation}
        \begin{aligned}
            \sum_{\alpha_1 \leq 1, |\alpha| \leq m} 
            \bigl| \bigl\langle \partial^{\alpha} (H \nabla G), \partial^{\alpha} \nabla Q \bigr\rangle \bigr| &\leq C \| H \|_{Y^{1,m}}^{\frac 1 2} 
            \| \nabla H \|_{Y^{1,m}}^{\frac 1 2} 
            \| \nabla G \|_{Y^{1,m}} 
            \| \nabla Q \|_{Y^{1,m}} \\
            &+ C_\delta \| H \|_{Y^{1,m}} 
            \| G \|_{Y^{1,2}}^{\frac 1 2} 
            \| \nabla G \|_{Y^{1,2}}^{\frac 1 2} \left( \| \nabla Q \|_{Y^{1}} 
            + \| \nabla^2 Q \|_{Y^{1,m}} \right).
        \end{aligned}
    \end{equation}
\end{lemma}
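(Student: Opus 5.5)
We expand each term with the Leibniz rule for the conormal derivatives $\partial^\alpha$, namely $\partial^\alpha(H\nabla G)=\sum_{\beta\le\alpha}C_{\alpha,\beta}\,\partial^{\alpha-\beta}H\,\partial^\beta\nabla G$. We then apply H\"older's inequality with one factor in $L^\infty$ and the other in $L^2$. Cauchy–Schwarz against $\|\nabla Q\|_{Y^{1,m}}$ and Lemma \ref{lem:separate} then resum the double sums into products of $Y$-norms. Part (a) puts $\partial^{\alpha-\beta}H^a$ in $L^\infty$, which gives $\|H^a\|_{Y^{1,m}_\infty}\|\nabla G\|_{Y^{1,m}}\|\nabla Q\|_{Y^{1,m}}$. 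Part (b) is symmetric, with $\partial^\beta\nabla G^a$ in $L^\infty$ and $\partial^{\alpha-\beta}H$ in $L^2$. These two parts need no special structure. We only use that $\partial^\alpha$ commutes with $\nabla$ up to the harmless commutator \eqref{exchange oper}, and we absorb that commutator because the $L^\infty$ norms already contain all lower conormal derivatives.

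For (c), we split the sum according to where the derivatives fall. If $|\alpha-\beta|\le m-1$, the factor $\partial^{\alpha-\beta}H$ goes to $L^\infty$. By Lemma \ref{lem:embedding}, $\sum_{|\gamma|\le m-1}\|\partial^\gamma H\|_\infty^2\lesssim\|H\|_{Y^{1,m}}\|\partial_yH\|_{Y^{1,m}}$, so this part gives the first term $\|H\|^{1/2}_{Y^{1,m}}\|\nabla H\|^{1/2}_{Y^{1,m}}\|\nabla G\|_{Y^{1,m}}\|\nabla Q\|_{Y^{1,m}}$. If $|\alpha-\beta|=m$, then $\beta$ is of low order ($|\beta|\le 1$ when $m\ge2$ is relevant, or $\beta=0$ in general). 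We then put $\partial^\beta\nabla G$ in $L^\infty$ and use Lemma \ref{lem:embedding} at level one, $\|\partial^\beta\nabla G\|_\infty\lesssim\|\nabla G\|_{Y^{1,1}}^{1/2}\|\nabla^2G\|_{Y^{1,1}}^{1/2}$. This gives the second term. The time index $\alpha_1\le1$ needs care, since the embedding costs a $\partial_y$ but not a time derivative, and Lemma \ref{lem:embedding} is stated exactly with $\alpha_1\le1$.

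For (d), the first piece is the same as in (c). The difference is the top-order piece $\langle\partial^{\alpha}H\,\nabla G,\partial^\alpha\nabla Q\rangle$, where we cannot afford $\nabla^2G$. Here we integrate by parts to move the gradient off $G$ and onto $Q$. We write $\partial^\alpha H\,\nabla G\cdot\partial^\alpha\nabla Q$, and we move $\nabla$ from $\partial^\alpha\nabla Q$ onto the product $\partial^\alpha H\,\nabla G$ only where necessary. Alternatively, we estimate using $\|G\|^{1/2}\|\nabla G\|^{1/2}$ in $L^\infty$-type bounds, after transferring one derivative onto $Q$ through the divergence form, $\langle \partial^\alpha H\,\partial_j G,\partial^\alpha\partial_jQ\rangle=-\langle \partial^\alpha H\, G,\partial_j\partial^\alpha\partial_jQ\rangle-\langle\partial_j\partial^\alpha H\,G,\partial^\alpha\partial_jQ\rangle$ plus a boundary term. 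The boundary term $\int_{y=0}\partial^\alpha H\,G\,\partial^\alpha\partial_yQ\,dx$ vanishes because $\partial_yQ|_{y=0}=0$: conormal derivatives either preserve the trace or carry the factor $\psi(0)=0$. The commutators $[\partial_y,\partial^\alpha]$ produce factors $\delta\psi'$ and lower terms, which we control by $\|\nabla Q\|_{Y^{1}}$ and Lemma \ref{lem:psi}; this is where the constant $C_\delta$ comes from. The remaining factor $\|G\|_\infty$ is bounded by Lemma \ref{lem:embedding} through $\|G\|_{Y^{1,2}}^{1/2}\|\nabla G\|_{Y^{1,2}}^{1/2}$, and $\partial^\alpha H$ and $\nabla^2 Q$ stay in $L^2$. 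The term with $\partial_j\partial^\alpha H$ is problematic because it has too many derivatives on $H$. For it we would instead move only the tangential or conormal part and keep $\partial_y$ on $Q$, so that the result is bounded by $\|H\|_{Y^{1,m}}\|\nabla^2Q\|_{Y^{1,m}}$.

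The main obstacle is this bookkeeping in (d). We must choose the integration by parts so that neither $\nabla\partial^\alpha H$ nor $\nabla^2G$ appears, while the boundary trace is still killed by the Neumann condition on $Q$.
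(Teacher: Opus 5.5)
Your parts (a)--(c) are essentially the paper's argument: the Leibniz rule, H\"older, Lemma~\ref{lem:separate}, and Lemma~\ref{lem:embedding} for the lower-order factor in (c). In (a)--(b) no commutator is needed at all, because $\|\nabla G\|_{Y^{1,m}}$ is defined through $\partial^\alpha\nabla G$.

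\textbf{The gap is in (d).} Any integration by parts that removes the derivative from $G$ must put it on the product $\partial^\alpha H\,\partial^\alpha\partial_jQ$. So the term $\langle \partial_j\partial^\alpha H\,G,\partial^\alpha\partial_jQ\rangle$ is unavoidable. For $|\alpha|=m$ it is only bounded by $\|\nabla H\|_{Y^{1,m}}\|G\|_\infty\|\nabla Q\|_{Y^{1,m}}$. Neither term on the right of (d) dominates this: the second term contains $H$ only through $\|H\|_{Y^{1,m}}$, and the first pairs $\|\nabla H\|^{1/2}_{Y^{1,m}}$ with $\|\nabla G\|_{Y^{1,m}}$.

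Your suggested repair does not close this. For $j=y$ there is no ``tangential part'' of $\partial_y$ to move. For $j=x$, moving $\partial_x$ onto $H$ gives $\partial_x\partial^\alpha H$, which has conormal order $m+1$. In your scheme the Neumann condition only kills a boundary trace, whereas in the actual proof it plays a different and essential role. The constant $C_\delta$ also does not come from the commutators, as you suggest.

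\textbf{The missing idea is that no integration by parts is needed.} Split $\nabla G\cdot\partial^\alpha\nabla Q$ into its $x$ and $y$ parts.

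For the $x$ part, $\partial_x$ is a conormal derivative, so Lemma~\ref{lem:embedding} gives directly $\|\partial_xG\|_\infty\lesssim\|G\|_{Y^{1,2}}^{1/2}\|\partial_yG\|_{Y^{1,2}}^{1/2}$.

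For the normal part, insert the weight $\psi$:
\[
\partial^\alpha H\,\partial_yG\,\partial^\alpha\partial_yQ=\partial^\alpha H\,(\psi\partial_yG)\,\frac{\partial^\alpha\partial_yQ}{\psi}.
\]
Here $\psi\partial_yG$ is a first-order conormal derivative of $G$, so again $\|\psi\partial_yG\|_\infty\lesssim\|G\|_{Y^{1,2}}^{1/2}\|\partial_yG\|_{Y^{1,2}}^{1/2}$. The condition $\partial_yQ|_{y=0}=0$ gives $\overline{\partial^\alpha\partial_yQ}=0$, since conormal derivatives either preserve the zero trace or carry the factor $\psi(0)=0$. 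Lemma~\ref{lem:psi} (Hardy) then yields
\[
\Big\|\frac{\partial^\alpha\partial_yQ}{\psi}\Big\|_2\le C_\delta\big(\|\partial_y\partial^\alpha\partial_yQ\|_2+\|\partial^\alpha\partial_yQ\|_2\big)\le C_\delta\big(\|\nabla Q\|_{Y^1}+\|\nabla^2Q\|_{Y^{1,m}}\big),
\]
after absorbing the commutator terms.

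This is exactly where $C_\delta$ and the term $\|\nabla Q\|_{Y^1}$ in (d) come from; the latter arises from the purely temporal multi-indices $\alpha$. Throughout, $\partial^\alpha H$ stays in $L^2$ with no extra derivative.
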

\begin{proof}
   (a) By H\"older's inequality, direct computations derive that 
   %The proofs for (a) and (b) are derived through , as follows:
    \begin{align*}
        &\sum_{\alpha_1 \leq 1, |\alpha| \leq m} \left| \langle \partial^\alpha (H^a \nabla G), \partial^\alpha \nabla Q \rangle \right| \\
        &\leq C \sum_{\alpha_1 \leq 1, |\alpha| \leq m} \sum_{\beta \leq \alpha} \left| \langle \partial^{\alpha-\beta} H^a \partial^\beta \nabla G, \partial^\alpha \nabla Q \rangle \right| \\
        &\leq C \left( \sum_{\alpha_1 \leq 1, |\alpha| \leq m} \|\partial^\alpha H^a\|_{L^\infty}^2 \right)^{\frac{1}{2}}
        \left( \sum_{\alpha_1 \leq 1, |\alpha| \leq m} \|\partial^\alpha \nabla G\|_{2}^2 \right)^{\frac{1}{2}}
        \left( \sum_{\alpha_1 \leq 1, |\alpha| \leq m} \|\partial^\alpha \nabla Q\|_{2}^2 \right)^{\frac{1}{2}},
    \end{align*}
and  (b) is similar.
    % \begin{align*}
    %     \sum_{\alpha_1 \leq 1, |\alpha| \leq m} &\left| \langle \partial^\alpha (H \nabla G^a), \partial^\alpha \nabla Q \rangle \right| \\
    %     &\leq C \sum_{\alpha_1 \leq 1, |\alpha| \leq m} \sum_{\beta \leq \alpha} \left| \langle \partial^{\alpha-\beta} H \partial^\beta \nabla G^a, \partial^\alpha \nabla Q \rangle \right| \\
    %     &\leq C \left( \sum_{\alpha_1 \leq 1, |\alpha| \leq m} \|\partial^\alpha H\|_{2}^2 \right)^{\frac{1}{2}}
    %     \left( \sum_{\alpha_1 \leq 1, |\alpha| \leq m} \|\partial^\alpha \nabla G^a\|_{L^\infty}^2 \right)^{\frac{1}{2}}
    %     \left( \sum_{\alpha_1 \leq 1, |\alpha| \leq m} \|\partial^\alpha \nabla Q\|_{2}^2 \right)^{\frac{1}{2}}.
    % \end{align*}
    
    (c) The term \(\sum_{\alpha_1 \leq 1, |\alpha| \leq m} \left| \langle \partial^\alpha (H \nabla G), \partial^\alpha \nabla Q \rangle \right|\)
can be divided into  two parts, one case is at least one derivative falls on $\nabla G$ (denoted $c_1$) and the other is  all derivatives fall on $H$ (denoted $c_2$):
\begin{align*}
    &\sum_{\alpha_1 \leq 1, |\alpha| \leq m} \left| \langle \partial^\alpha (H \nabla G), \partial^\alpha \nabla Q \rangle \right| \\
    &\leq C \sum_{\alpha_1 \leq 1, |\alpha| \leq m} \sum_{\substack{\beta \leq \alpha \\ 1 \leq |\beta|}} \left| \langle \partial^{\alpha-\beta} H \,\partial^\beta \nabla G, \partial^\alpha \nabla Q \rangle \right|
          + C \sum_{\alpha_1 \leq 1, |\alpha| \leq m} \left| \langle \partial^\alpha H \,\nabla G, \partial^\alpha \nabla Q \rangle \right| \\
    &=: c_1 + c_2.
\end{align*}
By Lemma \ref{lem:embedding} we have
\begin{align*}
c_1
&\leq C \left( \sum_{\alpha_1 \leq 1, |\alpha| \leq m-1} \|\partial^\alpha H\|_{L^\infty}^2 \right)^{\!\! \frac{1}{2}}
      \left( \sum_{\alpha_1 \leq 1, |\alpha| \leq m} \|\partial^\alpha \nabla G\|_{2}^2 \right)^{\!\! \frac{1}{2}}
      \left( \sum_{\alpha_1 \leq 1, |\alpha| \leq m} \|\partial^\alpha \nabla Q\|_{2}^2 \right)^{\!\! \frac{1}{2}} \\[4pt]
&\leq C \left( \sum_{\alpha_1 \leq 1, |\alpha| \leq m} \|\partial^\alpha H\|_{2}^2 \right)^{\!\! \frac{1}{4}}
      \left( \sum_{\alpha_1 \leq 1, |\alpha| \leq m} \|\partial^\alpha \partial_y H\|_{2}^2 \right)^{\!\! \frac{1}{4}} \\
&\quad \times \left( \sum_{\alpha_1 \leq 1, |\alpha| \leq m} \|\partial^\alpha \nabla G\|_{2}^2 \right)^{\!\! \frac{1}{2}}
      \left( \sum_{\alpha_1 \leq 1, |\alpha| \leq m} \|\partial^\alpha \nabla Q\|_{2}^2 \right)^{\!\! \frac{1}{2}}
\end{align*}
and
\begin{align*}
c_2
&\leq C \left( \sum_{\alpha_1 \leq 1, |\alpha| \leq m} \|\partial^\alpha H\|_{2}^2 \|\nabla G\|_{L^\infty}^2 \right)^{\!\! \frac{1}{2}}
      \left( \sum_{\alpha_1 \leq 1, |\alpha| \leq m} \|\partial^\alpha \nabla Q\|_{2}^2 \right)^{\!\! \frac{1}{2}} \\[4pt]
&\leq C \left( \sum_{\alpha_1 \leq 1, |\alpha| \leq m} \|\partial^\alpha H\|_{2}^2 \right)^{\!\! \frac{1}{2}}
      \left( \sum_{\tau \leq 1, |\alpha| \leq 1} \|\partial^\alpha \nabla G\|_{2}^2 \right)^{\!\! \frac{1}{4}} \\
&\quad \times \left( \sum_{\tau \leq 1, |\alpha| \leq 1} \|\partial^\alpha \nabla^2 G\|_{2}^2 \right)^{\!\! \frac{1}{4}}
      \left( \sum_{\alpha_1 \leq 1, |\alpha| \leq m} \|\partial^\alpha \nabla Q\|_{2}^2 \right)^{\!\! \frac{1}{2}}.
\end{align*}
Combining \(c_1\) and \(c_2\),  (c) is proved.\\
    (d) Note that 
    \begin{align*}
        &\sum_{\alpha_1 \leq 1, |\alpha| \leq m} \left| \langle \partial^\alpha (H \nabla G), \partial^\alpha \nabla Q \rangle \right| \\
        &\quad \leq C \sum_{\alpha_1 \leq 1, |\alpha| \leq m} \sum_{\substack{\beta \leq \alpha, 1 \leq |\beta|}} \left| \langle \partial^{\alpha-\beta} H \partial^\beta \nabla G, \partial^\alpha \nabla Q \rangle \right|  + C \sum_{\alpha_1 \leq 1, |\alpha| \leq m} \left| \langle \partial^\alpha H \nabla G, \partial^\alpha \nabla Q \rangle \right| \\
        &\quad =: d_1+d_2.
    \end{align*}
    The estimate of \(d_1\) is similar to \(c_1\) in (c), thus we only need to consider \(d_2\). By H\"older's inequality and Lemma \ref{lem:embedding}, it shows
    \begin{align*}
        d_2
        &\leq C \sum_{\alpha_1 \leq 1, |\alpha| \leq m} \left| \langle \partial^\alpha H \partial_x G, \partial^\alpha \partial_x Q \rangle \right|
           + C \sum_{\alpha_1 \leq 1, |\alpha| \leq m} \left| \langle \partial^\alpha H \partial_y G, \partial^\alpha \partial_y Q \rangle \right| \\
        &\leq C \left( \sum_{\alpha_1 \leq 1, |\alpha| \leq m} \|\partial^\alpha H\|_2^2 \left\| \partial_x G \right\|_{L^\infty}^2 \right)^{\frac{1}{2}}
           \left( \sum_{\alpha_1 \leq 1, |\alpha| \leq m} \|\partial^\alpha \partial_x Q\|_2^2 \right)^{\frac{1}{2}} \\
        &\quad + C \left( \sum_{\alpha_1 \leq 1, |\alpha| \leq m} \|\partial^{\alpha} H\|_2^2 \|\psi \partial_y G\|_{L^\infty}^2 \right)^{\frac{1}{2}}
           \left( \sum_{\alpha_1 \leq 1, |\alpha| \leq m} \left\| \frac{\partial^\alpha \partial_y Q}{\psi} \right\|_2^2 \right)^{\frac{1}{2}},
    \end{align*}
   and  by Lemma \ref{lem:embedding} and Lemma \ref{lem:psi}, this becomes
    \begin{align*}
        &\leq C \left( \sum_{\alpha_1 \leq 1, |\alpha| \leq m} \|\partial^\alpha H\|_2^2 \right)^{\frac{1}{2}}
           \left\| \partial_x G \right\|_{L^\infty}
           \left( \sum_{\alpha_1 \leq 1, |\alpha| \leq m} \|\partial^\alpha \partial_x Q\|_2^2 \right)^{\frac{1}{2}} \\
        &\quad + C_\delta \left( \sum_{\alpha_1 \leq 1, |\alpha| \leq m} \|\partial^\alpha H\|_2^2 \right)^{\frac{1}{2}}
           \|\psi \partial_y G\|_{L^\infty}
           \left( \sum_{\tau \leq 1}\|\partial^\tau_t\partial_y Q\|_2^2 + \sum_{\alpha_1 \leq 1, |\alpha| \leq m} \|\partial^\alpha \nabla^2 Q\|_2^2 \right)^{\frac{1}{2}}\\
        &\leq C_\delta \left( \sum_{\alpha_1 \leq 1, |\alpha| \leq m} \|\partial^\alpha H\|_2^2 \right)^{\frac{1}{2}}
           \left( \sum_{\tau \leq 1, |\alpha| \leq 2} \|\partial^\alpha G\|_2^2 \right)^{\frac{1}{4}}
           \left( \sum_{\tau \leq 1, |\alpha| \leq 2} \|\partial^\alpha \partial_y G\|_2^2 \right)^{\frac{1}{4}} \\
        &\quad \times \left( \sum_{\tau \leq 1}\|\partial^\tau_t \nabla Q\|_2^2 + \sum_{\alpha_1 \leq 1, |\alpha| \leq m} \|\partial^\alpha \nabla^2 Q\|_2^2 \right)^{\frac{1}{2}}.
    \end{align*}
    Combining \(d_1\) and \(d_2\), the proof of (d) is complete.
\end{proof}

\begin{lemma}\label{lem:nabla H cdot nabla G}
    For $\nabla H\cdot \nabla G$, we have
    
    \textup{(a)}
    \begin{equation}
        \sum_{\alpha_1 \leq 1, |\alpha| \leq m} 
        \bigl| \bigl\langle \partial^\alpha ((\nabla H^a \cdot \nabla) G), \partial^\alpha \nabla Q \bigr\rangle \bigr|
        \leq C \| \nabla H^a \|_{Y^{1,m}_\infty} 
        \| \nabla G \|_{Y^{1,m}} 
        \| \nabla Q \|_{Y^{1,m}}.
    \end{equation}
    
    \textup{(b)}
    \begin{equation}
        \sum_{\alpha_1 \leq 1, |\alpha| \leq m} 
        \bigl| \bigl\langle \partial^\alpha ((\nabla H \cdot \nabla) G^a), \partial^\alpha \nabla Q \bigr\rangle \bigr|
        \leq C \| \nabla G^a \|_{Y^{1,m}_\infty} 
        \| \nabla H \|_{Y^{1,m}} 
        \| \nabla Q \|_{Y^{1,m}}.
    \end{equation}
    
    \textup{(c)} While $\partial_y Q|_{y=0}=0$, it shows that
    \begin{equation}
        \begin{aligned}
            &\sum_{\alpha_1 \leq 1, |\alpha| \leq m} 
            \bigl| \bigl\langle \partial^\alpha ((\nabla H \cdot \nabla) G), \partial^\alpha \nabla Q \bigr\rangle \bigr| \\
            &\leq C \| \nabla H \|_{Y^{1,m}} 
            \| \nabla G \|_{Y^{1,m}}^{\frac 1 2} 
            \| \nabla^2 G \|_{Y^{1,m}}^{\frac 1 2} 
            \| \nabla Q \|_{Y^{1,m}} \\
            &\quad + C_\delta \| H \|_{Y^{1,2}}^{\frac 1 2} 
            \| \nabla H \|_{Y^{1,2}}^{\frac 1 2} 
            \| \nabla G \|_{Y^{1,m}} \left( \| \nabla Q \|_{Y^{1}} 
            + \| \nabla^2 Q \|_{Y^{1,m}} \right).
        \end{aligned}
    \end{equation}
\end{lemma}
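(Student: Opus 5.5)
The proof of Lemma \ref{lem:nabla H cdot nabla G} follows the template of Lemma \ref{lem:H nabla G}. Expand $\partial^\alpha((\nabla H\cdot\nabla)G)$ by the Leibniz rule into $\sum_{\beta\le\alpha}\binom{\alpha}{\beta}\,\partial^{\alpha-\beta}\nabla H\cdot\partial^\beta\nabla G$. This is valid because the conormal derivatives $\partial^\alpha$ act as derivations on products; the commutator \eqref{exchange oper} is only needed if $\partial^\alpha$ must be moved past $\nabla$, and that is not required here since we keep $\nabla H$ and $\nabla G$ as the objects being differentiated. Each product term is then handled by Hölder's inequality, and the double sums are regrouped with Lemma \ref{lem:separate}.

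For (a), put the factor involving $H^a$ in $L^\infty$ and the remaining factors in $L^2$. This gives
\[
\Big(\sum\|\partial^{\alpha-\beta}\nabla H^a\|_\infty^2\Big)^{1/2}\Big(\sum\|\partial^\beta\nabla G\|_2^2\Big)^{1/2}\|\nabla Q\|_{Y^{1,m}},
\]
which is exactly the bound $\|\nabla H^a\|_{Y^{1,m}_\infty}\|\nabla G\|_{Y^{1,m}}\|\nabla Q\|_{Y^{1,m}}$. Part (b) is identical, with $\partial^\beta\nabla G^a$ placed in $L^\infty$ and $\partial^{\alpha-\beta}\nabla H$ in $L^2$.

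For (c), split the Leibniz sum into two pieces.

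The first piece, $c_1$, collects the terms with $|\alpha-\beta|\ge 1$, so that at most $m-1$ conormal derivatives fall on $\nabla G$. Put $\partial^\beta\nabla G$ in $L^\infty$ and apply Lemma \ref{lem:embedding}:
\[
\sum_{|\beta|\le m-1}\|\partial^\beta\nabla G\|_\infty^2\lesssim\|\nabla G\|_{Y^{1,m}}\|\partial_y\nabla G\|_{Y^{1,m}}.
\]
This yields the term $\|\nabla H\|_{Y^{1,m}}\|\nabla G\|^{1/2}_{Y^{1,m}}\|\nabla^2 G\|^{1/2}_{Y^{1,m}}\|\nabla Q\|_{Y^{1,m}}$.

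The second piece, $c_2$, is the term $\langle \nabla H\cdot\partial^\alpha\nabla G,\partial^\alpha\nabla Q\rangle$, in which all derivatives fall on $G$. Now $\nabla H$ must go into $L^\infty$, but we are only allowed to use $\|H\|_{Y^{1,2}}^{1/2}\|\nabla H\|_{Y^{1,2}}^{1/2}$ and not $\nabla^2 H$. This mirrors the term $d_2$ in Lemma \ref{lem:H nabla G}(d).

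\begin{itemize}
\item The tangential part $\partial_x H\,\partial^\alpha\partial_x G$ is bounded using $\|\partial_x H\|_\infty\lesssim\|H\|_{Y^{1,2}}^{1/2}\|\partial_y H\|_{Y^{1,2}}^{1/2}$, which follows from Lemma \ref{lem:embedding} because $\partial_x$ is a conormal derivative. We then pair $\partial^\alpha\partial_x G$ against $\|\nabla Q\|_{Y^{1,m}}$.
\item The normal part $\partial_y H\,\partial^\alpha\partial_y G\cdot\partial^\alpha\partial_y Q$ is rewritten as $(\psi\partial_y H)\cdot\partial^\alpha\partial_y G\cdot(\partial^\alpha\partial_y Q/\psi)$. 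Since $\psi\partial_y$ is conormal, $\|\psi\partial_y H\|_\infty\lesssim \|H\|_{Y^{1,2}}^{1/2}\|\partial_yH\|_{Y^{1,2}}^{1/2}$ by Lemma \ref{lem:embedding}.
\item The hypothesis $\partial_yQ|_{y=0}=0$ makes $\overline{\partial^\alpha\partial_y Q}=0$. Lemma \ref{lem:psi} then gives
\[
\|\partial^\alpha\partial_yQ/\psi\|_2\le C_\delta(\|\partial_y\partial^\alpha\partial_yQ\|_2+\|\partial^\alpha\partial_yQ\|_2).
\]
Using the commutator \eqref{exchange oper} to move $\partial_y$ inside $\partial^\alpha$, this is controlled by $C_\delta(\|\nabla Q\|_{Y^1}+\|\nabla^2Q\|_{Y^{1,m}})$, in the same way as for $d_2$.
\end{itemize}

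Summing $c_1$ and $c_2$ gives (c).

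The main obstacle is this normal part of $c_2$. The weight $\psi$ must absorb the missing control of $\partial_y H$ in $L^\infty$, and this only works because of the vanishing boundary trace of $\partial_yQ$ together with the Hardy-type Lemma \ref{lem:psi}. We must also check that the commutator terms produced by \eqref{exchange oper}, which carry a factor $\delta\psi'$, stay within $\|\nabla Q\|_{Y^1}+\|\nabla^2Q\|_{Y^{1,m}}$. The case $\alpha=0$ (no derivatives) should be checked separately; there the $\|\nabla Q\|_{Y^1}$ term covers the lower-order piece.
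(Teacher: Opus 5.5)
Your proposal is correct and follows the paper's proof essentially step for step. Parts (a) and (b) use Leibniz plus Hölder. For (c), the paper uses the same split:
\begin{itemize}
\item terms with at least one conormal derivative on $\nabla H$, where $\partial^\beta\nabla G$ with $|\beta|\le m-1$ is put in $L^\infty$ via Lemma~\ref{lem:embedding};
\item the single term $\langle \nabla H\cdot\partial^\alpha\nabla G,\partial^\alpha\nabla Q\rangle$, where $\partial_x H$ and $\psi\partial_y H$ are put in $L^\infty$ and Lemma~\ref{lem:psi} is applied to $\partial^\alpha\partial_y Q/\psi$ using $\partial_yQ|_{y=0}=0$.
\end{itemize}
One minor bookkeeping point: the normal piece is really $\partial_y H_j\,\partial^\alpha\partial_j G\,\partial^\alpha\partial_y Q$, so every component of $\nabla G$ appears, not just $\partial_y G$. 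This does not affect the argument, since what it needs is that $\partial_y H$ is paired with $\partial_y Q$, and that is exactly what you use.
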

\begin{proof}
The proof of (a) and (b) is similar to the cases of Lemma \ref{lem:H nabla G}.

For term (c), we have
% split it into two parts, separating the cases where at least one derivative falls on $\nabla H$ (denoted $e_1$) and where all derivatives fall on $G$ (denoted $e_2$):
    \begin{align*}
        \sum_{\alpha_1 \leq 1, |\alpha| \leq m} \left| \langle \partial^\alpha (\nabla H \cdot \nabla) G, \partial^\alpha \nabla Q \rangle \right| 
        & \leq C \sum_{\alpha_1 \leq 1, |\alpha| \leq m} \sum_{\substack{\beta \leq \alpha, |\beta| \leq m-1}} \left| \langle \partial^{\alpha-\beta} \nabla H \cdot \partial^\beta \nabla G, \partial^\alpha \nabla Q \rangle \right| \\
        &\quad + \sum_{\alpha_1 \leq 1, |\alpha| \leq m} \left| \langle \nabla H \cdot \partial^\alpha \nabla G, \partial^\alpha \nabla Q \rangle \right| \\
        &=: e_1+e_2.
    \end{align*}
    For \(e_1\), by H\"older inequality and Lemma \ref{lem:embedding}, it shows
    \begin{align*}
        e_1
        &\leq C \left( \sum_{\alpha_1 \leq 1, |\alpha| \leq m} \|\partial^\alpha \nabla H\|_2^2 \right)^{\frac{1}{2}}
        \left( \sum_{\tau \leq 1, |\alpha| \leq m-1} \|\partial^\alpha \nabla G\|_{L^\infty}^2 \right)^{\frac{1}{2}}
        \left( \sum_{\alpha_1 \leq 1, |\alpha| \leq m} \|\partial^\alpha \nabla Q\|_2^2 \right)^{\frac{1}{2}} \\
        &\leq C \left( \sum_{\alpha_1 \leq 1, |\alpha| \leq m} \|\partial^\alpha \nabla H\|_2^2 \right)^{\frac{1}{2}}
        \left( \sum_{\alpha_1 \leq 1, |\alpha| \leq m} \|\partial^\alpha \nabla G\|_2^2 \right)^{\frac{1}{4}}
        \left( \sum_{\alpha_1 \leq 1, |\alpha| \leq m} \|\partial^\alpha \nabla^2 G\|_2^2 \right)^{\frac{1}{4}}\times\\
        &\quad \left( \sum_{\alpha_1 \leq 1, |\alpha| \leq m} \|\partial^\alpha \nabla Q\|_2^2 \right)^{\frac{1}{2}}.
    \end{align*}
The estimate for \(e_2\) follows a similar strategy, but requires Lemma \ref{lem:psi} to handle \(\partial_y Q\):
    \begin{align*}
    e_2
        &\leq C \sum_{\alpha_1 \leq 1, |\alpha| \leq m} \|\partial_x H\|_{L^\infty} \|\partial^\alpha \nabla G\|_2 \|\partial^\alpha \partial_x Q\|_2 \\
        &\quad + C \sum_{\alpha_1 \leq 1, |\alpha| \leq m} \|\psi \partial_y H\|_{L^\infty} \|\partial^\alpha \nabla G\|_2 \left\| \frac{\partial^\alpha \partial_y Q}{\psi} \right\|_2\\
        &\leq C_\delta \left( \sum_{\tau \leq 1, |\alpha| \leq 2} \|\partial^\alpha H\|_2^2 \right)^{\frac{1}{4}}
           \left( \sum_{\tau \leq 1, |\alpha| \leq 2} \|\partial^\alpha \partial_y H\|_2^2 \right)^{\frac{1}{4}}
           \left( \sum_{\alpha_1 \leq 1, |\alpha| \leq m} \|\partial^\alpha \nabla G\|_2^2 \right)^{\frac{1}{2}}\times \\
        &\quad  \left( \sum_{\alpha_1 \leq 1, |\alpha| \leq m} \|\partial^\alpha \nabla Q\|_2^2 
           + \sum_{\alpha_1 \leq 1, |\alpha| \leq m} \|\partial^\alpha \nabla^2 Q\|_2^2 \right)^{\frac{1}{2}}.
    \end{align*}
    Combining \(e_1\) and \(e_2\), this proof can be proven.
\end{proof}

%\subsection{The proof of conormal Sobolev-type estimates}
\subsection{The proof of conormal Sobolev-type estimates}\label{section_能量估计}
Next, we perform the energy estimate, which essentially involves taking $\partial^\alpha$ on both sides of the equation, forming the $L^2$-inner product with $\partial^\alpha G$ (where $G$ corresponds to the principal term of the equation), and then summing over $\alpha$. 
% Since this step is analogous, we omit it here.

\begin{proof}[Proof of Proposition~\ref{prop:n}.] First, apply \(\partial^\alpha\) to both sides of the first equation of \eqref{eq:error} and take the inner product with \(\partial^\alpha n\) , this gives
\begin{align*}
    &\frac{1}{2} \frac{d}{dt} \sum_{\alpha_1 \leq 1, |\alpha| \leq m} \|\partial^\alpha n\|_2^2 
       - \sum_{\alpha_1 \leq 1, |\alpha| \leq m} \langle \partial^\alpha \Delta n, \partial^\alpha n \rangle \\
    &\leq \left| \sum_{\alpha_1 \leq 1, |\alpha| \leq m} \langle \partial^\alpha (u^a \cdot \nabla n + u \cdot \nabla n^a + u \cdot \nabla n), \partial^\alpha n \rangle \right| \\
    &\quad + \left| \sum_{\alpha_1 \leq 1, |\alpha| \leq m} \langle \partial^\alpha (\nabla \cdot (n^a \nabla c + n \nabla c^a + n \nabla c)), \partial^\alpha n \rangle \right| \\
    &\quad + \left| \sum_{\alpha_1 \leq 1, |\alpha| \leq m} \langle \partial^\alpha N, \partial^\alpha n \rangle \right| \\
    &=: A_1 + A_2 + A_3.
\end{align*}
Then  using parts by integration, we get
\begin{align*}
    &- \sum_{\alpha_1 \leq 1, |\alpha| \leq m} \langle \partial^\alpha \Delta n, \partial^\alpha n \rangle \\
    &= - \sum_{\alpha_1 \leq 1, |\alpha| \leq m} \langle \partial_x \partial^\alpha \partial_x n, \partial^\alpha n \rangle 
       - \sum_{\alpha_1 \leq 1, |\alpha| \leq m} \langle \partial_y \partial^\alpha \partial_y n, \partial^\alpha n \rangle \\
    &\qquad - \sum_{\alpha_1 \leq 1, |\alpha| \leq m} \langle [\partial^\alpha, \partial_y] \partial_y n, \partial^\alpha n \rangle \\
    &= \sum_{\alpha_1 \leq 1, |\alpha| \leq m} \langle \partial^\alpha \partial_x n, \partial^\alpha \partial_x n \rangle 
       + \sum_{\alpha_1 \leq 1, |\alpha| \leq m} \langle \partial^\alpha \partial_y n, \partial^\alpha \partial_y n \rangle \\
    &\qquad -
    \sum_{\alpha_1 \leq 1, |\alpha| \leq m} \langle \partial^\alpha \partial_y n, [\partial_y, \partial^\alpha ] n \rangle - \sum_{\alpha_1 \leq 1, |\alpha| \leq m} \langle [\partial^\alpha, \partial_y] \partial_y n, \partial^\alpha n \rangle. 
\end{align*}
For the third term or the last term,  it is sufficient to consider \(\alpha_3 \ne 0\) (other cases vanish), 
% let the truncation function \(\psi\) in \(\partial^\alpha n\) exchange to \([\partial^\alpha, \partial_y]\), this becomes
and we have 
\begin{equation}\label{ineq:Delta n}
    \begin{aligned}
   - \sum_{\alpha_1 \leq 1, |\alpha| \leq m} \langle \partial^\alpha \Delta n, \partial^\alpha n \rangle &\geq \sum_{\alpha_1 \leq 1, |\alpha| \leq m} \|\partial^\alpha \nabla n\|_2^2 
       - C\delta \sum_{\alpha_1 \leq 1, |\alpha| \leq m} |\langle \partial^\alpha \partial_y n, \partial^{\alpha-(0,0,1)} \partial_y n \rangle| \\
    &\geq (1 - C\delta) \sum_{\alpha_1 \leq 1, |\alpha| \leq m} \|\partial^\alpha \nabla n\|_2^2.
\end{aligned} 
\end{equation}
For \(A_1\), we divide it into three parts:
    \begin{align*}
    A_1
    &\leq \sum_{\alpha_1 \leq 1, |\alpha| \leq m} |\langle \partial^\alpha (u^a \cdot \nabla n), \partial^\alpha n \rangle| + \sum_{\alpha_1 \leq 1, |\alpha| \leq m} |\langle \partial^\alpha (u \cdot \nabla n^a), \partial^\alpha n \rangle| \\
    &\quad + \sum_{\alpha_1 \leq 1, |\alpha| \leq m} |\langle \partial^\alpha (u \cdot \nabla n), \partial^\alpha n \rangle| \\
    &=: A_{11} + A_{12} + A_{13}.
\end{align*}
For \(A_{11}\), by Lemma~\ref{lem:H cdot nabla G} (a), it shows
\begin{align*}
    A_{11} &\leq \frac{C_\delta}{\sigma} \left( 1 + \|u^a\|_{Y^{1,m+1}_{\infty}} + \|fe^{-y}\|_{Y^{1,m}_{\infty}}^2 \right)^2 \|n\|_{Y^{1,m}}^2 + C\sigma \varepsilon^4 \|\partial_y n\|_{Y^{1,m}}^2.
\end{align*}
For \(A_{12}\), by Lemma~\ref{lem:H cdot nabla G} (b), it follows that
\begin{align*}
    A_{12} &\leq C \left( \left( \|n^a\|_{Y^{1,m+1}_{\infty}}^2 + 1 \right) \|n\|_{Y^{1,m}}^2 
        + \|u\|_{Y^{1}}^2 + \|\omega\|_{Y^{1,m}}^2 \right) \\
    &\quad + C \varepsilon^4 \|fe^{-y}\|_{Y^{1,m+1}}^2 
        \left( \|n^a\|_{Y^{1,m+1}_{\infty}}^2 + \|\partial_y n^a\|_{Y^{1,m}}^2 \right).
\end{align*}
For \(A_{13}\), by Lemma~\ref{lem:H cdot nabla G} (c), we have
\begin{align*}
    A_{13} &\leq \frac{C}{\sigma} \left( \|u\|_{Y^{1}}^2 + \|\omega\|_{Y^{1,m}}^2 + \|n\|_{Y^{1,m}}^2 \right) + \frac{C}{\sigma \varepsilon^4} \left( \|u\|_{Y^{1}}^2 + \|\omega\|_{Y^{1,m}}^2 + \|n\|_{Y^{1,m}}^2 \right)^3 \\
    &\quad + \frac{C}{\sigma} \varepsilon^2 \|fe^{-y}\|_{Y^{1,m+1}}^2 \|n\|_{Y^{1,m}}^2 + C\sigma \varepsilon^2 \|\nabla n\|_{Y^{1,m}}^2.
\end{align*}
Summing up, we get
\begin{equation}\label{ineq:A1}
    \begin{aligned}
    A_1 \leq &\frac{C_\delta}{\sigma} \left( 1 +\|n^a\|_{Y^{1,m+1}_{\infty}}+ \|u^a\|_{Y^{1,m+1}_{\infty}} + \|fe^{-y}\|_{Y^{1,m}_{\infty}}^2 \right)^2 \|n\|_{Y^{1,m}}^2+ C\sigma \varepsilon^2 \|\nabla n\|_{Y^{1,m}}^2\\
    &\quad + C\varepsilon^4 \|fe^{-y}\|_{Y^{1,m+1}}^2 
        \left(\|n\|_{Y^{1,m}}^2+ \|n^a\|_{Y^{1,m+1}_{\infty}}^2 + \|\partial_y n^a\|_{Y^{1,m}}^2 \right)\\
        &\quad \frac{C}{\sigma} \left( \|u\|_{Y^{1}}^2 + \|\omega\|_{Y^{1,m}}^2 + \|n\|_{Y^{1,m}}^2 \right) + \frac{C}{\sigma \varepsilon^4} \left( \|u\|_{Y^{1}}^2 + \|\omega\|_{Y^{1,m}}^2 + \|n\|_{Y^{1,m}}^2 \right)^3\\
        \leq &\frac{C_\delta}{\sigma} \left( 1 +\|n^a\|_{Y^{1,m+1}_{\infty}}+ \|u^a\|_{Y^{1,m+1}_{\infty}} + \|fe^{-y}\|_{Y^{1,m+1}}^2 \right)^2 \left( \|u\|_{Y^{1}}^2 + \|\omega\|_{Y^{1,m}}^2 + \|n\|_{Y^{1,m}}^2 \right)\\
    &\quad + C\sigma \varepsilon^2 \|\nabla n\|_{Y^{1,m}}^2+ C \varepsilon^4 \|fe^{-y}\|_{Y^{1,m+1}}^2 
        \left( \|n^a\|_{Y^{1,m+1}_{\infty}}^2 + \|\partial_y n^a\|_{Y^{1,m}}^2 \right)\\
        &\quad + \frac{C}{\sigma \varepsilon^4} \left( \|u\|_{Y^{1}}^2 + \|\omega\|_{Y^{1,m}}^2 + \|n\|_{Y^{1,m}}^2 \right)^3.
\end{aligned}
\end{equation}
For \(A_2\), we have the following estimate:
\begin{align*}
    A_2
    &\leq \sum_{\alpha_1 \leq 1, |\alpha| \leq m} |\langle \partial^\alpha (n^a \nabla c + n \nabla c^a + n \nabla c), \partial^\alpha \nabla n \rangle| \\
    &\quad + \sum_{\alpha_1 \leq 1, |\alpha| \leq m} |\langle [\partial^\alpha, \partial_y] (n^a \partial_y c + n \partial_y c^a + n \partial_y c), \partial^\alpha n \rangle| \\
    &\quad + \sum_{\alpha_1 \leq 1, |\alpha| \leq m} |\langle \partial^\alpha (n^a \partial_y c + n \partial_y c^a + n \partial_y c), [\partial_y, \partial^\alpha] n \rangle|,
\end{align*}
by \eqref{exchange oper} and H\"older inequality, which becomes
\begin{align*}
    &\leq \sum_{\alpha_1 \leq 1, |\alpha| \leq m} |\langle \partial^\alpha (n^a \nabla c + n \nabla c^a + n \nabla c), \partial^\alpha \nabla n \rangle| \\
    &\quad + C\delta \sum_{\alpha_1 \leq 1, |\alpha| \leq m} |\langle \partial^\alpha (n^a \partial_y c + n \partial_y c^a + n \partial_y c), \partial^{\alpha-(0,0,1)} \partial_y n \rangle|\\
    &\le C \left( \sum_{\alpha_1 \leq 1, |\alpha| \leq m} | \langle \partial^\alpha (n^a \nabla c), \partial^\alpha \nabla n \rangle |
        + \sum_{\alpha_1 \leq 1, |\alpha| \leq m} | \langle \partial^\alpha (n^a \partial_y c), \partial^{\alpha - (0,0,1)} \partial_y n \rangle | \right) \\
    &\quad+ C \left( \sum_{\alpha_1 \leq 1, |\alpha| \leq m} | \langle \partial^\alpha (n \nabla c^a), \partial^\alpha \nabla n \rangle |
        + \sum_{\alpha_1 \leq 1, |\alpha| \leq m} | \langle \partial^\alpha (n \partial_y c^a), \partial^{\alpha - (0,0,1)} \partial_y n \rangle | \right) \\
    &\quad+ C \left( \sum_{\alpha_1 \leq 1, |\alpha| \leq m} | \langle \partial^\alpha (n \nabla c), \partial^\alpha \nabla n \rangle |
        + \sum_{\alpha_1 \leq 1, |\alpha| \leq m} | \langle \partial^\alpha (n \partial_y c), \partial^{\alpha - (0,0,1)} \partial_y n \rangle | \right)\\
    &=: A_{21}+A_{22}+A_{23}.
\end{align*}
For \(A_{21}\), by Lemma~\ref{lem:H nabla G} (a), we have
% A_{21} 部分
\begin{align*}
    A_{21} &\le C \|n^{a}\|_{Y_{\infty}^{1,m}} \|\nabla c\|_{Y^{1,m}} \|\nabla n\|_{Y^{1,m}} \\
    &\le \frac{C}{\sigma} \|n^{a}\|_{Y_{\infty}^{1,m}}^{2} \|\nabla c\|_{Y^{1,m}}^{2} + C\sigma \|\nabla n\|_{Y^{1,m}}^{2}.
\end{align*}
For \(A_{22}\), by Lemma~\ref{lem:H nabla G} (b), it shows 
% A_{22} 部分
\begin{align*}
    A_{22} &\le C \|n\|_{Y^{1,m}} \|\nabla c^{a}\|_{Y_{\infty}^{1,m}} \|\nabla n\|_{Y^{1,m}} \\
    &\le \frac{C}{\sigma} \|\nabla c^{a}\|_{Y_{\infty}^{1,m}}^{2} \|n\|_{Y^{1,m}}^{2} + C\sigma \|\nabla n\|_{Y^{1,m}}^{2}.
\end{align*}
For \(A_{23}\), by Lemma~\ref{lem:H nabla G} (c), this becomes
% A_{23} 部分
\begin{align*}
    A_{23} &\le \|n\|_{Y^{1,m}}^{\frac 1 2} \|\nabla n\|_{Y^{1,m}}^{\frac 1 2} \|\nabla c\|_{Y^{1,m}} \|\nabla n\|_{Y^{1,m}} + \|n\|_{Y^{1,m}} \|\nabla c\|_{Y^{1,1}}^{\frac 1 2} \|\nabla^{2} c\|_{Y^{1,1}}^{\frac 1 2} \|\nabla n\|_{Y^{1,m}} \\
    &\le C\sigma \|\nabla n\|_{Y^{1,m}}^{2} + \frac{C}{\sigma} \|n\|_{Y^{1,m}} \|\nabla n\|_{Y^{1,m}} \|\nabla c\|_{Y^{1,m}}^{2}  + \frac{C}{\sigma} \|n\|_{Y^{1,m}}^{2} \|\nabla c\|_{Y^{1,1}} \|\nabla^{2} c\|_{Y^{1,1}} \\
    &\le C\sigma \|\nabla n\|_{Y^{1,m}}^{2} + \frac{C}{\sigma^{3}} \|n\|_{Y^{1,m}}^{2} \left(\|\nabla c\|_{Y^{1,m}}^{2}\right)^{2} + C\sigma \varepsilon^{2} \|\nabla^{2} c\|_{Y^{1,1}}^{2} + \frac{C}{\sigma^{3}\varepsilon^{2}} \left(\|n\|_{Y^{1,m}}^{2}\right)^{2} \|\nabla c\|_{Y^{1,1}}^{2}.
\end{align*}
Summing up, we get
\begin{equation}\label{ineq:A2}
    \begin{aligned}
        A_{2}\leq& \frac{C}{\sigma} \|n^{a}\|_{Y_{\infty}^{1,m}}^{2} \|\nabla c\|_{Y^{1,m}}^{2} + C\sigma \|\nabla n\|_{Y^{1,m}}^{2}+\frac{C}{\sigma} \|\nabla c^{a}\|_{Y_{\infty}^{1,m}}^{2} \|n\|_{Y^{1,m}}^{2}\\
        &\quad + \frac{C}{\sigma^{3}} \|n\|_{Y^{1,m}}^{2} \left(\|\nabla c\|_{Y^{1,m}}^{2}\right)^{2} + C\sigma \varepsilon^{2} \|\nabla^{2} c\|_{Y^{1,1}}^{2} \\
        &\quad+ \frac{C}{\sigma^{3}\varepsilon^{2}} \left(\|n\|_{Y^{1,m}}^{2}\right)^{2} \|\nabla c\|_{Y^{1,1}}^{2}\\
        \leq& \frac{C}{\sigma}\left( 1+\|n^{a}\|_{Y_{\infty}^{1,m}}^{2} +\|\nabla c^{a}\|_{Y_{\infty}^{1,m}}^{2}\right)^2\left(\|\nabla c\|_{Y^{1,m}}^{2} +\|n\|_{Y^{1,m}}^{2}\right) \\
        &\quad+ \frac{C}{\sigma^{3}\varepsilon^{4}} \left(\|n\|_{Y^{1,m}}^{2}+\|\nabla c\|_{Y^{1,m}}^{2}\right)^{3} + C\sigma \|\nabla n\|_{Y^{1,m}}^{2}+ C\sigma \varepsilon^{2} \|\nabla^{2} c\|_{Y^{1,1}}^{2} .
    \end{aligned}
\end{equation}
For $A_3$, using the Young inequality, we have
\begin{equation}\label{ineq:A3}
    \begin{aligned}
    \biggl| \sum_{|\alpha| \leq m,\; \tau \leq 1} \langle \partial^{\alpha} N, \; \partial^{\alpha} n \rangle \biggr|
    &\le C \sum_{|\alpha| \leq m,\; \tau \leq 1} \|\partial^{\alpha} N\|_{2}^{2}
       + C \sum_{|\alpha| \leq m,\; \tau \leq 1} \|\partial^{\alpha} n\|_{2}^{2} \\
    &\le C \|n\|_{Y^{1,m}}^{2} + C \|N\|_{Y^{1,m}}^{2}.
\end{aligned}
\end{equation}
Summing up, one can prove Proposition \ref{prop:n} by combining \eqref{ineq:Delta n}, \eqref{ineq:A1}, \eqref{ineq:A2} and \eqref{ineq:A3}.
\end{proof}

\begin{proof}
[Proof of Proposition~\ref{prop:c}.] Applying \(\partial^{\tau}_t\) to both sides of the second equation of \eqref{eq:error} and taking the inner product with \(\partial^{\tau}_t c\), this gives
% 导出的微分不等式部分
\begin{align*}
\frac{1}{2} \frac{d}{dt} \sum_{\tau \leq 1} \|\partial^{\tau}_t c\|_{2}^{2}
- \varepsilon^{2} \sum_{\tau \leq 1} \langle \partial^{\tau}_t \Delta c, \; \partial^{\tau}_t c \rangle 
&\le \sum_{\tau \leq 1} \bigl| \langle \partial^{\tau}_t (u^{a} \cdot \nabla c + u \cdot \nabla c + u \cdot \nabla c^{a}), \; \partial^{\tau}_t c \rangle \bigr| \\
&\quad + \sum_{\tau \leq 1} \bigl| \langle \partial^{\tau}_t (c^{a} n + c n^{a} + c n), \; \partial^{\tau}_t c \rangle \bigr| \\
&\quad + \sum_{\tau \leq 1} \bigl| \langle \partial^{\tau}_t K, \; \partial^{\tau}_t c \rangle \bigr|\\
&=:B_1+B_2+B_3.
\end{align*}
On the left, by integration by parts, we have
\begin{align}\label{ineq:c}
-\varepsilon^{2} \sum_{\tau \leq 1} \langle \partial^{\tau}_t \Delta c, \; \partial^{\tau}_t c \rangle
= \varepsilon^{2} \sum_{\tau \leq 1} \|\partial^{\tau}_t \nabla c\|_{2}^{2}.
\end{align}
On the right, for \(B_1\), we divide it into three parts:
% 1. 对流项分解
\begin{align*}
    &\sum_{\tau \le 1} \bigl| \langle \partial^{\tau}_t (u^{a} \cdot \nabla c + u \cdot \nabla c + u \cdot \nabla c^{a}), \; \partial^{\tau}_t c \rangle \bigr| \\
    &\le  \sum_{\tau \le 1} \bigl| \langle \partial^{\tau}_t (u^{a} \cdot \nabla c), \; \partial^{\tau}_t c \rangle \bigr|
    + C \sum_{\tau \le 1} \bigl| \langle \partial^{\tau}_t (u \cdot \nabla c^{a}), \; \partial^{\tau}_t c \rangle \bigr| + C \sum_{\tau \le 1} \bigl| \langle \partial^{\tau}_t (u \cdot \nabla c), \; \partial^{\tau}_t c \rangle \bigr|\\
     &=:B_{11}+B_{12}+B_{13}.
\end{align*}
For the first and second terms, by applying the H\"older inequality, we obtain
% 2. 三个子项的逐项估计
\begin{align*}
    & C \sum_{\tau \le 1} \bigl| \langle \partial^{\tau}_t (u^{a} \cdot \nabla c), \; \partial^{\tau}_t c \rangle \bigr|  \le C \biggl( \sum_{\tau \le 1} \|\partial^{\tau}_t u^{a}\|_{\infty}^{2} \biggr)^{\!\frac 1 2}
    \biggl( \sum_{\tau \le 1} \|\partial^{\tau}_t \nabla c\|_{2}^{2} \biggr)^{\!\frac 1 2}
    \biggl( \sum_{\tau \le 1} \|\partial^{\tau}_t c\|_{2}^{2} \biggr)^{\!\frac 1 2}, \\[4pt]
    & C \sum_{\tau \le 1} \bigl| \langle \partial^{\tau}_t (u \cdot \nabla c^{a}), \; \partial^{\tau}_t c \rangle \bigr|  \le C \biggl( \sum_{\tau \le 1} \|\partial^{\tau}_t u\|_{2}^{2} \biggr)^{\!\frac 1 2}
    \biggl( \sum_{\tau \le 1} \|\partial^{\tau}_t \nabla c^{a}\|_{\infty}^{2} \biggr)^{\!\frac 1 2}
    \biggl( \sum_{\tau \le 1} \|\partial^{\tau}_t c\|_{2}^{2} \biggr)^{\!\frac 1 2}.
\end{align*}
For the remaining term, a direct application of Lemma \ref{lem:embedding} and H\"older inequality yields
\begin{align*}
    &  \sum_{\tau \le 1} \bigl| \langle \partial^{\tau}_t (u \cdot \nabla c), \; \partial^{\tau}_t c \rangle \bigr| \\
    & \le \biggl( \sum_{\tau \le 1} \|\partial^{\tau}_t u\|_{\infty}^{2} \biggr)^{\!\frac 1 2}
        \biggl( \sum_{\tau \le 1} \|\partial^{\tau}_t \nabla c\|_{2}^{2} \biggr)^{\!\frac 1 2}
        \biggl( \sum_{\tau \le 1} \|\partial^{\tau}_t c\|_{2}^{2} \biggr)^{\!\frac 1 2} \\
&  \le \biggl( \sum_{|\alpha| \le 2,\; \alpha_1 \le 1} \|\partial^{\alpha} u\|_{2} \, \|\partial_{y}\partial^{\alpha}  u\|_{2} \biggr)^{\!\frac 1 2}
        \biggl( \sum_{\tau \le 1} \|\partial^{\tau}_t \nabla c\|_{2}^{2} \biggr)^{\!\frac 1 2}
        \biggl( \sum_{\tau \le 1} \|\partial^{\tau}_t c\|_{2}^{2} \biggr)^{\!\frac 1 2} \\
&  \le \biggl( \sum_{\tau \le 1} \|\partial^{\tau}_t u\|_{2}^{2}
        + \sum_{|\alpha| \le 2,\; \alpha_1 \le 1} \|\partial^{\alpha} \nabla u\|_{2}^{2} \biggr)^{\!\frac 1 2}
        \biggl( \sum_{\tau \le 1} \|\partial^{\tau}_t \nabla c\|_{2}^{2} \biggr)^{\!\frac 1 2}
        \biggl( \sum_{\tau \le 1} \|\partial^{\tau}_t c\|_{2}^{2} \biggr)^{\!\frac 1 2}.
\end{align*}
Summing up, we get
\begin{equation}\label{ineq:B1}
    \begin{aligned}
        B_1\leq &C\biggl(\|u^{a}\|_{Y_{\infty}^{1}}\|\nabla c\|_{Y^{1}}\|c\|_{Y^{1}} 
    +\|u\|_{Y^{1}}\|\nabla c^a\|_{Y_{\infty}^{1}}\|c\|_{Y^{1}}+\left(\|u\|_{Y^{1}}+\|\nabla u\|_{Y^{1,2}}\right)\|\nabla c\|_{Y^{1}}\|c\|_{Y^{1}}\biggr)\\
    \leq &C\left(1+\|u^{a}\|_{Y_{\infty}^{1}}+\|\nabla c^a\|_{Y_{\infty}^{1}}\right)\left(\|\nabla c\|_{Y^{1}}^2 
    +\|u\|_{Y^{1}}^2+\|c\|_{Y^{1}}^2+\|\omega\|_{Y^{1,2}}^2\right)+\frac{C}{\sigma}\left( \|\nabla c\|_{Y^{1}}^2+\|c\|_{Y^{1}}^2\right)^3.
    \end{aligned}
\end{equation}
For \(B_2\), using the same methods as in \(B_{13}\), it follows that
% 3. 源项估计
\begin{align}\label{ineq:B2}
    & \sum_{\tau \le 1} \bigl| \langle \partial^{\tau}_t (c^{a} n + c n^{a} + c n), \; \partial^{\tau}_t c \rangle \bigr| \nonumber\\
& \quad \le C \biggl( \sum_{\tau \le 1} \|\partial^{\tau}_t c^{a}\|_{\infty}^{2} \biggr)^{\!\frac 1 2}
        \biggl( \sum_{\tau \le 1} \|\partial^{\tau}_t n\|_{2}^{2} \biggr)^{\!\frac 1 2}
        \biggl( \sum_{\tau \le 1} \|\partial^{\tau}_t c\|_{2}^{2} \biggr)^{\!\frac 1 2}\nonumber \\
& \qquad + C \biggl( \sum_{\tau \le 1} \|\partial^{\tau}_t n^{a}\|_{\infty}^{2} \biggr)^{\!\frac 1 2}
        \sum_{\tau \le 1} \|\partial^{\tau}_t c\|_{2}^{2}
        + C \biggl( \sum_{\tau \le 1} \|\partial^{\tau}_t n\|_{\infty}^{2} \biggr)^{\!\frac 1 2}
        \sum_{\tau \le 1} \|\partial^{\tau}_t c\|_{2}^{2} \nonumber\\
& \quad \le C \left( \|c^{a}\|_{Y_{\infty}^{1}} + \|n^{a}\|_{Y_{\infty}^{1}} \right)
        \left( \|n\|_{Y^{1}}^{2} + \|c\|_{Y^1}^{2} \right) + C \|n\|_{Y^{1,2}}^{\frac 1 2} \|\partial_{y} n\|_{Y^{1,2}}^{\frac 1 2} \|c\|_{Y^1}^{2}\nonumber\\
        &\quad \leq C \left(1+ \|c^{a}\|_{Y_{\infty}^{1}} + \|n^{a}\|_{Y_{\infty}^{1}} \right)
        \left( \|n\|_{Y^{1}}^{2} + \|c\|_{Y^1}^{2} \right) + \frac{C}{\sigma } \left(\|n\|_{Y^{1,2}}^2+\|c\|_{Y^1}^{2}\right)^3+C\sigma \|\partial_y n\|_{Y^{1,2}}^2.
\end{align}
For \(B_3\), by Young inequality, it shows
% 4. 外力项估计
\begin{align}\label{ineq:B3}
\sum_{\tau \le 1} \bigl| \langle \partial^{\tau}_t K, \; \partial^{\tau}_t c \rangle \bigr|
&\le C \sum_{\tau \le 1} \|\partial^{\tau}_t K\|_{2}^{2}
   + C \sum_{\tau \le 1} \|\partial^{\tau}_t c\|_{2}^{2} \nonumber\\
&\le C \|K\|_{Y^{1}}^{2} + C \|c\|_{Y^1}^{2}.
\end{align}
Summing up, this proposition can be proved by \eqref{ineq:c}, \eqref{ineq:B1}, \eqref{ineq:B2} and \eqref{ineq:B3}.
\end{proof}

\begin{proof}
[Proof of Proposition~\ref{prop:nabla c}.] Apply \(\partial^\alpha \nabla\) to both sides of the second equation in \eqref{eq:error} and take the inner product with \(\partial^\alpha \nabla c\), We have:
% 2. 导出的微分不等式
\begin{align*}
& \frac{1}{2} \frac{d}{dt} \sum_{\alpha_1 \leq 1, |\alpha| \leq m} \|\partial^{\alpha} \nabla c\|_{2}^{2}
   - \varepsilon^{2} \sum_{\alpha_1 \leq 1, |\alpha| \leq m} \langle \partial^{\alpha} \nabla \Delta c, \; \partial^{\alpha} \nabla c \rangle \\
& \quad \le \Biggl| \sum_{\alpha_1 \leq 1, |\alpha| \leq m} \langle \partial^{\alpha} \left( (\nabla u^{a} \cdot \nabla) c + (\nabla u \cdot \nabla) c^{a} + (\nabla u \cdot \nabla) c \right), \; \partial^{\alpha} \nabla c \rangle \Biggr| \\
& \qquad + \Biggl| \sum_{\alpha_1 \leq 1, |\alpha| \leq m} \langle \partial^{\alpha} \left( (u^{a} \cdot \nabla) \nabla c + (u \cdot \nabla) \nabla c^{a} + (u \cdot \nabla) \nabla c \right), \; \partial^{\alpha} \nabla c \rangle \Biggr|\\
    & \qquad + \Biggl| \sum_{\alpha_1 \leq 1, |\alpha| \leq m} \langle \partial^{\alpha} (\nabla c^{a} n + \nabla c n^{a} + \nabla c n), \; \partial^{\alpha} \nabla c \rangle \Biggr| \\
    & \qquad + \Biggl| \sum_{\alpha_1 \leq 1, |\alpha| \leq m} \langle \partial^{\alpha} (c^{a} \nabla n + c \nabla n^{a} + c \nabla n), \; \partial^{\alpha} \nabla c \rangle \Biggr| + \Biggl| \sum_{\alpha_1 \leq 1, |\alpha| \leq m} \langle \partial^{\alpha} \nabla K, \; \partial^{\alpha} \nabla c \rangle \Biggr| \\
    & =:\bar{B}_{1} + \bar{B}_{2} + \bar{B}_{3} + \bar{B}_{4} + \bar{B}_{5}.
\end{align*}
On the left, by integration by parts, it shows
% 1. 分解为 x 与 y 方向
\begin{align*}
    &- \varepsilon^{2} \sum_{\alpha_1 \leq 1, |\alpha| \leq m} \langle \partial^{\alpha} \nabla \Delta c, \; \partial^{\alpha} \nabla c \rangle \\
    % &= -\varepsilon^{2} \sum_{\alpha_1 \leq 1, |\alpha| \leq m} \langle \partial_{x} \partial^{\alpha} \nabla \partial_{x} c, \; \partial^{\alpha} \nabla c \rangle  - \varepsilon^{2} \sum_{\alpha_1 \leq 1, |\alpha| \leq m} \langle \partial_{y} \partial^{\alpha} \nabla \partial_{y} c, \; \partial^{\alpha} \nabla c \rangle \\
    % &\quad - \varepsilon^{2} \sum_{\alpha_1 \leq 1, |\alpha| \leq m} \langle [\partial^{\alpha}, \partial_{y}] \nabla \partial_{y} c, \; \partial^{\alpha} \nabla c \rangle\\
    &= \varepsilon^{2} \sum_{\alpha_1 \leq 1, |\alpha| \leq m} \langle \partial^{\alpha} \nabla \partial_{x} c, \; \partial^{\alpha} \nabla \partial_{x} c \rangle + \varepsilon^{2} \sum_{\alpha_1 \leq 1, |\alpha| \leq m} \langle \partial^{\alpha} \nabla \partial_{y} c, \; \partial^{\alpha} \nabla \partial_{y} c \rangle \\
    &\quad - \varepsilon^{2} \sum_{\alpha_1 \leq 1, |\alpha| \leq m} \langle \partial^{\alpha} \nabla \partial_{y} c, \; [\partial^{\alpha}, \partial_{y}] \nabla c \rangle - \varepsilon^{2} \sum_{\alpha_1 \leq 1, |\alpha| \leq m} \langle [\partial^{\alpha}, \partial_{y}] \nabla \partial_{y} c, \; \partial^{\alpha} \nabla c \rangle,
\end{align*}
which is greater than
% for the third term, by , and for the last term, if \(\alpha_3 \ne 0\), let the truncation function \(\psi\) in \(\partial^\alpha \nabla c\) exchange to \([\partial^\alpha, \partial_y]\), this becomes
% 3. 下界估计
\begin{align}\label{ineq:nablac}
&\ge \varepsilon^{2} \sum_{\alpha_1 \leq 1, |\alpha| \leq m} \|\partial^{\alpha} \nabla^{2} c\|_{2}^{2}
   - C \delta \varepsilon^{2} \sum_{\alpha_1 \leq 1, |\alpha| \leq m}
     \bigl| \langle \partial^{\alpha} \nabla \partial_{y} c, \; \partial^{\alpha - (0,0,1)} \partial_{y} \nabla c \rangle \bigr| \nonumber\\
&\ge (1 - C \delta) \, \varepsilon^{2} \sum_{\alpha_1 \leq 1, |\alpha| \leq m} \|\partial^{\alpha} \nabla^{2} c\|_{2}^{2},
\end{align}
due to \eqref{exchange oper}.

{\bf Estimate of \(\bar{B}_1\).}  We divide it into three parts:
% 1. B1 的分解
\begin{align*}
    \bar{B}_1
    & \le C \sum_{\alpha_1 \leq 1, |\alpha| \leq m} \bigl| \langle \partial^{\alpha} ( (\nabla u^{a} \cdot \nabla) c), \; \partial^{\alpha} \nabla c \rangle \bigr| + C \sum_{\alpha_1 \leq 1, |\alpha| \leq m} \bigl| \langle \partial^{\alpha} ( (\nabla u \cdot \nabla) c^{a}), \; \partial^{\alpha} \nabla c \rangle \bigr| \\
    & \qquad + C \sum_{\alpha_1 \leq 1, |\alpha| \leq m} \bigl| \langle \partial^{\alpha} ( (\nabla u \cdot \nabla) c), \; \partial^{\alpha} \nabla c \rangle \bigr| \\
    &\quad =: \bar{B}_{11}+\bar{B}_{12}+\bar{B}_{13}.
\end{align*}
For \(\bar{B}_{11}\), by Lemma~\ref{lem:nabla H cdot nabla G} (a), it follows that
% 2. \bar{B}_{11} 估计（引理 3.1）
\begin{align*}
\bar{B}_{11} &\le C \|\nabla u^{a}\|_{Y_{\infty}^{1,m}} \|\nabla c\|_{Y^{1,m}}^{2}.
\end{align*}
For \(\bar{B}_{12}\), by Lemma~\ref{lem:nabla H cdot nabla G} (b) and Lemma \ref{lem:omega}, we have
% 3. \bar{B}_{12} 估计（引理 3.2）
\begin{align*}
    \bar{B}_{12} &\le C \|\nabla c^{a}\|_{Y_{\infty}^{1,m}} \|\nabla u\|_{Y^{1,m}} \|\nabla c\|_{Y^{1,m}} \\
    &\le C \|\nabla c^{a}\|_{Y_{\infty}^{1,m}} \left( \|\omega\|_{Y^{1,m}} + \varepsilon^{2} \|f e^{-y}\|_{Y^{1,m+1}} \right) \|\nabla c\|_{Y^{1,m}}.
\end{align*}
For \(\bar{B}_{13}\), by Lemma~\ref{lem:nabla H cdot nabla G} (c), it shows
% 4. B_{13} 的初步估计（引理 3.3）
\begin{align*}
\bar{B}_{13} &\le C \|\nabla u\|_{Y^{1,m}} \|\nabla c\|_{Y^{1,m}}^{\frac 1 2} \|\nabla^{2} c\|_{Y^{1,m}}^{\frac 1 2} \|\nabla c\|_{Y^{1,m}} \\
       &\quad + C_\delta \|u\|_{Y^{1,2}}^{\frac 1 2} \|\nabla u\|_{Y^{1,2}}^{\frac 1 2} \|\nabla c\|_{Y^{1,m}} \left( \|\nabla c\|_{Y^{1}} + \|\nabla^{2} c\|_{Y^{1,m}} \right),
\end{align*}
and by Lemma \ref{lem:omega} and Young inequality, this becomes
% 5. B_{13} 的进一步化简与最终合并
\begin{align*}
    & \le C \left(  \|\omega\|_{Y^{1,m}}^{2} + \varepsilon^{4} \|f e^{-y}\|_{Y^{1,m+1}}^{2} \right) + C \|\nabla c\|_{Y^{1,m}} \|\nabla^{2} c\|_{Y^{1,m}} \|\nabla c\|_{Y^{1,m}}^{2} \\
    & \quad + C_\delta \left( \|u\|_{Y^{1}}^{2} + \|\omega\|_{Y^{1,m}}^{2} + \varepsilon^{4} \|f e^{-y}\|_{Y^{1,m+1}}^{2} \right) \|\nabla c\|_{Y^{1,m}}^{2} + C \|\nabla c\|_{Y^{1}}^{2} \\
    & \quad + \frac{C_\delta}{\sigma \varepsilon^{2}} \left( \|u\|_{Y^{1}}^{2} + \|\omega\|_{Y^{1,m}}^{2} + \varepsilon^{4} \|f e^{-y}\|_{Y^{1,m+1}}^{2} \right) \|\nabla c\|_{Y^{1,m}}^{2}  + C \sigma \varepsilon^{2} \|\nabla^{2} c\|_{Y^{1,m}}^{2} \\
    & \leq C_\delta \left( \|\nabla c\|_{Y^{1,1}}^{2} + \|u\|_{Y^{1}}^{2} + \|\omega\|_{Y^{1,m}}^{2} + \varepsilon^{4} \|f e^{-y}\|_{Y^{1,m+1}}^{2} \right)  + \frac{C}{\sigma \varepsilon^{2}} \left( \|\nabla c\|_{Y^{1,m}}^{2} \right)^{3} \\
    & \quad + \frac{C_\delta}{\sigma \varepsilon^{2}} \left( \|u\|_{Y^{1}}^{2} + \|\omega\|_{Y^{1,m}}^{2} + \varepsilon^{4} \|f e^{-y}\|_{Y^{1,m+1}}^{2} \right) \|\nabla c\|_{Y^{1,m}}^{2} + C \sigma \varepsilon^{2} \|\nabla^{2} c\|_{Y^{1,m}}^{2}.
\end{align*}
In summary, we get
\begin{equation}\label{ineq:barB1}
    \begin{aligned}
        \bar{B}_1\leq &C \|\nabla u^{a}\|_{Y_{\infty}^{1,m}} \|\nabla c\|_{Y^{1,m}}^{2}+C \|\nabla c^{a}\|_{Y_{\infty}^{1,m}} \left( \|\omega\|_{Y^{1,m}} + \varepsilon^{2} \|f e^{-y}\|_{Y^{1,m+1}} \right) \|\nabla c\|_{Y^{1,m}}\\
        &\quad+C_\delta \left( \|\nabla c\|_{Y^{1,1}}^{2} + \|u\|_{Y^{1}}^{2} + \|\omega\|_{Y^{1,m}}^{2} + \varepsilon^{4} \|f e^{-y}\|_{Y^{1,m+1}}^{2} \right)  + \frac{C}{\sigma \varepsilon^{2}} \left( \|\nabla c\|_{Y^{1,m}}^{2} \right)^{3} \\
    & \quad + \frac{C_\delta}{\sigma \varepsilon^{2}} \left( \|u\|_{Y^{1}}^{2} + \|\omega\|_{Y^{1,m}}^{2} + \varepsilon^{4} \|f e^{-y}\|_{Y^{1,m+1}}^{2} \right) \|\nabla c\|_{Y^{1,m}}^{2} + C \sigma \varepsilon^{2} \|\nabla^{2} c\|_{Y^{1,m}}^{2}\\
   \leq & \frac{C_\delta}{\sigma} \bigr(1+\|\nabla u^{a}\|_{Y_{\infty}^{1,m}}+\|\nabla c^{a}\|_{Y_{\infty}^{1,m}}+\|f e^{-y}\|_{Y^{1,m+1}}\bigl)^2 \bigr(\|\nabla c\|_{Y^{1,m}}^{2}+\|u\|_{Y^{1}}^{2}+\|\omega
   \|_{Y^{1,m}}^{2} \bigl) \\
   &\quad+C_\delta \varepsilon^4\|f e^{-y}\|_{Y^{1,m+1}}^2 \left(1+\|\nabla c^{a}\|_{Y_{\infty}^{1,m}} \right)+ C \sigma \varepsilon^{2} \|\nabla^{2} c\|_{Y^{1,m}}^{2}\\
        &\quad + \frac{C_\delta}{\sigma^2 \varepsilon^{4}} \left( \|\nabla c\|_{Y^{1,m}}^{2}+\|u\|_{Y^{1}}^{2} + \|\omega\|_{Y^{1,m}}^{2} \right)^{3}.
    \end{aligned}
\end{equation}
{\bf Estimate of \(\bar{B}_2\).} Similar with \(\bar{B}_1\), it is divided into three terms:
% 1. B2 的分解
\begin{align*}
    \bar{B}_2
    & \le C \sum_{\alpha_1 \leq 1, |\alpha| \leq m} \bigl| \langle \partial^{\alpha} ( (u^{a} \cdot \nabla) \nabla c), \; \partial^{\alpha} \nabla c \rangle \bigr| + C \sum_{\alpha_1 \leq 1, |\alpha| \leq m} \bigl| \langle \partial^{\alpha} ( (u \cdot \nabla) \nabla c^{a}), \; \partial^{\alpha} \nabla c \rangle \bigr| \\
    & \qquad + C \sum_{\alpha_1 \leq 1, |\alpha| \leq m} \bigl| \langle \partial^{\alpha} ( (u \cdot \nabla) \nabla c), \; \partial^{\alpha} \nabla c \rangle \bigr| \\
    &=: \bar{B}_{21}+\bar{B}_{22}+\bar{B}_{23}.
\end{align*}
For \(\bar{B}_{21}\), by Lemma~\ref{lem:H cdot nabla G} (a), we can derive
% 2. B_{21} 估计（引理 1.1）
\begin{align*}
    \bar{B}_{21} &\le \frac{C_\delta}{\sigma} \left( 1 + \|u^{a}\|_{Y_{\infty}^{1,m+1}} + \|f e^{-y}\|_{Y_{\infty}^{1,m}} \right)^{2} \|\nabla c\|_{Y^{1,m}}^{2} + C \sigma \varepsilon^{4} \|\partial_{y} \nabla c\|_{Y^{1,m}}^{2}.
\end{align*}
For \(\bar{B}_{22}\), by Lemma~\ref{lem:H cdot nabla G} (b), we have
% 3. B_{22} 估计（引理 1.2）
\begin{align*}
\bar{B}_{22} &\le C \left( \left( \|\nabla c^{a}\|_{Y_{\infty}^{1,m+1}}^{2} + 1 \right) \|\nabla c\|_{Y^{1,m}}^{2}
          + \|u\|_{Y^1}^{2} + \|\omega\|_{Y^{1,m}}^{2} \right) \\
       &\quad + C \varepsilon^{4} \|f e^{-y}\|_{Y^{1,m+1}}^{2}
          \left( \|\nabla c^{a}\|_{Y_{\infty}^{1,m+1}}^{2} + \|\partial_{y} \nabla c^{a}\|_{Y^{1,m}}^{2} \right).
\end{align*}
For \(\bar{B}_{23}\), by Lemma~\ref{lem:H cdot nabla G} (c), it follows that
% 4. B_{23} 估计（引理 1.3）
\begin{align*}
    \bar{B}_{23} &\le \frac{C}{\sigma} \left( \|u\|_{Y^1}^{2} + \|\omega\|_{Y^{1,m}}^{2} + \|\nabla c\|_{Y^{1,m}}^{2} \right) + \frac{C}{\sigma \varepsilon^{4}} \left( \|u\|_{Y^1}^{2} + \|\omega\|_{Y^{1,m}}^{2} + \|\nabla c\|_{Y^{1,m}}^{2} \right)^{3} \\
    &\quad + \frac{C}{\sigma} \varepsilon^{2} \|f e^{-y}\|_{Y^{1,m+1}}^{2} \|\nabla c\|_{Y^{1,m}}^{2} + C \sigma \varepsilon^{2} \|\nabla^{2} c\|_{Y^{1,m}}^{2}.
\end{align*}
Summing up, we get
\begin{equation}\label{ineq:barB2}
    \begin{aligned}
        \bar{B}_2\leq& \frac{C_\delta}{\sigma} \left( 1 + \|\nabla c^{a}\|_{Y_{\infty}^{1,m+1}}+\|u^{a}\|_{Y_{\infty}^{1,m+1}} + \|f e^{-y}\|_{Y_{\infty}^{1,m}} \right)^{2} \|\nabla c\|_{Y^{1,m}}^{2}+ C \sigma \varepsilon^{2} \|\nabla^{2} c\|_{Y^{1,m}}^{2}\\
          &+ \frac{C}{\sigma} \left( \|u\|_{Y^1}^{2} + \|\omega\|_{Y^{1,m}}^{2} + \|\nabla c\|_{Y^{1,m}}^{2} \right) + \frac{C}{\sigma \varepsilon^{4}} \left( \|u\|_{Y^1}^{2} + \|\omega\|_{Y^{1,m}}^{2} + \|\nabla c\|_{Y^{1,m}}^{2} \right)^{3} \\
    &+ \frac{C}{\sigma} \varepsilon^{2} \|f e^{-y}\|_{Y^{1,m+1}}^{2} \|\nabla c\|_{Y^{1,m}}^{2} + C \varepsilon^{4} \|f e^{-y}\|_{Y^{1,m+1}}^{2}
          \left( \|\nabla c^{a}\|_{Y_{\infty}^{1,m+1}}^{2} + \|\partial_{y} \nabla c^{a}\|_{Y^{1,m}}^{2} \right)\\
          \leq& \frac{C_\delta}{\sigma} \left( 1 + \|\nabla c^{a}\|_{Y_{\infty}^{1,m+1}}+\|u^{a}\|_{Y_{\infty}^{1,m+1}} + \|f e^{-y}\|_{Y^{1,m+1}} \right)^{2} \left( \|u\|_{Y^1}^{2} + \|\omega\|_{Y^{1,m}}^{2} + \|\nabla c\|_{Y^{1,m}}^{2} \right)\\
          & + \frac{C}{\sigma \varepsilon^{4}} \left( \|u\|_{Y^1}^{2} + \|\omega\|_{Y^{1,m}}^{2} + \|\nabla c\|_{Y^{1,m}}^{2} \right)^{3} + C \sigma \varepsilon^{2} \|\nabla^{2} c\|_{Y^{1,m}}^{2}\\
          &+ C \varepsilon^{4} \|f e^{-y}\|_{Y^{1,m+1}}^{2}
          \left( \|\nabla c^{a}\|_{Y_{\infty}^{1,m+1}}^{2} + \|\partial_{y} \nabla c^{a}\|_{Y^{1,m}}^{2} \right).
    \end{aligned}
\end{equation}
{\bf Estimate of \(\bar{B}_3\).} It splits into three terms:
% 1. B3 的分解
\begin{align*}
    \bar{B}_3
    & \le C \sum_{\alpha_1 \leq 1, |\alpha| \leq m} \bigl| \langle \partial^{\alpha} ( \nabla c^{a} n), \; \partial^{\alpha} \nabla c \rangle \bigr| + C \sum_{\alpha_1 \leq 1, |\alpha| \leq m} \bigl| \langle \partial^{\alpha} ( \nabla c n^{a}), \; \partial^{\alpha} \nabla c \rangle \bigr| \\
    & \qquad + C \sum_{\alpha_1 \leq 1, |\alpha| \leq m} \bigl| \langle \partial^{\alpha} ( \nabla c n), \; \partial^{\alpha} \nabla c \rangle \bigr| \\
    &=: \bar{B}_{31}+\bar{B}_{32}+\bar{B}_{33}.
\end{align*}
For \(\bar{B}_{31}\), by Lemma~\ref{lem:H nabla G} (b), we obtain
% 2. \bar{B}_{31} 估计（引理 2.2）
\begin{align*}
    \bar{B}_{31} &\le C \|n\|_{Y^{1,m}} \|\nabla c^{a}\|_{Y_{\infty}^{1,m}} \|\nabla c\|_{Y^{1,m}}.
\end{align*}
For \(\bar{B}_{32}\), by Lemma~\ref{lem:H nabla G} (a), this yields
% 3. \bar{B}_{32} 估计（引理 2.1）
\begin{align*}
    \bar{B}_{32} &\le C \|n^{a}\|_{Y_{\infty}^{1,m}} \|\nabla c\|_{Y^{1,m}}^2.
\end{align*}
For \(\bar{B}_{33}\), by Lemma~\ref{lem:H nabla G} (d), it shows
% 4. \bar{B}_{33} 估计（引理 2.4）
\begin{align*}
    \bar{B}_{33} &\le C \|n\|_{Y^{1,m}}^{\frac 1 2} \|\nabla n\|_{Y^{1,m}}^{\frac 1 2} \|\nabla c\|_{Y^{1,m}}^2 \\
    &\quad + C_\delta \|n\|_{Y^{1,m}} \|c\|_{Y^{1,2}}^{\frac 1 2} \|\nabla c\|_{Y^{1,2}}^{\frac 1 2}
          \left( \|\nabla c\|_{Y^{1}} + \|\nabla^{2} c\|_{Y^{1,m}} \right).
\end{align*}
In summary, we get
\begin{equation}\label{ineq:barB3}
    \begin{aligned}
        \bar{B}_3\leq& C \|n\|_{Y^{1,m}} \|\nabla c^{a}\|_{Y_{\infty}^{1,m}} \|\nabla c\|_{Y^{1,m}}+C \|n^{a}\|_{Y_{\infty}^{1,m}} \|\nabla c\|_{Y^{1,m}}^2+C \|n\|_{Y^{1,m}}^{\frac 1 2} \|\nabla n\|_{Y^{1,m}}^{\frac 1 2} \|\nabla c\|_{Y^{1,m}}^2 \\
    &\quad + C_\delta \|n\|_{Y^{1,m}} \|c\|_{Y^{1,2}}^{\frac 1 2} \|\nabla c\|_{Y^{1,2}}^{\frac 1 2}
          \left( \|\nabla c\|_{Y^{1}} + \|\nabla^{2} c\|_{Y^{1,m}} \right)\\
    \leq& \frac{C_\delta}{\sigma}\left(1+\|\nabla c^{a}\|_{Y_{\infty}^{1,m}}+\|n^{a}\|_{Y_{\infty}^{1,m}}\right) \left(\|n\|_{Y^{1,m}}^2+ \|c\|_{Y^{1,m}}^2+  \|\nabla c\|_{Y^{1,m}}^2\right)\\
    &\quad +\frac{C_\delta}{\sigma} \left(\|n\|_{Y^{1,m}}^2+\|\nabla c\|_{Y^{1,m}}^2\right)^3 +C\sigma\|\nabla n\|_{Y^{1,m}}^2+C\sigma \varepsilon^2\|\nabla^2 c\|_{Y^{1,m}}^2.
    \end{aligned}
\end{equation}
{\bf Estimate of \(\bar{B}_4\).} We can separate it into three parts:
% 1. \bar{B}_4 的分解
\begin{align*}
    \bar{B}_4
    & \le C \sum_{\alpha_1 \leq 1, |\alpha| \leq m} \bigl| \langle \partial^{\alpha} (c^{a} \nabla n), \; \partial^{\alpha} \nabla c \rangle \bigr| + C \sum_{\alpha_1 \leq 1, |\alpha| \leq m} \bigl| \langle \partial^{\alpha} (c \nabla n^{a}), \; \partial^{\alpha} \nabla c \rangle \bigr| \\
    & \qquad + C \sum_{\alpha_1 \leq 1, |\alpha| \leq m} \bigl| \langle \partial^{\alpha} (c \nabla n), \; \partial^{\alpha} \nabla c \rangle \bigr| \\
    &=: \bar{B}_{41}+\bar{B}_{42}+\bar{B}_{43}.
\end{align*}
For \(\bar{B}_{41}\), by Lemma~\ref{lem:H nabla G} (a), we have
% 2. \bar{B}_{41} 估计（引理 2.2）
\begin{align*}
    \bar{B}_{41} &\le C \|c^{a}\|_{Y_{\infty}^{1,m}} \|\nabla n\|_{Y^{1,m}} \|\nabla c\|_{Y^{1,m}} \\
    &\le \frac{C}{\sigma} \|c^{a}\|_{Y_{\infty}^{1,m}}^{2} \|\nabla c\|_{Y^{1,m}}^{2} + C \sigma \|\nabla n\|_{Y^{1,m}}^{2} .
\end{align*}
For \(\bar{B}_{42}\), by Lemma~\ref{lem:H nabla G} (b), this becomes
% 3. \bar{B}_{42} 估计（引理 2.1）
\begin{align*}
\bar{B}_{42} &\le C \|c\|_{Y^{1,m}} \|\nabla n^{a}\|_{Y_{\infty}^{1,m}} \|\nabla c\|_{Y^{1,m}}.
\end{align*}
For \(\bar{B}_{43}\), by Lemma~\ref{lem:H nabla G} (d), it follows that
% 4. \bar{B}_{43} 估计（引理 2.4）及后续化简
\begin{align*}
\bar{B}_{43} &\le C \|c\|_{Y^{1,m}}^{\frac 1 2} \|\nabla c\|_{Y^{1,m}}^{\frac 1 2} \|\nabla n\|_{Y^{1,m}} \|\nabla c\|_{Y^{1,m}} \\
       &\quad + C_\delta \|c\|_{Y^{1,m}} \|n\|_{Y^{1,2}}^{\frac 1 2} \|\nabla n\|_{Y^{1,2}}^{\frac 1 2}
          \left( \|\nabla c\|_{Y^{1,1}} + \|\nabla^{2} c\|_{Y^{1,m}} \right), 
\end{align*}
which is controlled by
\begin{align*}
       % &\le \frac{C}{\sigma} (\|c\|_{Y^{1,m}}^{2} +\|\nabla c\|_{Y^{1,m}}^{2}) \|\nabla c\|_{Y^{1,m}}^{2}
       %    + C \sigma \|\nabla n\|_{Y^{1,m}}^{2} + C_\delta \|c\|_{Y^{1,m}}^{2} \|n\|_{Y^{1,2}} \|\nabla n\|_{Y^{1,2}}\\
       % &\quad + C_\delta \|\nabla c\|_{Y^{1,1}}^{2} + \frac{C_\delta}{\sigma \varepsilon^{2}} \|c\|_{Y^{1,m}}^{2} \|n\|_{Y^{1,2}} \|\nabla n\|_{Y^{1,2}}
       %    + C \sigma \varepsilon^{2} \|\nabla^{2} c\|_{Y^{1,m}}^{2} \\[4pt]
       &\leq \frac{C}{\sigma} \left( \|c\|_{Y^{1,m}}^{2} + \|\nabla c\|_{Y^{1,m}}^{2} \right) \|\nabla c\|_{Y^{1,m}}^{2}
          + C \sigma \|\nabla n\|_{Y^{1,m}}^{2} + \frac{C_\delta}{\sigma} \left( \|c\|_{Y^{1,m}}^{2} \right)^{2} \|n\|_{Y^{1,2}}^{2}\\
       &\quad + C \sigma \|\nabla n\|_{Y^{1,2}}^{2} + C_\delta \|\nabla c\|_{Y^{1,1}}^{2} + \frac{C_\delta}{\sigma^{3} \varepsilon^{4}} \|c\|_{Y^{1,m}}^{4} \|n\|_{Y^{1,2}}^{2}
          + C \sigma \|\nabla n\|_{Y^{1,2}}^{2}
          + C \sigma \varepsilon^{2} \|\nabla^{2} c\|_{Y^{1,m}}^{2}.
\end{align*}
In summary, we get
\begin{equation}\label{ineq:barB4}
    \begin{aligned}
        \bar{B}_4\leq& \frac{C_\delta}{\sigma} \left(1+\|c^{a}\|_{Y_{\infty}^{1,m}}^{2} \right)\|\nabla c\|_{Y^{1,m}}^{2} + C \sigma \|\nabla n\|_{Y^{1,m}}^{2}+C \|c\|_{Y^{1,m}} \|\nabla n^{a}\|_{Y_{\infty}^{1,m}} \|\nabla c\|_{Y^{1,m}}\\
        &+ \frac{C}{\sigma} \left( \|c\|_{Y^{1,m}}^{2} + \|\nabla c\|_{Y^{1,m}}^{2} \right) \|\nabla c\|_{Y^{1,m}}^{2}
          + \frac{C_\delta}{\sigma^{3} \varepsilon^{4}} \|c\|_{Y^{1,m}}^{4} \|n\|_{Y^{1,2}}^{2}
          + C \sigma \varepsilon^{2} \|\nabla^{2} c\|_{Y^{1,m}}^{2}\\
    \leq& \frac{C_\delta}{\sigma} \left(1+\|c^{a}\|_{Y_{\infty}^{1,m}}^{2} +\|\nabla n^{a}\|_{Y_{\infty}^{1,m}}^2\right)\left(\|\nabla c\|_{Y^{1,m}}^{2}+\|c\|_{Y^{1,m}}^{2}\right) + C \sigma \|\nabla n\|_{Y^{1,m}}^{2}\\
        &+ \frac{C_\delta}{\sigma^{3} \varepsilon^{4}}\left( \|c\|_{Y^{1,m}}^{2} +\|n\|_{Y^{1,2}}^{2}+\|\nabla c\|_{Y^{1,m}}^{2}\right)^3
          + C \sigma \varepsilon^{2} \|\nabla^{2} c\|_{Y^{1,m}}^{2}.
    \end{aligned}
\end{equation}
{\bf Estimate of \(\bar{B}_5\).} By Young inequality, this yields
% 5. \bar{B}5 估计
\begin{align}\label{ineq:barB5}
    \bar{B}_5
    &\le C \sum_{\tau \le 1,\; |\alpha| \le m} \|\partial^{\alpha} \nabla K\|_{2}^{2}
              + C \sum_{\tau \le 1,\; |\alpha| \le m} \|\partial^{\alpha} \nabla c\|_{2}^{2} \nonumber\\
    & \le C \|\nabla c\|_{Y^{1,m}}^{2} + C \|\nabla K\|_{Y^{1,m}}^{2}.
\end{align}
Summing up, one can prove this proposition by \eqref{ineq:nablac} to \eqref{ineq:barB5}.\end{proof}

\begin{proof}
[Proof of Proposition \ref{prop:u}.] Applying \(\partial^\tau_t\) to both sides of the third equation in \eqref{eq:error} and taking the inner product with \(\partial^\tau_t u\), this yields
% 导出的微分不等式
\begin{align*}
    & \frac{1}{2} \frac{d}{dt} \sum_{\tau \le 1} \|\partial^{\tau}_t u\|_{2}^{2}
    - \varepsilon^{2} \sum_{\tau \le 1} \langle \partial^{\tau}_t \Delta u, \; \partial^{\tau}_t u \rangle \\
    & \quad \le \Biggl| \sum_{\tau \le 1} \langle \partial^{\tau}_t \left( u^{a} \cdot \nabla u + u \cdot \nabla u^{a} + u \cdot \nabla u \right), \; \partial^{\tau}_t u \rangle \Biggr| + \Biggl| \sum_{\tau \le 1} \langle \partial^{\tau}_t \nabla p, \; \partial^{\tau}_t u \rangle \Biggr| \\
    & \qquad + \Biggl| \sum_{\tau \le 1} \langle \partial^{\tau}_t n, \; \partial^{\tau}_t u_{2} \rangle \Biggr| + \Biggl| \sum_{\tau \le 1} \langle \partial^{\tau}_t U, \; \partial^{\tau}_t u \rangle \Biggr|=:I_1'+\cdots+I_4'
\end{align*}
On the left hand side, by integration by parts we have
% Laplacian 项的分部积分
\begin{align*}
&- \varepsilon^{2} \sum_{\tau \le 1} \langle \partial^{\tau}_t \Delta u, \; \partial^{\tau}_t u \rangle \\
&= -\varepsilon^{2} \Biggl( \sum_{\tau \le 1} \langle \partial^{\tau}_t \partial_{x}^{2} u, \; \partial^{\tau}_t u \rangle
   + \sum_{\tau \le 1} \langle \partial^{\tau}_t \partial_{y}^{2} u, \; \partial^{\tau}_t u \rangle \Biggr) \\
&= \varepsilon^{2} \Biggl( \sum_{\tau \le 1} \langle \partial^{\tau}_t \partial_{x} u, \; \partial^{\tau}_t \partial_{x} u \rangle
   + \sum_{\tau \le 1} \langle \partial^{\tau}_t \partial_{y} u, \; \partial^{\tau}_t \partial_{y} u \rangle - \sum_{\tau \le 1} \int_{\partial \mathbb{R}_{+}^{2}} \partial^{\tau}_t \partial_{y} u_{2} \; \partial^{\tau}_t u_{2}dx \Biggr),
\end{align*}
and by Gauss theorem, H\"older inequality and Young inequality, this becomes
\begin{align}\label{ineq:u}
&= \varepsilon^{2} \sum_{\tau \le 1} \|\partial^{\tau}_t \nabla u\|_{2}^{2}
   + \varepsilon^{4} \sum_{\tau \le 1} \int_{\partial \mathbb{R}_{+}^{2}} \partial^{\tau}_t \partial_{y} u_{2} \; \partial^{\tau}_t f dx\nonumber\\
   &\ge \varepsilon^{2} \sum_{\tau \le 1} \|\partial^{\tau}_t \nabla u\|_{2}^{2}
   - \varepsilon^{4} \sum_{\tau \le 1} \biggl| \int_{\mathbb{R}_{+}^{2}} \partial^{\tau}_t f \;
      \partial_{y} \left( \partial^{\tau}_t \partial_{x} u_{1} e^{-y} \right) \biggr| \nonumber\\
&\ge \varepsilon^{2} (1 - C \sigma) \sum_{\tau \le 1} \|\partial^{\tau}_t \nabla u\|_{2}^{2}
   - \frac{C}{\sigma} \varepsilon^{4} \Biggl( \sum_{\tau \le 1} \|\partial^{\tau}_t \partial_{x} f e^{-y}\|_{2}^{2}
   + \sum_{\tau \le 1} \|\partial^{\tau}_t f e^{-y}\|_{2}^{2} \Biggr).
\end{align}
On the right hand side,  we decompose the convective term into the following three parts:
% 1. 对流项分解
\beno
    I_1'&\leq & \Biggl| \sum_{\tau \le 1} \langle \partial^{\tau}_t \left( u^{a} \cdot \nabla u + u \cdot \nabla u^{a} + u \cdot \nabla u \right), \; \partial^{\tau}_t u \rangle \Biggr| \\
    & \le& C \sum_{\tau \le 1} \bigl| \langle \partial^{\tau}_t (u^{a} \cdot \nabla u), \; \partial^{\tau}_t u \rangle \bigr| + C \sum_{\tau \le 1} \bigl| \langle \partial^{\tau}_t (u \cdot \nabla u^{a}), \; \partial^{\tau}_t u \rangle \bigr|  + C \sum_{\tau \le 1} \bigl| \langle \partial^{\tau}_t (u \cdot \nabla u), \; \partial^{\tau}_t u \rangle \bigr|,
\eeno
 which are estimated, respectively. For the first term \(u^{a} \cdot \nabla u\), by H\"older inequality, we derive
% 2. 第一项 (u^a ⋅ ∇ u) 估计
\begin{align*}
    \sum_{\tau \le 1} \bigl| \langle \partial^{\tau}_t (u^{a} \cdot \nabla u), \; \partial^{\tau}_t u \rangle \bigr|
    &\le C \Biggl( \sum_{\tau \le 1} \|\partial^{\tau}_t u^{a}\|_{\infty}^{2} \Biggr)^{\frac{1}{2}} \Biggl( \sum_{\tau \le 1} \|\partial^{\tau}_t \nabla u\|_{2}^{2} \Biggr)^{\frac 1 2}
     \Biggl( \sum_{\tau \le 1} \|\partial^{\tau}_t u\|_{2}^{2} \Biggr)^{\frac 1 2}.
\end{align*}
For the second term \(u \cdot \nabla u^{a}\), we have
% 3. 第二项 (u ⋅ ∇ u^a) 估计
\begin{align*}
\sum_{\tau \le 1} \bigl| \langle \partial^{\tau}_t (u \cdot \nabla u^{a}), \; \partial^{\tau}_t u \rangle \bigr|
&\le C \Biggl( \sum_{\tau \le 1} \|\partial^{\tau}_t \nabla u^{a}\|_{\infty}^{2} \Biggr)^{\!\frac 1 2}
      \sum_{\tau \le 1} \|\partial^{\tau}_t u\|_{2}^{2} .
\end{align*}
For the last term \(u \cdot \nabla u\), we perform a more refined estimate:
% 4. 第三项 (u ⋅ ∇ u) 估计
\begin{align*}
    &\sum_{\tau \le 1} \bigl| \langle \partial^{\tau}_t (u \cdot \nabla u), \; \partial^{\tau}_t u \rangle \bigr| \le C \Biggl( \sum_{\tau \le 1} \|\partial^{\tau}_t u\|_{\infty}^{2} \Biggr)^{\frac 1 2}
    \Biggl( \sum_{\tau \le 1} \|\partial^{\tau}_t \nabla u\|_{2}^{2} \Biggr)^{\frac 1 2}
    \Biggl( \sum_{\tau \le 1} \|\partial^{\tau}_t u\|_{2}^{2} \Biggr)^{\frac 1 2} ,
\end{align*}
and by Lemma \ref{lem:embedding} and Lemma \ref{lem:omega}, this becomes
\begin{align*}
    &\le C \Biggl( \sum_{\alpha_1 \le 1,\; |\alpha| \le 2} \|\partial^{\alpha} u\|_{2}^{2} \Biggr)^{\frac 1 4}
    \Biggl( \sum_{\alpha_1 \le 1,\; |\alpha| \le 2} \|\partial^{\alpha} \partial_{y} u\|_{2}^{2} \Biggr)^{\frac 1 4} \Biggl( \sum_{\tau \le 1} \|\partial^{\tau}_t \nabla u\|_{2}^{2} \Biggr)^{\frac 1 2}
    \Biggl( \sum_{\tau \le 1} \|\partial^{\tau}_t u\|_{2}^{2} \Biggr) ^\frac12\\
    &\le C \Biggl( \sum_{\tau \le 1} \|\partial^{\tau}_t u\|_{2}^{2}
    + \sum_{\alpha_1 \le 1,\; |\alpha| \le 2} \|\partial^{\alpha} \omega\|_{2}^{2}
    + \varepsilon^{4} \sum_{\alpha_1 \le 1,\; |\alpha| \le 3} \|\partial^{\alpha} (f e^{-y})\|_{2}^{2} \Biggr)
    \Biggl( \sum_{\tau \le 1} \|\partial^{\tau}_t u\|_{2}^{2} \Biggr)^{\frac 1 2}.
\end{align*}
In summary, we get
\begin{equation}\label{ineq:unabla u}
    \begin{aligned}
       % &\Biggl| \sum_{\tau \le 1} \langle \partial^{\tau}_t \left( u^{a} \cdot \nabla u + u \cdot \nabla u^{a} + u \cdot \nabla u \right), \; \partial^{\tau}_t u \rangle \Biggr|\\
      I_1' &\leq C\|u^{a}\|_{Y_{\infty}^{1}}\|\nabla u\|_{Y^{1}}\| u\|_{Y^{1}}+C\|\nabla u^a\|_{Y^{1}_\infty}\|u\|_{Y^{1}}^2\\
       &\qquad+C\left( \|u\|_{Y^{1}}^2+\|\omega\|_{Y^{1,2}}^2+\varepsilon^4 \|fe^{-y}\|_{Y^{1,3}}^2\right)\|u\|_{Y^{1}}\\
       &\quad\leq C\left(1+\|u^{a}\|_{Y_{\infty}^{1}}+\|\nabla u^a\|_{Y^{1}_\infty}\right)\left(\|\omega\|_{Y^{1,2}}^2+\| u\|_{Y^{1}}^2\right)+\varepsilon^4 \|u^{a}\|_{Y_{\infty}^{1}}\|fe^{-y}\|_{Y^{1,2}}^2\\
       &\qquad+C\left( \|u\|_{Y^{1}}^2+\|\omega\|_{Y^{1,2}}^2+\varepsilon^4 \|fe^{-y}\|_{Y^{1,3}}^2\right)^3.
    \end{aligned}
\end{equation}
Finally, we estimate the pressure term, the coupling term and the external force term, leading to the following inequalities:
% 5. 压力、耦合与外力项估计
\begin{align}\label{ineq:p1}
    I_2'\leq  \Biggl| \sum_{\tau \le 1} \langle \partial^{\tau}_t \nabla p, \; \partial^{\tau}_t u \rangle \Biggr|
\le C \|\nabla p\|_{Y^1}^{2} + C \|u\|_{Y^1}^{2},
\end{align}
\begin{align}\label{ineq:p2}
I_3'\leq \Biggl| \sum_{\tau \le 1} \langle \partial^{\tau}_t n, \; \partial^{\tau}_t u_{2} \rangle \Biggr|
&\le C \|n\|_{Y^{1}}^{2} + C \|u_{2}\|_{Y^1}^{2},
\end{align}
\begin{align}\label{ineq:p3}
I_4'\leq  \Biggl| \sum_{\tau \le 1} \langle \partial^{\tau}_t U, \; \partial^{\tau}_t u \rangle \Biggr|
&\le C \|U\|_{Y^{1}}^{2} + C \|u\|_{Y^1}^{2}.
\end{align}
% Then for $\|\nabla p\|_{Y^1}^{2}$, we have  we apply 
Next we estimate the pressure. Multiplying 
\(\partial^\tau_t\) to both sides of equation \eqref{eq:p} and taking the inner product with \(\partial^\tau_t p\), this yields
\begin{align*}
    - \sum_{\tau \le 1} \langle \partial^{\tau}_t \Delta p, \; \partial^{\tau}_t p \rangle 
    &= \sum_{\tau \le 1} \langle \partial^{\tau}_t \nabla \cdot \left( u^{a} \cdot \nabla u + u \cdot \nabla u^{a} + u \cdot \nabla u \right), \; \partial^{\tau}_t p \rangle \\
    &\quad - \sum_{\tau \le 1} \langle \partial^{\tau}_t \partial_{y} n, \; \partial^{\tau}_t p \rangle - \sum_{\tau \le 1} \langle \partial^{\tau}_t \nabla \cdot U, \; \partial^{\tau}_t p \rangle.
\end{align*}
For the left term, by integration by parts, it shows
\begin{align}\label{ineq:nablap}
    - \sum_{\tau \le 1} \langle \partial^{\tau}_t \Delta p, \; \partial^{\tau}_t p \rangle &= \sum_{\tau \le 1} \langle \partial^{\tau}_t \partial_x p, \partial^{\tau}_t \partial_x p \rangle
       + \sum_{\tau \le 1} \langle \partial^{\tau}_t \partial_y p,  \partial^{\tau}_t \partial_y p \rangle - \sum_{\tau \le 1} \int_{\mathbb{R}} \overline{\partial^{\tau}_t \partial_y p  \partial^{\tau}_t p} \, dx \nonumber\\
    &= \sum_{\tau \le 1} \|\partial^{\tau}_t \nabla p\|_2^2
       - \sum_{\tau \le 1} \int_{\mathbb{R}}  \overline{\partial^{\tau}_t \partial_y p \; \partial^{\tau}_t p} dx.
\end{align}
For the right one of  \(\nabla \cdot \left( u^{a} \cdot \nabla u + u \cdot \nabla u^{a} + u \cdot \nabla u \right)\), by integration by parts, we have
% 1. 分部积分（散度项转化为对流项）
\begin{align*}
& \sum_{\tau \le 1} \langle \partial^{\tau}_t \nabla \cdot \left( u^{a} \cdot \nabla u + u \cdot \nabla u^{a} + u \cdot \nabla u \right), \; \partial^{\tau}_t p \rangle \\
& = -\sum_{\tau \le 1} \langle \partial^{\tau}_t \left( u^{a} \cdot \nabla u + u \cdot \nabla u^{a} + u \cdot \nabla u \right), \; \partial^{\tau}_t \nabla p \rangle \\
& \quad + \sum_{\tau \le 1} \int_{\mathbb{R}}\overline{\partial^{\tau}_t (u^{a} \cdot \nabla u_{2} + u \cdot \nabla u_{2}^{a} + u \cdot \nabla u_{2}) \; \partial^{\tau}_t p}  \, dx,
\end{align*}
which is bounded by
% 2. 三项分别估计
\begin{align*}
& \le C \Biggl( \sum_{\tau \le 1} \|\partial^{\tau}_t u^{a}\|_{\infty}^{2} \Biggr)^{\!\frac 1 2}
     \Biggl( \sum_{\tau \le 1} \|\partial^{\tau}_t \nabla u\|_{2}^{2} \Biggr)^{\!\frac 1 2}
     \Biggl( \sum_{\tau \le 1} \|\partial^{\tau}_t \nabla p\|_{2}^{2} \Biggr)^{\!\frac 1 2} \\
& \quad + C \Biggl( \sum_{\tau \le 1} \|\partial^{\tau}_t u\|_{2}^{2} \Biggr)^{\!\frac 1 2}
     \Biggl( \sum_{\tau \le 1} \|\partial^{\tau}_t \nabla u^{a}\|_{\infty}^{2} \Biggr)^{\!\frac 1 2}
     \Biggl( \sum_{\tau \le 1} \|\partial^{\tau}_t \nabla p\|_{2}^{2} \Biggr)^{\!\frac 1 2} \\
& \quad + C \Biggl( \sum_{\tau \le 1} \|\partial^{\tau}_t u\|_{\infty}^{2} \Biggr)^{\!\frac 1 2}
     \Biggl( \sum_{\tau \le 1} \|\partial^{\tau}_t \nabla u\|_{2}^{2} \Biggr)^{\!\frac 1 2}
     \Biggl( \sum_{\tau \le 1} \|\partial^{\tau}_t \nabla p\|_{2}^{2} \Biggr)^{\!\frac 1 2} \\
& \quad + \sum_{\tau \le 1} \int_{\mathbb{R}} \overline{\partial^{\tau}_t (u^{a} \cdot \nabla u_{2} + u \cdot \nabla u_{2}^{a} + u \cdot \nabla u_{2}) \; \partial^{\tau}_t p} dx,
\end{align*}
and by Lemma \ref{lem:embedding} and Lemma \ref{lem:omega}, this becomes
\begin{align*}
& \le \frac{C}{\mu} \left( \|u^{a}\|_{Y_{\infty}^{1}}^{2} + \|\nabla u^{a}\|_{Y_{\infty}^{1}}^{2} \right)
     \left( \|u\|_{Y^{1}}^{2} + \|\omega\|_{Y^{1,1}}^{2} + \varepsilon^{4} \|f e^{-y}\|_{Y^{1,2}}^{2} \right) \\
& \quad + C \mu \|\nabla p\|_{Y^{1}}^{2}
   + \frac{C}{\mu} \|u\|_{Y^{1,2}} \|\nabla u\|_{Y^{1,2}}^{3}
   + C \mu \|\nabla p\|_{Y^{1}}^{2} \\
& \quad + \sum_{\tau \le 1} \int_{\mathbb{R}} \overline{\partial^{\tau}_t (u^{a} \cdot \nabla u_{2} + u \cdot \nabla u_{2}^{a} + u \cdot \nabla u_{2}) \partial^{\tau}_t p}  \, dx
\end{align*}
\begin{align}\label{ineq:nablap2}
& \le \frac{C}{\mu} \left(1+ \|u^{a}\|_{Y_{\infty}^{1}}^{2} + \|\nabla u^{a}\|_{Y_{\infty}^{1}}^{2} \right)
     \left( \|u\|_{Y^{1}}^{2} + \|\omega\|_{Y^{1,1}}^{2} + \varepsilon^{4} \|f e^{-y}\|_{Y^{1,2}}^{2} \right) \nonumber\\
& \quad + \frac{C}{\mu} \left( \|u\|_{Y^{1}}^{2}+ \|\omega\|_{Y^{1,2}}^{2} + \varepsilon^{4} \|f e^{-y}\|_{Y^{1,3}}^{2} \right)^{3}
   + C \mu \|\nabla p\|_{Y^{1}}^{2} \\
& \quad + \sum_{\tau \le 1} \int_{\mathbb{R}} \overline{\partial^{\tau}_t (u^{a} \cdot \nabla u_{2} + u \cdot \nabla u_{2}^{a} + u \cdot \nabla u_{2}) \partial^{\tau}_t p}  \, dx.\nonumber
\end{align}
For \(\partial_{y} n\), it follows that
% 1. 关于 -∂_y n 项的估计
\begin{align}\label{ineq:partialy n}
- \sum_{\tau \le 1} \langle \partial^{\tau}_t \partial_{y} n, \; \partial^{\tau}_t p \rangle
&= \sum_{\tau \le 1} \langle \partial^{\tau}_t n, \; \partial^{\tau}_t \partial_{y} p \rangle
   - \sum_{\tau \le 1} \int_{\mathbb{R}} \overline{\partial^{\tau}_t n \; \partial^{\tau}_t p} \, dx \nonumber\\
&\le \frac{C}{\mu} \|n\|_{Y^{1}}^{2} + C \mu \|\partial_{y} p\|_{Y^{1}}^{2}
   - \sum_{\tau \le 1} \int_{\mathbb{R}} \overline{\partial^{\tau}_t n \; \partial^{\tau}_t p} \, dx.
\end{align}
For \(\nabla \cdot U\), by integration by parts, this yields
% 2. 关于 -∇·U 项的估计
\begin{align}\label{ineq:nablaU}
    - \sum_{\tau \le 1} \langle \partial^{\tau}_t \nabla \cdot U, \; \partial^{\tau}_t p \rangle
    &= \sum_{\tau \le 1} \langle \partial^{\tau}_t U, \; \partial^{\tau}_t \nabla p \rangle
    - \sum_{\tau \le 1} \int_{\mathbb{R}} \overline{\partial^{\tau}_t U_{2} \; \partial^{\tau}_t p}  \, dx \nonumber\\
    &\le \frac{C}{\mu} \|U\|_{Y^{1}}^{2} + C \mu \|\nabla p\|_{Y^{1}}^{2}
    - \sum_{\tau \le 1} \int_{\mathbb{R}} \overline{\partial^{\tau}_t U_{2} \; \partial^{\tau}_t p} \, dx.
\end{align}
Finally, by \eqref{ineq:nablap}, \eqref{ineq:nablap2}, \eqref{ineq:partialy n} and \eqref{ineq:nablaU}, we have
% 3. 组合后的总估计
\begin{align*}
& \sum_{\tau \le 1} \|\partial^{\tau}_t \nabla p\|_{2}^{2}
   - \sum_{\tau \le 1} \int_{\mathbb{R}} \overline{\partial^{\tau}_t \partial_{y} p \; \partial^{\tau}_t p} \, dx \nonumber\\
& \quad \le \frac{C}{\mu} \left( 1 + \|u^{a}\|_{Y_{\infty}^{1}}^{2} + \|\nabla u^{a}\|_{Y_{\infty}^{1}}^{2} \right)
        \left( \|u\|_{Y^{1}}^{2} + \|\omega\|_{Y^{1,1}}^{2} + \varepsilon^{4} \|f e^{-y}\|_{Y^{1,2}}^{2} \right) \nonumber\\
& \qquad + \frac{C}{\mu} \left( \|u\|_{Y^1}^{2} + \|\omega\|_{Y^{1,2}}^{2} + \varepsilon^{4} \|f e^{-y}\|_{Y^{1,3}}^{2} \right)^{3}
        + C \mu \|\nabla p\|_{Y^1}^{2} \nonumber\\
& \qquad + \int_{\mathbb{R}} \overline{\partial^{\tau}_t  \left(u^{a} \cdot \nabla u_{2} + u \cdot \nabla u_{2}^{a} + u \cdot \nabla u_{2}  \;\right) \partial^{\tau}_t p} \, dx \\
& \qquad + \frac{C}{\mu} \|n\|_{Y^{1}}^{2} + C \mu \|\partial_{y} p\|_{Y^1}^{2}
   - \sum_{\tau \le 1} \int_{\mathbb{R}} \overline{\partial^{\tau}_t n \; \partial^{\tau}_t p} \, dx \nonumber\\
& \qquad + \frac{C}{\mu} \|U\|_{Y^{1}}^{2} + C \mu \|\nabla p\|_{Y^{1}}^{2}
   - \sum_{\tau \le 1} \int_{\mathbb{R}} \overline{\partial^{\tau}_t U_{2} \; \partial^{\tau}_t p}   \, dx. \nonumber
\end{align*}

For the second term in equation \eqref{eq:error} involving \(u\), we simplify it as follows:
% 1. 主体估计（含边界项）
\begin{equation}\label{p-equation}
   \begin{aligned}
(1 - C\mu) \|\nabla p\|_{Y^{1}}^{2}
    &\le \frac{C}{\mu} \left( 1 + \|u^{a}\|_{Y_{\infty}^{1}}^{2} + \|\nabla u^{a}\|_{Y_{\infty}^{1}}^{2} \right) \\
    &\quad \times \left( \|u\|_{Y^{1}}^{2} + \|\omega\|_{Y^{1,1}}^{2} + \|n\|_{Y^{1}}^{2} + \varepsilon^{4} \|f e^{-y}\|_{Y^{1,2}}^{2} \right) \\
    &\quad + \frac{C}{\mu} \left( \|u\|_{Y^1}^{2} + \|\omega\|_{Y^{1,2}}^{2} + \varepsilon^{4} \|f e^{-y}\|_{Y^{1,3}}^{2} \right)^{3} \\
    &\quad + \sum_{\tau \le 1} \int_{\mathbb{R}} \overline{\partial^{\tau}_t \left(\partial_t u_2 - \varepsilon^{2} \Delta u_2 \;\right) \partial^{\tau}_t p} \, dx
   + \frac{C}{\mu} \|U\|_{Y^{1}}^{2}.
\end{aligned} 
\end{equation}
By \(\nabla \cdot u=0\) and the boundary conditions ( \(\overline{\partial_y u_1}=0, \overline{\partial_y^2u_2}=-\overline{\partial_{xy}u_1} =0 \)), there holds:
% 2. 边界项的处理（利用跳跃条件）
\begin{align*}
\sum_{\tau \le 1} \int_{\mathbb{R}} \overline{\partial^{\tau}_t \left(\partial_t u_2 - \varepsilon^{2} \Delta u_2 \;\right) \partial^{\tau}_t p} \, dx
&= \sum_{\tau \le 1} \int_{\mathbb{R}} \partial^{\tau}_t (-\varepsilon^{2} \partial_t f + \varepsilon^{4} \partial_x^{2} f) \; \overline{\partial^{\tau}_t p} \, dx \\
&= \sum_{\tau \le 1}\int_{\mathbb{R}_{+}^{2}} \partial^{\tau}_t (-\varepsilon^{2} \partial_t f + \varepsilon^{4} \partial_x^{2} f) \; 
    \partial_y \left( \partial^{\tau}_t p \; e^{-y} \right) \, dx \, dy,
\end{align*}
and applying H\"older inequality, we know that
% 3. 边界项的进一步估计
\begin{equation}\label{p-equation1}
    \begin{aligned}
    &= \sum_{\tau \le 1} \left(\int_{\mathbb{R}_{+}^{2}} \partial^{\tau}_t (-\varepsilon^{2} \partial_t f + \varepsilon^{4} \partial_x^{2} f) \; 
        \partial_y \partial^{\tau}_t p \; e^{-y} \, dx \, dy + \int_{\mathbb{R}_{+}^{2}} \partial^{\tau}_t (\varepsilon^{2} \partial_t f - \varepsilon^{4} \partial_x^{2} f) \; 
        \partial^{\tau}_t p \; e^{-y} \, dx \, dy\right) \\
    &= -\sum_{\tau \le 1}\left(\int_{\mathbb{R}_{+}^{2}} \partial^{\tau}_t (\varepsilon^{2} \partial_t f - \varepsilon^{4} \partial_x^{2} f) \; 
        \partial_y \partial^{\tau}_t p \; e^{-y} \, dx \, dy  +\int_{\mathbb{R}_{+}^{2}} \partial^{\tau}_t (\varepsilon^{2} \partial_t F - \varepsilon^{4} \partial_x f) \; 
        \partial_x \partial^{\tau}_t p \; e^{-y} \, dx \, dy\right) \\
    &\le C \varepsilon^{2} \left( \|f e^{-y}\|_{Y^{2,2}} + \varepsilon^{2} \|f e^{-y}\|_{Y^{1,3}} \right) \|\partial_y  p\|_{Y^1} + C \varepsilon^{2} \left( \|F e^{-y}\|_{Y^{2,2}} + \varepsilon^{2} \|f e^{-y}\|_{Y^{1,2}} \right) \|\partial_x p\|_{Y^1},
\end{aligned}
\end{equation}
 where the definition of \(F\) is in \eqref{f uniform bound}.
Combining \eqref{p-equation} and \eqref{p-equation1}, we have
% 4. 最终简化形式（取 μ 适当小）
\begin{align}\label{ineq:nablap3}
\frac{1}{2}(1-C\mu) \|\nabla p\|_{Y^1}^{2}
&\le C \left( 1 + \|u^{a}\|_{Y_{\infty}^{1}}^{2}+\|\nabla u^{a}\|_{Y_{\infty}^{1}}^{2} \right)
     \left( \|u\|_{Y^1}^{2} + \|\omega\|_{Y^{1,1}}^{2} + \|n\|_{Y^{1}}^{2} + \varepsilon^{4} \|f e^{-y}\|_{Y^{1,2}}^{2} \right) \nonumber\\
&\quad + C \left( \|u\|_{Y^1}^{2} + \|\omega\|_{Y^{1,2}}^{2} + \varepsilon^{4} \|f e^{-y}\|_{Y^{1,3}}^{2} \right)^{3} \\
&\quad + C \varepsilon^{4} \left( \|F e^{-y}\|_{Y^{2,2}}^{2} + \|f e^{-y}\|_{Y^{2,2}}^{2} + \varepsilon^{2} \|f e^{-y}\|_{Y^{1,3}}^{2} \right)
   + C \|U\|_{Y^{1}}^{2}.\nonumber
\end{align}
In summary, we can prove this proposition by \eqref{ineq:u} to \eqref{ineq:p3} and \eqref{ineq:nablap3}.\end{proof}

\begin{proof}
[Proof of Proposition~\ref{prop:omega}.] First, we make the boundary corrections on the \(\omega\)'s equation \eqref{eq:omega} by
\begin{align*}
    \widehat{\omega} = \omega - \varepsilon^2 \partial_x f e^{-y},
\end{align*}
then we have
% 4. 能量不等式（移项并取绝对值）
\begin{align*}
    &\frac{1}{2} \frac{d}{dt} \sum_{\alpha_1 \leq 1, |\alpha| \leq m} \|\partial^{\alpha} \widehat{\omega}\|_{2}^{2}
    - \varepsilon^{2} \sum_{\alpha_1 \leq 1, |\alpha| \leq m} \langle \partial^{\alpha} \Delta \widehat{\omega}, \; \partial^{\alpha} \widehat{\omega} \rangle \\
    & \quad \le \sum_{\alpha_1 \leq 1, |\alpha| \leq m} \bigl| \langle \partial^{\alpha} (u^{a} \cdot \nabla \widehat{\omega} + u \cdot \nabla \omega^{a} + u \cdot \nabla \widehat{\omega}), \; \partial^{\alpha} \widehat{\omega} \rangle \bigr|  + \sum_{\alpha_1 \leq 1, |\alpha| \leq m} \bigl| \langle \partial^{\alpha} \partial_x n, \; \partial^{\alpha} \widehat{\omega} \rangle \bigr| \\
    & \qquad + \varepsilon^{2} \sum_{\alpha_1 \leq 1, |\alpha| \leq m} \bigl| \langle \partial^{\alpha} \left( \partial_t + (u^{a} + u) \cdot \nabla - \varepsilon^{2} \Delta \right)(\partial_x f e^{-y}), \; \partial^{\alpha} \widehat{\omega} \rangle \bigr|  + \sum_{\alpha_1 \leq 1, |\alpha| \leq m} \bigl| \langle \partial^{\alpha} \nabla^\perp \cdot U, \; \partial^{\alpha} \widehat{\omega} \rangle \bigr| \\
    & \quad =: \bar{C}_{1} + \bar{C}_{2} + \bar{C}_{3} + \bar{C}_{4}.
\end{align*}
For the left, by integration by parts, we get
% 2. Laplacian 项的分部积分与交换子处理
\begin{align*}
    &- \varepsilon^{2} \sum_{\alpha_1 \leq 1, |\alpha| \leq m} \langle \partial^{\alpha} \Delta \widehat{\omega}, \; \partial^{\alpha} \widehat{\omega} \rangle \\
    % &= -\varepsilon^{2} \sum_{\alpha_1 \leq 1, |\alpha| \leq m} \langle \partial_x \partial^{\alpha} \partial_x \widehat{\omega}, \; \partial^{\alpha} \widehat{\omega} \rangle - \varepsilon^{2} \sum_{\alpha_1 \leq 1, |\alpha| \leq m} \langle \partial_y \partial^{\alpha} \partial_y \widehat{\omega}, \; \partial^{\alpha} \widehat{\omega} \rangle \\
    % &\qquad - \varepsilon^{2} \sum_{\alpha_1 \leq 1, |\alpha| \leq m} \langle [\partial^{\alpha}, \partial_y] \partial_y \widehat{\omega}, \; \partial^{\alpha} \widehat{\omega} \rangle \\[4pt]
    &= \varepsilon^{2} \sum_{\alpha_1 \leq 1, |\alpha| \leq m} \langle \partial^{\alpha} \partial_x \widehat{\omega}, \; \partial^{\alpha} \partial_x \widehat{\omega} \rangle + \varepsilon^{2} \sum_{\alpha_1 \leq 1, |\alpha| \leq m} \langle \partial^{\alpha} \partial_y \widehat{\omega}, \; \partial^{\alpha} \partial_y \widehat{\omega} \rangle \\
    &\quad + \varepsilon^{2} \sum_{\alpha_1 \leq 1, |\alpha| \leq m} \langle \partial^{\alpha} \partial_y \widehat{\omega}, \; [\partial_y, \partial^{\alpha}] \widehat{\omega} \rangle - \varepsilon^{2} \sum_{\alpha_1 \leq 1, |\alpha| \leq m} \langle [\partial^{\alpha}, \partial_y] \partial_y \widehat{\omega}, \; \partial^{\alpha} \widehat{\omega} \rangle ,
\end{align*}
% for the third term, by \eqref{exchange oper}, and for the last term, if \(\alpha_3 \ne 0\), let the truncation function \(\psi\) in \(\partial^\alpha \widehat{\omega}\) exchange to \([\partial^\alpha, \partial_y]\), this becomes
which equals with
\begin{align}\label{ineq:omega}
    &= \varepsilon^{2} \sum_{\alpha_1 \leq 1, |\alpha| \leq m} \|\partial^{\alpha} \nabla \widehat{\omega}\|_{2}^{2}
   - C \delta \varepsilon^{2} \sum_{\alpha_1 \leq 1, |\alpha| \leq m} \langle \partial^{\alpha} \partial_y \widehat{\omega}, \; \partial^{\alpha - (0,0,1)} \partial_y \widehat{\omega} \rangle \nonumber\\
    &\ge \varepsilon^{2} \sum_{\alpha_1 \leq 1, |\alpha| \leq m} \|\partial^{\alpha} \nabla \widehat{\omega}\|_{2}^{2} - C \delta \varepsilon^{2} \Biggl( \sum_{\alpha_1 \leq 1, |\alpha| \leq m} \|\partial^{\alpha} \partial_y \widehat{\omega}\|_{2}^{2}
          + \sum_{\alpha_1 \leq 1, |\alpha| \leq m} \|\partial^{\alpha - (0,0,1)} \partial_y \widehat{\omega}\|_{2}^{2} \Biggr) \nonumber\\[4pt]
    &\ge (1 - C \delta) \varepsilon^{2} \sum_{\alpha_1 \leq 1, |\alpha| \leq m} \|\partial^{\alpha} \nabla \widehat{\omega}\|_{2}^{2}.
\end{align}
{\bf Estimate of \(\bar{C}_1\).} We have
% 1. 对流项分解
\begin{align*}
   \bar{C}_1
    &\le \sum_{\alpha_1 \leq 1, |\alpha| \leq m} \bigl| \langle \partial^{\alpha} (u^{a} \cdot \nabla \widehat{\omega}), \; \partial^{\alpha} \widehat{\omega} \rangle \bigr| + \sum_{\alpha_1 \leq 1, |\alpha| \leq m} \bigl| \langle \partial^{\alpha} (u \cdot \nabla \omega^{a}), \; \partial^{\alpha} \widehat{\omega} \rangle \bigr| \\
    & \qquad + \sum_{\alpha_1 \leq 1, |\alpha| \leq m} \bigl| \langle \partial^{\alpha} (u \cdot \nabla \widehat{\omega}), \; \partial^{\alpha} \widehat{\omega} \rangle \bigr|.
\end{align*}
For the first term \(u^{a} \cdot \nabla \widehat{\omega}\) , by Lemma~\ref{lem:H cdot nabla G} (a), it follows that
% 2. C_{11} 估计（引理 1.1）
\begin{align*}
\sum_{\alpha_1 \leq 1, |\alpha| \leq m} \bigl| \langle \partial^{\alpha} (u^{a} \cdot \nabla \widehat{\omega}), \; \partial^{\alpha} \widehat{\omega} \rangle \bigr|
&\le \frac{C_\delta}{\sigma} \left( 1 + \|u^{a}\|_{Y_{\infty}^{1,m+1}} + \|f e^{-y}\|_{Y_{\infty}^{1,m}} \right)^{2} \|\widehat{\omega}\|_{Y^{1,m}}^{2}  + C \sigma \varepsilon^{4} \|\partial_{y} \widehat{\omega}\|_{Y^{1,m}}^{2}.
\end{align*}
For the second term \(u \cdot \nabla \omega^{a}\), by Lemma~\ref{lem:H cdot nabla G} (b), this yields
% 3. C_{12} 估计（引理 1.2）
\begin{align*}
\sum_{\alpha_1 \leq 1, |\alpha| \leq m} \bigl| \langle \partial^{\alpha} (u \cdot \nabla \omega^{a}), \; \partial^{\alpha} \widehat{\omega} \rangle \bigr|
&\le C \left( \left( \|\omega^{a}\|_{Y_{\infty}^{1,m+1}}^{2} + 1 \right) \|\widehat{\omega}\|_{Y^{1,m}}^{2}
      + \|u\|_{Y^1}^{2} + \|\omega\|_{Y^{1,m}}^{2} \right) \\
&\quad + C \varepsilon^{4} \|f e^{-y}\|_{Y^{1,m+1}}^{2}
      \left( \|\omega^{a}\|_{Y_{\infty}^{1,m+1}}^{2} + \|\partial_{y} \omega^{a}\|_{Y^{1,m}}^{2} \right)\\
      &\le C \left( \left( \|\omega^{a}\|_{Y_{\infty}^{1,m+1}}^{2} + 1 \right) \|\widehat{\omega}\|_{Y^{1,m}}^{2}
      + \|u\|_{Y^1}^{2}  \right) \\
&\quad + C \varepsilon^{4} \|f e^{-y}\|_{Y^{1,m+1}}^{2}
      \left(1+ \|\omega^{a}\|_{Y_{\infty}^{1,m+1}}^{2} + \|\partial_{y} \omega^{a}\|_{Y^{1,m}}^{2} \right).
\end{align*}
For the last term \(u \cdot \nabla \widehat{\omega}\), by Lemma~\ref{lem:H cdot nabla G} (c), we derive
% 4. C_{13} 估计（引理 1.3）
\begin{align*}
\sum_{\alpha_1 \leq 1, |\alpha| \leq m} \bigl| \langle \partial^{\alpha} (u \cdot \nabla \widehat{\omega}), \; \partial^{\alpha} \widehat{\omega} \rangle \bigr|
&\le \frac{C}{\sigma} \left( \|u\|_{Y^1}^{2} + \|\omega\|_{Y^{1,m}}^{2} + \|\widehat{\omega}\|_{Y^{1,m}}^{2} \right) \\
&\quad + \frac{C}{\sigma \varepsilon^{4}} \left( \|u\|_{Y^1}^{2} + \|\omega\|_{Y^{1,m}}^{2} + \|\widehat{\omega}\|_{Y^{1,m}}^{2} \right)^{3} \\
&\quad + \frac{C}{\sigma} \varepsilon^{2} \|f e^{-y}\|_{Y^{1,m+1}}^{2} \|\widehat{\omega}\|_{Y^{1,m}}^{2}
   + C \sigma \varepsilon^{2} \|\nabla \widehat{\omega}\|_{Y^{1,m}}^{2}\\
   &\le \frac{C}{\sigma} \left( \|u\|_{Y^1}^{2} + \varepsilon^4\|fe^{-y}\|_{Y^{1,m+1}}^{2} + \|\widehat{\omega}\|_{Y^{1,m}}^{2} \right) \\
&\quad + \frac{C}{\sigma \varepsilon^{4}} \left( \|u\|_{Y^1}^{2} + \varepsilon^4\|fe^{-y}\|_{Y^{1,m+1}}^{2} + \|\widehat{\omega}\|_{Y^{1,m}}^{2} \right)^{3} \\
&\quad + \frac{C}{\sigma} \varepsilon^{2} \|f e^{-y}\|_{Y^{1,m+1}}^{2} \|\widehat{\omega}\|_{Y^{1,m}}^{2}
   + C \sigma \varepsilon^{2} \|\nabla \widehat{\omega}\|_{Y^{1,m}}^{2}.
\end{align*}
Summing up, we get
\begin{equation}\label{ineq:barC1}
    \begin{aligned}
        \bar{C}_1\leq& \frac{C_\delta}{\sigma} \left( 1 + \|u^{a}\|_{Y_{\infty}^{1,m+1}} + \|f e^{-y}\|_{Y^{1,m+1}}+\|\omega^{a}\|_{Y_{\infty}^{1,m+1}} \right)^{2} \left(\|u\|_{Y^1}^{2}+\|\widehat{\omega}\|_{Y^{1,m}}^{2}\right) \\
        &\quad + C \sigma \varepsilon^{2} \|\nabla \widehat{\omega}\|_{Y^{1,m}}^{2} + \frac{C}{\sigma \varepsilon^{4}} \left( \|u\|_{Y^1}^{2} + \varepsilon^4\|fe^{-y}\|_{Y^{1,m+1}}^{2} + \|\widehat{\omega}\|_{Y^{1,m}}^{2} \right)^{3}\\
&\quad + \frac{C}{\sigma} \varepsilon^{4} \|f e^{-y}\|_{Y^{1,m+1}}^{2}
      \left(1+ \|\omega^{a}\|_{Y_{\infty}^{1,m+1}}^{2} + \|\partial_{y} \omega^{a}\|_{Y^{1,m}}^{2} \right).
    \end{aligned}
\end{equation}
{\bf Estimate of \(\bar{C}_2\).}, by H\"older inequality, we obtain
% 5. C_2 估计（∂_x n 项）
\begin{align}\label{ineq:barC2}
\bar{C}_{2} \le C \sigma \|\nabla n\|_{Y^{1,m}}^{2} + \frac{C}{\sigma} \|\widehat{\omega}\|_{Y^{1,m}}^{2}.
\end{align}
{\bf Estimate of \(\bar{C}_3\).} We now estimate each term in \(\bar{C}_3\) separately. First, \(\bar{C}_3\) is decomposed  into three parts:
% 1. C_3 的分解
\begin{align*}
    \bar{C}_3
    & \le \varepsilon^{2} \sum_{\alpha_1 \leq 1, |\alpha| \leq m} \bigl| \langle \partial^{\alpha} \partial_t (\partial_x f e^{-y}), \; \partial^{\alpha} \widehat{\omega} \rangle \bigr| + \varepsilon^{2} \sum_{\alpha_1 \leq 1, |\alpha| \leq m} \bigl| \langle \partial^{\alpha} ((u^{a} + u) \cdot \nabla) \partial_x f e^{-y}, \; \partial^{\alpha} \widehat{\omega} \rangle \bigr| \\
    & \quad + \varepsilon^{2} \sum_{\alpha_1 \leq 1, |\alpha| \leq m} \bigl| \langle \partial^{\alpha} \varepsilon^{2} \Delta (\partial_x f e^{-y}), \; \partial^{\alpha} \widehat{\omega} \rangle \bigr| \\
    &=: \bar{C}_{31}+\bar{C}_{32}+\bar{C}_{33}.
\end{align*}
Next for \(\bar{C}_{31}\), the estimate is
% 2.1 第一项估计（时间导数项）
\begin{align*}
    \bar{C}_{31}
    \le \varepsilon^{4} \|f e^{-y}\|_{Y^{2,m+2}}^{2} + \|\widehat{\omega}\|_{Y^{1,m}}^{2}.
\end{align*}
Then for \(\bar{C}_{32}\), applying H\"older inequality and Lemma \ref{lem:omega} gives
% 2.2 第二项估计（对流项）
\begin{align*}
    \bar{C}_{32}
    &\le C \varepsilon^{2} \| u^a + u \|_{Y^{1,m}} \|f e^{-y}\|_{Y_{\infty}^{1,m+2}} \|\widehat{\omega}\|_{Y^{1,m}}\\
    &\le C \varepsilon^{2} \left( \|u^{a}\|_{Y^{1,m}} + \|u\|_{Y^{1}} + \|\omega\|_{Y^{1,m-1}} 
    + \|f e^{-y}\|_{Y^{1,m}} \right)\|f e^{-y}\|_{Y_{\infty}^{1,m+2}}
    \|\widehat{\omega}\|_{Y^{1,m}} \\
    &\le C \varepsilon^{4} \|f e^{-y}\|_{Y_{\infty}^{1,m+2}}^{2}
       + C \left( \|u\|_{Y^{1}}^{2} + \|\omega\|_{Y^{1,m-1}}^{2}+\|u^{a}\|_{Y^{1,m}}^{2} + \|f e^{-y}\|_{Y^{1,m}}^{2} \right) \|\widehat{\omega}\|_{Y^{1,m}}^{2}\\
       &\le C \varepsilon^{4} \|f e^{-y}\|_{Y^{1,m+3}}^{2}
       + C \left( \|u\|_{Y^{1}}^{2} + \|\widehat{\omega}\|_{Y^{1,m-1}}^{2}+\|u^{a}\|_{Y^{1,m}}^{2} + \|f e^{-y}\|_{Y^{1,m}}^{2} \right) \|\widehat{\omega}\|_{Y^{1,m}}^{2}.
\end{align*}
Finally for \(\bar{C}_{33}\),
% 2.3 第三项估计（扩散项）
\begin{align*}
    \bar{C}_{33} \le C \varepsilon^{8} \|f e^{-y}\|_{Y^{1,m+3}}^{2} + C \|\widehat{\omega}\|_{Y^{1,m}}^{2}.
\end{align*}
In summary, we get
\begin{equation}\label{ineq:barC3}
    \begin{aligned}
        \bar{C}_3&\leq \varepsilon^{4} \|f e^{-y}\|_{Y^{2,m+2}}^{2} +C \varepsilon^{4} \|f e^{-y}\|_{Y^{1,m+3}}^{2}\\
        &\quad+ C \left( 1+\|u\|_{Y^{1}}^{2} + \|\widehat{\omega}\|_{Y^{1,m-1}}^{2}+\|u^{a}\|_{Y^{1,m}}^{2} + \|f e^{-y}\|_{Y^{1,m}}^{2} \right) \|\widehat{\omega}\|_{Y^{1,m}}^{2}\\
        &\leq \varepsilon^{4} \|f e^{-y}\|_{Y^{2,m+2}}^{2} +C \varepsilon^{4} \|f e^{-y}\|_{Y^{1,m+3}}^{2}\\
        &\quad+ C\sigma  \left( 1+\|u^{a}\|_{Y^{1,m}}^{2} + \|f e^{-y}\|_{Y^{1,m}}^{2} \right)\left( \|\widehat{\omega}\|_{Y^{1,m}}^{2}+\|u\|_{Y^{1}}^{2} \right)+\frac{C\sigma}{\varepsilon^4}\left( \|u\|_{Y^{1}}^{2} + \|\widehat{\omega}\|_{Y^{1,m}}^{2}\right)^3.
    \end{aligned}
\end{equation}
{\bf Estimate of \(\bar{C}_4\).} We have
% 3. C_4 估计（∇×U 项）
\begin{align}\label{ineq:barC4}
    C_4 \le  C \|\nabla^\perp\cdot  U\|_{Y^{1,m}}^{2} + C \|\widehat{\omega}\|_{Y^{1,m}}^{2}.
\end{align}
In summary, this proposition can be proven by using \eqref{ineq:omega}-\eqref{ineq:barC4}.\end{proof}

%Appendix
\section{Appendix}
This appendix is devoted to prove Proposition \ref{prop:uniform bounds}. Similar to Section~\ref{section_operator}, we define:
\begin{equation}\label{def:inner derivative}
    \widehat{\partial}^\alpha = \partial_t^{\alpha_1} \partial_x^{\alpha_2} (\delta z)^{\alpha_3}\partial_z^{\alpha_3}.
\end{equation}
Then we have
\begin{equation}\label{inner exchange operator}
    [\partial_z, \widehat{\partial}^\alpha] = \delta \alpha_3 \widehat{\partial}^{\alpha-(0,0,1)}\partial_z, 
\end{equation}
and for $s,l,m \in \mathbb{N}$, the inner conormal Sobolev space is defined by
\begin{equation}\label{def:Zlms}
    Z^{l,m}_s \left(\mathbb{R}_+^2 \right) = \left\{ u \;\Bigg|\; \| u \|_{Z^{l,m}_s}^2 = \sum_{\alpha_1 \leq l, |\alpha| \leq m} \| (1+z)^s \widehat{\partial}^\alpha u\|_2^2 < \infty \right\}.
\end{equation}
Then by the definitions \eqref{def:conormal sobolev} and \eqref{def:sobolev} we have
\begin{equation}\label{neq:embed y}
    \begin{aligned}
        \| h(t,x,y) \|_{Y^{l,m}}^2 &= \sum_{\alpha_1 \le l,|\alpha| \le m} \int_{R_+^2} \left| \partial_t^l \partial_x^{\alpha_1} \psi^{\alpha_3} \partial_y^{\alpha_3} h \right|^2  dxdy \\
        &\leq \sum_{\alpha_1 \le l,|\alpha| \le m} \left\|  \psi^{\alpha_3} \right\|_{L_y^\infty}^2 \int_{R_+^2} \left| \partial_t^l \partial_x^{\alpha_1} \partial_y^{\alpha_3} h \right|^2  dxdy \\
        &\le C\| h \|_{\overline{H}^{l,m}}^2, 
    \end{aligned}
\end{equation}
    and 
\begin{equation}\label{neq:embed z}
    \begin{aligned}
        \| h(t,x,z) \|_{Y^{l,m}}^2 &= \sum_{\alpha_1 \le l,|\alpha| \le m} \int_{R_+^2} \left| \partial_t^l \partial_x^{\alpha_1} \frac{\psi^{\alpha_3}}{(\delta z)^{\alpha_3}} \frac{(\delta z)^{\alpha_3}}{\varepsilon^{\alpha_3}} \partial_z^{\alpha_3} h \right|^2  dxdy \\
        &\le \sum_{\alpha_1 \le l,|\alpha| \le m} \left\| \frac{\psi^{\alpha_3}}{(\delta y)^{\alpha_3}} \right\|_{L_y^\infty}^2 \int_{R_+^2} \left| \partial_t^l \partial_x^{\alpha_1} (\delta z)^{\alpha_3} \partial_z^{\alpha_3} h \right|^2  dxd(\varepsilon z) \\
        &\le C\varepsilon \| h \|_{Z_0^{l,m}}^2,
    \end{aligned}
\end{equation}
which indicates that we need to compute the Sobolev norm of the outer solution and the conormal Sobolev norm of the inner solution, respectively.

In this section, we denote 
\begin{equation}\label{def:coupled norm}
    \begin{aligned}
        &\|(n,c,u)\|_{Y^{l,m}}^2 = \|n\|_{Y^{l,m}}^2 + \|c\|_{Y^{l,m}}^2 + \|u\|_{Y^{l,m}}^2,\\
        &\|(n,c,u)\|_{Y^{l,m}_\infty}^2 = \|n\|_{Y^{l,m}_\infty}^2 + \|c\|_{Y^{l,m}_\infty}^2 + \|u\|_{Y^{l,m}_\infty}^2,\\
        &\|(n,c,u)\|_{Z^{l,m}_s}^2 = \|n\|_{Z^{l,m}_s}^2 + \|c\|_{Z^{l,m}_s}^2 + \|u\|_{Z^{l,m}_s}^2.\\
    \end{aligned}
\end{equation}

\subsection{Local well-posedness of solutions to the equations \eqref{eq:e0} in $\overline{H}^{l,m}$.}\label{sec:loocal well-posed} Recall the space $\overline{H}^{l,m}$  defined in   \eqref{def:sobolev}.
Assume that the $m$-th order compatibility conditions hold:
\begin{equation*}
    (\mathcal{I}^m)
    \left\{
    \begin{aligned}
        &\partial^{\tau+1}_t n^{e,0}(x,y,0) + \partial^{\tau}_t({u}^{0} \cdot \nabla n^{e,0})(x,y,0) - \partial^{\tau}_t\Delta n^{e,0}(x,y,0) = -\partial^{\tau}_t\nabla \cdot (n^{e,0} \nabla c^{e,0})(x,y,0), \\ 
        &\partial^{\tau+1}_t c^{e,0}(x,y,0) + \partial^{\tau}_t({u^{e,0}} \cdot \nabla c^{e,0})(x,y,0)  = -\partial^{\tau}_t(c^{e,0}n^{e,0})(x,y,0), \\ 
        &\partial^{\tau+1}_t {u^{e,0}}(x,y,0) + \partial^{\tau}_t({u^{e,0}} \cdot \nabla {u^{e,0}})(x,y,0) + \partial^{\tau}_t\nabla p^{e,0}(x,y,0) = -\binom{0}{\partial^{\tau}_tn^{e,0}(x,y,0)},\\
        &0 \le 2\tau\le m.
    \end{aligned}
    \right.
\end{equation*}

Then the local well-posed results of the system \eqref {eq:e0} are stated as follows:
\begin{proposition}\label{proposition_e0}
    Assume that $(n_{in},c_{in},u_{in})\in \left(H^{m+1}\times H^{m+1}\times H^{m+1}\right) (3\leq m \in \mathbb{N})$ fulfills the compatibility conditions $(\mathcal{I}^{m-1})$. Then \eqref{eq:e0} admits a unique solution $(n^{e,0},c^{e,0},u^{e,0},\nabla p^{e,0})$, whose maximal time of existence is denoted by \(0<T_{e}\leq\infty\), satisfying for any \(0<T<T_{e}\)
    \begin{align*}
        (n^{0},c^{0},u^{0}) \in C\left([0,T];H^{m}\times H^{m+1}\times H^{m+1}\right)\cap L^2\left(0,T;H^{m+1}\times H^{m+1}\times H^{m+1}\right), 
    \end{align*}
    \begin{align*}
        \nabla p^{e,0} \in L^{\infty}\left(0,T;H^{m}\right),
    \end{align*}
    and
    \begin{align*}
    % &\partial_t^j 
    n^{e,0} &\in C\left([0,T]; \overline{H}^{k,m-2k}\right) 
    \cap L^2\left(0,T; \overline{H}^{k,m+1-2k}\right), \quad 2k\leq m,\\
    % &\partial_t^j 
    c^{e,0},u^{e,0} &\in C\left([0,T]; \overline{H}^{k,m+1-2k}\right) \cap L^2\left(0,T; \overline{H}^{k,m+1-2k}\right), \quad 2<2k\leq m,\\
    \partial_tc^{e,0},\partial_tu^{e,0} &\in C\left([0,T]; H^{m}\right) \cap L^2\left(0,T; H^{m}\right).
    \end{align*}
\end{proposition}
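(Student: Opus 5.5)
We have to prove Proposition \ref{proposition_e0}: local well-posedness of the leading-order outer system \eqref{eq:e0}. This is a parabolic equation for $n^{e,0}$ with a nonlinear Robin-type zero-flux condition, coupled to a transport equation for $c^{e,0}$ and the 2D Euler equations with buoyancy and slip condition $u_2^{e,0}=0$.

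The plan is a standard iteration (or Galerkin/mollification) scheme closed by a priori energy estimates in $H^{m}$ / $H^{m+1}$, followed by the recovery of mixed time–space regularity from the equations.

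\emph{Linearized scheme.} Given $(n^k,c^k,u^k)$, we define $(n^{k+1},c^{k+1},u^{k+1})$ as follows:
\begin{itemize}
\item $u^{k+1}$ solves the linear Euler system transported by $u^k$, with source $n^k e_2$ and $u^{k+1}_2=0$ on $y=0$.
\item $c^{k+1}$ solves $\partial_t c+u^{k}\cdot\nabla c+n^k c=0$. No boundary condition is needed, since $u^k_2=0$ on $y=0$ makes the boundary characteristic.
\item $n^{k+1}$ solves the linear heat equation $\partial_t n+u^k\cdot\nabla n-\Delta n=-\nabla\cdot(n\nabla c^{k})$ with boundary condition $\partial_y n=n\partial_y c^{k}$ on $y=0$.
\end{itemize}
The $n$-equation is in divergence form with conormal flux $\nabla n-n\nabla c^k$, which vanishes on the boundary. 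Its weak formulation is therefore the natural Neumann-type one, and $L^2$ energy estimates produce no boundary terms. Existence for each linear problem is classical: the transport/Euler parts follow from characteristics and Kato-type results in the half-space, and the $n$-part follows from a Galerkin argument for a uniformly parabolic oblique/Robin problem.

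\emph{Uniform estimates.} For $c$ and $u$, I would apply tangential derivatives $\partial_x^j$ and use the vorticity $\partial_y u_1-\partial_x u_2$, which satisfies a transport equation with source $\partial_x n$ and needs no boundary condition. Together with the div–curl estimate (as in Lemma \ref{lem:omega} with $\overline{g_2}=0$), this gives $H^{m+1}$ control of $u$ from $H^{m}$ control of $\omega$ plus $\|n\|_{H^{m+1}}$ integrated in time. For $c$, differentiating the transport equation in all directions is allowed because the boundary is characteristic. The resulting bound is $\frac{d}{dt}\|(c,u)\|_{H^{m+1}}^2\lesssim (1+\|\nabla u\|_{H^{m}})\|(c,u)\|_{H^{m+1}}^2+\|n\|_{H^{m+1}}^2$.

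For $n$, I would first estimate tangential and time derivatives by energy methods: test $\partial_t^{k}\partial_x^j$ of the equation with the same derivative of $n$. Normal derivatives are then recovered from the identity $\partial_y^2 n=\partial_t n+u\cdot\nabla n-\partial_x^2 n+\nabla\cdot(n\nabla c)$ used in the paper, trading each pair of normal derivatives for one time derivative. This is precisely why the spaces $\overline{H}^{k,m-2k}$ appear.

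\emph{Main obstacle.} The hardest step is the $n$-estimate at the top order with the nonlinear boundary condition. When time-differentiated, the condition gives $\partial_y\partial_t^k n=\partial_t^k(n\partial_y c)$. Integrating by parts therefore leaves the boundary term $\int_{y=0}\partial_t^k(n\partial_y c)\,\partial_t^k n\,dx$. I would treat it by the trace inequality $\|g\|_{L^2(y=0)}^2\le C\|g\|_2\|\partial_y g\|_2$, absorbing half into the dissipation $\|\nabla\partial_t^k n\|_2^2$. This needs control of $\overline{\partial_y c}$ in $H^{m-1/2}$ of the boundary, which follows from $c\in H^{m+1}$. Alternatively, one could write the equation in the divergence form $\nabla\cdot(\nabla n-n\nabla c)$ and test with $\partial_t^k n$ directly, so that the flux boundary term vanishes identically.

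The compatibility conditions $(\mathcal{I}^{m-1})$ guarantee that the initial time derivatives $\partial_t^\tau n(0)$ lie in the right spaces and satisfy the boundary condition, so the time-differentiated estimates start from finite data.

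\emph{Closing the argument.} A Gronwall argument gives a uniform bound on a short interval $[0,T_0]$. Contraction of the iterates in a low norm ($L^\infty L^2$ for $c,u$ and $L^\infty L^2\cap L^2H^1$ for $n$) gives convergence, and uniqueness follows by the same low-norm argument. Weak-$*$ limits and the usual strong-continuity argument then yield the stated $C([0,T];\cdot)\cap L^2$ regularity. The pressure bound comes from the Neumann problem $-\Delta p=\nabla\cdot(u\cdot\nabla u)-\partial_y n$ with data determined by the slip condition. Finally, $T_e$ is the maximal time obtained by continuation.
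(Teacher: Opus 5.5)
Your a priori estimates are the paper's Steps 1--4. These are vorticity in $H^m$ with the div--curl bound for $u$, full $H^{m+1}$ transport estimates for $c$ (legitimate because $u_2=0$ makes the boundary characteristic), tangential/time energy estimates for $n$, and recovery of normal derivatives from $\partial_y^2 n=\partial_t n+u\cdot\nabla n-\partial_x^2 n+\nabla\cdot(n\nabla c)$. For the $n$-estimates, keep the equation in divergence form so that the zero-flux boundary integral vanishes identically. That is the paper's route and the one to take. Your trace-inequality option needs, at top order $2k=m$, the boundary trace of $\partial_t^k\partial_y c$, which $c\in H^{m+1}$ does not give; it would have to be extracted from the $c$-equation. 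Either way, the commutators in the $n$-estimates need bounds on $\partial_t^i u$ and $\partial_t^i c$, which the paper obtains inductively from the equations. The real difference from the paper is existence. You close an explicit linearized iteration by contraction, while the paper passes to the limit in a mollified scheme via Aubin--Lions and proves uniqueness separately.

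That contraction step fails in the norms you name. For the difference of two iterates (or two solutions), the $n$-equation carries zero-flux data for the flux $\nabla\delta n-\delta n\nabla c-n\nabla\delta c$. Testing with $\delta n$ therefore produces $\int n\,\nabla\delta c\cdot\nabla\delta n\,dx\,dy$, which cannot be bounded by $\|\delta c\|_2$; moving the derivative off $\delta c$ produces $\Delta\delta n$ instead. There are two ways to repair it.
\begin{itemize}
\item Take $\delta c$ in $H^1$. Its estimate contains $\nabla(\delta u\cdot\nabla c)$ and $\nabla(c\,\delta n)$, so $\delta u$ must also be in $H^1$; estimate $\delta\omega$ in $L^2$, whose equation is forced by $\partial_x\delta n$, and absorb $\nabla\delta n$ with the dissipation. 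You then contract in $\sup_t\bigl(\|\delta n\|_2^2+\|\delta c\|_{H^1}^2+\|\delta u\|_{H^1}^2\bigr)+\int_0^t\|\nabla\delta n\|_2^2$.
\item Keep $(\delta c,\delta u)$ in $L^2$ and lower $\delta n$ to $(H^1)^*$. Test with $(I-\Delta)^{-1}\delta n$ under the homogeneous Neumann condition; this gives dissipation $\int_0^t\|\delta n\|_2^2$.
\end{itemize}
Either version closes for short time, using $m\ge 3$ so that $\nabla n,\nabla c,\nabla\omega\in L^\infty$, and gives uniqueness as well. The paper's one-line appeal to ``standard $L^2$ stability'' glosses over the same point.
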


Before proving Proposition \ref{proposition_e0}, we give the following Korn's type lemma and Kato-Ponce commutator estimates, which are frequently used in the proof.
\begin{lemma}[\cite{tan2023global}]\label{lem:korn}
    {Let $u = (u_1, u_2)$ be a vector field on the upper half-plane $\mathbb{R}_+^2 = \{ (x,y) \in \mathbb{R}^2 : y>0 \}$, satisfying the vanishing property at $\infty$, 
    $\mathrm{div}\,u = 0$
    with the boundary condition $u_2|_{y=0}=0$. Denote $\omega =\nabla\times\,u = \partial_x u_2 - \partial_y u_1$, then for any integer $k \geq 0$, there exists a constant $C = C(k) > 0$ depending only on $k$, such that
    \begin{equation*}
            \begin{split}{}
    \left\| \nabla^{k+1} u \right\|_{L^2(\mathbb{R}_+^2)} \leq C \left\| \nabla^k \omega \right\|_{L^2(\mathbb{R}_+^2)}.
    \end{split}
        \end{equation*}	
        }
\end{lemma}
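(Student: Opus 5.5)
We need to prove Lemma \ref{lem:korn}: $\|\nabla^{k+1}u\|_{L^2}\le C\|\nabla^k\omega\|_{L^2}$ for a divergence-free field $u$ on $\mathbb{R}^2_+$ with $u_2|_{y=0}=0$ that vanishes at infinity. The plan is to reduce everything to the stream function and use odd reflection across $y=0$ so that the Fourier multiplier bounds on the whole plane apply.

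\emph{Step 1: stream function.} Since $\nabla\cdot u=0$ on the simply connected half-plane, write $u=\nabla^\perp\Psi=(-\partial_y\Psi,\partial_x\Psi)$ (up to sign convention). The condition $u_2|_{y=0}=\partial_x\Psi|_{y=0}=0$ together with decay at infinity forces $\Psi|_{y=0}=$ const, which we normalize to $0$. Then $\omega=\partial_xu_2-\partial_yu_1=\Delta\Psi$, so $\Psi$ solves the Dirichlet problem $\Delta\Psi=\omega$ in $\mathbb{R}^2_+$, $\Psi=0$ on $y=0$.

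\emph{Step 2: reflection.} Extend $\Psi$ and $\omega$ oddly in $y$, giving $\tilde\Psi,\tilde\omega$. Because $\Psi$ vanishes on the boundary, $\tilde\Psi\in H^1_{loc}$ across $y=0$, and $\Delta\tilde\Psi=\tilde\omega$ holds in $\mathcal{D}'(\mathbb{R}^2)$: testing against $\varphi$ and integrating by parts on each half-plane, the boundary terms cancel by oddness and the Dirichlet condition. Each component of $\nabla^{k+1}\tilde u$ is a second derivative of $\nabla^{k}\tilde\Psi$, that is, $\partial_i\partial_j\Delta^{-1}\nabla^k\tilde\omega$, a composition of Riesz transforms. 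By Plancherel, $\|\partial_i\partial_j\Delta^{-1}g\|_{L^2(\mathbb{R}^2)}\le\|g\|_{L^2}$.

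The main obstacle is the validity of $\nabla^k\tilde\omega\in L^2(\mathbb{R}^2)$ with norm at most $2\|\nabla^k\omega\|_{L^2(\mathbb{R}^2_+)}$. For $k\ge1$ the odd extension of $\omega$ generally produces jumps at $y=0$ unless the even-order normal derivatives of $\omega$ vanish on the boundary, which is not assumed. A reflection-only argument therefore works directly only for $k=0$ (and for the tangential parts). For $k\ge1$ I would instead argue by induction with tangential derivatives and the equation.

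\emph{Step 3: induction.} For $k=0$, reflection gives $\|\nabla u\|_{L^2}\le C\|\omega\|_{L^2}$; equivalently, integrating by parts gives $\|\nabla u\|^2=\|\omega\|^2+\|\nabla\cdot u\|^2$, since the boundary term $\int u_2\partial_xu_1-u_1\partial_xu_2\,dx$ reduces to zero using $u_2=0$. For the inductive step, note that $\partial_x^k u$ satisfies the same hypotheses (divergence free, $\partial_x^ku_2=0$ on $y=0$), so the $k=0$ case gives control of $\nabla\partial_x^ku$ by $\partial_x^k\omega$. Normal derivatives are then recovered algebraically: $\partial_yu_2=-\partial_xu_1$ and $\partial_yu_1=\partial_xu_2-\omega$. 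Consequently any mixed derivative $\partial_x^a\partial_y^{b}u$ with $a+b=k+1$ and $b\ge1$ can be rewritten as a mixed derivative of $u$ with one fewer $\partial_y$ and one more $\partial_x$, plus $\partial_x^{a}\partial_y^{b-1}\omega$. Iterating, every $(k+1)$-st derivative of $u$ is expressed through $\nabla\partial_x^ku$ and $k$-th derivatives of $\omega$. This closes the estimate with $C=C(k)$, and the approach avoids boundary regularity issues entirely.
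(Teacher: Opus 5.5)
Your proof is correct. Step 3 by itself is a complete argument, so the stream-function and reflection detour of Steps 1--2 can be dropped. You rightly note that reflection cannot go past $k=0$, since the odd extension of $\omega$ jumps by $2\omega(x,0)$ across $y=0$. Step 3 is also not really an induction, because the case $k-1$ is never used.

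The paper gives no proof of this lemma; it is quoted from \cite{tan2023global}, so there is no in-paper argument to match. Your route is elementary. For $v=\partial_x^k u$ one has $\nabla\cdot v=0$ and $v_2|_{y=0}=0$. Moreover $|\nabla v|^2=(\partial_xv_2-\partial_yv_1)^2+(\nabla\cdot v)^2-2\det\nabla v$, and $\int_{\mathbb{R}^2_+}\det\nabla v=-\frac12\int_{y=0}(v_2\partial_xv_1-v_1\partial_xv_2)\,dx=0$. This gives the exact identity $\|\nabla\partial_x^ku\|_{L^2}=\|\partial_x^k\omega\|_{L^2}$. The relations $\partial_yu_2=-\partial_xu_1$ and $\partial_yu_1=\partial_xu_2-\omega$ then trade normal for tangential derivatives. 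The cost is at most $k+1$ terms $\partial_x^a\partial_y^{b}\omega$ with $a+b=k$, which gives $C=C(k)$.

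Keep in mind that this is an \emph{a priori} estimate. It presupposes $\nabla^{k+1}u\in L^2$ and enough decay to drop contributions at infinity, or else a density argument. That is exactly the setting in which the paper invokes it, for $u^{e,0}\in H^{m+1}$ in Proposition \ref{proposition_e0}.

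Structurally, your argument parallels the paper's own conormal analogue, Lemma \ref{lem:omega}. There $\nabla g_2$ comes from an energy estimate for $\Delta g_2=-\partial_x\omega^g$, and $\nabla g_1$ is recovered algebraically from $\nabla\cdot G=0$ and $\omega^g$. In the flat $L^2$ setting your identity is simpler: there are no $\psi$-commutators, and no boundary corrector $\overline{g_2}e^{-y}$ is needed because $u_2$ vanishes on $y=0$.
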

    
\begin{lemma}[\cite{KP1988,WW2014}]\label{lem:commutator}
    For $\alpha, \beta \in \mathbb{N}^3$, there hold
    \begin{align*}
        \left\| [D^\alpha, f] D^\beta g \right\|_{L^2} \leq C \left(
        \sum_{|\gamma| = |\alpha| + |\beta|} \| D^\gamma f \|_{L^2} \| g \|_{L^\infty}
        + \sum_{|\gamma| = |\alpha| + |\beta| - 1} \| \nabla f \|_{L^\infty} \| D^\gamma g \|_{L^2}
        \right),
    \end{align*}
    and
 \begin{align*}
        \left\| D^\alpha( f g) \right\|_{L^2} \leq C \left(
        \sum_{|\gamma| = |\alpha| } \| D^\gamma f \|_{L^2} \| g \|_{L^\infty}
        + \sum_{|\gamma| = |\alpha| } \| f \|_{L^\infty} \| D^\gamma g \|_{L^2}
        \right).
    \end{align*}
    
\end{lemma}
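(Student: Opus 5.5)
The statement to prove is Lemma~\ref{lem:commutator}, the Kato--Ponce-type commutator and product estimates on $\mathbb{R}^2_+$ with $D$ ranging over $\partial_t,\partial_x,\partial_y$.

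The plan is to reduce both inequalities to the classical Gagliardo--Nirenberg interpolation inequality combined with the Leibniz rule. Since $D^\alpha$ is a product of ordinary derivatives, no Fourier multipliers are needed.

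\textbf{Commutator estimate.} First expand with Leibniz:
\[
[D^\alpha,f]D^\beta g=\sum_{0<\gamma\le\alpha}\binom{\alpha}{\gamma}D^\gamma f\,D^{\alpha-\gamma+\beta}g .
\]
Every term has at least one derivative on $f$. Set $k=|\alpha|+|\beta|$ and $F=\nabla f$. A typical term then has the form $D^{j}F\cdot D^{k-1-j}g$ with $0\le j\le k-1$.

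Apply H\"older with exponents $p=\tfrac{2(k-1)}{j}$ and $q=\tfrac{2(k-1)}{k-1-j}$. Then use the Gagliardo--Nirenberg inequalities
\[
\|D^jF\|_{L^p}\lesssim\|F\|_{L^\infty}^{1-\theta}\|D^{k-1}F\|_{L^2}^{\theta},\qquad \|D^{k-1-j}g\|_{L^q}\lesssim\|g\|_{L^\infty}^{\theta}\|D^{k-1}g\|_{L^2}^{1-\theta},
\]
with $\theta=j/(k-1)$. Young's inequality then bounds the product by
\[
\|\nabla f\|_\infty\|D^{k-1}g\|_2+\|D^{k}f\|_2\|g\|_\infty ,
\]
which is the claimed right-hand side. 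The endpoint terms $j=0$ and $j=k-1$ follow directly from H\"older.

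\textbf{Product estimate.} The argument is identical. Write $D^\alpha(fg)=\sum_{\gamma\le\alpha}\binom{\alpha}{\gamma}D^\gamma f\,D^{\alpha-\gamma}g$ and interpolate each term between $(\|f\|_\infty,\|D^{|\alpha|}f\|_2)$ and $(\|g\|_\infty,\|D^{|\alpha|}g\|_2)$, again with H\"older exponents $2|\alpha|/|\gamma|$ and $2|\alpha|/(|\alpha|-|\gamma|)$.

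\textbf{Main obstacle.} The Gagliardo--Nirenberg inequalities must hold on the half-space (and including the $t$-direction as a variable when $\alpha_1>0$). I would handle this with a bounded extension operator, for instance higher-order reflection across $y=0$. Such an operator is bounded simultaneously on $L^\infty$ and on $\dot H^{k}$-type norms up to lower-order terms. One applies the whole-space inequalities and restricts back.

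Alternatively, for integer orders one can prove the interpolation inequality directly on $\mathbb{R}^2_+$ by integration by parts in each variable separately, as in Nirenberg's original argument, which is one-dimensional in nature. In $y$ one integrates on $(0,\infty)$ with boundary terms controlled by the $L^\infty$ norm.

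Once the half-space Gagliardo--Nirenberg inequality is available, the rest is bookkeeping of exponents and the constant depends only on $|\alpha|+|\beta|$. Since the lemma is cited from \cite{KP1988,WW2014}, I would present only this reduction and not the full details.
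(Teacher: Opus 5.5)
Your argument is correct, read with spatial derivatives, but it is not the argument the paper relies on. The paper gives no proof and only cites Kato--Ponce \cite{KP1988} and \cite{WW2014}. Kato--Ponce work on all of $\mathbb{R}^d$ with $J^s=(1-\Delta)^{s/2}$ and get the commutator bound from Coifman--Meyer estimates for pseudodifferential operators. That gives fractional orders and general $L^p$, but it does not by itself cover $\mathbb{R}^2_+$, which is where the paper applies the lemma (Step~1 of the proof of Proposition~\ref{proposition_e0}, with $D=\nabla^l$).

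Your route is the classical integer-order Moser/Klainerman--Majda argument: Leibniz expansion, H\"older with exponents $2(k-1)/j$ and $2(k-1)/(k-1-j)$, Gagliardo--Nirenberg between $L^\infty$ and $\dot H^{k-1}$, then Young. The exponent bookkeeping is right, including the endpoint terms and the case $k=1$, and the same scheme gives the product estimate. It is more elementary than the cited proof and adapts to the half-plane. It only yields integer orders, which is all the paper uses.

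Two points need tightening.

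First, the extension must be homogeneous. An extension bounded on $\dot H^k$ only \emph{up to lower-order terms} would give an inhomogeneous inequality, which is not the stated lemma. The flat boundary avoids this. Take $Eu(x,y)=\sum_{i=1}^{k+1}\lambda_i\,u(x,-\mu_i y)$ for $y<0$, with distinct $\mu_i>0$ and $\sum_i\lambda_i(-\mu_i)^j=1$ for $0\le j\le k$. Then $\partial^\gamma Eu(x,y)=\sum_i\lambda_i(-\mu_i)^{\gamma_y}(\partial^\gamma u)(x,-\mu_i y)$, where $\gamma_y$ is the number of $y$-derivatives in $\gamma$. Hence $\|\partial^\gamma Eu\|_{L^p(\mathbb{R}^2)}\le C\|\partial^\gamma u\|_{L^p(\mathbb{R}^2_+)}$ for all $|\gamma|\le k$ and all $p\in[1,\infty]$ at once, which is exactly what the interpolation step needs.

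Second, drop the time direction and read $D$ as spatial derivatives.
\begin{itemize}
\item If $\partial_t$ is allowed and the norms are spatial at fixed time, the lemma is false. For $f=\tfrac12 t^2\phi(x,y)$, $g=t\psi(x,y)$, $\alpha=(3,0,0)$, $\beta=0$, one has $[D^\alpha,f]g=3\phi\psi$ for all $t$. At $t=0$ the right-hand side vanishes, because $g$ and every first derivative of $f$ vanish there.
\item If $t$ is instead an integration variable on a bounded interval $(0,T)$, the homogeneous interpolation inequality in $t$ fails (take $u=t$). So your extension/interpolation step breaks down there.
\end{itemize}
Treating $t$ as a variable only works on an unbounded time interval, which is not the setting of the paper.
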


% We consider the local-in-time well-posedness of the following inviscid limit system in the half-plane $\mathbb{R}_+^2 = \{ (x,y) \in \mathbb{R}^2 \mid y > 0 \}$:
% \begin{equation}\label{eq:limit}
%     \left\{
%     \begin{aligned}
%         &\partial_t n^{e,0} + u^{e,0} \cdot \nabla n^{e,0} - \Delta n^{e,0} = -\nabla \cdot (n^{e,0} \nabla c^{e,0}), \\
%         &\partial_t c^{e,0} + u^{e,0} \cdot \nabla c^{e,0} = -c^{e,0}n^{e,0}, \\
%         &\partial_t u^{e,0} + u^{e,0} \cdot \nabla u^{e,0} + \nabla P^{e,0} = n^{e,0} e_2, \\
%         &\nabla \cdot u^{e,0} = 0, \\
%         &\partial_y n^{e,0} = n^{e,0} \partial_y c^{e,0},~u_2^0 = 0, \quad y=0,\\
%         &(u^{e,0},c^{e,0},n^{e,0})=(u_{in},c_{in},n_{in}), \quad t=0.
%     \end{aligned}
%     \right.
% \end{equation}

\begin{proof}[Proof of Proposition \ref{proposition_e0}]
    
To overcome the {loss of derivatives} caused by the chemotaxis term $\nabla \cdot (n^{e,0} \nabla c^{e,0})$ and the {boundary commutator issues} in the half-plane, we assume $m \geq 3$ (which ensures $H^m(\mathbb{R}_+^2) \hookrightarrow W^{1,\infty}(\mathbb{R}_+^2)$) and define the high-order total energy functional:
\begin{equation}
    \begin{aligned}
        \mathcal{E}_m(t) =& \sum_{2k\le m+1} \|\partial_t^k u^{e,0}\|_2^2 + \|\omega^{e,0}(t)\|_{H^{m}}^2 + \|c^{e,0}(t)\|_{H^{m+1}}^2 \\
        &+ |||\nabla n^{e,0}(t)|||_{m-1} + |||\partial_t n^{e,0}(t)|||_{m-1} + \| n^{e,0} \|_{2},
    \end{aligned}
\end{equation}
where $|||n^{e,0}|||_m$ is the anisotropic energy norm involving only time and tangential spatial derivatives:
\begin{equation}
    |||n^{e,0}(t)|||_{m}^2 = \sum_{2\alpha_1 + \alpha_2 \le m} \|\partial_t^{\alpha_1} \partial_x^{\alpha_2} n^{e,0}(t)\|_{L^2(\mathbb{R}_+^2)}^2.
\end{equation}
We will establish a closed \textit{a priori} estimate for $\mathcal{E}_m(t)$.

{\bf Step 1: Estimate for the velocity $u^{e,0}$.}
To avoid estimating the pressure $p^0$, we take the curl of the Euler equation. Let $\omega^{e,0} = \nabla^\perp \cdot u^{e,0} = \partial_x u_2^0 - \partial_y u_1^0$, which satisfies:
\begin{equation}
    \partial_t \omega^{e,0} + u^{e,0} \cdot \nabla \omega^{e,0} = \partial_x n^{e,0}.
\end{equation}
Applying $\nabla^l$ to the vorticity equation and taking the $L^2$ inner product with $\nabla^l \omega^{e,0}$ ($0\le l\leq m$), the boundary integral of the convective term vanishes since $u_2^0|_{y=0} = 0$,
where $\sigma > 0$ is a small constant to be determined later.
Using Lemma \ref{lem:commutator}, we obtain:
\begin{equation*}
    \begin{aligned}
        \frac{1}{2}\frac{d}{dt}\|\omega^{e,0}\|_{H^m}^2 &\le C\|\nabla u^{e,0}\|_{L^\infty} \|\omega^{e,0}\|_{H^m}^2 + C\| u^{e,0}\|_{H^m} \|\nabla \omega^{e,0}\|_{L^\infty}^2+ \| \partial_x n^{e,0}\|_{H^{m}} \| \omega^{e,0}\|_{H^m} \\
        &\le C\|u^{e,0}\|_{H^{m+1}} \|\omega^{e,0}\|_{H^m}^2 + \sigma \|n^{e,0}\|_{H^{m+1}}^2 + C_\sigma \|\omega^{e,0}\|_{H^m}^2.
    \end{aligned}
\end{equation*}
% By the standard elliptic Div-Curl estimate with the boundary condition $u_2^0|_{y=0}=0$, we have $\|u^{e,0}\|_{H^{m+1}} \le C(\|u^{e,0}\|_{L^2} + \|\omega^{e,0}\|_{H^m})$, thus 
By Lemma \ref{lem:korn} we have
\begin{equation}\label{est:omega}
    \begin{aligned}
        \frac{1}{2}\frac{d}{dt}\|\omega^{e,0}\|_{H^m}^2 & \le 2\sigma \|n^{e,0}\|_{H^{m+1}}^2 + C_\sigma P(\mathcal{E}_m(t)),
    \end{aligned}
\end{equation}
where $P(\cdot)$ denotes a generic polynomial and $\sigma > 0$ is a small constant to be determined later. 

For derivatives with respect to $t$ only, we apply $\partial_t^k$ to the velocity equation and taking the $L^2$ inner product with $\partial^k_t u^{e,0}$, we obtain
\begin{equation}\label{est:u}
\begin{aligned}
    &\frac{1}{2}\frac{d}{dt} \sum_{2k\le m+1}\|\partial^k_t u^{e,0}\|_2^2 \\
    &\le \sum_{2k\le m+1} |\langle \partial^k_t (u^{e,0}\cdot \nabla u^{e,0}), \partial_t^k u^{e,0} \rangle| + \sum_{2k\le m+1} |\langle \partial_t^k n, \partial^k_t u_2\rangle| \\
    &\le \sum_{2k\le m+1} \|\partial_t^k u^{e,0}\|_{H^{m+1-2k}}^2 \sum_{2k\le m+1} \|\partial^k_t u^{e,0}\|_2 
    + (|||\partial_t n^{e,0}|||_{m-1} + \|n^{e,0}\|_2)\sum_{2k\le m+1} \|\partial_t^k u^{e,0}_2\|_2 \\
    & \le P \left(\mathcal{E}_m(t), \sum_{2k\le m+1} \|\partial_t^k u^{e,0}\|_{H^{m+1-2k}}^2 \right).
\end{aligned}
\end{equation}

{\bf Step 2: $H^{m+1}$ estimates for the chemical concentration $c^{e,0}$.}
Applying $\nabla^{l}$ to the transport equation of $c^{e,0}$ and taking the $L^2$ inner product with $\nabla^{l}c^{e,0}(0\le l\le m+1)$, we get:
\begin{equation}
    \begin{aligned}
        \frac{1}{2}\frac{d}{dt}\|c^{e,0}\|_{H^{m+1}}^2 \le& C\|\nabla u^{e,0}\|_{L^\infty} \|c^{e,0}\|_{H^{m+1}}^2 + C\|\nabla c^{e,0}\|_{L^\infty} \|u^{e,0}\|_{H^{m+1}} \|c^{e,0}\|_{H^{m+1}} \\
        &+C \| c^{e,0}\|_{L^\infty} \|n^{e,0}\|_{H^{m}} + \| c^{e,0}\|_{H^{m+1}} \|n^{e,0}\|_{L^\infty},
    \end{aligned}
\end{equation}
which implies that
%Applying Young's inequality to extract the highest order term of $n^{e,0}$, we obtain:
\begin{equation}\label{est:c}
    \frac{d}{dt}\|c^{e,0}\|_{H^{m+1}}^2 \le \sigma \|n^{e,0}\|_{H^{m+1}}^2 + C_\sigma P(\mathcal{E}_m(t)).
\end{equation}

{\bf Step 3: Anisotropic Estimate for $n^{e,0}$ and Perfect Boundary Cancellation.}
In the half-plane, normal derivatives do not commute with the boundary trace. Therefore, we define the tangential-time derivative operator $\partial_\Gamma^\alpha = \partial_t^{\alpha_1} \partial_x^{\alpha_2}$.
% with $2\alpha_1 + \alpha_2 \le m$.
We rewrite the equation for $n^{e,0}$ in the divergence form of the total flux:
\begin{equation}
    \partial_t n^{e,0} + u^{e,0} \cdot \nabla n^{e,0} - \nabla \cdot (\nabla n^{e,0} - n^{e,0} \nabla c^{e,0}) = 0.
\end{equation}
Applying $\partial_t\partial_\Gamma^\alpha$ to the equation, multiplying by $\partial_t\partial_\Gamma^\alpha n^{e,0}$, integrating over $\mathbb{R}_+^2$, and summing by $2\alpha_1 + \alpha_2 \le m-1$, we integrate the total flux term by parts (noting the outward normal vector is $-\vec{e}_2$):
\begin{align}\label{eq:n0 estimate}
    &-\int_{\mathbb{R}_+^2} \nabla \cdot \partial_t \partial_\Gamma^\alpha (\nabla n^{e,0} - n^{e,0} \nabla c^{e,0}) \partial_t \partial_\Gamma^\alpha n^{e,0} \,dxdy \nonumber\\
    &= \int_{\mathbb{R}_+^2} \partial_t \partial_\Gamma^\alpha (\nabla n^{e,0} - n^{e,0} \nabla c^{e,0}) \cdot \nabla \partial_t\partial_\Gamma^\alpha n^{e,0} \,dxdy 
    + \int_{y=0} \partial_t\partial_\Gamma^\alpha \underbrace{\left( \partial_y n^{e,0} - n^{e,0} \partial_y c^{e,0} \right)}_{\text{Total normal flux}} \partial_t\partial_\Gamma^\alpha n^{e,0} \,dx.
\end{align}
Note that the physical boundary condition $\partial_y n^{e,0} = n^{e,0} \partial_y c^{e,0}$, which implies the boundary integral of \eqref{eq:n0 estimate} identically vanishes.
% \begin{equation}
%     \int_{y=0} \partial_\Gamma^\alpha \left( \partial_y n^{e,0} - n^{e,0} \partial_y c^{e,0} \right) \partial_\Gamma^\alpha n^{e,0} \,dx = \int_{y=0} \partial_\Gamma^\alpha (0) \cdot \partial_\Gamma^\alpha n^{e,0} \,dx \equiv 0.
% \end{equation}
% Thus, no trace theorems or rough bounds are needed. Bounding the remaining interior terms using Cauchy-Schwarz, Moser estimates, and 
    \begin{align*}
        &\frac{1}{2}\frac{d}{dt} |||\partial_t n^{e,0}|||_{m-1}^2 + |||\nabla \partial_t n^{e,0}|||_{m-1}^2  \\
        \le& \sum_{|\alpha| \le m-1} \left| \left\langle \partial^\alpha_\Gamma \left( \partial_t u^{e,0} \cdot \nabla n^{e,0} \right), \partial^\alpha_\Gamma \partial_t n^{e,0} \right\rangle \right|
        + \sum_{|\alpha| \le m-1} \left| \left\langle \partial^\alpha_\Gamma \left( u^{e,0} \cdot \nabla \partial_t n^{e,0} \right), \partial^\alpha_\Gamma \partial_t n^{e,0} \right\rangle \right| \\
        &  + \sum_{|\alpha| \le m-1} \left| \left\langle \partial^\alpha_\Gamma \left( \partial_t n^{e,0} \nabla c^{e,0} \right), \partial^\alpha_\Gamma \partial_t \nabla n^{e,0} \right\rangle \right| \\
        & + \sum_{|\alpha| \le m-1} \left| \left\langle \partial^\alpha_\Gamma \left[ n^{e,0} \left( \nabla u^{e,0} \cdot \nabla c^{e,0} + (u^{e,0} \cdot \nabla )\nabla c^{e,0} + \nabla c^{e,0} n^{e,0} + c^{e,0} \nabla n^{e,0} \right) \right], \partial^\alpha_\Gamma \partial_t \nabla n^{e,0} \right\rangle \right|. 
    \end{align*}
By the divergence-free condition, we obtain 
    \begin{align*}\label{neq:nt}
        &\frac{1}{2}\frac{d}{dt} |||\partial_t n^{e,0}|||_{m-1}^2 + |||\nabla \partial_t n^{e,0}|||_{m-1}^2\nonumber \\
        \le & C\sum_{2k \le m+1} \|\partial_t^k u^{e,0}\|_{H^{m+1-2k}}
        \left( \sum_{l\le m} |||\partial_y^l n^{e,0}|||_{m-l} + |||\partial_t n^{e,0}|||_{m-1}\right)
        ||| \partial_t n^{e,0} |||_{m-1} \nonumber
        \\
        & + C\left( ||| \partial_t n^{e,0} |||_{m-1} + \sum_{l\le m} ||| \partial_y^l n^{e,0} |||_{m-l} \right)
        \sum_{2k \le m} \left\| \partial^k_t c^{e,0} \right\|_{H^{m-2k}}
        \sum_{|\alpha| \le m-1} \left\| \partial^\alpha_\Gamma \partial_t \nabla n^{e,0} \right\|_2 \nonumber\\
        & + C\sum_{l\le m-1} |||\partial_y^l n^{e,0}|||_{m-1-l}
        \sum_{2k\le m} \|\partial_t^k u^{e,0}\|_{H^{m-2k}}
        \sum_{2k \le m+1} \|\partial_t^k c^{e,0}\|_{H^{m+1-2k}}
        \sum_{|\alpha| \le m-1} \left\| \partial^\alpha_\Gamma \partial_t \nabla n^{e,0} \right\|_2\nonumber\\
        & + C\sum_{l\le m} |||\partial_y^l n^{e,0}|||_{m-l}^2
        \sum_{2k \le m+1} \|\partial_t^k c^{e,0}\|_{H^{m+1-2k}}
        \sum_{|\alpha| \le m-1} \left\| \partial^\alpha_\Gamma \partial_t \nabla n^{e,0} \right\|_2\nonumber\\
%%%%
        % \le& \sigma |||\partial_t \nabla n^{e,0}|||_{m-1}^2 + C_\sigma P\big( \mathcal{E}_m(t), \sum_{l \le m} ||| \partial_y^l n^{e,0} |||_{m-l}, \sum_{2k \le m+1}\|\partial_t^k c^{e,0}\|_{H^{m+1-2k}}, \sum_{2k \le m+1}\|\partial_t^k u^{e,0}\|_{H^{m+1-2k}}\big).\nonumber
    \end{align*}
    which implies 
    \ben\label{neq:nt}
    &&\frac{d}{dt} |||\partial_t n^{e,0}|||_{m-1}^2 + |||\nabla \partial_t n^{e,0}|||_{m-1}^2
   \nonumber \\&\leq & C P\big( \mathcal{E}_m(t), \sum_{l \le m} ||| \partial_y^l n^{e,0} |||_{m-l}, \sum_{2k \le m+1}\|\partial_t^k c^{e,0}\|_{H^{m+1-2k}}, \sum_{2k \le m+1}\|\partial_t^k u^{e,0}\|_{H^{m+1-2k}}\big)\een
In addition, applying $\partial_\Gamma^\alpha$ to the equation, multiplying by $\partial_t\partial_\Gamma^\alpha n^{e,0}$, then by a similar process, we can get:
\begin{equation}\label{neq:nabla n}
    \begin{aligned}
        &\frac{1}{2}\frac{d}{dt} |||\nabla n^{e,0}|||_{m-1}^2 + |||\partial_t n^{e,0}|||_{m-1}^2 \\
        \leq& \sum_{|\alpha|\leq m-1} \left| \left\langle \partial^\alpha_\Gamma (u^{e,0} \cdot \nabla n^{e,0}), \partial^\alpha_\Gamma \partial_t n^{e,0} \right\rangle \right| + \sum_{|\alpha|\leq m-1} \left| \left\langle \partial^\alpha_\Gamma (n^{e,0} \nabla c^{e,0}), \partial^\alpha_\Gamma \partial_t \nabla n^{e,0} \right\rangle \right|\\
        \le& \sum_{2k\le m-1} || \partial_t^k u^{e,0} ||_{H^{m-1-2k}} \sum_{l\le m}||| \partial^l_y n^{e,0} |||_{m-l}  ||| \partial_t n^{e,0} |||_{m-1}\\
       \qquad &+ \sum_{l\le m-1}||| \partial_y^l n^{e,0} |||_{{m-1-l}} \sum_{2k\le m}|| \partial_t^k c^{e,0} ||_{H^{m-2k}} \sum_{|\alpha|\leq m-1} ||| \partial^\alpha \partial_t \nabla n^{e,0} |||_{2}\\
        \le& \sigma |||\partial_t \nabla n^{e,0}|||_{m-1}^2
        +\frac12|||\partial_t n^{e,0}|||_{m-1}^2\\
        &\qquad+C_\sigma P\big( \sum_{l \le m}||| \partial_y^l n^{e,0} |||_{m-l}, \sum_{2k \le m}\|\partial_t^k c^{e,0}\|_{H^{m-2k}}, \sum_{2k \le m}\|\partial_t^k u^{e,0}\|_{H^{m-2k}}\big).
    \end{aligned}
\end{equation}
Finally, multiplying $n^{e,0}$ on the both sides of equation, we have:
\begin{equation}\label{neq:n}
    \begin{aligned}
        &\frac{1}{2}\frac{d}{dt} || n^{e,0}||_{2}^2 + \frac{1}{2} ||\nabla n^{e,0}||_{2}^2 
        \le C P\big( \mathcal{E}_m(t) \big).
    \end{aligned}
\end{equation}

Combining \eqref{neq:nt}, \eqref{neq:nabla n} and \eqref{neq:n}, we have
\begin{equation}\label{neq:n summing}
    \begin{aligned}
        &\frac{d}{dt}  \left( |||\partial_t n^{e,0}|||_{m-1}^2 + |||\nabla n^{e,0}|||_{m-1}^2 + || n^{e,0}||_{2}^2 \right) 
        + |||\nabla \partial_t n^{e,0}|||_{m-1}^2
         \\
        \le&  C P\left( \mathcal{E}_m(t), \sum_{l \le m} ||| \partial_y^l n^{e,0} |||_{m-l}, \sum_{2k \le m+1}\|\partial_t^k c^{e,0}\|_{H^{m+1-2k}}, \sum_{2k \le m+1}\|\partial_t^k u^{e,0}\|_{H^{m+1-2k}}\right).
    \end{aligned}
\end{equation}

{\bf Step 4: Elliptic lifting for normal derivatives.}
To recover the full isotropic norm $\|n^{e,0}\|_{H^{m+1}}$, we utilize the equation itself as an elliptic equation in the normal direction $y$:
\begin{equation}\label{eq:partial_y^2 n}
    \partial_y^2 n^{e,0} = \partial_t n^{e,0} + u^{e,0} \cdot \nabla n^{e,0} - \partial_x^2 n^{e,0} + \nabla \cdot (n^{e,0} \nabla c^{e,0}).
\end{equation}
To control $\partial_y^k n^{e,0}$, we only need to control terms on the right-hand side which involve at most $\partial_y^{k-1} n^{e,0}$, tangential derivatives $\partial_x$, and the {time derivative $\partial_t$}. The inclusion of time derivatives in our anisotropic norm $|||n^{e,0}|||_m$ ensures that terms like $\partial_t n^{e,0} \in H^{m-2}$ are well-controlled.

To be precise, 
let $\mathcal{M}(l)$ denote the claim:
\begin{align}\label{induction n}
    |||\partial_y^l n^{e,0}|||_{m-l+1}^2 
        \le& P \left(\mathcal{E}_m(t), \sum_{2k \le m}\|\partial_t^k c^{e,0}\|_{H^{m+1-2k}}, \sum_{2k \le m}\|\partial_t^k u^{e,0}\|_{H^{m+1-2k}} \right) + |||\nabla n^{e,0}|||_m^2,
\end{align}
which is related to $l$. Then by noting that
\ben\label{eq:n-norm}
    \|n^{e,0}\|_{H^{m+1}}^2 \le \sum_{l\le m+1} |||\partial_y^l n^{e,0}|||_{m+1-l},
\een
it suffices to show that $\mathcal{M}(l)$ holds for all $0\le l \le m+1$ for the estimate of $ \|n^{e,0}\|_{H^{m+1}}^2$ in \eqref{est:c}. 
We prove \eqref{induction n} by mathematical induction.

    (\textbf{I} Basis Step.) For $l=0$ and $l=1$, we have
    \begin{align*}
        |||n^{e,0}|||_{m+1} + |||\partial_y n^{e,0}|||_{m} 
        \le \|n^{e,0}\|_2 + |||\nabla n^{e,0}|||_{m}  + |||\partial_t n^{e,0} |||_{m-1}.
    \end{align*}
    % and 
    % \begin{align*}
    %     |||n^{e,0}|||_{m} + |||\partial_y n^{e,0}|||_{m-1} 
    %     \le& \|n^{e,0}\|_2 + |||\nabla n^{e,0}|||_{m-1}  + |||\partial_t n^{e,0} |||_{m-2}.
    % \end{align*}
    
    (\textbf{II} Inductive Step.) Assume $\mathcal{M}(l)$ holds for all $1\le l \le l_0 \le m$, then for $l=l_0+1$, by \eqref{eq:partial_y^2 n} we have
    \begin{align*}
        |||\partial_y^{l_0+1} n^{e,0}|||_{m-l_0} =& |||\partial_y^{l_0-1} \partial_y^2 n^{e,0}|||_{m-l_0} \\
        \le&  |||\partial_y^{l_0-1} \partial_t n^{e,0}|||_{m-l_0} 
        + |||\partial_y^{l_0-1} \partial_x^2 n^{e,0}|||_{m-l_0}\\
        &+ |||\partial_y^{l_0-1} (u^{e,0}\cdot \nabla n^{e,0})|||_{m-l_0}
        + |||\partial_y^{l_0-1} \nabla \cdot (n\nabla c)|||_{m-l_0} \\
        \le& |||\partial_y^{l_0-1} n^{e,0}|||_{m-l_0+2} 
        + \sum_{2k\le m+1} \| \partial_t^k u^{e,0}\|_{H^{m+1-2k}} \sum_{l\le l_0} |||\partial_y^l n^{e,0}|||_{m-l} \\
        &+ \sum_{2k\le m+1} \| \partial_t^k c^{e,0}\|_{H^{m+1-2k}} \sum_{l\le l_0} |||\partial_y^l n^{e,0}|||_{m-l},
    \end{align*}
    % as well as
    % \begin{align*}
    %     |||\partial_y^{l_0+1} n^{e,0}|||_{m-l_0-1} 
    %     \le& |||\partial_y^{l_0-1} n^{e,0}|||_{m-l_0+1} 
    %     + \sum_{2k\le m} \| \partial_t^k u^{e,0}\|_{H^{m}} \sum_{l\le l_0} |||\partial_y^l n^{e,0}|||_{m-l-1} \\
    %     &+ \sum_{2k\le m} \| \partial_t^k c^{e,0}\|_{H^{m}} \sum_{l\le l_0} |||\partial_y^l n^{e,0}|||_{m-l-1}.
    % \end{align*}
    Thus, by the inductive hypothesis, the inductive step is proved.
%\end{proof}

Moreover, for $c$ and $u$, we denote the claim $\mathcal{N}(k)$ by 
    \begin{align}\label{induction cu}
        &\|\partial_t^k c^{e,0}\|_{H^{m+1-2k}}
        + \|\partial_t^k u^{e,0}\|_{H^{m+1-2k}} 
        \le P(\mathcal{E}_m(t)),
    \end{align}
    which is related to $k$. It is suffices to show that $\mathcal{N}(k)$ holds for all $0\le 2k \le m+1$, and we use mathematical induction again.

    (\textbf{I} Basis Step.) For $k=0$, by Lemma \ref{lem:korn} we have
    \begin{align*}
        \|c^{e,0}\|_{H^{m+1}} + \|u^{e,0}\|_{H^{m+1}} \le \mathcal{E}_m(t).
    \end{align*}

    (\textbf{II} Inductive Step.) Assume $\mathcal{N}(K)$ holds for all $k \le k_0 \le \lfloor \frac{m+1}2 \rfloor -1$, then for $k=k_0+1$, by equations \eqref{eq:e0}$_2$ and \eqref{eq:e0}$_3$ we have
    \begin{align*}
        \|\partial_t^{k_0+1} c^{e,0}\|_{H^{m-2(k_0+1)}} 
        =& \|\partial_t^{k_0} \partial_t c^{e,0}\|_{H^{m-2(k_0+1)}} \\
        \le& \|\partial_t^{k_0} (u^{e,0}\cdot \nabla c^{e,0})\|_{H^{m-2(k_0+1)}}
        + \|\partial_t^{k_0} (c^{e,0}n^{e,0}) \|_{H^{m-2(k_0+1)}} \\
        \le& \sum_{k\le k_0} \|\partial_t^{k} u^{e,0}\|_{H^{m-2k}} \sum_{k\le k_0} \|\partial_t^{k} c^{e,0}\|_{H^{m-2k-1}} \\
        &+ \sum_{k \le k_0}\|\partial_t^{k_0} c^{e,0}\|_{H^{m-2k-1}} \sum_{l\le m-1}|||\partial_y^{l} n^{e,0}|||_{H^{m-l-1}}, \\
        \|\partial_t^{k_0+1} u^{e,0}\|_{H^{m-2(k_0+1)}} 
        \le& \|\partial_t^{k_0+1} u^{e,0}\|_2
        +\|\partial_t^{k_0+1} \nabla u^{e,0}\|_{H^{m-2k_0-3}} \\
        \le& \|\partial_t^{k_0+1} u^{e,0}\|_2
        +\|\partial_t^{k_0} \partial_t \omega^{e,0}\|_{H^{m-2k_0-3}} \\
        \le& \|\partial_t^{k_0+1} u^{e,0}\|_2
        +\|\partial_t^{k_0} (u^{e,0}\cdot \nabla \omega^{e,0})\|_{H^{m-2k_0-3}}
        +\|\partial_t^{k_0} \partial_x n^{e,0}\|_{H^{m-2k_0-3}} \\
        \le&\|\partial_t^{k_0+1} u^{e,0}\|_2
        + \sum_{k\le k_0}  \|\partial_t^{k} u^{e,0}\|_{H^{m-2k-1}} \sum_{k \le k_0} \|\partial_t^{k} \omega^{e,0} \|_{H^{m-2k-2}}\\
        &+\sum_{l\le m-2} |||\partial_y^{l} n^{e,0} |||_{H^{m-l-2}} .
    \end{align*}  
    Thus, by the inductive hypothesis and \eqref{induction n}, the inductive step is proved.

Combining \eqref{neq:nt} to \eqref{neq:n}, \eqref{induction n} and \eqref{induction cu}, there exists a dissipation 
\begin{equation}\label{est:n}
    \begin{aligned}
        \frac{d}{dt} \left( |||\nabla n^{e,0}|||_{m-1}^2 + |||\partial_t n^{e,0}|||_{m-1}^2 + \|n\|_2^2 \right) +  |||\partial_t \nabla n^{e,0}|||_{m-1}^2 
        \le P(\mathcal{E}_m(t)).
    \end{aligned}
\end{equation}

{\bf Step 5: Energy closure and conclusion.}
Summing the estimates \eqref{est:omega}, \eqref{est:u}, \eqref{est:c}, and \eqref{est:n}, we have:
\begin{equation}
    \frac{d}{dt} \mathcal{E}_m(t)+C(1-\sigma)\|n^{e,0}\|_{H^{m+1}}^2 \le C P(\mathcal{E}_m(t)).
\end{equation}
Since the initial data $(u_{in}, c_{in}, n_{in}) \in H^{m+1} \times H^{m+1} \times H^m$, we have $\mathcal{E}_m(0) < \infty$. By the standard ODE comparison principle, there exists a local time $T > 0$ such that $\sup_{t \in [0,T]} \mathcal{E}_m(t) \le 2\mathcal{E}_m(0) < \infty$. 
The existence of the solution follows from constructing a successive approximation scheme (e.g., Friedrichs mollifiers) and passing to the limit using the uniform bounds derived above and the Aubin-Lions compactness lemma. The uniqueness follows from standard $L^2$ stability estimates for the difference of two solutions $(\delta u, \delta c, \delta n)$, utilizing the Lipschitz continuity of the flow (guaranteed by $m\geq 3$). 

Thus, the system is locally well-posed in the specified Sobolev spaces. 
\end{proof}

\subsection{Well-posedness of boundary layers.}
The purpose of this section is to obtain some necessary a priori estimates of $n^{b,1}, c^{b,1}, (u_1^{b,1})_p$ and $n^{b,2}, c^{b,2}, u_1^{b,2}$. The main results are as follows:
% Though Lemma \ref{lem:tool-1} to Lemma \ref{lem:tool-3}, we have the following conclusions.
\begin{proposition}\label{proposition_b,1}
    Let $m \ge 2, s \ge 2 \in \mathbb{N}$. Assume that $(n^{e,0}, c^{e,0}, u^{e,0}) \in \overline{H}^{3,m+3}$ defined in \eqref{def:sobolev} are the same as in Proposition~\ref{proposition_e0}, then there exists $T^{b,1} \geq 0$, such that for all $t \in [0,T^{b,1}]$ and $\delta$ small enough, the systems \eqref{eq:u_1^b1} and \eqref{eq:c^b1,n^b1} admit a unique solution $(n^{b,1},c^{b,1},(u^{b,1})_p)$, which satisfy
    \begin{equation}\label{neq:cu^b1}
        \begin{aligned}
        % \|  n^{b,1} \|_{L_t^\infty Z^{2,{m}}_s}^2&+ 
        &\|  (c^{b,1},u_1^{b,1},\partial_z c^{b,1},\partial_z u_1^{b,1}) \|_{L_t^\infty Z^{2,{m}}_s}^2
      %  + \|  u_1^{b,1} \|_{L_t^\infty Z^{2,{m}}_{s}}^2
    %     &+  \|  \partial_z c^{b,1} \|_{L_t^2 Z^{2,{m}}_s}^2
    %     + \| \partial_z u_1^{b,1} \|_{L_t^2 Z^{2,{m}}_{s}}^2+\|  \partial_z n^{b,1} \|_{L_t^2 Z^{2,{m}}_s}^2  \leq C,
    % \end{align*}
    % and 
    % \begin{align*}
        % \| \partial_z n^{b,1} \|_{L_t^\infty Z^{2,{m}}_s}^2&
       % + \| \partial_z c^{b,1} \|_{L_t^\infty Z^{2,{m}}_s}^2
        %+ \| \partial_z u_1^{b,1} \|_{L_t^\infty Z^{2,{m}}_s}^2 \\
        %&+\| \partial_z^2 c^{b,1} \|_{L_t^2 Z^{2,{m}}_s}^2
        %+ \| \partial_z^2 u_1^{b,1} \|_{L_t^2 Z^{2,{m}}_s}^2
        % +\| \partial_z^2 n^{b,1} \|_{L_t^2 Z^{2,{m}}_s}^2 
        +\| (\partial_z^2 c^{b,1},\partial_z^2 u_1^{b,1}) \|_{L_t^2 Z^{2,{m}}_s}^2\leq C,
    \end{aligned}
    \end{equation}
and
\begin{equation}\label{neq:n^b1,u_2^b2}
    \begin{aligned}
        &\| (n^{b,1},\partial_z n^{b,1}) \|_{L_t^\infty Z_s^{2,m}}^2 +\| \partial_z^2 n^{b,1} \|_{L_t^2 Z_s^{2,m}}^2
        %+ \| \partial_z n^{b,1} \|_{L_t^\infty Z_s^{2,m}}^2 +\| \partial_z^2 n^{b,1} \|_{L_t^2 Z_s^{2,m}}^2  \\
        % \le C \| n^{e,0} \|_{\overline{H}^{2,m+2}} \| c^{b,1} \|_{Z_s^{2,m}}, \\
        +\| (u_2^{b,2},\partial_z u_2^{b,2},\partial_z^2 u_2^{b,2} ) \|_{L_t^\infty Z_{s-2}^{2,m-1}}^2 
        %+ \| \partial_z u_2^{b,2} \|_{L_t^\infty Z_{s-2}^{2,m-1}}^2 + \| \partial_z^2 u_2^{b,2} \|_{L_t^\infty Z_{s-2}^{2,m-1}}^2 
        \le C,~s\geq2,
        % = \left\| \int_z^\infty \partial_x u_1^{b,1}d\zeta \right\|_{Z_{s-2}^{2,m}} \le C \| u_1^{b,1} \|_{Z_{s}^{2,m}}~(s\geq2).
    \end{aligned}
\end{equation}
where $Z_s^{2,m}$ is defined in \eqref{def:Zlms}.
\end{proposition}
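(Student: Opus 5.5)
We treat the systems \eqref{eq:u_1^b1} and \eqref{eq:c^b1,n^b1} as a triangular cascade. The first step is to solve for $u_1^{b,1}$; then $c^{b,1}$ with $u_1^{b,1}$ as a known coefficient; and finally we read off $n^{b,1}$ and $u_2^{b,2}$ algebraically and by integration.

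\textbf{Step 1: $u_1^{b,1}$.} Equation \eqref{eq:u_1^b1}$_1$ is a linear parabolic equation in $(x,z)$. Its transport field $(u^{E,0})_p=(u_1^{E,0},\,z\overline{\partial_y u_2^{e,0}})$ is built from traces of the outer solution, so its regularity is controlled by $\|u^{e,0}\|_{\overline{H}^{3,m+3}}$ through the trace theorem. The inhomogeneity sits in the Neumann datum $\partial_z u_1^{b,1}|_{z=0}=-\overline{\partial_y u_1^{e,0}}$. We remove it by setting $\tilde u=u_1^{b,1}+\overline{\partial_y u_1^{e,0}}\,\chi(z)$, where $\chi$ is a smooth, rapidly decaying function with $\chi'(0)=1$. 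This gives a homogeneous Neumann problem with a forcing term in every $Z^{2,m}_s$.

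We then apply $\widehat\partial^\alpha$ and multiply by $(1+z)^{2s}\widehat\partial^\alpha\tilde u$. Several facts make this work:
\begin{itemize}
\item The commutator \eqref{inner exchange operator} produces only $\delta$-small terms, which are absorbed as in \eqref{ineq:Delta n}.
\item The weight generates $s(1+z)^{2s-1}$ terms, which are handled by Young's inequality.
\item The boundary term vanishes, because $\widehat\partial^\alpha$ with $\alpha_3\ge1$ vanishes at $z=0$, and for $\alpha_3=0$ the Neumann condition applies.
\item The term $z\overline{\partial_y u_2^{e,0}}\partial_z$ is handled by integration by parts, using $\partial_z(z(1+z)^{2s})\lesssim(1+z)^{2s}$.
\end{itemize}
Gronwall then yields the $L^\infty_tZ^{2,m}_s$ and $L^2_t$ dissipation bounds. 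Repeating the argument on the equation for $\partial_z u_1^{b,1}$ gives the $L^\infty_t$ bound on $\partial_z u_1^{b,1}$ and the $L^2_t$ bound on $\partial_z^2u_1^{b,1}$. For this we differentiate in $z$; the boundary value of $\partial_z^2u_1^{b,1}$ is expressed through the equation at $z=0$.

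\textbf{Step 2: $c^{b,1}$ and $n^{b,1}$.} We substitute $n^{b,1}=n^{E,0}c^{b,1}=\overline{n^{e,0}}\,c^{b,1}$ into \eqref{eq:c^b1,n^b1}$_1$. This leaves a linear parabolic equation for $c^{b,1}$ with zeroth-order coefficient $\overline{n^{e,0}}(1+\overline{c^{e,0}})\ge0$, forcing $-u_1^{b,1}\partial_x\overline{c^{e,0}}$ controlled by Step 1, and Neumann datum $-\overline{\partial_y c^{e,0}}$. The same lifting and weighted conormal energy argument gives \eqref{neq:cu^b1} for $c^{b,1}$, and existence and uniqueness follow by linearity. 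The bounds on $n^{b,1}$ and $\partial_z^k n^{b,1}$ in \eqref{neq:n^b1,u_2^b2} are then immediate from $n^{b,1}=\overline{n^{e,0}}c^{b,1}$. Since $\overline{n^{e,0}}$ is independent of $z$, we only need its $x,t$ derivatives up to order $m$ (two in $t$) in $L^\infty_x$. These come from Proposition~\ref{proposition_e0} via traces and Sobolev embedding in $x$; this is why $\overline H^{3,m+3}$ is assumed.

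\textbf{Step 3: $u_2^{b,2}$.} From \eqref{eq:u_1^b1}$_2$ and the decay at infinity, $u_2^{b,2}=\int_z^\infty\partial_xu_1^{b,1}\,d\zeta$. By weighted Hardy inequalities,
\[
\|(1+z)^{s-2}\textstyle\int_z^\infty g\|_{L^2_z}\lesssim\|(1+z)^{s}g\|_{L^2_z}\quad(s\ge2).
\]
The conormal derivatives commute with the integral up to lower-order $z\partial_z$ terms, which are handled by integrating by parts in $\zeta$. This gives the $Z^{2,m-1}_{s-2}$ bound: one $x$-derivative is spent, hence $m-1$. The bounds on $\partial_zu_2^{b,2}=-\partial_xu_1^{b,1}$ and $\partial_z^2u_2^{b,2}$ follow directly.

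\textbf{Main obstacle.} The delicate point is the highest time derivative: $\alpha_1=2$ requires $\partial_t^2$ of the boundary traces, which we must check is available. A second concern is closing the $\partial_z$-level estimates with weights while keeping the boundary terms from the Neumann lifting under control. We expect that only this regularity bookkeeping, not any nonlinearity, limits $T^{b,1}$, which we may take equal to the existence time of the outer solution.
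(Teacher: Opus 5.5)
Your proposal is correct and follows essentially the paper's route. The paper also runs weighted conormal energy estimates in $Z^{2,m}_s$ for $u_1^{b,1}$, $c^{b,1}$ and their $z$-derivatives, handling the commutator, weight and transport terms as in Lemmas~\ref{lem:tool-1}--\ref{lem:tool-3}, and then reads off $n^{b,1}=\overline{n^{e,0}}\,c^{b,1}$ and $u_2^{b,2}=\int_z^\infty\partial_x u_1^{b,1}\,d\zeta$.

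The only real difference is where the Neumann data are lifted. The paper estimates $c^{b,1}$ and $u_1^{b,1}$ directly, then works with $\partial_z c^{b,1}+e^{-z}\overline{\partial_y c^{e,0}}$ and $\partial_z u_1^{b,1}+e^{-z}\overline{\partial_y u_1^{e,0}}$. These are exactly $\partial_z$ of your lift with $\chi=-e^{-z}$ and they vanish at $z=0$, so at the $\partial_z$ level you never need the boundary value of $\partial_z^2u_1^{b,1}$. Note also that the $\partial_t$ of the lift at $\alpha_1=2$ costs three time derivatives of the traces, not two, and $\overline{H}^{3,m+3}$ is what supplies them.
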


\begin{proposition}\label{proposition_b,2}
    Let $m,s \in \mathbb{N}$. Assume that $(n^{e,0}, c^{e,0}, u^{e,0}) \in \overline{H}^{3,m+4}$ are the same as in Proposition~\ref{proposition_e0} and $n^{b,1}, c^{b,1}, u_1^{b,1} \in Z^{2,m+1}_{s+3}$ are the same as in Propositon~\ref{proposition_b,1}, then there exists $T^{b,2} \geq 0$, such that for all $t \in [0,T^{b,2}]$ and $\delta$ small enough, the systems \eqref{eq:u_1^b2}
    and \eqref{eq:c^b2,n^b2} admit a unique solution $(n^{b,2},c^{b,2},u^{b,2}_1)$, which satisfy
    \begin{align*}
        % \|  n^{b,2} \|_{L_t^\infty Z^{2,{m}}_s}^2&+ 
        &\|  (c^{b,2},u_1^{b,2},\partial_z c^{b,2},\partial_z u_1^{b,2}) \|_{L_t^\infty Z^{2,{m}}_s}^2
        %+ \|  u_1^{b,2} \|_{L_t^\infty Z^{2,{m}}_{s}}^2 
        % +  \|  \partial_z c^{b,2} \|_{L_t^2 Z^{2,{m}}_s}^2
        % + \| \partial_z u_1^{b,2} \|_{L_t^2 Z^{2,{m}}_{s}}^2 \\
        % +\|  \partial_z n^{b,2} \|_{L_t^2 Z^{2,{m}}_s}^2  \leq C,
    % \end{align*}
    % and 
    % \begin{align*}
        % \| \partial_z n^{b,2} \|_{L_t^\infty Z^{2,{m}}_s}^2&+ 
        %+\| \partial_z c^{b,2} \|_{L_t^\infty Z^{2,{m}}_s}^2
        %+ \| \partial_z u_1^{b,2} \|_{L_t^\infty Z^{2,{m}}_s}^2 \\
        +\| (\partial_z^2 c^{b,2},\partial_z^2 u_1^{b,2}) \|_{L_t^2 Z^{2,{m}}_s}^2
        %+ \| \partial_z^2 u_1^{b,2} \|_{L_t^2 Z^{2,{m}}_s}^2 
        \leq C,
    \end{align*}
    and 
    \begin{align*}
        \| n^{b,2} \|_{L_t^\infty Z^{2,{m}}_s}^2 
        + \| \partial_z n^{b,2} \|_{L_t^\infty Z^{2,{m}}_s}^2 
        +\| \partial_z^2 n^{b,2} \|_{L_t^2 Z^{2,{m}}_s}^2 \le C.
    \end{align*}
    \end{proposition}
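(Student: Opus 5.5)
The plan is to treat the second-order layer system as a \emph{linear} problem, because every coefficient and forcing term in it is already known. In \eqref{eq:u_1^b2} the unknown $u_1^{b,2}$ enters only through the transport term $(u^{E,0})_p\cdot\widehat{\nabla}u_1^{b,2}$, the zeroth-order term $u_1^{b,2}\partial_x u_1^{E,0}$ and the diffusion $-\partial_z^2 u_1^{b,2}$, with homogeneous Neumann data at $z=0$. The remaining terms are forcings built from $u^{E,1}$, $u^{b,1}$ and $p^{b,2}=-\int_z^\infty n^{b,1}$. By \eqref{ncu^e1=0} we have $u^{E,1}=z\overline{\partial_y u^{e,0}}$, so these forcings involve only $(n^{e,0},c^{e,0},u^{e,0})$ and the first-layer profiles.

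Similarly, \eqref{eq:c^b2,n^b2}$_2$ can be integrated from $z$ to $\infty$, which gives $n^{b,2}$ as an explicit functional of $c^{b,2}$ and known first-order quantities:
\[
n^{b,2}=n^{E,0}c^{b,2}-\int_z^\infty\bigl(n^{E,1}\partial_z c^{b,1}+n^{b,1}\partial_z c^{E,1}+n^{b,1}\partial_z c^{b,1}-\partial_z n^{E,0}\,c^{b,2}\bigr)d\zeta .
\]
Here $\partial_z n^{E,0}=0$, since $n^{E,0}=\overline{n^{e,0}}$. Substituting this into \eqref{eq:c^b2,n^b2}$_1$ produces a linear parabolic equation for $c^{b,2}$ with Neumann data, zero initial data, and a zeroth-order term $(n^{E,0}+c^{E,0}n^{E,0})c^{b,2}$.

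\textbf{Step 1: estimates for $u_1^{b,2}$ and $c^{b,2}$.} Existence follows by Galerkin or linear parabolic theory. For the uniform bounds I apply $\widehat{\partial}^\alpha$ with $\alpha_1\le2$, $|\alpha|\le m$, multiply by $(1+z)^{2s}\widehat{\partial}^\alpha u_1^{b,2}$, and integrate. The diffusion term yields $\|(1+z)^s\widehat{\partial}^\alpha\partial_z u_1^{b,2}\|^2$, up to commutators. These commutators are $O(\delta)$ by \eqref{inner exchange operator}, and there are also lower-order weight-derivative terms $s(1+z)^{2s-1}$ that are absorbed by Young's inequality.

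The transport term needs care. Its normal part is $(u^{E,0})_2=z\overline{\partial_y u_2^{e,0}}$, since $u_2^{E,0}=0$. This is bounded by $z$ times a bounded function, so after integration by parts it contributes $\|\overline{\partial_x u_1^{e,0}}\|_\infty\|u_1^{b,2}\|_{Z_s}^2$. The forcings are bounded using the Appendix norms of the first-order profiles from Proposition~\ref{proposition_b,1}, together with the fact that $z^k$ factors from $u^{E,1}$, $c^{E,1}$, $n^{E,1}$ are absorbed by the extra weight $s+3$ assumed on $(n^{b,1},c^{b,1},u_1^{b,1})$.

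The same procedure is applied to $\partial_z u_1^{b,2}$, which satisfies homogeneous Dirichlet data after differentiating, and to $c^{b,2}$ and $\partial_z c^{b,2}$. The $L^2_tZ$ bound on $\partial_z^2$ then comes either from the equation or from the dissipation of the $\partial_z$-level estimate. Gronwall's inequality on $[0,T^{b,2}]$ closes all of these.

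\textbf{Step 2: bounds for $n^{b,2}$.} These are read off from the integrated formula above. The weighted tail integral is handled by a Hardy-type inequality
\[
\Bigl\|(1+z)^s\int_z^\infty g\Bigr\|_2\le C\|(1+z)^{s+1}g\|_2 .
\]
This loses one power of weight, which is why weight $s+3$ is required on the first-order layers. $\partial_z n^{b,2}$ is given directly by \eqref{eq:c^b2,n^b2}$_2$, and $\partial_z^2 n^{b,2}$ follows by differentiating it and using the $L^2_t$ bound on $\partial_z^2 c^{b,2}$.

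\textbf{Main obstacle.} I expect the hardest part to be the time-derivative count $\alpha_1\le2$ in the forcing. Terms such as $\partial_t^2$ applied to $\partial_x p^{b,2}$, to $u^{b,1}\cdot\widehat{\nabla}u_1^{b,1}$ and to $z\overline{\partial_y c^{e,0}}$ require two time derivatives of the outer traces. These are provided by the hypothesis $(n^{e,0},c^{e,0},u^{e,0})\in\overline{H}^{3,m+4}$, via the trace theorem in $y$, and by the $Z^{2,m+1}$ bounds on the first layer, because the commutator with $\partial_x$ loses one tangential derivative. I would first check that every forcing term uses at most $m+1$ conormal derivatives of the $b,1$ profiles and $m+4$ derivatives of the outer solution, and only then run the Gronwall argument.
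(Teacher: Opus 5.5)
Your proposal is correct and follows essentially the paper's route: weighted conormal energy estimates (Lemmas~\ref{lem:tool-1}--\ref{lem:tool-3}) for $(c^{b,2},u_1^{b,2})$ and then for $(\partial_z c^{b,2},\partial_z u_1^{b,2})$, closed by Gronwall, with $n^{b,2},\partial_z n^{b,2},\partial_z^2 n^{b,2}$ read off from \eqref{eq:c^b2,n^b2}$_2$ at the cost of extra $z$-weights on the first-order profiles. As you note, the $\partial_z$-level estimate needs no boundary correction here, unlike Proposition~\ref{proposition_b,1}, because the Neumann data are homogeneous.

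One small correction: the forcing involves $(u^{E,1})_p=(u_1^{E,1},u_2^{E,2})$ rather than $u^{E,1}$. Here $u_2^{E,2}=\overline{u_2^{e,2}}+\tfrac{z^2}{2}\overline{\partial_y^2u_2^{e,0}}$, and its trace is $\overline{u_2^{e,2}}=-f$ by \eqref{bound}; it is not zero by \eqref{ncu^e1=0}. This is still a known first-layer quantity, and the $z^2$ factor is absorbed by the weight $s+3$, so the argument is unaffected.
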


Before proving Proposition \ref{proposition_b,1} and Proposition \ref{proposition_b,2}, we give the following lemmas, which are frequently used in
the proof.

\begin{lemma}\label{lem:tool-1}
    Let
    $$A=\sum_{\alpha_1 \leq l, |\alpha| \leq m} 
    \Bigl| \Bigl\langle \widehat{\partial}^\alpha \left( (u^{E,0})_p \cdot \widehat{\nabla} G \right), 
    (1+z)^{2s} \widehat{\partial}^\alpha G \Bigr\rangle \Bigr|.
    $$
Then there holds
% \begin{equation}
% \begin{aligned}
% A
% &\leq C_\delta \Biggl( \sum_{\substack{\alpha_1 \leq 1 , \alpha_3 = 0 \\|\alpha| \leq m}} 
% \bigl\| \widehat{\partial}^\alpha \overline{u_1^{e,0}} \bigr\|_{L_x^\infty}^2+ \sum_{\substack{\alpha_1 \leq 1 , \alpha_3 = 0 \\ |\alpha| \leq m}} 
% \bigl\| \widehat{\partial}^\alpha \overline{\partial_y u_2^{e,0}}   \bigr\|_{L_x^\infty}^2 \Biggr)^{\!\frac 1 2}\times \\
% &\qquad\sum_{\alpha_1 \leq 1, |\alpha| \leq m} 
% \bigl\| (1+z)^s \widehat{\partial}^\alpha G \bigr\|_2^2.
% \end{aligned}
% \end{equation}
\begin{equation}\label{eq:uE0}
\begin{aligned}
A
&\leq C_\delta \Biggl( 
\left\| u_1^{e,0} \right\|_{\overline{H}^{l,m+2}}+ 
\left\|\partial_y u_2^{e,0} \right\|_{\overline{H}^{l,m+2}} \Biggr) \|  G \bigr\|_{Z^{l,m}_s}^2.
\end{aligned}
\end{equation}
\end{lemma}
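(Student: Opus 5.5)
Recall that $(u^{E,0})_p = (\overline{u_1^{e,0}},\, z\,\overline{\partial_y u_2^{e,0}})$, since $u_2^{E,0}=\overline{u_2^{e,0}}=0$ by \eqref{bound:u_2^e0}. Hence
$$(u^{E,0})_p\cdot\widehat\nabla G=\overline{u_1^{e,0}}\,\partial_x G+\overline{\partial_y u_2^{e,0}}\;z\partial_z G.$$
Both coefficients depend only on $(t,x)$. The plan is to distribute $\widehat\partial^\alpha$ by Leibniz, treat the top-order terms by integration by parts, and bound the lower-order terms by Hölder. The coefficient norms are then controlled by the trace inequality $\|\overline{h}\|_{L^\infty_x}\le C\|h\|_{H^2(\mathbb R^2_+)}$. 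This inequality is the source of the loss of two derivatives in $\overline H^{l,m+2}$.

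\textbf{Commutator.} First we check that $\widehat\partial^\alpha$ essentially commutes with $z\partial_z$. Indeed $(\delta z)^{k}\partial_z^k$ is a polynomial in the Euler operator $z\partial_z$, so it commutes with $z\partial_z$ exactly. Therefore
$$\widehat\partial^\alpha\big(\overline{\partial_yu_2^{e,0}}\,z\partial_z G\big)=\sum_{\beta\le\alpha}\binom{\alpha}{\beta}\partial_t^{\beta_1}\partial_x^{\beta_2}\overline{\partial_yu_2^{e,0}}\;z\partial_z\,\widehat\partial^{\alpha-\beta}G.$$

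\textbf{Top-order terms.} For $\beta=0$ we integrate by parts.
\begin{itemize}
\item For the $x$-term, $\langle \overline{u_1^{e,0}}\partial_x\widehat\partial^\alpha G,(1+z)^{2s}\widehat\partial^\alpha G\rangle=-\tfrac12\langle\partial_x\overline{u_1^{e,0}},(1+z)^{2s}|\widehat\partial^\alpha G|^2\rangle$.
\item For the $z$-term, the boundary term vanishes because of the factor $z$ at $z=0$, giving $-\tfrac12\langle \overline{\partial_yu_2^{e,0}}\,\partial_z(z(1+z)^{2s}),|\widehat\partial^\alpha G|^2\rangle$.
\end{itemize}
Since $\partial_z(z(1+z)^{2s})\le C_s(1+z)^{2s}$, both terms are bounded by
$$\big(\|\partial_x\overline{u_1^{e,0}}\|_{L^\infty_x}+\|\overline{\partial_yu_2^{e,0}}\|_{L^\infty_x}\big)\|G\|^2_{Z^{l,m}_s}.$$

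\textbf{Lower-order terms.} For $\beta\neq0$ we place the coefficient in $L^\infty_x$ and use Cauchy--Schwarz.
\begin{itemize}
\item In the $x$-term, $\partial_x\widehat\partial^{\alpha-\beta}G$ is a conormal derivative of order at most $|\alpha|$ with $\alpha_1$-index at most $l$, so it is controlled by $\|G\|_{Z^{l,m}_s}$.
\item In the $z$-term, $z\partial_z\widehat\partial^{\alpha-\beta}G=\delta^{-1}\widehat\partial^{\alpha-\beta+(0,0,1)}G$, which has order at most $|\alpha|$. This is where the constant $C_\delta$ enters.
\end{itemize}
Summing over $\alpha$ with Lemma~\ref{lem:separate} gives a bound by $\big(\sum_{\alpha_1\le l,|\alpha|\le m}\|\partial^\alpha_{t,x}\overline{u_1^{e,0}}\|_{L^\infty_x}+\|\partial^\alpha_{t,x}\overline{\partial_yu_2^{e,0}}\|_{L^\infty_x}\big)\|G\|_{Z^{l,m}_s}^2$.

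\textbf{Main obstacle.} The delicate point is the time derivatives on the trace, since $\alpha_1\le l$ must be matched with $\overline H^{l,\cdot}$. I would apply the trace-plus-1D-Sobolev bound $\|\overline h\|_{L^\infty_x}\le C\|h\|_{L^2}^{1/2}\ldots\le C\|h\|_{H^2}$ to $h=\partial_t^{\alpha_1}\partial_x^{\alpha_2}u_1^{e,0}$ with $\alpha_1+\alpha_2\le m$. This yields $\|u_1^{e,0}\|_{\overline H^{l,m+2}}$, and the same argument for $\partial_yu_2^{e,0}$ yields $\|\partial_yu_2^{e,0}\|_{\overline H^{l,m+2}}$. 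Together these give \eqref{eq:uE0}.
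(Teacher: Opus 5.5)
Your proposal is correct and is essentially the paper's argument: a Leibniz splitting into a top-order part and lower-order parts, integration by parts of the top-order transport term (with the $z=0$ boundary term killed by the factor $z$), and H\"older plus a trace/Sobolev bound on the $(t,x)$-dependent coefficients. The differences are cosmetic: you use the exact commutation of $z\partial_z$ with $\widehat{\partial}^\alpha$ where the paper computes $[\widehat{\partial}^\alpha,\partial_z]$ via \eqref{inner exchange operator}, and you integrate by parts in $x$ and $z$ separately where the paper relies on $\partial_x\overline{u_1^{e,0}}+\overline{\partial_y u_2^{e,0}}=0$, which you can also use to bound your $\|\partial_x\overline{u_1^{e,0}}\|_{L^\infty_x}$ term without an extra derivative; there are also two harmless slips, namely that $(u^{E,0})_p=(\overline{u_1^{e,0}},z\overline{\partial_y u_2^{e,0}})$ follows from $\overline{u_2^{e,1}}=0$ (see \eqref{eq:u2e1}) rather than from $u_2^{E,0}=0$, and that actually $z\partial_z\widehat{\partial}^{\gamma}G=\delta^{-1}\widehat{\partial}^{\gamma+(0,0,1)}G+\gamma_3\widehat{\partial}^{\gamma}G$, whose extra term is lower order and controlled by $\|G\|_{Z^{l,m}_s}$.
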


\begin{lemma}\label{lem:tool-2}
   Let
    $$
    B=\sum_{\alpha_1 \leq l, |\alpha| \leq m} 
    \Bigl| \Bigl\langle \widehat{\partial}^\alpha (H G), (1+z)^{2s} \widehat{\partial}^\alpha Q \Bigr\rangle \Bigr|.
    $$
    Then the following hold:

    (a) when \(H = H(t,x)\) we have
    %     \begin{equation}
    % \begin{aligned}
    % B&\leq C \Biggl( \sum_{\substack{\alpha_1 \leq 1 , \alpha_3 = 0 \\ |\alpha| \leq m}} 
    % \| \widehat{\partial}^\alpha H \|_{L_x^\infty}^2 \Biggr)^{\!\frac 1 2}  \Biggl( \sum_{\alpha_1 \leq 1, |\alpha| \leq m} 
    % \| (1+z)^s \widehat{\partial}^\alpha G \|_2^2 \Biggr)^{\!\frac 1 2}\times \\
    % &\quad  \Biggl( \sum_{\alpha_1 \leq 1, |\alpha| \leq m} 
    % \| (1+z)^s \widehat{\partial}^\alpha Q \|_2^2 \Biggr)^{\!\frac 1 2}.
    % \end{aligned}
    % \end{equation}
    \begin{equation}\label{eq:tool-21}
    \begin{aligned}
        B&\leq C \sum_{\alpha_1 \leq l,\alpha_3=0, |\alpha| \leq m}
           \|\widehat{\partial}^{\alpha} H\|_{L_x^\infty} \|G\|_{Z^{l,m}_s}\|Q\|_{Z^{l,m}_s},
        \end{aligned}
    \end{equation}
    
    (b) when $H=H(t,x,z)$ we have 
    %     \begin{equation}
    % \begin{aligned}
    % B&\leq C \Biggl( \sum_{\alpha_1 \leq 1, |\alpha| \leq m+1} 
    % \| \widehat{\partial}^\alpha H \|_2^2 \Biggr)^{\!1/4}  \Biggl( \sum_{\alpha_1 \leq 1, |\alpha| \leq m+1} 
    % \| \widehat{\partial}^\alpha \partial_z H \|_2^2 \Biggr)^{\!1/4}\times  \\
    % &\quad \Biggl( \sum_{\alpha_1 \leq 1, |\alpha| \leq m} 
    % \| (1+z)^s \widehat{\partial}^\alpha G \|_2^2 \Biggr)^{\!\frac 1 2} \Biggl( \sum_{\alpha_1 \leq 1, |\alpha| \leq m} 
    % \| (1+z)^s \widehat{\partial}^\alpha Q \|_2^2 \Biggr)^{\!\frac 1 2}.
    % \end{aligned}
    % \end{equation}
    \begin{equation}\label{eq:tool-22}
        \begin{aligned}
            B&\leq C \|H\|_{Z_0^{l,m+1}}^{\frac 1 2}\|\partial_z H\|_{Z_0^{l,m+1}}^{\frac 1 2} \|G\|_{Z^{l,m}_s}\|Q\|_{Z^{l,m}_s}.
        \end{aligned}
    \end{equation}
\end{lemma}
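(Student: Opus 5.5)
We prove the two parts of Lemma~\ref{lem:tool-2} by expanding $\widehat{\partial}^\alpha(HG)$ with the Leibniz rule:
\begin{equation*}
\widehat{\partial}^\alpha(HG)=\sum_{\beta\le\alpha}\binom{\alpha}{\beta}\,\widehat{\partial}^{\alpha-\beta}H\,\widehat{\partial}^{\beta}G .
\end{equation*}
This identity is legitimate because $\widehat{\partial}^\alpha=\partial_t^{\alpha_1}\partial_x^{\alpha_2}(\delta z)^{\alpha_3}\partial_z^{\alpha_3}$ and $(\delta z\partial_z)^{k}$ differs from $(\delta z)^k\partial_z^k$ only by lower-order conormal terms of the same type, with constants depending on $\delta$. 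After the expansion we pair each term with $(1+z)^{2s}\widehat{\partial}^\alpha Q$ and split the weight as $(1+z)^s\cdot(1+z)^s$, placing one factor on $\widehat{\partial}^\beta G$ and one on $\widehat{\partial}^\alpha Q$. The weight is therefore carried entirely by $G$ and $Q$, and $H$ is always measured in an unweighted norm, which matches the $Z_0$ norms of $H$ in \eqref{eq:tool-22}. Finally we sum over $\alpha$ with Cauchy--Schwarz, using Lemma~\ref{lem:separate} to decouple the double sum over $(\alpha,\beta)$.

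For part (a), $H=H(t,x)$ is independent of $z$, so $\widehat{\partial}^{\alpha-\beta}H$ vanishes unless $(\alpha-\beta)_3=0$. Hölder's inequality with $H$ in $L^\infty_x$ gives
\begin{equation*}
\bigl|\langle \widehat{\partial}^{\alpha-\beta}H\,\widehat{\partial}^\beta G,(1+z)^{2s}\widehat{\partial}^\alpha Q\rangle\bigr|\le \|\widehat{\partial}^{\alpha-\beta}H\|_{L^\infty_x}\,\|(1+z)^s\widehat{\partial}^\beta G\|_2\,\|(1+z)^s\widehat{\partial}^\alpha Q\|_2 .
\end{equation*}
Summing over $\beta\le\alpha$ with $\alpha_1\le l$ and $|\alpha|\le m$ yields \eqref{eq:tool-21} directly, because every $\beta$ satisfies $|\beta|\le m$ and $\beta_1\le l$.

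For part (b), we split the Leibniz sum according to where the derivatives fall.

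\emph{Case $|\alpha-\beta|\le |\alpha|-1$, or more generally few derivatives on $H$.} We put $\widehat{\partial}^{\alpha-\beta}H$ in $L^\infty_{x,z}$. By the one-dimensional Agmon inequality in $x$ and in $z$, exactly as in the proof of Lemma~\ref{lem:embedding},
\begin{equation*}
\|h\|_{L^\infty}\lesssim \|h\|_2^{1/4}\|\partial_xh\|_2^{1/4}\|\partial_zh\|_2^{1/4}\|\partial_x\partial_zh\|_2^{1/4}.
\end{equation*}
With $h=\widehat{\partial}^{\alpha-\beta}H$, the factors $h$ and $\partial_xh$ are controlled by $\|H\|_{Z_0^{l,m+1}}$. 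The factors $\partial_zh$ and $\partial_x\partial_zh$ are controlled by $\|\partial_zH\|_{Z_0^{l,m+1}}$, after commuting $\partial_z$ through $\widehat{\partial}$ using \eqref{inner exchange operator}; each commutator produces a factor $\delta$ and a lower-order term of the same form. The result is at most $\|H\|_{Z_0^{l,m+1}}^{1/2}\|\partial_zH\|_{Z_0^{l,m+1}}^{1/2}$.

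\emph{Case all derivatives on $H$, i.e.\ $\beta=0$.} Here we instead put $G$ in $L^\infty$, and this is the delicate step. The $L^\infty$ bound of $G$ would require $\partial_zG$, which is not part of $\|G\|_{Z^{l,m}_s}$. To avoid this, we keep $\widehat{\partial}^\alpha H$ in $L^\infty_zL^2_x$ and $G$ in $L^2_zL^\infty_x$. The tangential Agmon inequality bounds $\|G\|_{L^2_zL^\infty_x}$ by $\|G\|_{Z^{l,1}_s}^{1/2}\|\partial_xG\|_{Z^{l,1}_s}^{1/2}\lesssim\|G\|_{Z^{l,m}_s}$, since $m\ge1$. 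The one-dimensional Sobolev inequality in $z$ bounds $\|\widehat{\partial}^\alpha H\|_{L^\infty_zL^2_x}$ by $\|\widehat{\partial}^\alpha H\|_2^{1/2}\|\partial_z\widehat{\partial}^\alpha H\|_2^{1/2}$, which is again at most $\|H\|_{Z_0^{l,m+1}}^{1/2}\|\partial_zH\|_{Z_0^{l,m+1}}^{1/2}$ after using \eqref{inner exchange operator}. Mixed cases are handled by this same partition: whichever factor carries fewer tangential derivatives receives the $L^\infty_x$ norm, and $H$ always receives the $L^\infty_z$ norm, so that $\partial_z$ never falls on $G$.

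The main obstacle is the bookkeeping of derivative counts in part (b). The $\partial_x$ used in Agmon's inequality raises the order on $H$ by one, which is why $m+1$ appears, and we must check that whenever $G$ is placed in $L^\infty_x$ it has at most $m-1$ derivatives. If this check fails, the fallback is to move the extra $\partial_x$ onto $H$, which remains within $Z_0^{l,m+1}$.
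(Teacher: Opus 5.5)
Your argument is correct. For part (a), and for the terms $\beta\neq0$ of part (b), it is exactly the paper's proof. The ingredients are the same: the Leibniz rule, the weight $(1+z)^{2s}$ split between $\widehat{\partial}^\beta G$ and $\widehat{\partial}^\alpha Q$, $\widehat{\partial}^{\alpha-\beta}H$ measured unweighted in $L^\infty$ through the Agmon-type bound of Lemma~\ref{lem:embedding}, and Cauchy--Schwarz over $(\alpha,\beta)$.

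The only divergence is the term $\beta=0$. You treat it as the delicate case and handle it with the mixed split $L^\infty_zL^2_x\times L^2_zL^\infty_x$. The paper does not single this case out: it puts $\widehat{\partial}^{\alpha-\beta}H$ in $L^\infty_{x,z}$ for every $\beta\le\alpha$, including $\beta=0$. Since $|\alpha|\le m$, set $h=\widehat{\partial}^\alpha H$. After commuting $\partial_z$ through $\widehat{\partial}^\alpha$ with \eqref{inner exchange operator}, the quantities $h$, $\partial_x h$, $\partial_z h$, $\partial_x\partial_z h$ involve at most $m+1$ conormal derivatives of $H$ or of $\partial_z H$. That is exactly what the factor $\|H\|_{Z_0^{l,m+1}}^{1/2}\|\partial_zH\|_{Z_0^{l,m+1}}^{1/2}$ in \eqref{eq:tool-22} accounts for. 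You note this yourself in your last paragraph, so your ``fallback'' is in fact the whole proof, and no case distinction is needed.

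Your mixed-norm route is still valid. For that term it needs only $Z_0^{l,m}$ on $H$, at the price of one $\partial_x$ on $G$, hence $m\ge1$.

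Two small corrections:
\begin{itemize}
\item The tangential bound should read $\|(1+z)^sG\|_{L^2_zL^\infty_x}\lesssim\|(1+z)^sG\|_2^{1/2}\|(1+z)^s\partial_xG\|_2^{1/2}\le\|G\|_{Z^{l,1}_s}$. Your factor $\|\partial_xG\|_{Z^{l,1}_s}$ would require $m\ge2$.
\item The Leibniz formula for $\widehat{\partial}^\alpha$ is exact, with no lower-order conormal corrections, because $(\delta z)^k\partial_z^k(HG)=\sum_{j}\binom{k}{j}\,[(\delta z)^{k-j}\partial_z^{k-j}H]\,[(\delta z)^{j}\partial_z^{j}G]$.
\end{itemize}
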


\begin{lemma}\label{lem:tool-3}
   Letting
    $$
    D=-\sum_{\alpha_1 \leq l, |\alpha| \leq m} 
    \Bigl\langle \widehat{\partial}^\alpha \partial_z^2 H, (1+z)^{2s} \widehat{\partial}^\alpha H \Bigr\rangle,
    $$
and  $H\partial_z H|_{z=0} = 0$, we have
\begin{equation}\label{eq:partial_z^2}
    \begin{aligned}
        D\geq (1 - C\sigma - C\delta) \|\partial_zH\|_{Z^{l,m}_s}^2- \frac C\sigma \|H\|_{Z^{l,m}_{s-1}},
    \end{aligned}
\end{equation}
where $\delta,\sigma>0.$
% \begin{equation}
% \begin{aligned}
% -\sum_{\alpha_1 \leq 1, |\alpha| \leq m} 
% &\Bigl\langle \widehat{\partial}^\alpha \partial_z^2 H, (1+z)^{2s} \widehat{\partial}^\alpha H \Bigr\rangle\\
% &\geq (1 - C\sigma - C\delta) 
% \sum_{\alpha_1 \leq 1, |\alpha| \leq m} 
% \| (1+z)^s \widehat{\partial}^\alpha \partial_z H \|_2^2 \\
% &\quad - \frac{C}{\sigma} 
% \sum_{\alpha_1 \leq 1, |\alpha| \leq m} 
% \| (1+z)^{s-1} \widehat{\partial}^\alpha H \|_2^2.
% \end{aligned}
% \end{equation}
\end{lemma}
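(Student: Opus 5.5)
The plan is to expand $D$ by integrating by parts once in $z$, which moves one $\partial_z$ off $\partial_z^2 H$, and then to control the error terms. Two kinds of error arise: the commutator $[\partial_z,\widehat{\partial}^\alpha]$ given by \eqref{inner exchange operator}, and the derivative of the weight $(1+z)^{2s}$.

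First write $\widehat{\partial}^\alpha\partial_z^2 H=\partial_z\widehat{\partial}^\alpha\partial_z H-[\partial_z,\widehat{\partial}^\alpha]\partial_z H$. For the principal part, integrating by parts gives
\begin{align*}
-\bigl\langle \partial_z\widehat{\partial}^\alpha\partial_z H,(1+z)^{2s}\widehat{\partial}^\alpha H\bigr\rangle
=\bigl\langle \widehat{\partial}^\alpha\partial_z H,(1+z)^{2s}\partial_z\widehat{\partial}^\alpha H\bigr\rangle
+2s\bigl\langle \widehat{\partial}^\alpha\partial_z H,(1+z)^{2s-1}\widehat{\partial}^\alpha H\bigr\rangle
+\int_{\mathbb{R}}\overline{\widehat{\partial}^\alpha\partial_z H\,\widehat{\partial}^\alpha H}\,dx .
\end{align*}
The boundary term vanishes in every case:
\begin{itemize}
\item if $\alpha_3\ge1$, the factor $(\delta z)^{\alpha_3}$ vanishes at $z=0$;
\item if $\alpha_3=0$, then $\widehat{\partial}^\alpha$ involves only $\partial_t,\partial_x$, and the hypothesis $H\partial_z H|_{z=0}=0$ is used. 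In the applications it holds through a Neumann condition $\partial_z H|_{z=0}=0$ (or through $H|_{z=0}=0$), and either condition commutes with tangential and time derivatives. I would state this interpretation explicitly, since the product condition alone does not pass to derivatives.
\end{itemize}

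In the first inner product, write $\partial_z\widehat{\partial}^\alpha H=\widehat{\partial}^\alpha\partial_z H+[\partial_z,\widehat{\partial}^\alpha]H$ and use $[\partial_z,\widehat{\partial}^\alpha]H=\delta\alpha_3\widehat{\partial}^{\alpha-(0,0,1)}\partial_z H$. This yields $\|(1+z)^s\widehat{\partial}^\alpha\partial_z H\|_2^2$ plus an error bounded by $C\delta\|(1+z)^s\widehat{\partial}^\alpha\partial_zH\|_2\|(1+z)^s\widehat{\partial}^{\alpha-(0,0,1)}\partial_zH\|_2$. The commutator term coming from $\widehat{\partial}^\alpha\partial_z^2H$ produces $\langle\delta\alpha_3\widehat{\partial}^{\alpha-(0,0,1)}\partial_z\partial_zH,(1+z)^{2s}\widehat{\partial}^\alpha H\rangle$. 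This still contains $\partial_z^2$, so I would integrate it by parts once more, or equivalently iterate the commutator identity, to reduce it to terms of the form $\delta\langle \widehat{\partial}^{\beta}\partial_z H,(1+z)^{2s}\widehat{\partial}^{\gamma}\partial_z H\rangle$ with $|\beta|,|\gamma|\le m$, plus weight-derivative terms. Summing over $\alpha$, all of these are bounded by $C\delta\|\partial_zH\|_{Z^{l,m}_s}^2$ and by terms of the type treated next.

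The weight term is handled by Cauchy–Schwarz and Young:
\[
2s\bigl|\langle(1+z)^s\widehat{\partial}^\alpha\partial_zH,(1+z)^{s-1}\widehat{\partial}^\alpha H\rangle\bigr|\le C\sigma\|(1+z)^s\widehat{\partial}^\alpha\partial_zH\|_2^2+\frac{C}{\sigma}\|(1+z)^{s-1}\widehat{\partial}^\alpha H\|_2^2 .
\]
Summing the pieces gives \eqref{eq:partial_z^2}. The right-hand side should read $\|H\|^2_{Z^{l,m}_{s-1}}$; the missing square in the statement is presumably a typo.

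The main obstacle is the commutator term from $\widehat{\partial}^\alpha\partial_z^2 H$, since one must avoid creating $\partial_z^2$ terms that cannot be absorbed. I would handle it by an induction on $\alpha_3$: the identity $\widehat{\partial}^\alpha\partial_z=\partial_z\widehat{\partial}^\alpha-\delta\alpha_3\widehat{\partial}^{\alpha-(0,0,1)}\partial_z$ expresses every such term through $\partial_z$ applied to lower-order conormal derivatives. After one integration by parts, each resulting term carries a factor $\delta$ and is at most quadratic in $\|\partial_zH\|_{Z^{l,m}_s}$, with the weight errors again absorbed by the $\sigma$-Young step above.
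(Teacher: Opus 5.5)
Your argument is correct and has the same skeleton as the paper's proof. It uses one integration by parts in $z$, the identity \eqref{inner exchange operator} for the two commutator terms, and Young's inequality for the weight term $2s\langle\widehat{\partial}^\alpha\partial_zH,(1+z)^{2s-1}\widehat{\partial}^\alpha H\rangle$. You are also right about the missing square: the paper's proof ends with $\frac{C}{\sigma}\|H\|^2_{Z^{l,m}_{s-1}}$.

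The one real divergence is the term $\delta\alpha_3\langle\widehat{\partial}^{\alpha-(0,0,1)}\partial_z^2H,(1+z)^{2s}\widehat{\partial}^\alpha H\rangle$. You call it the main obstacle and handle it by induction on $\alpha_3$ with further integrations by parts. The paper needs none of that. Since $\widehat{\partial}^\alpha=\partial_t^{\alpha_1}\partial_x^{\alpha_2}(\delta z)^{\alpha_3}\partial_z^{\alpha_3}$, one factor $\delta z$ can be moved from $\widehat{\partial}^\alpha H$ onto the other factor, which gives the pointwise identity
\[
\widehat{\partial}^{\alpha-(0,0,1)}\partial_z^2H\;\widehat{\partial}^{\alpha}H
=\widehat{\partial}^{\alpha}\partial_zH\;\widehat{\partial}^{\alpha-(0,0,1)}\partial_zH,
\qquad \alpha_3\ge 1 .
\]
So this term has exactly the form of the other commutator term and is bounded by $C\delta\|\partial_zH\|^2_{Z^{l,m}_s}$ at once. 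This is what the paper's bound by $C\delta\sum|\langle\widehat{\partial}^\alpha\partial_zH,(1+z)^{2s}\widehat{\partial}^{\alpha-(0,0,1)}\partial_zH\rangle|$ silently uses. Your iteration also closes: the new boundary terms vanish because $\widehat{\partial}^\alpha H$ carries $(\delta z)^{\alpha_3}$ with $\alpha_3\ge1$, and the extra weight terms are absorbed by the same $\sigma$-Young step. It only adds terms and combinatorial constants, with no gain.

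Your caveat that $H\partial_zH|_{z=0}=0$ does not pass to derivatives is also unnecessary once the traces are $C^k$ in $(t,x)$. Set $a=H|_{z=0}$ and $b=\partial_zH|_{z=0}$, so $ab\equiv0$. The open set $\{a\neq0\}$ lies in the interior of $\{b=0\}$, so every derivative of $b$ vanishes on its closure. Every derivative of $a$ vanishes on the interior of $\{a=0\}$. These two sets cover the boundary, so $\partial_t^{\alpha_1}\partial_x^{\alpha_2}a\cdot\partial_t^{\alpha_1}\partial_x^{\alpha_2}b\equiv0$. Hence the $\alpha_3=0$ boundary term vanishes under the hypothesis as stated, without restricting to Neumann or Dirichlet data.
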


\begin{proof}[Proof of Proposition \ref{proposition_b,1}]
    We  briefly sketch the proof.

   {\bf Step I. The estimates of $c^{b,1}$ and $u_1^{b,1}$.} For $c^{b,1}$, applying $\widehat{\partial}^\alpha$ to the equation \eqref{eq:c^b1,n^b1}, multiplying by $(1+z)^{2s}\widehat{\partial}^\alpha c^{b,1}$, integrating over $\mathbb{R}_+^2$, and summing by $|\alpha| \le m$, we have
    \begin{align*}
        &\sum_{\alpha_1 \le 2, |\alpha| \le m} \bigl\langle \widehat{\partial}^\alpha \partial_t c^{b,1}, (1+z)^{2s}\,\widehat{\partial}^\alpha c^{b,1} \bigr\rangle
        - \sum_{\alpha_1 \le 2, |\alpha| \le m} \bigl\langle \widehat{\partial}^\alpha \partial_z^2 c^{b,1}, (1+z)^{2s}\,\widehat{\partial}^\alpha c^{b,1} \bigr\rangle \\
        & \le \Bigl| \sum_{\alpha_1 \le 2, |\alpha| \le m} \bigl\langle \widehat{\partial}^\alpha \bigl( u_1^{b,1} \partial_x c^{E,0} \bigr), (1+z)^{2s}\,\widehat{\partial}^\alpha c^{b,1} \bigr\rangle \Bigr| 
        + \Bigl| \sum_{\alpha_1 \le 2, |\alpha| \le m} \bigl\langle \widehat{\partial}^\alpha \bigl( (u^{E,0})_p \cdot \widehat{\nabla} c^{b,1} \bigr), (1+z)^{2s}\,\widehat{\partial}^\alpha c^{b,1} \bigr\rangle \Bigr| \\
        & \quad + \Bigl| \sum_{\alpha_1 \le 2, |\alpha| \le m} \bigl\langle \widehat{\partial}^\alpha \bigl( c^{b,1} n^{E,0} + c^{E,0} n^{b,1} \bigr), (1+z)^{2s}\,\widehat{\partial}^\alpha c^{b,1} \bigr\rangle \Bigr|.
    \end{align*}
    For $u_1^{b,1}$, applying $\widehat{\partial}^\alpha$ to the equation \eqref{eq:u_1^b1}$_1$, multiplying by $(1+z)^{2s}\widehat{\partial}^\alpha u_1^{b,1}$, integrating over $\mathbb{R}_+^2$, and summing by $|\alpha| \le m$, we have
    \begin{align*}
        &\sum_{\alpha_1 \le 2, |\alpha| \le m} \bigl\langle \widehat{\partial}^\alpha \partial_t u_1^{b,1}, (1+z)^{2s}\,\widehat{\partial}^\alpha u_1^{b,1} \bigr\rangle
        - \sum_{\alpha_1 \le 2, |\alpha| \le m} \bigl\langle \widehat{\partial}^\alpha \partial_z^2 u_1^{b,1}, (1+z)^{2s}\,\widehat{\partial}^\alpha u_1^{b,1} \bigr\rangle \\
        \le& \Bigl| \sum_{\alpha_1 \le 2, |\alpha| \le m} \bigl\langle \widehat{\partial}^\alpha \bigl( (u^{E,0})_p \cdot \widehat{\nabla} u_1^{b,1} \bigr), (1+z)^{2s}\,\widehat{\partial}^\alpha u_1^{b,1} \bigr\rangle \Bigr|
        + \Bigl| \sum_{\alpha_1 \le 2, |\alpha| \le m} \bigl\langle \widehat{\partial}^\alpha \bigl( u_1^{b,1} \partial_x u_1^{E,0} \bigr), (1+z)^{2s}\,\widehat{\partial}^\alpha u_1^{b,1} \bigr\rangle \Bigr|.
    \end{align*}
    Using \eqref{eq:uE0}, one could  estimate the terms of $(u^{E,0})_p \cdot \widehat{\nabla} c^{b,1}$ and $(u^{E,0})_p \cdot \widehat{\nabla} u_1^{b,1}$. The bound of the terms $u_1^{b,1} \partial_x c^{E,0}$, $u_1^{b,1} \partial_x u_1^{E,0}$ and $c^{b,1} n^{E,0}, c^{E,0} n^{b,1}$ follows from  \eqref{eq:tool-21}. Moreover, by \eqref{eq:partial_z^2}, we get the estimates of  $\partial_z^2 c^{b,1}, \partial_z^2 u_1^{b,1}$. In summary, we have
    \begin{equation}
        \begin{aligned}
            &\frac 1 2 \frac{d}{dt} \| c^{b,1} \|_{Z_s^{2,m}}^2 + \frac 1 2 \frac{d}{dt} \| u_1^{b,1} \|_{Z_s^{2,m}}^2 + (1-C\delta)  \| \partial_z u_1^{b,1} \|_{Z_s^{2,m}}^2 + (1-C\delta)  \| \partial_z c^{b,1} \|_{Z_s^{2,m}}^2 \\
            &\le C_\delta\left( 1 + \| n^{e,0} \|_{\overline{H}^{2,m+2}}^2 + \| c^{e,0} \|_{\overline{H}^{2,m+3}}^2 + \| u^{e,0} \|_{\overline{H}^{2,m+3}}^2 \right) \|  c^{b,1} \|_{Z_s^{2,m}}^2 \\
            &\qquad + C_\delta \left( 1 + \| u^{e,0} \|_{\overline{H}^{2,m+3}} \right)  \|  u_1^{b,1} \|_{Z_s^{2,m}}^2+ \| c^{e,0} \|_{\overline{H}^{2,m+2}}^2+\| u^{e,0} \|_{\overline{H}^{2,m+2}}^2.
        \end{aligned}
    \end{equation}
    
     {\bf Step II. The estimates of $\partial_zc^{b,1}$ and $\partial_z u_1^{b,1}$.}  For $\partial_z c^{b,1},\partial_z u_1^{b,1}$, firstly applying boundary corrections \(\widehat{\partial_z c^{b,1}}=\partial_z c^{b,1}+e^{-z}\overline{\partial_y c^{e,0}}\) and \(\widehat{\partial_z u_1^{b,1}}=\partial_z u_1^{b,1}+e^{-z}\overline{\partial_y u_1^{e,0}}\) to the equations \eqref{eq:c^b1,n^b1} and \eqref{eq:u_1^b1}$_1$, we have
    \begin{align*}
        &\partial_t \widehat{\partial_z c^{b,1}} - \partial_z^2\bigl(\widehat{\partial_z c^{b,1}}\bigr) + (u^{E,0})_p \cdot \widehat{\nabla}\bigl(\widehat{\partial_z c^{b,1}}\bigr) + \overline{\partial_y u_2^{e,0}}\bigl(\widehat{\partial_z c^{b,1}}\bigr) + \widehat{\partial_z u_1^{b,1}} \overline{\partial_x c^{e,0}} + \bigl(\overline{c^{e,0}} + 1\bigr) \overline{n^{e,0}} \bigl(\widehat{\partial_z c^{b,1}}\bigr) \nonumber \\
        &= \left[\partial_t - \partial_z^2 + (u^{E,0})_p \cdot \nabla + \overline{\partial_y u_2^{e,0}}  \nonumber + \bigl(\overline{c^{e,0}} + 1\bigr) \overline{n^{e,0}}\right] \bigl(e^{-z} \overline{\partial_y c^{e,0}}\bigr) + e^{-z} \overline{\partial_y u_1^{e,0}} \overline{\partial_x c^{e,0}},
         \end{align*}
       and
        \begin{align*} &\partial_t \widehat{\partial_z u_1^{b,1}}
        - \partial_z^2 \bigl(\widehat{\partial_z u_1^{b,1}}\bigr)
        + (u^{E,0})_p \cdot \widehat{\nabla} \bigl(\widehat{\partial_z u_1^{b,1}}\bigr)
        + \overline{\partial_y u_2^{e,0}} \bigl(\widehat{\partial_z u_1^{b,1}}\bigr)
        + \bigl(\widehat{\partial_z u_1^{b,1}}\bigr) \overline{\partial_x u_1^{e,0}}
        \\
        &= \left[\partial_t 
        - \partial_z^2 
        + (u^{E,0})_p \cdot \widehat{\nabla} 
        + \overline{\partial_y u_2^{e,0}} 
        +  \overline{\partial_x u_1^{e,0}}\right] \bigl(e^{-z} \overline{\partial_y u_1^{e,0}}\bigr).
    \end{align*}
    Applying $\widehat{\partial}^\alpha$ to the above equation , multiplying by $ (1+z)^{2s}\widehat{\partial}^\alpha \widehat{\partial_z c^{b,1}}, (1+z)^{2s}\widehat{\partial}^\alpha \widehat{\partial_z u_1^{b,1}} $ respectively, then by Lemma \ref{lem:tool-1} to Lemma \ref{lem:tool-3}, we have
    \begin{equation}
        \begin{aligned}
            &\frac 1 2 \frac{d}{dt} \left\| \widehat{\partial_z c^{b,1}} \right\|_{Z_s^{2,m}}^2 + \frac 1 2 \frac{d}{dt}  \left\| \widehat{\partial_z u_1^{b,1}} \right\|_{Z_s^{2,m}}^2 + (1-C\delta)  \left\| \widehat{\partial_z^{2} u_1^{b,1}} \right\|_{Z_s^{2,m}}^2 + (1-C\delta)  \left\|\widehat{\partial_z^{2} c^{b,1} } \right\|_{Z_s^{2,m}}^2 \\
            &\le C_\delta\left( 1 + \| n^{e,0} \|_{\overline{H}^{2,m+2}}^2 + \| c^{e,0} \|_{\overline{H}^{2,m+3}}^2 + \| u^{e,0} \|_{\overline{H}^{2,m+3}}^2 \right) \left\|\widehat{\partial_z c^{b,1}}  \right\|_{Z_s^{2,m}}^2 \\
            &\qquad + C_\delta \left( 1 + \| u^{e,0} \|_{\overline{H}^{2,m+3}}+ \| c^{e,0} \|_{\overline{H}^{2,m+3}}^2 \right)  \left\| \widehat{\partial_z u_1^{b,1}} \right\|_{Z_s^{2,m}}^2\\
            &\qquad+C\left(1+\| n^{e,0} \|_{\overline{H}^{2,m+2}}^2 + \| c^{e,0} \|_{\overline{H}^{3,m+3}}^2 + \| u^{e,0} \|_{\overline{H}^{3,m+3}}^2\right)^2.
        \end{aligned}
    \end{equation}
    By noting that
    \begin{align*}
        \sup_t \| \partial_z c^{b,1} \|_{Z_s^{2,m}}^2&\leq \sup_t \left\| \widehat{\partial_z c^{b,1}} \right\|_{Z_s^{2,m}}^2+\sup_t \| c^{e,0} \|_{\overline{H}^{2,m+2}}^2,\\
        \sup_t \| \partial_z u_1^{b,1} \|_{Z_s^{2,m}}^2&\leq \sup_t \left\| \widehat{\partial_z u_1^{b,1}} \right\|_{Z_s^{2,m}}^2+\sup_t \| u_1^{e,0} \|_{\overline{H}^{2,m+2}}^2,\\
       \| \partial_z^2 c^{b,1} \|_{L^2_tZ_s^{2,m}}^2&\leq  \left\| \widehat{\partial_z^2 c^{b,1}} \right\|_{L^2_tZ_s^{2,m}}^2+ \| c^{e,0} \|_{L^2_t\overline{H}^{2,m+3}}^2,\\
        \| \partial_z^2 u_1^{b,1} \|_{L^2_tZ_s^{2,m}}^2&\leq \left\| \widehat{\partial_z^2 u_1^{b,1}} \right\|_{L^2_tZ_s^{2,m}}^2+\| u_1^{e,0} \|_{L^2_t\overline{H}^{2,m+3}}^2,
    \end{align*}
    the inequality \eqref{neq:cu^b1}  is proved.

    {\bf Step III. The estimates of $n^{b,1}$ and $u_2^{b,2}$.}  For $n^{b,1}$ and $u_2^{b,2}$, by \eqref{eq:c^b1,n^b1}$_2$ and \eqref{eq:u_1^b1}$_2$, we have
    \begin{align*}
        &\sum_{i=0}^2 \| \partial_z^i n^{b,1} \|_{Z_s^{2,m}}^2 \le C \| n^{e,0} \|_{\overline{H}^{2,m+2}} \| \partial_z^i c^{b,1} \|_{Z_s^{2,m}},
    \end{align*}
    and
    % \begin{align*}
    %     \| u_2^{b,2} \|_{Z_{s-2}^{2,m-1}} 
    %     =& \left\| \int_z^\infty \partial_x u_1^{b,1}d\zeta \right\|_{Z_{s-2}^{2,m-1}} 
    %     \le C \| u_1^{b,1} \|_{Z_{s}^{2,m}},\\
    %     \sum_{i =1}^2 \| \partial_z^i u_2^{b,2}\|_{Z_{s-2}^{2,m-1}} 
    %     =& \sum_{i =0}^1 \left\| \partial_z^i \partial_x u_1^{b,1} \right\|_{Z_{s-2}^{2,m-1}} 
    %     \le C \sum_{i=0}^1 \| \partial_z^i u_1^{b,1} \|_{Z_{s}^{2,m}}.
    % \end{align*}
     \begin{align*}
        \| u_2^{b,2} \|_{Z_{s-2}^{2,m-1}} 
        =& \left\| \int_z^\infty \partial_x u_1^{b,1}d\zeta \right\|_{Z_{s-2}^{2,m-1}} 
        \le C \| u_1^{b,1} \|_{Z_{s}^{2,m}},\\
        \| \partial_z^i u_2^{b,2}\|_{Z_{s-2}^{2,m-1}} 
        =& \left\| \partial_z^{i-1} \partial_x u_1^{b,1} \right\|_{Z_{s-2}^{2,m-1}} 
        \le C\| \partial_z^{i-1} u_1^{b,1} \|_{Z_{s}^{2,m}}，
    \end{align*}
    which $i=1,2$. 
    The proof is  complete.
\end{proof}

\begin{proof}[Proof of Proposition \ref{proposition_b,2}]
    The proof of Proposition \ref{proposition_b,2} is similar to that of Proposition \ref{proposition_b,1}, and we only present the energy inequality.
    
    (a) For \(n^{b,2}\), it follows that
\begin{align*}
    \| n^{b,2} \|_{ Z^{1,{m}}_s}^2&\leq C \| n^{e,0} \|^2_{\overline{H}^{1,m+2}}\|  c^{b,2} \|_{Z_s^{1,m}}^2+C\| c^{e,0} \|^2_{\overline{H}^{1,m+3}}\|  n^{b,1} \|_{Z_{s+2}^{1,m}}^2\\
    &\quad+C\left( \| n^{e,0} \|^2_{\overline{H}^{1,m+3}}+\|  n^{b,1} \|_{Z_0^{1,m+1}}^2+\|  \partial_zn^{b,1} \|_{Z_0^{1,m+1}}^2 \right)\| \partial_z c^{b,1} \|_{Z_{s+3}^{1,m}}^2.
\end{align*}

(b) For \(\partial_z n^{b,2}\), we have
\begin{align*}
    \| \partial_zn^{b,2} \|_{ Z^{1,{m}}_s}^2&\leq C \| n^{e,0} \|^2_{\overline{H}^{1,m+2}}\| \partial_z c^{b,2} \|_{Z_s^{1,m}}^2+C\| c^{e,0} \|^2_{\overline{H}^{1,m+3}}\|  n^{b,1} \|_{Z_{s}^{1,m}}^2\\
    &\quad+C\left( \| n^{e,0} \|^2_{\overline{H}^{1,m+3}}+\|  n^{b,1} \|_{Z_0^{1,m+1}}^2+\|  \partial_zn^{b,1} \|_{Z_0^{1,m+1}}^2 \right)\| \partial_z c^{b,1} \|_{Z_{s+1}^{1,m}}^2.
\end{align*}

(c) For \(\partial_z^2 n^{b,2}\), it shows that
\begin{align*}
    \|\partial_z^2 n^{b,2} \|_{ Z^{1,{m}}_s}^2&\leq C \| n^{e,0} \|^2_{\overline{H}^{1,m+3}}\| \partial_z^2 c^{b,2} \|_{Z_{s+1}^{1,m}}^2+C\| c^{e,0} \|^2_{\overline{H}^{1,m+3}}\| \partial_z n^{b,1} \|_{Z_{s}^{1,m}}^2\\
    &\quad+C\left( \| n^{e,0} \|^2_{\overline{H}^{1,m+3}}+\| \partial_z n^{b,1} \|_{Z_0^{1,m+1}}^2+\|  \partial^2_zn^{b,1} \|_{Z_0^{1,m+1}}^2 \right)\| \partial_z c^{b,1} \|_{Z_{s}^{1,m}}^2\\
    &\quad+C\left( \| n^{b,1} \|_{Z_0^{1,m+1}}^2+\|  \partial_zn^{b,1} \|_{Z_0^{1,m+1}}^2 \right)\| \partial_z^2 c^{b,1} \|_{Z_{s}^{1,m}}^2.
\end{align*}
    
    (d) For the \(c^{b,2}\) and \(u_1^{b,2}\), we derive
\begin{align*}
    &\frac 1 2 \frac{d}{dt}  \left(\|  c^{b,2} \|_{Z_s^{1,m}}^2+\|  u_1^{b,2} \|_{Z_s^{1,m}}^2 \right)+ (1-C\sigma -C\delta)  \left(\|\partial_z c^{b,2} \|_{Z_s^{1,m}}^2+\|\partial_z u_1^{b,2} \|_{Z_s^{1,m}}^2\right) \\
    &\leq C_\delta \left( 1+\| u^{e,0} \|^2_{\overline{H}^{1,m+3}}+ \| c^{e,0} \|^2_{\overline{H}^{1,m+2}} + \| n^{e,0} \|^2_{\overline{H}^{1,m+2}} \right)^2 \|  c^{b,2} \|_{Z_s^{1,m}}^2\\
    &\quad+C_\delta \left( 1+\| u^{e,0} \|^2_{\overline{H}^{1,m+3}} + \| c^{e,0} \|^2_{\overline{H}^{1,m+3}} \right)\|  u_1^{b,2} \|_{Z_s^{1,m}}^2+g_1,
\end{align*}
where 
\begin{align*}
    g_1=& C_\delta \left( \| n^{e,0} \|^2_{\overline{H}^{1,m+3}}+\| u^{e,0} \|^2_{\overline{H}^{1,m+4}}+\|(u_1^{b,1}, n^{b,1})\|_{Z_{s+2}^{1,m+1}}^2+\|(\partial_z u_1^{b,1} ,\partial_z n^{b,1})\|_{Z_{0}^{1,m+1}}^2\right)\\
    &\qquad \times\left(1+\| c^{e,0} \|^2_{\overline{H}^{1,m+4}} +\|c^{b,1} \|_{Z_{s+1}^{1,m+1}}^2+\|\partial_z c^{b,1} \|_{Z_{s+3}^{1,m+1}}^2\right)^2\\
    &\quad +C \left( 1+\| u^{e,0} \|^2_{\overline{H}^{1,m+4}}+\|u_1^{b,1} \|_{Z_{2}^{1,m+1}}^2+\|\partial_z u_1^{b,1} \|_{Z_{0}^{1,m+1}}^2\right)\|(u_1^{b,1},\partial_z u_1^{b,1},n^{b,1}) \|_{Z_{s+2}^{1,m+1}}^2\\
    &\quad +C\|(\partial_z^2 c^{b,1} ,\partial_z^2 u_1^{b,1})\|_{Z_{0}^{1,m+1}}^2\|u_1^{b,1} \|_{Z_{s+2}^{1,m+1}}^2.
\end{align*}

(e) For the \(\partial_z c^{b,2}\) and \(\partial_z u_1^{b,2}\), we have the following energy inequality:
\begin{align*}
    &\frac 1 2 \frac{d}{dt}  \left(\|  \partial_z c^{b,2} \|_{Z_s^{1,m}}^2+\|  \partial_z  u_1^{b,2} \|_{Z_s^{1,m}}^2 \right)+ (1-C\sigma -C\delta)  \left(\|\partial_z^2 c^{b,2} \|_{Z_s^{1,m}}^2+\|\partial_z^2 u_1^{b,2} \|_{Z_s^{1,m}}^2\right) \\
    &\leq C_\delta \left( 1+\| u^{e,0} \|^2_{\overline{H}^{1,m+3}}+ \| c^{e,0} \|^2_{\overline{H}^{1,m+3}} + \| n^{e,0} \|^2_{\overline{H}^{1,m+2}} \right)^2 \|  \partial_z  c^{b,2} \|_{Z_s^{1,m}}^2\\
    &\quad+C_\delta \left( 1+\| u^{e,0} \|^2_{\overline{H}^{1,m+3}} + \| c^{e,0} \|^2_{\overline{H}^{1,m+3}} \right)\|  \partial_z  u_1^{b,2} \|_{Z_s^{1,m}}^2+g_2,
\end{align*}
where 
\begin{align*}
    g_2=& C_\delta \bigg( \| n^{e,0} \|^2_{\overline{H}^{1,m+3}}+\| u^{e,0} \|^2_{\overline{H}^{1,m+4}}+\|(u_1^{b,1}, n^{b,1}) \|_{Z_{s}^{1,m+1}}^2+\|(\partial_z u_1^{b,1} ,\partial_z n^{b,1})\|_{Z_{s+1}^{1,m+1}}^2\bigg) \\
    &\times \left(1+\| c^{e,0} \|^2_{\overline{H}^{1,m+4}} +\|c^{b,1} \|_{Z_{s}^{1,m+1}}^2+\|\partial_z c^{b,1} \|_{Z_{s+1}^{1,m+1}}^2\right)^2\\
    &\quad +C \left( \| u^{e,0} \|^2_{\overline{H}^{1,m+4}}+\|(\partial_z u_1^{b,1},u_1^{b,1}) \|_{Z_{0}^{1,m+1}}^2\right)\left(\|u_1^{b,1} \|_{Z_{s}^{1,m+1}}^2+\|\partial_z u_1^{b,1} \|_{Z_{s+1}^{1,m+1}}^2\right)\\
    &\quad +C \left( \| u^{e,0} \|^2_{\overline{H}^{1,m+4}}+\|c^{b,1} \|_{Z_{s}^{1,m+1}}^2+\| u_1^{b,1} \|_{Z_{s+2}^{1,m+1}}^2\right)\|\partial_z^2 u_1^{b,1} \|_{Z_{s+2}^{1,m+1}}^2\\
    &\quad +C_\delta  \left( \|u^{e,0} \|_{\overline{H}^{1,m+4}}^2+\|u_1^{b,1} \|_{Z_{s+2}^{1,m+1}}^2\right)\|\partial_z^2 c^{b,1} \|_{Z_{s+2}^{1,m+1}}^2.
\end{align*}
\end{proof}

\begin{proof}[Proof of Lemma \ref{lem:tool-1}]
   We divide it into two parts:
    \begin{align*}
        &\sum_{\alpha_1 \leq l, |\alpha| \leq m} \left| \langle \widehat{\partial}^\alpha ((u^{E,0})_p \cdot \widehat{\nabla} G), (1+z)^{2s} \widehat{\partial}^\alpha G \rangle \right| \\
        \leq& \sum_{\alpha_1 \leq l, |\alpha| \leq m} \left| \langle (u^{E,0})_p \cdot \widehat{\partial}^\alpha \widehat{\nabla} G, (1+z)^{2s} \widehat{\partial}^\alpha G \rangle \right| \\
        &+ C \sum_{\alpha_1 \leq l, |\alpha| \leq m} \sum_{\beta \le \alpha, |\beta|\le m-1} 
           \left| \langle \widehat{\partial}^{\alpha-\beta} (u^{E,0})_p \cdot \widehat{\partial}^\beta \widehat{\nabla} G, (1+z)^{2s} \widehat{\partial}^\alpha G \rangle \right|\\
       =:& K_1+K_2.
    \end{align*}
    For \(K_1\),  swapping \(\partial_z\) and \(\widehat{\partial}^\alpha\), it follows that 
    \begin{align*}
        &\sum_{\alpha_1 \leq l, |\alpha| \leq m} \left| \langle (u^{E,0})_p \cdot \widehat{\partial}^\alpha \widehat{\nabla} G, (1+z)^{2s} \widehat{\partial}^\alpha G \rangle \right| \\
        \leq& \sum_{\alpha_1 \leq l, |\alpha| \leq m} \left| \langle (u^{E,0})_p \cdot \widehat{\nabla} \widehat{\partial}^\alpha G, (1+z)^{2s} \widehat{\partial}^\alpha G \rangle \right| + \sum_{\alpha_1 \leq l, |\alpha| \leq m} \left| \langle u_2^{E,1} [\widehat{\partial}^\alpha, \partial_z] G, (1+z)^{2s} \widehat{\partial}^\alpha G \rangle \right| ,
        % &= \sum_{\alpha_1 \leq 1, |\alpha| \leq m} \left| \int_{\mathbb{R}_+^2} (1+z)^{2s} (u^{E,0})_p \cdot \widehat{\nabla} (\widehat{\partial}^\alpha G)^2 dxdy \right| \\
        % &\quad + \delta \sum_{\alpha_1 \leq 1, |\alpha| \leq m} \left| \langle z \overline{\partial_y u_2^{e,0}} \widehat{\partial}^{\alpha-(0,0,1)} \partial_z G, (1+z)^{2s} \widehat{\partial}^\alpha G \rangle \right| \\
        % &\leq 2s \sum_{\alpha_1 \leq 1, |\alpha| \leq m} \left| \int_{\mathbb{R}_+^2} (1+z)^{2s-1} z \overline{\partial_y u_2^{e,0}} (\widehat{\partial}^\alpha G)^2 dxdy \right| \\
        % &\quad + \sum_{\alpha_1 \leq 1, |\alpha| \leq m} \left| \langle \overline{\partial_y u_2^{e,0}} \widehat{\partial}^\alpha G, (1+z)^{2s} \widehat{\partial}^\alpha G \rangle \right| \\
        % &\leq C \|\overline{\partial_y u_2^{e,0}}\|_{L_x^\infty} \sum_{\alpha_1 \leq 1, |\alpha| \leq m} \|(1+z)^s \widehat{\partial}^\alpha G\|_2^2
    \end{align*}
    and by \eqref{inner exchange operator}, \eqref{def:Ej}, \eqref{ncu^e1=0} and integration by parts, this becomes
    \begin{align}\label{eq:K1}
        % \sum_{\alpha_1 \leq 1, |\alpha| \leq m} &\left| \langle (u^{E,0})_p \cdot \widehat{\partial}^\alpha \widehat{\nabla} G, (1+z)^{2s} \widehat{\partial}^\alpha G \rangle \right| \\
        % &\leq \sum_{\alpha_1 \leq 1, |\alpha| \leq m} \left| \langle (u^{E,0})_p \cdot \widehat{\nabla} \widehat{\partial}^\alpha G, (1+z)^{2s} \widehat{\partial}^\alpha G \rangle \right| \\
        % &\quad + \sum_{\alpha_1 \leq 1, |\alpha| \leq m} \left| \langle u_2^{E,1} [\widehat{\partial}^\alpha, \partial_z] G, (1+z)^{2s} \widehat{\partial}^\alpha G \rangle \right| \\
        \leq& \sum_{\alpha_1 \leq l, |\alpha| \leq m} \left| \int_{\mathbb{R}_+^2} (1+z)^{2s} (u^{E,0})_p \cdot \widehat{\nabla} (\widehat{\partial}^\alpha G)^2 dxdy \right|\nonumber \\
        &+ C\delta \sum_{\alpha_1 \leq l, |\alpha| \leq m} \left| \langle z \overline{\partial_y u_2^{e,0}} \widehat{\partial}^{\alpha-(0,0,1)} \partial_z G, (1+z)^{2s} \widehat{\partial}^\alpha G \rangle \right| \nonumber\\
        \leq& 2s \sum_{\alpha_1 \leq l, |\alpha| \leq m} \left| \int_{\mathbb{R}_+^2} (1+z)^{2s-1} z \overline{\partial_y u_2^{e,0}} (\widehat{\partial}^\alpha G)^2 dxdy \right| 
        + \sum_{\alpha_1 \leq l, |\alpha| \leq m} \left| \langle \overline{\partial_y u_2^{e,0}} \widehat{\partial}^\alpha G, (1+z)^{2s} \widehat{\partial}^\alpha G \rangle \right| \nonumber\\
        \leq& C \left\|\overline{\partial_y u_2^{e,0}}\right\|_{L_x^\infty} \sum_{\alpha_1 \leq l, |\alpha| \leq m} \|(1+z)^s \widehat{\partial}^\alpha G\|_2^2.
    \end{align}
    For \(K_2\), 
    % first \((u^{E,0})_p \cdot \widehat{\partial}^\beta \widehat{\nabla}\) is decomposed into \(u^{E,0}_1 \widehat{\partial}^\beta \partial_x\) and \(u^{E,1}_2 \widehat{\partial}^\beta \partial_z\), 
    we have
    \begin{align*}
        &\sum_{\alpha_1 \leq l, |\alpha| \leq m} \sum_{\beta \le \alpha, |\beta|\le m-1} 
           \left| \langle \widehat{\partial}^{\alpha-\beta} (u^{E,0})_p \cdot \widehat{\partial}^\beta \widehat{\nabla} G, (1+z)^{2s} \widehat{\partial}^\alpha G \rangle \right| \\
        \leq& C \sum_{\alpha_1 \leq l, |\alpha| \leq m} \sum_{\beta \le \alpha, |\beta|\le m-1} 
           \left| \langle \widehat{\partial}^{\alpha-\beta} u_1^{E,0} \widehat{\partial}^\beta \partial_x G, (1+z)^{2s} \widehat{\partial}^\alpha G \rangle \right| \\
        &+ C \sum_{\alpha_1 \leq l, |\alpha| \leq m} \sum_{\beta \le \alpha, |\beta|\le m-1} 
           \left| \langle \widehat{\partial}^{\alpha-\beta} u_2^{E,1} \widehat{\partial}^\beta \partial_z G, (1+z)^{2s} \widehat{\partial}^\alpha G \rangle \right|,
    \end{align*}
    then by \eqref{def:Ej} and \eqref{def:inner derivative}, it can be controlled by
    \begin{align*}
        % &\leq C \sum_{\alpha_1 \leq 1, |\alpha| \leq m} \sum_{\beta \le \alpha, |\beta|\le m-1} 
        %    \left| \langle \widehat{\partial}^{\alpha-\beta} \overline{u_1^{e,0}} \widehat{\partial}^\beta \partial_x G, (1+z)^{2s} \widehat{\partial}^\alpha G \rangle \right| \\
        % &\quad + C \sum_{\alpha_1 \leq 1, |\alpha| \leq m} \sum_{\beta \le \alpha, |\beta|\le m-1} 
        %    \left| \langle z \widehat{\partial}^{\alpha-\beta} \overline{\partial_y u_2^{e,0}} \widehat{\partial}^\beta \partial_z G, (1+z)^{2s} \widehat{\partial}^\alpha G \rangle \right| \\
        % &\quad + \sum_{\alpha_1 \leq 1, |\alpha| \leq m} \sum_{\beta \le \alpha, |\beta|\le m-1} 
        %    \left| \langle [\widehat{\partial}^{\alpha-\beta}, z] \overline{\partial_y u_2^{e,0}} \widehat{\partial}^\beta \partial_z G, (1+z)^{2s} \widehat{\partial}^\alpha G \rangle \right| \\
        &\leq C \sum_{\alpha_1 \leq l, |\alpha| \leq m} \sum_{\beta \le \alpha, |\beta|\le m-1} 
           \left| \langle \widehat{\partial}^{\alpha-\beta} \overline{u_1^{e,0}} \widehat{\partial}^{\beta+(0,1,0)} G, (1+z)^{2s} \widehat{\partial}^{\alpha} G \rangle \right| \\
        &\quad + \frac{C}{\delta} \sum_{\alpha_1 \leq l, |\alpha| \leq m} \sum_{\beta \le \alpha, |\beta|\le m-1} 
           \left| \langle \widehat{\partial}^{\alpha-\beta} \overline{\partial_y u_2^{e,0}} \widehat{\partial}^{\beta+(0,0,1)} G, (1+z)^{2s} \widehat{\partial}^{\alpha} G \rangle \right| \\
        &\quad + \sum_{\alpha_1 \leq l, |\alpha| \leq m} \sum_{\beta \le \alpha, |\beta|\le m-1} 
           \left| \langle \widehat{\partial}^{\alpha-\beta-(0,0,1)} \overline{\partial_y u_2^{e,0}} \widehat{\partial}^{\beta+(0,0,1)} G, (1+z)^{2s} \widehat{\partial}^{\alpha} G \rangle \right|.
    \end{align*}
    Finally, by H\"older inequality and the embedding property, we have
    % \begin{align*}
    % &\leq C \left( \sum_{\substack{\alpha_1 \leq 1 , \alpha_3 = 0 \\ |\alpha| \leq m}} \|\widehat{\partial}^{\alpha} \overline{u_1^{e,0}}\|_{L_x^\infty}^2 \right)^{\frac{1}{2}} 
    %    \sum_{\alpha_1 \leq 1, |\alpha| \leq m} \|(1+z)^s \widehat{\partial}^{\alpha} G\|_2^2 \\
    % &\quad + \frac{C}{\delta} \left( \sum_{\substack{\alpha_1 \leq 1 , \alpha_3 = 0 \\ |\alpha| \leq m}} \|\widehat{\partial}^{\alpha} \overline{\partial_y u_2^{e,0}}\|_{L_x^\infty}^2 \right)^{\frac{1}{2}} 
    %    \sum_{\alpha_1 \leq 1, |\alpha| \leq m} \|(1+z)^s \widehat{\partial}^{\alpha} G\|_2^2
    % \end{align}
    \begin{align}\label{eq:K2}
        &K_2\leq C \left(\left\|u_1^{e,0}\right\|_{\overline{H}^{l,m+2}}+\left\|\partial_y u_2^{e,0}\right\|_{\overline{H}^{l,m+2}}\right)
          \|G\|_{Z^{l,m}_s}^2.
    \end{align}
    Combining \eqref{eq:K1} and \eqref{eq:K2}, this lemma can be proven.
\end{proof}

\begin{proof}[Proof of Lemma \ref{lem:tool-2}]
    (a) By the Leibniz rule and H\"older inequality, it follows that
    \begin{align*}
        &\sum_{\alpha_1 \leq l, |\alpha| \leq m} \left| \langle \widehat{\partial}^\alpha (HG), (1+z)^{2s} \widehat{\partial}^\alpha Q \rangle \right| \\
        % &\leq C \sum_{\alpha_1 \leq 1, |\alpha| \leq m} \sum_{\beta \leq \alpha} \left| \langle \widehat{\partial}^{\alpha-\beta} H \widehat{\partial}^\beta G, (1+z)^{2s} \widehat{\partial}^\alpha Q \rangle \right| \\
        &\leq C \sum_{\alpha_1 \leq l, |\alpha| \leq m} \sum_{\beta \leq \alpha} 
           \|\widehat{\partial}^{\alpha-\beta} H\|_{L_x^\infty} \|(1+z)^s \widehat{\partial}^\beta G\|_2 \|(1+z)^s \widehat{\partial}^\alpha Q\|_2 \\
        &\leq C \left( \sum_{\alpha_1 \leq l, |\alpha| \leq m} \sum_{\beta \leq \alpha} 
           \|\widehat{\partial}^{\alpha-\beta} H\|_{L_x^\infty}^2 \|(1+z)^s \widehat{\partial}^\beta G\|_2^2 \right)^{\frac{1}{2}}
           \left( \sum_{\alpha_1 \leq l, |\alpha| \leq m} \|(1+z)^s \widehat{\partial}^\alpha Q\|_2^2 \right)^{\frac{1}{2}} \\
           % &\leq C \|H\|_{\overline{H}^{l,m+1}}\|G\|_{Z^{l,m}_s}\|Q\|_{Z^{l,m}_s}\\
        &\leq C  \sum_{\alpha_1 \leq l,\alpha_3=0, |\alpha| \leq m} 
           \|\widehat{\partial}^{\alpha} H\|_{L_x^\infty} \|G\|_{Z^{l,m}_s}\|Q\|_{Z^{l,m}_s}.
           % &\leq C \left( \sum_{\substack{\alpha_1 \leq 1 , \alpha_3 = 0 \\ |\alpha| \leq m}} 
           % \|\widehat{\partial}^{\alpha} H\|_{L_x^\infty}^2 \right)^{\frac{1}{2}}
           % \left( \sum_{\alpha_1 \leq 1, |\alpha| \leq m} \|(1+z)^s \widehat{\partial}^\alpha G\|_2^2 \right)^{\frac{1}{2}}\times\\
           % &\qquad\left( \sum_{\alpha_1 \leq 1, |\alpha| \leq m} \|(1+z)^s \widehat{\partial}^\alpha Q\|_2^2 \right)^{\frac{1}{2}}
    \end{align*}

    (b) By Lemma \ref{lem:embedding}, it shows that
    \begin{align*}
        &\sum_{\alpha_1 \leq l, |\alpha| \leq m} \left| \langle \widehat{\partial}^\alpha (HG), (1+z)^{2s} \widehat{\partial}^\alpha Q \rangle \right| \\
        % &\qquad\leq C \sum_{\alpha_1 \leq 1, |\alpha| \leq m} \sum_{\beta \leq \alpha} \left| \langle \widehat{\partial}^{\alpha-\beta} H \widehat{\partial}^\beta G, (1+z)^{2s} \widehat{\partial}^\alpha Q \rangle \right| \\
        % &\qquad\leq C \sum_{\alpha_1 \leq 1, |\alpha| \leq m} \sum_{\beta \leq \alpha} 
        %    \|\widehat{\partial}^{\alpha-\beta} H\|_{L^\infty} \|(1+z)^s \widehat{\partial}^\beta G\|_2 \|(1+z)^s \widehat{\partial}^\alpha Q\|_2 \\
        \leq& C \left( \sum_{\alpha_1 \leq l, |\alpha| \leq m} \sum_{\beta \leq \alpha} 
           \|\widehat{\partial}^{\alpha-\beta} H\|_{L^\infty}^2 \|(1+z)^s \widehat{\partial}^\beta G\|_2^2 \right)^{\frac{1}{2}}
           \left( \sum_{\alpha_1 \leq l, |\alpha| \leq m} \|(1+z)^s \widehat{\partial}^\alpha Q\|_2^2 \right)^{\frac{1}{2}} \\
        \leq& C \|H\|_{Z_0^{l,m+1}}^{\frac 1 2}\|\partial_z H\|_{Z_0^{l,m+1}}^{\frac 1 2}\|G\|_{Z^{l,m}_s}\|Q\|_{Z^{l,m}_s}.
    \end{align*}
\end{proof}

\begin{proof}[Proof of Lemma \ref{lem:tool-3}]
    Firstly, by integration by parts we have
    \begin{align*}
        &- \sum_{\alpha_1 \leq l, |\alpha| \leq m} \langle \widehat{\partial}^\alpha \partial_z^2 H, (1+z)^{2s} \widehat{\partial}^\alpha H \rangle \\
        % =& - \sum_{\alpha_1 \leq 1, |\alpha| \leq m} \langle \partial_z \widehat{\partial}^\alpha \partial_z H, (1+z)^{2s} \widehat{\partial}^\alpha H \rangle  - \sum_{\alpha_1 \leq 1, |\alpha| \leq m} \langle [\widehat{\partial}^\alpha, \partial_z] \partial_z H, (1+z)^{2s} \widehat{\partial}^\alpha H \rangle \\
        =& \sum_{\alpha_1 \leq l, |\alpha| \leq m} \langle \widehat{\partial}^\alpha \partial_z H, \partial_z \{ (1+z)^{2s} \widehat{\partial}^\alpha H \}\rangle - \sum_{\alpha_1 \leq l, |\alpha| \leq m} \langle [\widehat{\partial}^\alpha, \partial_z] \partial_z H, (1+z)^{2s} \widehat{\partial}^\alpha H \rangle \\
        % &=2s\sum_{\alpha_1 \leq 1, |\alpha| \leq m} \langle \widehat{\partial}^\alpha \partial_z H, (1+z)^{2s-1} \widehat{\partial}^\alpha H \rangle+\sum_{\alpha_1 \leq 1, |\alpha| \leq m} \langle \widehat{\partial}^\alpha \partial_z H, (1+z)^{2s}\partial_z \widehat{\partial}^\alpha H \rangle \\
        % &\quad - \sum_{\alpha_1 \leq 1, |\alpha| \leq m} \langle [\widehat{\partial}^\alpha, \partial_z] \partial_z H, (1+z)^{2s} \widehat{\partial}^\alpha H \rangle\\
        =& 2s\sum_{\alpha_1 \leq l, |\alpha| \leq m} \langle \widehat{\partial}^\alpha \partial_z H, (1+z)^{2s-1} \widehat{\partial}^\alpha H \rangle+\sum_{\alpha_1 \leq l, |\alpha| \leq m} \langle \widehat{\partial}^\alpha \partial_z H, (1+z)^{2s}\widehat{\partial}^\alpha \partial_z H \rangle \\
        &- \sum_{\alpha_1 \leq l, |\alpha| \leq m} \langle [\widehat{\partial}^\alpha, \partial_z] \partial_z H, (1+z)^{2s} \widehat{\partial}^\alpha H \rangle+\sum_{\alpha_1 \leq l, |\alpha| \leq m} \langle \widehat{\partial}^\alpha \partial_z H, (1+z)^{2s}[\partial_z, \widehat{\partial}^\alpha] H \rangle.
    \end{align*}
    Finally, by \eqref{inner exchange operator} and H\"older inequality, we derive
    \begin{align*}
        D&\geq \sum_{\alpha_1 \leq l, |\alpha| \leq m} \|(1+z)^s \widehat{\partial}^\alpha \partial_z H\|_2^2  - 2s \sum_{\alpha_1 \leq l, |\alpha| \leq m} |\langle \widehat{\partial}^\alpha \partial_z H, (1+z)^{2s-1} \widehat{\partial}^\alpha H \rangle| \\
        &\qquad - C\delta \sum_{\alpha_1 \leq l, |\alpha| \leq m} |\langle \widehat{\partial}^\alpha \partial_z H, (1+z)^{2s} \widehat{\partial}^{\alpha-(0,0,1)} \partial_z H \rangle| \\
        &\geq \|\partial_z H\|_{Z^{l,m}_s}^2 - C \left( \sigma  \|\partial_z H\|_{Z^{l,m}_s}^2 
              + \frac{1}{\sigma} \| H\|_{Z^{l,m}_{s-1}}^2 \right)  - C \delta \left( \|\partial_z H\|_{Z^{l,m}_s}^2
              + \|\partial_z H\|_{Z^{l,m}_{s-1}}^2 \right) \\
        &\geq (1 - C\sigma - C\delta) \|\partial_z H\|_{Z^{l,m}_s}^2  - \frac{C}{\sigma} \|H\|_{Z^{l,m}_{s-1}}^2.
    \end{align*}
\end{proof}

% From the above propositions, we can establish Proposition \ref{prop:uniform bounds}.

%%%%%%%%%%%%%%%%%%%%%%%%%%%%%%%%%%%%%%%%%%%%%%%%%%%%%%%%%%%%%%%%%%%

%%%%%%%%%%%%%%%%%%%%%%%%%%%%%%%%%%%%%%%%%%%%%%%%%%%%%%%%%%%%%%%%%
%%%%%%%%%%%%%%%%%%%%%%%%%%%%%%%%%%%%%%%%%%%%%%%%%%%%%%%%%%%%%%%%%
\subsection{Proof of Proposition \ref{prop:uniform bounds}}\label{proof:uniform bounds}
In this section, we establish the uniform boundedness in the conormal Sobolev spaces of the approximate solutions and remainders $N,K,U$.

First, concluding the results of Proposition~\ref{proposition_e0}, Proposition~\ref{proposition_b,1}, and Proposition~\ref{proposition_b,2}, we obtain the following estimates immediately.
\begin{proposition}
    Assume that $(n^{e,0},c^{e,0},u^{e,0})$ are the same as in Proposition~\ref{proposition_e0}, $(n^{b,1},c^{b,1},u^{b,1})$ are the same as in Proposition~\ref{proposition_b,1}, and $(n^{b,2},c^{b,2},u^{b,2})$ are the same as in Proposition~\ref{proposition_b,2}. Then we have
    
    (a)
    \begin{align*}
        &\|n^a\|_{Y^{1,3}_\infty}^2 + \|c^a\|_{Y^{1,4}_\infty}^2 + \|u^a\|_{Y^{1,4}_\infty}^2 \\
        \leq
        &C\Bigl(\|n^{e,0}\|_{Y^{1,3}_\infty}^2+\|(c^{e,0},u^{e,0})\|_{Y^{1,4}_\infty}^2 + \|(n^{b,1},n^{b,2},\partial_z n^{b,1},\partial_z n^{b,2})\|_{Z^{1,4}_0}^2\\
        &+\|(c^{b,1},c^{b,2},\partial_z c^{b,1},\partial_z c^{b,2},u_1^{b,1},\partial_z u_1^{b,1},u_2^{b,2},\partial_z u_2^{b,2})\|_{Z^{1,5}_0}^2
        \Bigr),
    \end{align*}
    
    (b)
    \begin{align*}
        &\|\partial_y n^a\|_{Y^{1,2}_\infty}^2 + \|\partial_y c^a\|_{Y^{1,3}_\infty}^2 + \|\partial_y u^a\|_{Y^{1,3}_\infty}^2 \\
        \leq
        &C\Bigl(\|\partial_y n^{e,0}\|_{Y^{1,2}_\infty}^2+\|(\partial_y c^{e,0},\partial_y u^{e,0})\|_{Y^{1,3}_\infty}^2+\|(\partial_z n^{b,1},\partial_z n^{b,2},\partial_z^2 n^{b,1},\partial_z^2 n^{b,2})\|_{Z^{1,3}_0}^2\\
        &+\|(\partial_z c^{b,j},\partial_z^2 c^{b,j},\partial_z u_1^{b,1},\partial_z^2 u_1^{b,1},\partial_z u_2^{b,2},\partial_z^2 u_2^{b,2})\|_{Z^{1,4}_0}^2\Bigr),
    \end{align*}
    
    (c)
    \begin{align*}
        &\|\partial_y^2 c^a\|_{Y^{1,2}}^2 + \|\partial_y^2 u^a\|_{Y^{1,2}}^2 
        \leq C\varepsilon^{-\frac{1}{2}}\Bigl(\|(\partial_y^2 c^{e,0},\partial_y^2 u^{e,0})\|_{Y^{1,2}}^2+\|(\partial_z^2 c^{b,1},\partial_z^2 c^{b,2},\partial_z^2 u_1^{b,1},\partial_z^2 u_2^{b,2})\|_{Z^{1,2}}^2\Bigr).
    \end{align*}
\end{proposition}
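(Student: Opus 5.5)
The plan is to prove the three bounds (a)--(c) term by term from the explicit form \eqref{def:soluntion^a} of $(n^a,c^a,u^a)$. Each component is split into its outer part $n^{e,0},c^{e,0},u^{e,0}$ and its boundary-layer parts $\varepsilon^j h^{b,j}(t,x,y/\varepsilon)$, and the triangle inequality is applied in $Y^{1,k}_\infty$ or $Y^{1,k}$. The outer parts already appear on the right-hand sides as they stand, so nothing is needed for them beyond \eqref{neq:embed y}, which shows they are controlled by the $\overline H$ norms of Proposition~\ref{proposition_e0}. All the work is therefore in the boundary-layer pieces.

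For the boundary-layer terms in (a) and (b) we use two structural facts about the conormal derivative. The first is that $\psi(y)\partial_y = \frac{\psi(\varepsilon z)}{\varepsilon z}\, z\partial_z$, and $\frac{\psi(\varepsilon z)}{\delta\varepsilon z}$ is bounded with bounded $z\partial_z$-derivatives. Hence $\partial^\alpha h(t,x,y/\varepsilon)$ is a finite combination, with uniformly bounded coefficients, of $\widehat\partial^\beta h$ for $\beta\le\alpha$, and no power of $\varepsilon$ is lost. The second concerns normal derivatives: $\partial_y(\varepsilon h^{b,1})=\partial_z h^{b,1}$ and $\partial_y(\varepsilon^2 h^{b,2})=\varepsilon\partial_z h^{b,2}$. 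This is why (b) involves $\partial_z h^{b,j}$ and $\partial_z^2 h^{b,j}$, with the conormal derivatives then handled as in the first fact. The $L^\infty$ norms are converted into the $Z_0$ norms of the inner profiles by the anisotropic Gagliardo--Nirenberg estimate used in Lemma~\ref{lem:embedding}, now in the variables $(x,z)$:
\begin{equation*}
\|g\|_{L^\infty_{x,z}}\le C\|g\|_{2}^{\frac14}\|\partial_x g\|_{2}^{\frac14}\|\partial_z g\|_{2}^{\frac14}\|\partial_{xz} g\|_{2}^{\frac14}.
\end{equation*}
This costs one extra tangential derivative together with one $\partial_z$. That accounts for the index shift from $Y^{1,3}_\infty$ (resp. $Y^{1,4}_\infty$) to $Z^{1,4}_0$ (resp. $Z^{1,5}_0$), and for the appearance of $\partial_z h^{b,j}$ in the list on the right-hand side. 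No $\varepsilon$-factors arise here, because the $L^\infty$ norm is scale invariant.

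For (c) we compute in $L^2$ instead. We have $\partial_y^2(\varepsilon h^{b,1})=\varepsilon^{-1}\partial_z^2h^{b,1}$ and $\partial_y^2(\varepsilon^2 h^{b,2})=\partial_z^2 h^{b,2}$, and $u_2^a$ contributes $\partial_z^2u_2^{b,2}$. By \eqref{neq:embed z}, $\|h(\cdot,y/\varepsilon)\|_{Y^{1,2}}^2\le C\varepsilon\|h\|_{Z^{1,2}_0}^2$, so the worst term is of order $\varepsilon^{-2}\cdot\varepsilon=\varepsilon^{-1}$ in the squared norm. This is consistent with the $\varepsilon^{-\frac12}$ bound on the unsquared norm in \eqref{approximate uniform bound}. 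The stated factor $C\varepsilon^{-\frac12}$ in (c) should then be read accordingly: either the left-hand side is unsquared, or the constant must be $\varepsilon^{-1}$. I would check this and fix the exponent.

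The main obstacle I expect is the bookkeeping in the first fact above, namely that mixed conormal derivatives $\partial^\alpha$ with $\alpha_3>0$ acting on a composed function $h(y/\varepsilon)$ stay uniformly bounded. This requires controlling all $z$-derivatives of $\psi(\varepsilon z)/(\varepsilon z)$, including in the region $y\ge 1$ where $\psi$ saturates. There $z\partial_z$ applied to that ratio is bounded because $\psi(y)/y$ is smooth with $y(\psi/y)'$ bounded. Once this lemma is in hand, the rest is a direct application of Propositions~\ref{proposition_e0}--\ref{proposition_b,2}.
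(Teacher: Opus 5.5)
Your proposal is correct and follows essentially the paper's route. You split into outer and boundary-layer parts and convert conormal derivatives of $h(t,x,y/\varepsilon)$ into $\widehat\partial^\alpha h$ through the bounded factor $\psi(y)/(\delta y)$. With the paper's definition $\partial^\alpha=\partial_t^{\alpha_1}\partial_x^{\alpha_2}\psi^{\alpha_3}\partial_y^{\alpha_3}$ this is the exact identity $\partial^\alpha[h(y/\varepsilon)]=(\psi(y)/(\delta y))^{\alpha_3}\widehat\partial^\alpha h$, so the commutator bookkeeping you anticipated as the main obstacle is not needed. You then apply the anisotropic Agmon inequality in $(x,z)$ for (a)--(b) and \eqref{neq:embed z} for (c), exactly as the paper does.

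Your remark on (c) is also right. The paper's own computation gives $C\varepsilon\cdot\varepsilon^{-2}=C\varepsilon^{-1}$ for the squared norms. That matches the unsquared $\varepsilon^{-\frac12}$ bound in \eqref{approximate uniform bound}, but not the factor $\varepsilon^{-\frac12}$ written in the statement.
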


\begin{proof}
    For (a), by \eqref{def:soluntion^a} and embedding theory, we have
    \begin{align*}
        &\|n^a\|_{Y^{1,3}_\infty}^2 + \|c^a\|_{Y^{1,4}_\infty}^2 + \|u^a\|_{Y^{1,4}_\infty}^2 \\
        \leq& \Big( \|(n^{e,0},\varepsilon n^{b,1},\varepsilon^2 n^{b,2)}\|_{Y_\infty^{1,3}}^2 
        + \|(c^{e,0},\varepsilon c^{b,1}, \varepsilon^2 c^{b,2})\|_{Y_\infty^{1,4}}^2 
        + \|(u^{e,0},\varepsilon u_1^{b,1}, \varepsilon^2 u_2^{b,2})\|_{Y_\infty^{1,4}}^2 \Big) \\
        \le& \Big( \|n^{e,0}\|_{Y_\infty^{1,3}}^2 
        + \|c^{e,0}\|_{Y_\infty^{1,4}}^2 
        + \|u^{e,0}\|_{Y_\infty^{1,4}}^2 \Big) \\
        &+ C\varepsilon^2 \Big(  \|n^{b,1}\|_{Z_0^{1,4}}\|\partial_z n^{b,1}\|_{Z_0^{1,4}} + \|c^{b,1}\|_{Z_0^{1,5}}\|\partial_z c^{b,1}\|_{Z_0^{1,5}} + \|u_1^{b,1}\|_{Z_0^{1,5}}\|\partial_z u_1^{b,1}\|_{Z_0^{1,5}}\Big) \\
        &+C\varepsilon^4 \Big(  \|n^{b,2}\|_{Z_0^{1,4}}\|\partial_z n^{b,2}\|_{Z_0^{1,4}} + \|c^{b,2}\|_{Z_0^{1,5}}\|\partial_z c^{b,2}\|_{Z_0^{1,5}} + \|u_2^{b,2}\|_{Z_0^{1,5}}\|\partial_z u_2^{b,2}\|_{Z_0^{1,5}}\Big),
    \end{align*}
    where we have used
    \begin{align*}
        \|h(t,x,z)\|_{Y^{l,m}_\infty}^2 =& \sum_{\alpha_1 \le l, |\alpha| \le m} \| \partial^\alpha h\|_{\infty}^2 \\
        \le& \sum_{\alpha_1 \le l, |\alpha| \le m} \left\| \frac{\psi}{\delta y}\right\|^{2\alpha_3}_\infty \| \widehat{\partial}^\alpha h\|_{\infty}^2 \\
        \le& C\sum_{\alpha_1 \le l,|\alpha|\le m+1} \|\widehat{\partial}^\alpha h\|_{L_{xz}^2} \|\partial_z\widehat{\partial}^\alpha h\|_{L_{xz}^2},
    \end{align*}
    then the proof of (a) is completed. 
    
    For (b), note that
    \begin{align*}
        &\partial_y n^a =\partial_y n^{e,0} + \partial_z n^{b,1} +\varepsilon \partial_z^2 n^{b,2}, \\
        &\partial_y c^a =\partial_y c^{e,0} + \partial_z c^{b,1} +\varepsilon \partial_z^2 c^{b,2}, \\
        &\partial_y u^a =\partial_y u^{e,0} + \binom{\partial_z u_1^{b,1}}{\varepsilon \partial_z^2 u_2^{b,2}}, 
    \end{align*}
    then the proof is similar to (a).
    
    For (c), note that
    \begin{align*}
        &\|\partial_y^2 c^a\|_{Y^{1,2}}^2 + \|\partial_y^2 u^a\|_{Y^{1,2}}^2 \\
        \le & \| (\partial_y^2 c^{e,0}, \partial_y^2 u^{e,0})\|_{Y^{l,2}}^2 
        + \| (\varepsilon^{-1} \partial_z^2 c^{b,1}, \varepsilon^{-1} \partial_z^2 u_1^{b,1})\|_{Y^{l,2}}^2 
        +\| (\partial_z^2 c^{b,1}, \partial_z^2 u_2^{b,2})\|_{Y^{l,2}}^2,
    \end{align*}
    then by \eqref{neq:embed z}, this becomes
    \begin{align*}
        \le & \| (\partial_y^2 c^{e,0}, \partial_y^2 u^{e,0})\|_{Y^{l,2}}^2 
        + C\varepsilon \| (\varepsilon^{-1} \partial_z^2 c^{b,1}, \varepsilon^{-1} \partial_z^2 u_1^{b,1})\|_{Z_0^{l,2}}^2 
        + C\varepsilon\| (\partial_z^2 c^{b,1}, \partial_z^2 u_2^{b,2})\|_{Z_0^{l,2}}^2,
    \end{align*}
    and the proof is complete. 
\end{proof}

Second, for the remainders $N,K,U$ there hold the following inequalities.
\begin{proposition}\label{NKU}
    Assume that $(n^{e,0},c^{e,0},u^{e,0})$ are the same as in Proposition~\ref{proposition_e0}, $(n^{b,1},c^{b,1},u^{b,1})$ are the same as in Proposition~\ref{proposition_b,1}, and $(n^{b,2},c^{b,2},u^{b,2})$ are the same as in Proposition~\ref{proposition_b,2}. Then we have
    \begin{align*}
        \|N\|^2_{Y^{1,m}} 
        &\leq \varepsilon^3 \|(n^{b,1},n^{b,2})\|_{Z^{2,m+1}_{0}}^2+\varepsilon^3 \Big(1 + \|c^{e,0}\|^2_{\overline{H}^{1,m+4}} + \|u^{e,0}\|^2_{\overline{H}^{1,m+3}} + \big\| u_1^{b,1}\big\|^2_{Z^{1,m+2}_2} \\
        &\quad + \|(c^{b,1},c^{b,2},\partial_z u_1^{b,1})\|^2_{Z^{1,m+2}_0} + \|\partial_z c^{b,1}\|^2_{Z^{1,m+1}_1} + \|\partial_z c^{b,2}\|^2_{Z^{1,m}_0} \Big)\\
        &\quad  \times \Big(\|n^{e,0}\|^2_{\overline{H}^{1,m+4}} + \|(n^{b,1},n^{b,2})\|^2_{Z^{1,m+2}_0} + \|(\partial_z n^{b,1},\partial_z n^{b,2})\|^2_{Z^{1,m+2}_1}  \Big) \\
        % &\quad +\varepsilon^5 \left(1+\|n^{e,0}\|^2_{\overline{H}^{1,m+2}} + \|u_2^{e,0}\|^2_{\overline{H}^{1,m+3}} + \|u_1^{e,0}\|^2_{\overline{H}^{1,m+2}} + \|c^{e,0}\|^2_{\overline{H}^{1,m+3}} \right)^2\times\\
        % &\qquad \left( \| u_1^{b,1}\|^2_{Z^{1,m}_0} + \|c^{b,1}\|^2_{Z^{1,m+1}_0} + \|\partial_z c^{b,1}\|^2_{Z^{1,m+1}_1} \right)\\
        &\quad + \varepsilon^3 \Big(\|n^{e,0}\|^2_{\overline{H}^{1,m+4}} + \|\partial_z c^{b,2}\|^2_{Z^{1,m}_0} + \|(n^{b,2},\partial_z n^{b,2},n^{b,1},\partial_z n^{b,1})\|^2_{Z^{1,m+1}_0} \Big) \\
        &\quad \times \Big(\|\partial_z^2 c^{b,1}\|^2_{Z^{1,m+1}_2}  + \|\partial_z^2 c^{b,2}\|^2_{Z^{1,m}_1}+ \|(\partial_z^2 n^{b,1},\partial_z^2 n^{b,2})\|^2_{Z^{1,m+1}_0}\Big),
    \end{align*}
    \begin{align*}
        \|K\|^2_{Y^{1}} 
        &\leq \varepsilon^5 \Big(1 + \|(u_1^{e,0},n^{e,0})\|^2_{\overline{H}^{1,4}} \Big) \| (c^{b,1},c^{b,2})\|^2_{Z^{1,3}_1} + \varepsilon^5 \|u_2^{e,0}\|^2_{\overline{H}^{1,5}} \| (\partial_z c^{b,1},\partial_z c^{b,2})\|^2_{Z^{1,0}_2} \\
        &\quad + \varepsilon^4 \|c^{e,0}\|^2_{\overline{H}^{1,5}} \Big(1 + \| (n^{b,1},n^{b,2})\|^2_{Z^{1,0}_1} + \|  u_1^{b,1}\|^2_{Z^{1,2}_3} + \|u_1^{b,2}\|^2_{Z_{0}^{1,0}}\Big) \\
        &\quad + \varepsilon^7\|(u_1^{b,1},\partial_z u_1^{b,1},n^{b,1},\partial_z n^{b,1},n^{b,2},\partial_z n^{b,2})\|^2_{Z^{1,2}_0}
       \|c^{b,2}\|^2_{Z^{1,2}_0}  \\
        &\quad + \varepsilon^5 \| u_1^{b,1}\|^2_{Z^{1,3}_2} \|(\partial_z c^{b,2},\partial_z c^{b,1})\|^2_{Z_0^{1,0}}+ \varepsilon^7 \|(c^{b,1},\partial_z c^{b,1})\|^2_{Z^{1,2}_0} \|n^{b,2}\|^2_{Z_0^{1,0}},
    \end{align*}
    % \begin{align*}
    %     \|K\|^2_{Y^{1}} 
    %     &\leq \varepsilon^5 \Big(1 + \|u_1^{e,0}\|^2_{\overline{H}^{1,4}} + \|n^{e,0}\|^2_{\overline{H}^{1,4}}\Big) \Big(\| c^{b,1}\|^2_{Z^{1,2}_1} + \| c^{b,2}\|^2_{Z^{1,2}_1} + \|c^{b,1}\|^2_{Z^{1,3}_0} + \|c^{b,2}\|^2_{Z^{1,3}_0}\Big) \\
    %     &\quad + \varepsilon^5 \|u_2^{e,0}\|^2_{\overline{H}^{1,5}} \Big(\| \partial_z c^{b,1}\|^2_{Z^{1,0}_2} + \| \partial_z c^{b,2}\|^2_{Z^{1,0}_2}\Big) \\
    %     &\quad + \varepsilon^4 \|c^{e,0}\|^2_{\overline{H}^{1,5}} \Big(1 + \| n^{b,1}\|^2_{Z^{1,0}_1} + \| n^{b,2}\|^2_{Z^{1,0}_1} + \|  u_1^{b,1}\|^2_{Z^{1,2}_3} + \|u_1^{b,2}\|^2_{Z_{0}^{1,0}}\Big) \\
    %     &\quad + \varepsilon^7 \Big(\|u_1^{b,1}\|^2_{Z^{1,2}_0} + \|\partial_z u_1^{b,1}\|^2_{Z^{1,2}_0} + \|n^{b,1}\|^2_{Z^{1,2}_0} + \|\partial_z n^{b,1}\|^2_{Z^{1,2}_0} + \|n^{b,2}\|^2_{Z^{1,2}_0} + \|\partial_z n^{b,2}\|^2_{Z^{1,2}_0}\Big)  \\
    %     &\quad \times \|c^{b,2}\|^2_{Z^{1,2}_0} + \varepsilon^5 \| u_1^{b,1}\|^2_{Z^{1,3}_2} \Big(\|\partial_z c^{b,2}\|^2_{Z_0^{1,0}} + \|\partial_z c^{b,1}\|^2_{Z_0^{1,0}}\Big) \\
    %     &\quad + \varepsilon^7 \Big(\|c^{b,1}\|^2_{Z^{1,2}_0} + \|\partial_z c^{b,1}\|^2_{Z^{1,2}_0}\Big) \|n^{b,2}\|^2_{Z_0^{1,0}},
    % \end{align*}
    \begin{align*}
        \|\partial_x K\|^2_{\overline{H}^{1,m}} 
        &\leq \varepsilon^5 \Big(1 + \|(u_1^{e,0},n^{e,0})\|^2_{\overline{H}^{1,m+4}} \Big) \|( c^{b,1},c^{b,2})\|^2_{Z^{1,m+3}_1}+ \varepsilon^5 \|u_2^{e,0}\|^2_{\overline{H}^{1,m+5}} \| (\partial_z c^{b,1},\partial_z c^{b,2})\|^2_{Z^{1,m+1}_2}\\
        &\quad + \varepsilon^4 \|c^{e,0}\|^2_{\overline{H}^{1,m+5}} \Big(1 + \| (n^{b,1},n^{b,2})\|^2_{Z^{1,m+1}_1}+ \|  u_1^{b,1}\|^2_{Z^{1,m+2}_3} + \|u_1^{b,2}\|^2_{Z^{1,m+1}_0}\Big) \\
        &\quad + \varepsilon^7 \|(u_1^{b,1},\partial_z u_1^{b,1},n^{b,1},\partial_z n^{b,1},n^{b,2},\partial_z n^{b,2})\|^2_{Z^{1,m+2}_0}  \|c^{b,2}\|^2_{Z^{1,m+2}_0}  \\
        &\quad + \varepsilon^5 \| u_1^{b,1}\|^2_{Z^{1,m+3}_2} \|(\partial_z c^{b,1},\partial_z c^{b,2})\|^2_{Z^{1,m+1}_0}+ \varepsilon^7 \|(c^{b,1},\partial_z c^{b,1})\|^2_{Z^{1,m+2}_0} \|n^{b,2}\|^2_{Z^{1,m+1}_0},
    \end{align*}
    % \begin{align*}
    %     \|\partial_x K\|^2_{\overline{H}^{1,m}} 
    %     &\leq \varepsilon^5 \Big(1 + \|u_1^{e,0}\|^2_{\overline{H}^{1,m+4}} + \|n^{e,0}\|^2_{\overline{H}^{1,m+4}}\Big) \Big(\| c^{b,1}\|^2_{Z^{1,m+2}_1} + \| c^{b,2}\|^2_{Z^{1,m+2}_1} \\
    %     &\quad + \|c^{b,1}\|^2_{Z^{1,m+3}_0} + \|c^{b,2}\|^2_{Z^{1,m+3}_0}\Big) + \varepsilon^5 \|u_2^{e,0}\|^2_{\overline{H}^{1,m+5}} \Big(\| \partial_z c^{b,1}\|^2_{Z^{1,m+1}_2} + \| \partial_z c^{b,2}\|^2_{Z^{1,m+1}_2}\Big) \\
    %     &\quad + \varepsilon^4 \|c^{e,0}\|^2_{\overline{H}^{1,m+5}} \Big(1 + \| n^{b,1}\|^2_{Z^{1,m+1}_1} + \| n^{b,2}\|^2_{Z^{1,m+1}_1} + \|  u_1^{b,1}\|^2_{Z^{1,m+2}_3} + \|u_1^{b,2}\|^2_{Z^{1,m+1}_0}\Big) \\
    %     &\quad + \varepsilon^7 \Big(\|u_1^{b,1}\|^2_{Z^{1,m+2}_0} + \|\partial_z u_1^{b,1}\|^2_{Z^{1,m+2}_0} + \|n^{b,1}\|^2_{Z^{1,m+2}_0} + \|\partial_z n^{b,1}\|^2_{Z^{1,m+2}_0}  + \|n^{b,2}\|^2_{Z^{1,m+2}_0}\\
    %     &\quad + \|\partial_z n^{b,2}\|^2_{Z^{1,m+2}_0}\Big) \|c^{b,2}\|^2_{Z^{1,m+2}_0}  + \varepsilon^5 \| u_1^{b,1}\|^2_{Z^{1,m+3}_2} \Big(\|\partial_z c^{b,2}\|^2_{Z^{1,m+1}_0} + \|\partial_z c^{b,1}\|^2_{Z^{1,m+1}_0}\Big) \\
    %     &\quad + \varepsilon^7 \Big(\|c^{b,1}\|^2_{Z^{1,m+2}_0} + \|\partial_z c^{b,1}\|^2_{Z^{1,m+2}_0}\Big) \|n^{b,2}\|^2_{Z^{1,m+1}_0},
    % \end{align*}
    \begin{align*}
        \|\partial_y K\|^2_{Y^{1,m}} 
        &\leq \varepsilon^3 \Big(1 +  \|u^{e,0}\|^2_{\overline{H}^{1,m+4}} + \|n^{e,0}\|^2_{\overline{H}^{1,m+3}}\Big) \Big(\|(c^{b,1},c^{b,2})\|^2_{Z^{1,m+1}_0} + \|(\partial_z c^{b,1},\partial_z c^{b,2})\|^2_{Z^{1,m+2}_1} \Big) \\
        &\quad + \varepsilon^3 \|c^{e,0}\|^2_{\overline{H}^{1,m+4}} \Big(1 + \|(n^{b,1},n^{b,2},\partial_z u_1^{b,2})\|^2_{Z^{1,m}_0} + \|(\partial_z n^{b,1},\partial_z n^{b,2},\partial_z u_1^{b,1})\|^2_{Z^{1,m}_1}\\
        &\quad+ \| u_1^{b,1}\|^2_{Z^{1,m+1}_2} \Big) + \varepsilon^5 \|(\partial_z^2 u_1^{b,1},\partial_z^2 n^{b,1})\|^2_{Z^{1,m+1}_0} \|c^{b,2}\|^2_{Z^{1,m+1}_0}\\
        &\quad+ \varepsilon^5 \|(u_1^{b,1},\partial_z u_1^{b,1},n^{b,1},\partial_z n^{b,1},n^{b,2},\partial_z n^{b,2})\|^2_{Z^{1,m+1}_0}
       \|(c^{b,2},\partial_z c^{b,2},c^{b,1},\partial_z c^{b,1})\|^2_{Z^{1,m+1}_0} \\
        &\quad + \varepsilon^3 \Big(\| u_1^{b,1}\|^2_{Z^{1,m+2}_2}+ \|n^{b,2}\|^2_{Z^{1,m}_0} + \|u_2^{e,0}\|^2_{\overline{H}^{1,m+4}}\Big) \| (\partial_z^2 c^{b,1},\partial_z^2 c^{b,2})\|^2_{Z^{1,m+1}_2} ,
    \end{align*}
    \begin{align*}
        \|U_1\|^2_{Y^{1}} 
        &\leq \varepsilon^4 \|u_1^{e,0}\|^2_{\overline{H}^{1,5}} \Big(1 + \| u_1^{b,1}\|^2_{Z^{1,2}_2} \Big) 
        + \varepsilon^5 \|u_2^{e,0}\|^2_{\overline{H}^{1,5}} \| \partial_z u_1^{b,1}\|^2_{Z^{1,1}_2} 
        + \varepsilon^5 \| n^{b,1}\|^2_{Z^{1,2}_2} \\
        &\quad + \varepsilon^5 \|u_1^{b,1}\|^2_{Z^{1,3}_0} \Big(1 + \|\partial_z u_1^{b,1}\|^2_{Z^{1,3}_0}\Big) 
        + \varepsilon^5 \|u_1^{b,1}\|^2_{Z^{1,2}_2}\left( \|\partial_z u_1^{b,1}\|^2_{Z^{1,2}_0}+ \|\partial_z^2 u_1^{b,1}\|^2_{Z^{1,2}_0}\right),
    \end{align*}
    \begin{align*}
        \|U_2\|^2_{Y^{1}} 
        &\leq \varepsilon^5 \big\| u_1^{b,1}\big\|^2_{Z^{2,3}_2} +\varepsilon^5 \|n^{b,2}\|^2_{Y^{1}}+ \varepsilon^4\|u^{e,0}\|^2_{\overline{H}^{1,5}}+\varepsilon^5\|\partial_z u_1^{b,1} \|_{Z^{1,2}_0}
         \\
        &\quad + \varepsilon^4 \Big(1 + \|u_1^{b,1}\|^2_{Z^{1,3}_0} + \|\partial_z u_1^{b,1}\|^2_{Z^{1,2}_0} + \|u^{e,0}\|^2_{\overline{H}^{1,5}} \Big) \big\| u_1^{b,1}\big\|^2_{Z^{1,4}_2},
    \end{align*}
    \begin{align*}
        \| \partial_y U_1 \|_{Y^{1,m}}^2
        &\leq \varepsilon^3 \|u^{e,0}\|^2_{\overline{H}^{1,m+4}} \Big(1 + \|\partial_z u_1^{b,1}\|^2_{Z^{1,m+1}_1}+ \big\| u_1^{b,1}\big\|^2_{Z^{1,m+1}_2}\Big) + \varepsilon^3 \|n^{b,1}\|^2_{Z^{1,m+1}_0} \\
        &\quad + \varepsilon^3 \Big(1 + \|\partial_z u_1^{b,1}\|^2_{Z^{1,m+1}_0}\Big) \|(u_1^{b,1},\partial_z u_1^{b,1})\|^2_{Z^{1,m+2}_0} \\
        &\quad + \varepsilon^3 \left(\|u_2^{e,0}\|^2_{\overline{H}^{1,m+4}} +\big\| u_1^{b,1}\big\|^2_{Z^{1,m+2}_2}\right)\|\partial_z^2 u_1^{b,1}\|^2_{Z^{1,m+1}_2},
    \end{align*}
    % \begin{align*}
    %     \| \partial_y U_1 \|_{Y^{1,m}}^2
    %     &\leq \varepsilon^3 \|u_1^{e,0}\|^2_{\overline{H}^{1,m+4}} \Big(1 + \|\partial_z u_1^{b,1}\|^2_{Z^{1,m+1}_1} + \|u_1^{b,1}\|^2_{Z^{1,m+1}_0} + \big\| u_1^{b,1}\big\|^2_{Z^{1,m+1}_2}\Big) \\
    %     &\quad + \varepsilon^3 \|u_2^{e,0}\|^2_{\overline{H}^{1,m+4}} \|\partial_z u_1^{b,1}\|^2_{Z^{1,m}_1} + \varepsilon^3 \|n^{b,1}\|^2_{Z^{1,m+1}_0} \\
    %     &\quad + \varepsilon^3 \Big(1 + \|\partial_z u_1^{b,1}\|^2_{Z^{1,m+1}_0}\Big) \Big(\|u_1^{b,1}\|^2_{Z^{1,m+2}_0} + \|\partial_z u_1^{b,1}\|^2_{Z^{1,m+2}_0}\Big) \\
    %     &\quad + \varepsilon^3 \|u_2^{e,0}\|^2_{\overline{H}^{1,m+4}} \| \partial_z^2 u_1^{b,1}\|^2_{Z^{1,m}_2} + \varepsilon^3 \Big(\big\| u_1^{b,1}\big\|^2_{Z^{1,m+2}_2} + \|u_1^{b,1}\|^2_{Z^{1,m+2}_0}\Big) \|\partial_z^2 u_1^{b,1}\|^2_{Z^{1,m+1}_0},
    % \end{align*}
and
    % \begin{align*}
    %     \|\partial_x U_2\|^2_{Y^{1,m}} 
    %     &\leq \varepsilon^5 \big\| u_1^{b,1}\big\|^2_{Z^{2,m+3}_2} 
    %     + \varepsilon^4 \|u_2^{e,0}\|^2_{\overline{H}^{1,m+5}} \Big(1 + \big\| u_1^{b,1}\big\|^2_{Z^{1,m+2}_2}\Big) \\
    %     &\quad + \varepsilon^5 \Big(1 + \|u_1^{e,0}\|^2_{\overline{H}^{1,m+5}} + \|u_1^{b,1}\|^2_{Z^{1,m+2}_0} + \|\partial_z u_1^{b,1}\|^2_{Z^{1,m+2}_0} + \|u_1^{b,1}\|^2_{Z^{1,m+3}_0}\Big) \big\| u_1^{b,1}\big\|^2_{Z^{1,m+4}_2} \\
    %     &\quad + \varepsilon^5 \|n^{b,2}\|^2_{Z^{1,m+1}_0} 
    %     + \varepsilon^5 \|\partial_z u_1^{b,1}\|^2_{Z^{1,m+2}_0}.
    % \end{align*}
\begin{align*}
        \|\partial_x U_2\|^2_{Y^{1,m}} 
        &\leq \varepsilon^5 \big\| u_1^{b,1}\big\|^2_{Z^{2,m+3}_2} 
        + \varepsilon^4 \|u_2^{e,0}\|^2_{\overline{H}^{1,m+5}} \Big(1 + \big\| u_1^{b,1}\big\|^2_{Z^{1,m+2}_2}\Big) \\
        &\quad + \varepsilon^5 \Big(1 + \|u_1^{e,0}\|^2_{\overline{H}^{1,m+5}} + \|\partial_z u_1^{b,1}\|^2_{Z^{1,m+2}_0} + \|u_1^{b,1}\|^2_{Z^{1,m+3}_0}\Big) \big\| u_1^{b,1}\big\|^2_{Z^{1,m+4}_2} \\
        &\quad + \varepsilon^5 \|n^{b,2}\|^2_{Z^{1,m+1}_0} 
        + \varepsilon^5 \|\partial_z u_1^{b,1}\|^2_{Z^{1,m+2}_0}.
    \end{align*}
\end{proposition}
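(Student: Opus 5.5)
The plan is to estimate each remainder term by term from the explicit formulas \eqref{eq:N}, \eqref{eq:K}, \eqref{eq:U1}, \eqref{eq:U2}, sorting every term into one of three types.

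\emph{Pure outer terms} are functions of $(t,x,y)$ only, such as $\varepsilon^2\partial_y^2 c^{e,0}$ or $\varepsilon^2\Delta u_1^{e,0}$. Their conormal norms are controlled by $\overline{H}$ norms through \eqref{neq:embed y}, and they carry the explicit power of $\varepsilon$.

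\emph{Pure boundary-layer terms} are functions of $(t,x,z)$, such as $\varepsilon\partial_t n^{b,1}$, $\varepsilon^2\partial_x^2 c^{b,j}$ or $\varepsilon^2 n^{b,2}$. For these, \eqref{neq:embed z} gives an extra factor $\varepsilon$ in the squared norm. For example, $\|\varepsilon\partial_t n^{b,1}\|^2_{Y^{1,m}}\le C\varepsilon^3\|n^{b,1}\|^2_{Z^{2,m+1}_0}$, which is the first term in the bound for $N$.

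\emph{Mixed products} combine an outer quantity with a layer quantity. They are handled with H\"older's inequality: the outer factor goes in $L^\infty$, bounded by $\overline{H}^{1,m+k}$ via Sobolev embedding, and the layer factor goes in $L^2_{xz}$. For products of two layer quantities, one factor goes in $L^\infty$ using the anisotropic bound already used in part (a) of the preceding proposition, namely $\|h\|^2_{Y^{l,m}_\infty}\le C\sum\|\widehat\partial^\alpha h\|\,\|\partial_z\widehat\partial^\alpha h\|$.

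The key mechanism is the set of Taylor-difference terms: $u_1^{e,0}-\overline{u_1^{e,0}}$, $u_2^{e,0}-y\overline{\partial_y u_2^{e,0}}$, $n^{E,0}-n^{e,0}+y\overline{\partial_y n^{e,0}}$, $c^{e,0}-c^{E,0}$, and so on. By Taylor's formula with integral remainder, each is bounded pointwise by $y^k\|\partial_y^k(\cdot)\|_{L^\infty}=\varepsilon^k z^k\|\partial_y^k(\cdot)\|_{L^\infty}$. The factor $z^k$ is absorbed by the weight $(1+z)^s$ in the $Z_s$ norms, which is why weights $Z_1$, $Z_2$, $Z_3$ appear in the statement, and the factor $\varepsilon^k$ supplies the gain. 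For example, in $K$, $(u_2^{e,0}-y\overline{\partial_y u_2^{e,0}})\partial_z c^{b,1}$ is bounded by $\varepsilon^2 z^2\|\partial_y^2u_2^{e,0}\|_\infty|\partial_z c^{b,1}|$. Combined with the factor $\varepsilon$ from \eqref{neq:embed z}, this gives $\varepsilon^5\|u_2^{e,0}\|^2\|\partial_zc^{b,1}\|^2_{Z_2}$. Applying conormal derivatives $\partial^\alpha$ to these products only redistributes derivatives via Leibniz, since $\psi\partial_y$ acting on a $z$-function equals $(\psi/(\varepsilon z))\,z\partial_z$ with $\psi/y$ bounded. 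This accounts for the extra $+1$ or $+2$ in the regularity indices.

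The normal-derivative bounds $\|\partial_yK\|_{Y^{1,m}}$ and $\|\partial_yU_1\|_{Y^{1,m}}$ are treated the same way. There, $\partial_y$ hitting a layer factor costs $\varepsilon^{-1}$, which is why the power drops from $\varepsilon^5$ to $\varepsilon^3$. For $N$, the divergence form $-\partial_y(\cdots)$ is expanded similarly, and the term $\varepsilon^2 n^{b,2}(\partial_yc^{e,0}+\partial_zc^{b,1})$ produces the $\partial_z^2$ factors.

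For $\partial_xU_2$ and $U_2$, the term $\varepsilon^2\partial_tu_2^{b,2}$ is reduced to $u_1^{b,1}$ through $u_2^{b,2}=\int_z^\infty\partial_xu_1^{b,1}$. The estimate from Proposition~\ref{proposition_b,1} then costs one $x$-derivative and two units of weight, giving $Z^{2,m+3}_2$.

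I expect the main obstacle to be the bookkeeping for $N$. Its divergence structure mixes $\partial_y$ with Taylor differences of order one, such as $n^{E,0}-n^{e,0}+y\overline{\partial_yn^{e,0}}$, which is $O(y^2)$. One must check that every surviving term really gains at least $\varepsilon^{3/2}$ in norm, in particular that $\partial_y$ acting on $(y\overline{\partial_yn^{e,0}}+n^{E,0}-n^{e,0})\partial_zc^{b,1}$ leaves $O(\varepsilon)\cdot z$ times layer terms rather than an $O(1)$ term. I would verify this first, using \eqref{eq:c^b1,n^b1}$_2$ and \eqref{eq:partial_z n^b2}, which show that the $O(1)$ and $O(\varepsilon)$ flux parts cancel exactly.
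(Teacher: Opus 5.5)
The paper states this proposition without proof. It only supplies the tools: \eqref{neq:embed y}, \eqref{neq:embed z}, and the anisotropic $L^\infty$ bound from part (a) of the preceding proposition. Your term-by-term scheme is exactly the direct computation those tools are meant for, and it is sound.

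Your worry about $N$ resolves as you expect. Since $n^{E,0}=\overline{n^{e,0}}$, the coefficient $y\overline{\partial_y n^{e,0}}+n^{E,0}-n^{e,0}$ is $O(y^2)$. So $\partial_y$ of its product with $\partial_z c^{b,1}$ is bounded, up to $\|\partial_y^2 n^{e,0}\|_\infty$, by $\varepsilon z|\partial_z c^{b,1}|+\varepsilon z^2|\partial_z^2 c^{b,1}|$. This is where $\|\partial_z^2 c^{b,1}\|_{Z^{1,m+1}_2}$ comes from. Every term of $N$ is then $O(\varepsilon)$ pointwise and carries a layer factor, which gives the $\varepsilon^3$.

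A few points should be made explicit.

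\emph{Conormal derivatives on Taylor differences.} When $\psi^j\partial_y^j$ falls on a Taylor difference rather than on the layer factor, you need the vanishing order to survive. It does, because $\psi\le C\delta y$. For example, $\psi\partial_y(u_2^{e,0}-y\overline{\partial_y u_2^{e,0}})=\psi(\partial_y u_2^{e,0}-\overline{\partial_y u_2^{e,0}})=O(y^2)$.

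\emph{The $E,1$ quantities in $K$.} These must be unpacked via \eqref{def:Ej} and \eqref{ncu^e1=0}: $c^{E,1}=z\overline{\partial_y c^{e,0}}$, $n^{E,1}=z\overline{\partial_y n^{e,0}}$, $u_1^{E,1}=z\overline{\partial_y u_1^{e,0}}$. The shifted component of $(u^{E,1})_p$ is $u_2^{E,2}=\overline{u_2^{e,2}}+\tfrac{z^2}{2}\overline{\partial_y^2u_2^{e,0}}$. Since $u^{e,2}$ is never constructed, you must fix $\overline{u_2^{e,2}}$, e.g.\ $-f$ by \eqref{bound}. Either way these are weighted $O(\varepsilon^2)$ layer terms that fit the stated bounds.

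\emph{The norm for $\partial_x K$ (most important).} The estimate is written in $\overline{H}^{1,m}$, but your argument, correctly, only yields it in $Y^{1,m}$. In $\overline{H}^{1,m}$ with $m\ge1$ the displayed powers of $\varepsilon$ cannot hold in general. Indeed, $K$ contains (up to sign) $\varepsilon^2u_1^{b,2}\partial_x c^{E,0}$, because $u^a$ has no $u_1^{b,2}$ component. A genuine $\partial_y\partial_x$ of it equals $\varepsilon\,\partial_z\partial_x\big(u_1^{b,2}\,\overline{\partial_x c^{e,0}}\big)$, whose squared $L^2$ norm is of order $\varepsilon^3$, not $\varepsilon^4$ or $\varepsilon^5$. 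No other term of $K$ contains $u_1^{b,2}$ to cancel it. So state and prove the $Y^{1,m}$ version, which is all that Proposition~\ref{prop:uniform bounds} uses for $\|\nabla K\|_{L^2_tY^{1,2}}$.

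\emph{A minor typo in $U_2$.} For the $\varepsilon^2 n^{b,2}$ term, your computation gives $\varepsilon^4\|n^{b,2}\|^2_{Y^1}\le C\varepsilon^5\|n^{b,2}\|^2_{Z^{1,0}_0}$, where the statement writes $\varepsilon^5\|n^{b,2}\|^2_{Y^1}$.
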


\begin{proposition}
    Assume that $(n^{e,0},c^{e,0},u^{e,0})$ are the same as in Proposition~\ref{proposition_e0}, $(n^{b,1},c^{b,1},u^{b,1})$ are the same as in Proposition~\ref{proposition_b,1}, and $(n^{b,2},c^{b,2},u^{b,2})$ are the same as in Proposition~\ref{proposition_b,2}. Then we have
    \begin{align*}
        \| f e^{-y} \|_{Y^{2,{5}}} &\le C\|u_1^{b,1}\|_{Z^{2,6}_1},
    \end{align*}
    and 
     \begin{align*}
        \| F e^{-y} \|_{Y^{2,2}}&\leq C\|u_1^{b,1}\|_{Z^{2,2}_1}.
    \end{align*}
\end{proposition}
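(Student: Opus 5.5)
The plan uses the formula \eqref{eq:f-u2b2}, $f(t,x)=\int_0^\infty \partial_x u_1^{b,1}(t,x,z)\,dz$. Since $f$ is independent of $y$, the natural choice is $F(t,x):=\int_0^\infty u_1^{b,1}(t,x,z)\,dz$, so that $f=\partial_x F$. The key observation is that conormal derivatives act on $f e^{-y}$ only through $e^{-y}$ in the normal direction: $(\psi\partial_y)^{\alpha_3}e^{-y}$ is a bounded combination of $e^{-y}$ times bounded functions of $y$, because $\psi$ and its derivatives are bounded. Hence for $\alpha_1\le 2$ and $|\alpha|\le 5$,
\[
\|\partial^\alpha (f e^{-y})\|_2\le C\sum_{\beta_1\le \alpha_1,\ \beta_2\le \alpha_2}\|\partial_t^{\beta_1}\partial_x^{\beta_2} f\|_{L^2_x}\,\|e^{-y}\|_{L^2_y}+\cdots,
\]
and everything reduces to bounding $\|\partial_t^{j}\partial_x^{k} f\|_{L^2_x}$ for $j\le 2$, $j+k\le 5$. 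The same argument applies to $F$ with $j\le2$, $j+k\le 2$.

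Next I commute $\partial_t^j\partial_x^k$ inside the $z$-integral and apply Cauchy--Schwarz with the weight $(1+z)$:
\[
\Big|\int_0^\infty g(t,x,z)\,dz\Big|\le \Big(\int_0^\infty (1+z)^{-2}dz\Big)^{1/2}\Big(\int_0^\infty (1+z)^{2}|g|^2dz\Big)^{1/2}.
\]
Squaring and integrating in $x$ gives $\|\int_0^\infty g\,dz\|_{L^2_x}\le C\|(1+z)g\|_{L^2_{xz}}$. With $g=\partial_t^j\partial_x^{k+1}u_1^{b,1}$, this is a component of $\|u_1^{b,1}\|_{Z^{2,6}_1}$, since such derivatives have $\alpha_3=0$, $\alpha_1\le2$ and total order $\le 6$. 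Summing over the multi-indices yields $\|fe^{-y}\|_{Y^{2,5}}\le C\|u_1^{b,1}\|_{Z^{2,6}_1}$. For $F$, the same computation with $g=\partial_t^j\partial_x^k u_1^{b,1}$ and $j+k\le2$ gives $\|Fe^{-y}\|_{Y^{2,2}}\le C\|u_1^{b,1}\|_{Z^{2,2}_1}$. The resulting quantities are finite by Proposition~\ref{proposition_b,1} with $s\ge 1$ and $m=6$.

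The only step needing care is justifying the interchange of differentiation and $z$-integration and checking the definition of $F$. Both follow from the weighted bounds and the decay of $u_1^{b,1}$ as $z\to\infty$ contained in Proposition~\ref{proposition_b,1}. I also need to confirm that $(\psi\partial_y)^{\alpha_3}e^{-y}$ stays bounded by $Ce^{-y}$, which holds because $\psi\le \delta$ and $\psi$ is smooth with bounded derivatives. This bookkeeping is the main (mild) obstacle; the rest is Cauchy--Schwarz.
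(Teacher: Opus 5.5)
Your proposal is correct and follows the paper's argument. The paper also takes $F=\int_0^\infty u_1^{b,1}\,dz$, moves the time and tangential derivatives inside the $z$-integral, and applies Cauchy--Schwarz with the integrable weight $(1+z)^{-2}$ to land in $Z^{2,6}_1$ (resp.\ $Z^{2,2}_1$). Your explicit remark that the normal conormal derivatives fall only on $e^{-y}$ (since $\psi$ is bounded) spells out a step the paper's displayed computation glosses over.
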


\begin{proof} By direct computations, we have
    \begin{align*}
        \| f e^{-y} \|_{Y^{2,{5}}}^2&=\sum_{|\alpha|\leq5,|\alpha_1|\leq 2}\int_{\mathbb{R}^2_+} \left|\partial^\alpha \left(\int_0^\infty \partial_x u_1^{b,1}dz e^{-y}\right)\right|^2dxdy\\
        &\leq \sum_{|\alpha|\leq5,|\alpha_1|\leq 2}\int_{\mathbb{R}^2_+} \left(1+z\right)^2 \left(\partial^\alpha \partial_x u_1^{b,1}\right)^2 dxdz\leq C\|u_1^{b,1}\|_{Z^{2,6}_1}^2,
    \end{align*}
    and 
     \begin{align*}
        \| F e^{-y} \|_{Y^{2,2}}^2&=\sum_{|\alpha|\leq2,|\alpha_1|\leq 2}\int_{\mathbb{R}^2_+} \left|\partial^\alpha \left(\int_0^\infty u_1^{b,1}dz e^{-y}\right)\right|^2dxdy\\
        &\leq \sum_{|\alpha|\leq2,|\alpha_1|\leq 2}\int_{\mathbb{R}^2_+} \left(1+z\right)^2 \left(\partial^\alpha u_1^{b,1}\right)^2 dxdz\leq\|u_1^{b,1}\|_{Z^{2,2}_1}^2,
         \end{align*}
         where the definition of  $f$ in \eqref{eq:f-u2b2}. The proof is complete.
\end{proof}

{\bf Proof of Proposition \ref{prop:uniform bounds}.}
\begin{proof} It follows from the above three propositions, together with integration in $t$, that this proof is complete. \end{proof}

\noindent {\bf Acknowledgment:}\,
The authors would like to thank Professor Zhi-An Wang for some helpful communications.
 W. Wang was supported by National Key R\&D Program of China (No. 2023YFA1009200) and NSFC under grant 12471219 and 12071054.

\medskip
\noindent\textbf{Data Availability Statement:}
Data sharing is not applicable to this article as no datasets were generated or analyzed during the current study.

\noindent\textbf{Conflict of Interest:}
The authors declare that they have no conflict of interest.

\bibliographystyle{plain}
\bibliography{refs}

\end{document}